\newtheorem{theorem}{Theorem}[section]
\newtheorem{proposition}[theorem]{Proposition}
\newtheorem{lemma}[theorem]{Lemma}
\newtheorem{corollary}[theorem]{Corollary}
\theoremstyle{definition}
\theoremstyle{remark}
\newtheorem{remark}[theorem]{Remark}
\numberwithin{equation}{section}
\newcommand\blfootnote[1]{%
  \begingroup
  \renewcommand\thefootnote{}\footnote{#1}%
  \addtocounter{footnote}{-1}%
  \endgroup
}
\newcommand{\R}{\mathbb{R}}
\newcommand{\N}{\mathbb{N}}
\newcommand{\Prob}{\mathbb{P}}    
\newcommand{\Cb}{\mathbb {C}}
\newcommand{\Nc}{\mathcal{N}}
\newcommand{\eps}{\varepsilon}
\newcommand{\diff}{{\,\mathrm{d}}}
\newcommand{\weak}{\rightsquigarrow}
\newcommand{\Exp}{\operatorname{E}}
\newcommand{\Var}{\operatorname{Var}}
\newcommand{\Cov}{\operatorname{Cov}}
\newcommand{\E}{\op{\mathbb{E}}}
\newcommand{\p}{\mathbb{P}}
\def\z@first#1#2{#1}
\def\z@second#1#2{#2}
\def\z@zp@selectchar#1#2{
	\IfStrEqCase{#2}{%
		{p}{#1{(}{)}}%
		{P}{#1{)}{(}}%
		{c}{#1{[}{]}}%
		{C}{#1{]}{[}}%
		{a}{#1{\{}{\}}}%
		{A}{#1{\}}{\{}}%
		{i}{#1{[}{]}\!#1{[}{]}}%
		{I}{#1{]}{[}\!#1{]}{[}}%
		{t}{#1{<}{>}}%
		{T}{#1{>}{<}}%
		{b}{#1{|}{|}}%
		{n}{#1{\|}{\|}}%
		{v}{#1{.}{.}}%
	}[#1{(}{)}]%
}
\def\z@zp#1#2\fin#3{
	\z@zp@selectchar{\left\z@first}{#1}#3
	\zifempty{#2}%
	{\z@zp@selectchar{\right\z@second}{#1}}%
	{\z@zp@selectchar{\right\z@second}{#2}}%
}
\newcommand{\zp}[2][p]{\zifempty{#1}{\left(#2\right)}{\z@zp#1\fin{#2}}}
\def\zifempty#1#2#3{\def\foo{#1}\ifx\foo\empty\relax#2\else#3\fi}
\newcommand{\ds}{\displaystyle}
\newcommand{\op}{\operatorname} 
\newcommand{\ff}{\frac{1}}
\begin{document}

\title[Testing independence]{Testing for independence in high dimensions based on empirical copulas}

\author{Axel B\"ucher}
\author{Cambyse Pakzad}

\address{Heinrich-Heine-Universit\"at D\"usseldorf, Mathematisches Institut, Universit\"atsstr.~1, 40225 D\"usseldorf, Germany.}
\email{axel.buecher@hhu.de}
\email{cambyse.pakzad@hhu.de}

\date{\today}

\begin{abstract} 
Testing for pairwise independence for the case where the number of variables may be of the same size or even larger than the sample size has received increasing attention in the recent years. We contribute to this branch of the literature by considering tests that allow to detect higher-order dependencies. The proposed methods are based on connecting the problem to copulas and making use of the Moebius transformation of the empirical copula process; an approach that has already been used successfully for the case where the number of variables is fixed. Based on a martingale central limit theorem, it is shown that respective test statistics converge to the standard normal distribution, allowing for straightforward definition of critical values. The results are illustrated by a Monte Carlo simulation study.

\end{abstract}

\keywords{Empirical copula process; high dimensional statistics; higher order dependence; Moebius transform; rank based inference}

\maketitle


\section{Introduction}

Suppose $\ds{\bm X_1, \dots, \bm X_n}$, $\ds{\bm X_i=(X_{i1}, \dots, X_{id})}$ is an i.i.d.\ sample of $d$-variate observations with joint cumulative distribution function (cdf) $F$ and continuous marginal cdf's $F_1, \dots, F_d$. A generic random variable with c.d.f.\ $F$ will be denoted by $\ds{\bm X=(\bm X_1, \dots, \bm X_d)^\top}$. We are interested in testing for the hypothesis
\begin{align} \label{eq:indep}
H: X_1, \dots, X_d \text{ are mutually independent}
\end{align}
in a high dimensional asymptotic regime, where $d=d(n)$ is allowed to be larger than $n$ and grows to infinity with increasing sample size $n$. 

The problem has recently attracted increasing attention. Motivated by the Gaussian case, where mutual independence is equivalent to pairwise independence, much work has been devoted to testing whether covariance or (rank) correlation matrices are equal to the identity matrix, see, e.g., \cite{LedWol02, Sch05, CaiJia11, CheZhaZho10,JiaQi15, LeuDrt18, HanCheLiu17, Yao18, HanWu20, Drt20}. The approaches that have motivated the present work may be categorized into two types: maximum-type statistics or $L^2$-type statistics. The former aim at detecting possibly sparse alternatives where only a few pairs of coordinates of $\bm X$ are highly dependent (see, e.g., \citealp{HanCheLiu17, Drt20}). The latter aim at detecting possibly dense alternatives, where many of the pairs of coordinates of $\bm X$ are only weakly dependent, but the overall non-independence signal is large \citep{LeuDrt18, Yao18}. Quite remarkably, to the best of our knowledge, the testing problem has not yet been approached by copula methods, despite the longstanding success of respective methods in the `fixed $d$' case (see \citealp{Gen19} and the discussion below). 

The purpose of this paper is two-fold: first of all, we take up the last point from the previous paragraph and aim at transferring a successful methodological approach from the literature on statistics for copulas (which has only been studied for the fixed $d$ case) to the current high-dimensional setting.
Here, the underlying theoretical connection to copulas \citep{Nel06} is quite obvious: by Sklar's theorem \citep{Skl59}, the continuity assumption on the marginals implies that the hypothesis in \eqref{eq:indep} is equivalent to 
\begin{align} \label{eq:Hall}
H: C = \Pi_d,
\end{align}
where $C$ is the unique copula associated with $\bm X$ and where $\Pi_d$ denotes the $d$-dimensional independence copula defined as 
\[
\Pi_d(\bm u)=\textstyle \prod_{j=1}^d u_j, \qquad \bm u=(u_1, \dots, u_d)^\top \in [0,1]^d.
\] 
Given the simplicity of the hypothesis, it may in hindsight be regarded quite natural that essentially the entire field of statistics for copulas has emerged from a series of papers on independence testing in the `fixed $d$' case from around 1980, see \cite{Deh79, Deh81a, Deh81b}. The approach that we will take up in this paper is closely connected to \cite{Deh81a} and a thorough subsequent analysis in \cite{GenRem04}; see also \cite{GenQueRem07,KojHol09, Gen19} for further contributions. In particular, our test statistics will rely on the Moebius transform of the empirical copula process. The latter process has been extensively studied in the fixed $d$ case, see, e.g., \cite{Seg12}.

The second objective of the present paper is motivated by the fact that the tests mentioned in the second paragraph (e.g., \citealp{LeuDrt18}, which is closest in spirit to our approach) are only consistent against (either sparse or dense) alternatives which involve some form of pairwise dependence. For instance, situations where all pairs of $\bm X$ are independent, but some triples are dependent cannot be detected. In other words, the proposed tests should rather be regarded as tests for  the weaker hypothesis
\[
H_2:  X_1, \dots, X_d \text{ are pairwise independent},
\]
or, equivalently, that all bivariate margins of $C$ are  equal to the bivariate independence copula, i.e.,  
\begin{align} \label{eq:H2}
H_2: C_A = \Pi_2 \text{ for all }  A \in I_d(2), 
\end{align}
where $I_d(k)$ 
denotes the set of all $A \subset \{1, \dots, d\}$ of cardinality $k$ and where $C_A$ denotes the $|A|$-dimensional marginal copula of $C$ belonging to the sub-vector $\bm X_A=(X_j)_{j\in A}$.  The hypothesis in \eqref{eq:H2} may easily be extended to $k\in\{3, \dots, d\}$:
\begin{align} \label{eq:Hk}
H_k: C_A = \Pi_{k} \text{ for all }  A \in I_d(k),
\end{align}
i.e., $\bm X$ is $k$-wise dependent.
With a slight abuse of notation, $H$ in \eqref{eq:Hall} may be written as $H=\bigcap_{k=2}^d H_k$ with $H_2 \subset H_3 \subset \dots \subset H_d$. 

In view of the latter chain of subset relations, one might be tempted to define a single test statistic that measures the discrepancy from $C$ to $\Pi_d$; for instance, the Kolmogorov distance between the empirical copula and the independence copula, whose finite-sample distribution may easily be approximated by simulation (for any $n,d$). However, doing so is not helpful in terms of power properties due to the potentially little signal to noise ratio for large $d$. In fact, one might  argue that typical alternatives from practice should involve some form of lower order (pairs, triples, ...) dependencies, and as such it seems reasonable to come up with test statistics that are designed to detect such alternatives and, hence, build up upon measures of the respective lower-order discrepancies (for instance, on pairwise dependence measures as in \citealp{LeuDrt18}, among others). In the `fixed $d$' case, this is exactly the purpose of the test statistics based on the Moebius transform of the empirical copula process, see \cite{GenRem04}. 

In contrast to the `fixed $d$' case however, numerical problems arise when $d$ is even moderately large (say, $d\ge 20$). Those arise from the fact that the cardinality of $I_d(k)$ is $\binom{d}{k}$, which, for $d$ fixed and even, is maximal at $k/2$ and grows exponentially in $d$ (for instance, $|I_{20}(10)|=184\, 756$). As a consequence, in the high-dimensional regime, we propose to compromise between testing for $H_2$ only and testing for the full null $H=H_d$ by testing for the intermediate hypothesis $H_m$
with some fixed and finite $m$. Here, $m$ should be chosen by the statistician based on external knowledge of the problem at hand (e.g., guided by the question of which alternatives one might expect or be interested to detect, or by the growth conditions connecting $d$, $n$ and $m$ derived in our main result, Theorem~\ref{theo:main}) and, possibly,  on the availability of computational resources. In particular, it is worthwhile to mention that, given that the null hypothesis is a singleton, exact critical values may in principle be obtained by simulation for any combination of $(n,d)$; note that this argument applies to any distribution-free test of independence. It is mostly for computational reasons (and, of course,  for simplicity) that tests based on asymptotic critical values provide a valid alternative.


The remaining parts of this paper are organized as follows. In Section~\ref{sec:bb}, we introduce the main building blocks for our tests statistics. We further explain their connection with the empirical copula process and the Moebius transfrom and derive some basic properties regarding their first and second order moments. In Section~\ref{sec:asy}, we aggregate the building blocks by suitable summation, and formulate a respective limit theorem. It is further discussed how the result immediately suggests asymptotic tests for independence. The general proof outline is discussed in Section~\ref{sec:proofmain}, where we also formulate important intermediate results. The finite-sample properties of the obtained tests are investigated in Section~\ref{sec:sims} based on a large scale Monte Carlo simulation study. Finally, the detailed proofs are collected in Section~\ref{sec:proofs}, with some 
straightforward calculations and the handling of a specific extension to higher dimensions postponed to a supplementary material.

All convergences are for $n$ to infinity, if not mentioned otherwise. Weak convergence of random variables and probability distributions is denoted by `$\weak$'.

\section{Building blocks for the  test statistics} \label{sec:bb}

Throughout this section, fix $m\in \N$, and let $n$ be sufficiently large such that $d=d(n) \ge m$.
Our test statistic for testing $H_m$ in \eqref{eq:Hk} will be based on combining test statistics for each individual $H_k$ with $k\in\{2, \dots, m\}$. For the latter, we follow the approach in \cite{GenRem04} based on the Moebius transform of the empirical copula process. For the ease of reading, we start by recapitulating the most important derivations from that paper.

The basic underlying ingredient is the \textit{empirical copula}, defined as
\[
\hat C_n(\bm u) = \frac1n \sum_{i=1}^n 1_{\{\hat {\bm U_i} \le \bm u\}}, \qquad \bm u=(u_1, \dots, u_d)^\top \in [0,1]^d,
\]
where the inequality is understood componentwise and where $\hat{\bm U_i} = (\hat U_{i1}, \dots, \hat U_{id})^\top$ denotes observable pseudo-observations from $C$ defined as
\begin{align*} 
\hat U_{ip} =  \frac{R_{ip}}{n+1},\quad R_{ip}=\sum_{j=1}^{n} 1_{\{ X_{j p} \le X_{ip} \}},\quad p=1,\ldots,d.
\end{align*}
Note that $R_{ip}$ is the (max-)rank of $X_{ip}$ among $X_{1p}, \dots, X_{np}$.
Further recall that, in the `fixed $d$' case, $\hat C_n$ is a consistent non-parametric estimator for the unknown copula $C$ \citep{Seg12}.
The rescaled estimation error under the hypothesis of mutual independence, that is
\begin{align*} 
\Cb_n(\bm u) = \sqrt n \{ \hat C_n (\bm u) - \Pi(\bm u)\},
\end{align*}
is commonly referred to as the \textit{empirical copula process}, provided that $C=\Pi$. For $A\subset \{1, \dots, d\}$ with $|A|>1$, the $A$-margin of $\Cb_n$ will be denoted by
\[
\Cb_{n,A}(\bm u) 
= 
\Cb_n(\bm u^A)
=
\sqrt n\{ \hat C_{n,A} (\bm u) - \Pi_A(\bm u)\}.
\]
Here, for a c.d.f.\ $G$ on $[0,1]^d$, $G_A$ is  defined as
\[
G_A((u_j)_{j\in A}) = G(\bm u^A),  \qquad (u_j)_{j\in A} \in [0,1]^{|A|},
\]
where $\bm u^A \in [0,1]^d$ has $p$th component $u_p^A =  u_p1_{\{p\in A\}} + 1_{\{p\notin A\}}$. 
Occasionally, we also use the notation $\bm u^A$ for vectors $\bm u \in[0,1]^d$, which should note yield any confusion.
Finally, we also write
$G_A(\bm u) = G(\bm u^A)$ for $\bm u \in [0,1]^d$, despite the fact that $G_A$ is a function on $[0,1]^{|A|}$.

Next, the Moebius transformation of $\Cb_n$ is defined, for $\bm u=(u_1, \dots, u_d)^\top \in [0,1]^d$, as:
\begin{align*}
\Cb^{\rm M}_{n,A}(\bm u) 
&= 
\sum_{B \subset A} (-1)^{|A \setminus B|} \Cb_n(\bm u^{B}) \prod_{j \in A \setminus B} u_j  =
\frac1{\sqrt n} \sum_{i=1}^n \prod_{p \in A}\left(  1_{\{\hat U_{ip} \le u_p\}} - u_p\right) .
\end{align*}
 In the `fixed $d$' case, we have the  remarkable property that the processes $\Cb^{\rm M}_{n,A}(\cdot)$ with $A \subset \{1, \dots, d\}$ of cardinality larger than $1$ are asymptotically independent (and Gaussian). This eventually allows to obtain limit results for test statistics that suitably aggregate over various sets $A$, even in the high dimensional regime, see Section~\ref{sec:asy}. 

Following \cite{GenRem04}, we measure the non-independence of $\bm X_A$ based on Cram\'er-von-Mises statistics, i.e.,
\[
\bar S^{\rm M}_{n,A} =\int_{[0,1]^{|A|}} \{  \Cb_{n,A}^{\rm M}(\bm u)  \}^2 \diff \Pi_{A}((u_j)_{j\in A})
\]  
(clearly, other functionals may be used as well, as for instance the supremum norm of $\Cb_{n,A}^{\rm M}$); a statistic that is stochastically bounded under $H$ and diverges to infinity in probability under (fixed) alternatives.
In the interest of improved efficiency, \cite{GenRem04}, p.~347,  propose to  use the previous definition with a version of the empirical copula process that is centered under the null hypothesis $C=\Pi$, namely
\[
\Cb_{n,A}^{\rm M} = \frac1{\sqrt n} \sum_{i=1}^n \prod_{p \in A} \left(  1_{\{\hat U_{ip} \le u_p\}} - U_n(u_p) \right) ,
\]
where  $U_n=U_n(t)= \min\{ \lfloor (n+1)t \rfloor/n,1\}$ denotes the cdf of a random variable that is uniformly distributed on $\{1/(n+1), \dots, n/(n+1)\}$. The respective Cram\'er-von-Mises statistics may then be calculated explicitly, and one obtains:
\begin{align} \label{eq:snam}
S_{n,A}^{\rm M} = \int_{[0,1]^{|A|}} \{   
\Cb_{n,A}^{\rm M}(\bm u)  \}^2 \diff \Pi_{A}((u_j)_{j\in A})
=
\frac1n \sum_{i, j=1}^n   \prod_{p \in A}  I_{i,j}^{(p)},
\end{align}
where
\begin{align*} 
I_{i,j}^{(p)}  =  \frac{2n+1}{6n} + \frac{R_{ip}(R_{ip}-1)}{2n(n+1)} + \frac{R_{jp}(R_{jp}-1)}{2n(n+1)}  - 
\frac{\max(R_{ip}, R_{jp})}{n+1}.
\end{align*}
Note that $S_{n,A}^{\rm M}$ may be written as $\frac2n \sum_{i<j}^n   \prod_{p \in A}  I_{i,j}^{(p)} + \frac1n\sum_{i=1}^n  I_{i,i}^{(p)}$, with the first sum being a rescaled u-statistics. Similar u-statistics have been treated in \cite{LeuDrt18}; however, unlike in that paper, the ranks $R_{ip}$ cannot be replaced by their unobservable counterparts $n F_p(X_{ip})$ without changing the finite-sample and asymptotic behavior, which complicates the derivation of asymptotic theory.

Now, large values of $S_{n,A}^{\rm M}$ provide evidence against mutual independence of $\bm X_A$. For the purpose of aggregating over various index sets $A$, it is helpful to calculate expectation and variance of $S_{n,A}^{\rm M}$. For $A\subset\{2, \dots, d\}$ such that $|A|=k \in\{2, \dots d\}$, let 
\begin{align} \label{eq:musn2}
\mu_n(k) := \E_{H_k}\left[S^{\rm M}_{n,A}\right], \qquad \sigma_n^2(k) \coloneqq \Var_{H_k}(S_{n,A}^{\rm M}),
\end{align}
where $\E_{H_k}$ denotes expectation under $H_k$.

\begin{lemma}\label{lem:snvar}
Irrespective of the dependence of $\bm X$ we have, for any $p\in\{1, \dots, d\}$ and $i,j\in\{1, \dots, n\}$,
\begin{align} \label{eq:expi}
\E[I_{i,j}^{(p)}]
=
\left( \frac16 - \dfrac1{6n}\right) 1_{\{i=j\}} - \frac1{6n} 1_{\{i\neq j\}}.
\end{align}
Moreover, if $H_k$ is met,
\begin{align} \label{eq:munk}
\mu_n(k) = \left(\ff{6}-\ff{6n} \right)^{k} +\left(n-1 \right)\left( \frac{-1}{6n}\right)^{k},
\end{align}
and
\begin{align} \label{eq:varn}
\sigma_n^2(k) = \frac2{90^k}  \{ 1+O(n^{-1}) \}.
\end{align}
Finally, for $k\in\{2,3\}$, we have
\begin{align*}
\sigma_n^2(2)
&=
\frac{(n-2)^2(n-1)(8n+1)}{32400n^2(n+1)^2}   
=
\frac2{90^2} - \frac{11}{6480 n} + \frac{161}{32400n^2} + O(n^{-3}), \\
\sigma_n^2(3)
&=
\frac{(n-2)(n-1)(16n^5-96n^4+359n^3-269n^2-963n-370)}{5832000 \cdot n^4(n+1)^3}  \\
&=
\frac2{90^3} - \frac{1}{30375 n} + \frac{1207}{5832000 n^2} + O(n^{-3}).
\end{align*}
\end{lemma}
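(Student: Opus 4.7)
First, I would make essential use of the integral representation
\[
I_{i,j}^{(p)} = \int_0^1 \bigl(\ind_{\{\hat U_{ip}\le u\}} - U_n(u)\bigr)\bigl(\ind_{\{\hat U_{jp}\le u\}} - U_n(u)\bigr)\diff u,
\]
obtained by applying Fubini's theorem to the integrand defining $S_{n,A}^{\rm M}$ in \eqref{eq:snam}. This form is much friendlier than the closed expression displayed in the lemma when computing moments, since expectations may be passed inside the integral and will factorize once column-independence is available.

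\textbf{Step 1: the identities \eqref{eq:expi} and \eqref{eq:munk}.} Continuity of the marginal c.d.f.'s implies that, \emph{irrespective} of the joint dependence of $\bm X$, the rank vector $(R_{1p},\dots,R_{np})$ is, for every $p$, a uniformly distributed random permutation of $\{1,\dots,n\}$. Hence a single rank is uniform on $\{1,\dots,n\}$, and a pair $(R_{ip},R_{jp})$ with $i\neq j$ is uniform on the $n(n-1)$ ordered pairs of distinct elements. Plugging this into the formula for $I_{i,j}^{(p)}$, and using the elementary sums $\sum_{r=1}^n r(r-1)=n(n-1)(n+1)/3$ together with $\sum_{1\le r<s\le n}s=n(n-1)(n+1)/3$, delivers \eqref{eq:expi} after a short arithmetic. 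For \eqref{eq:munk} I would exploit that under $H_k$ the columns $(R_{\cdot p})_{p\in A}$ are \emph{independent} random permutations, which gives $\E\bigl[\prod_{p\in A}I_{i,j}^{(p)}\bigr] = \prod_{p\in A}\E[I_{i,j}^{(p)}]$. Splitting $\sum_{i,j=1}^n$ into its diagonal and off-diagonal parts and substituting \eqref{eq:expi} then produces \eqref{eq:munk}.

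\textbf{Step 2: the variance \eqref{eq:varn}.} The starting point is
\[
\sigma_n^2(k) = \frac{1}{n^2}\sum_{i_1,j_1,i_2,j_2=1}^{n}\Cov\!\Bigl(\prod_{p\in A}I_{i_1,j_1}^{(p)},\prod_{p\in A}I_{i_2,j_2}^{(p)}\Bigr).
\]
Column-independence under $H_k$ factorizes each covariance to $\prod_{p\in A}\alpha(i_1,j_1;i_2,j_2) - \prod_{p\in A}\E[I_{i_1,j_1}^{(p)}]\E[I_{i_2,j_2}^{(p)}]$, where the quantity $\alpha(i_1,j_1;i_2,j_2)\coloneqq\E[I_{i_1,j_1}^{(p)}I_{i_2,j_2}^{(p)}]$ does not depend on $p$. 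By the symmetry $I^{(p)}_{i,j}=I^{(p)}_{j,i}$ and exchangeability of observations, $\alpha$ depends only on the coincidence pattern of the multiset $\{i_1,j_1,i_2,j_2\}$, yielding a short list of cases (all four equal; $\{i_1,j_1\}=\{i_2,j_2\}$ with $i_1\neq j_1$; exactly three distinct indices in various arrangements; four distinct indices). For each pattern I would evaluate $\alpha$ as an elementary combinatorial sum over two, three, or four ranks of a uniform random permutation, using the integral representation above. The leading asymptotic term $2\cdot 90^{-k}$ will come from the matched-pair configurations $\{i_1,j_1\}=\{i_2,j_2\}$, $i_1\neq j_1$: there are $2n(n-1)$ such ordered quadruples, and $\alpha=\E[(I^{(p)}_{i,j})^2]\to\iint K(u,v)^2\diff u\diff v = 1/90$ with $K(u,v)=\min(u,v)-uv$, so their combined contribution after division by $n^2$ tends to $2\cdot 90^{-k}$. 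All remaining patterns will be shown to contribute at most $O(n^{-1})$, thanks to partial cancellations between $\prod\alpha$ and $\prod\E\cdot\prod\E$ that the uniform-permutation constraint forces at the leading order. The constant $2/90^k$ also matches the variance of $\int_{[0,1]^k}Z_A(\bm u)^2\diff\bm u$, where $Z_A$ is the centered Gaussian weak limit of $\Cb^{\rm M}_{n,A}$ with covariance $\prod_{p\in A}K(u_p,v_p)$ (so the claim is consistent with the fixed-$d$ asymptotics of \cite{GenRem04}).

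\textbf{Step 3: closed forms for $k\in\{2,3\}$, and main obstacle.} For $k=2,3$ the enumeration of patterns in Step~2 is finite and short (roughly seven cases after symmetry). The plan is to write each $\alpha$ \emph{exactly} (not only to leading order) in terms of elementary rank-sums, raise it to the $k$th power, multiply by the polynomial-in-$n$ count of its configurations, subtract the matching product of expectations, sum over patterns and simplify; the resulting polynomial algebra is mechanical but dense, easily verified by a computer algebra system, and produces the two displayed formulas. The principal obstacle throughout the proof is precisely this bookkeeping in Step~2: one must enumerate every coincidence pattern \emph{without omission}, compute each $\alpha$ exactly, and track polynomial orders in $n$ carefully, since the $O(n^{-1})$ correction in \eqref{eq:varn} (and the $O(n^{-3})$ remainders in the explicit $k=2,3$ formulas) arise from delicate partial cancellations between contributions of different patterns. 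I would organize the enumeration in tabular form to keep the accounting transparent.
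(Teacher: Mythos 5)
Your proposal is correct and follows essentially the same route as the paper: compute $\E[I_{i,j}^{(p)}]$ from the uniform-random-permutation property of the ranks, factorize over $p\in A$ under $H_k$, decompose the variance over the coincidence patterns of the index quadruple (with the matched-pair pattern $(i,j,i,j)$/$(i,j,j,i)$ contributing the leading $2\cdot 90^{-k}$ and the $(i,i,j,j)$ pattern handled by cancellation), and finish the exact $k\in\{2,3\}$ formulas by mechanical algebra checked with a computer algebra system. Your integral representation of $I_{i,j}^{(p)}$ is a valid and convenient reformulation of the same moment computations the paper carries out via its appendix of elementary rank-sum formulas, so it is not a genuinely different argument.
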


The proof is given in Section~\ref{subsec:moml1}. It is worthwhile to mention that the leading term in the asymptotic expression for $\sigma_n^2(k)$, i.e., $2 \cdot 90^{-k}$, can easily seen to be equal to the variance of the (fixed~$d$) limiting distribution of $S_{n,A}^{\rm M}$ stated in Proposition~4.1 in \cite{GenRem04}.

\section{$L_2$-type test statistics and asymptotic results} \label{sec:asy}

The basic building blocks $S_{n,A}^{\rm M}$ from \eqref{eq:snam} may be aggregated in various ways over index sets $A \subset\{1,\dots, d\}$. Throughout, following \cite{LeuDrt18} and \cite{Yao18}, we opt for $L^2$-type aggregation, and leave aggregation based on maxima for future research (which would involve a completely different theoretical approach, see \citealp{HanCheLiu17}). More precisely,  for some given $k\in\{2, \dots, m\}$, we consider the following aggregation over all sets $A\subset\{1,\dots, d\}$ with $|A|=k$:
\begin{align} \label{eq:tnell}
T_n(k) =\sum\limits_{\substack{A \subset \{1, \dots, d\} \\ |A|=k }} S_{n,A}^{\rm M} .
\end{align}
Note that $T_n(k)$ is related to the linear combination  rule in Section 4.3.1 in \cite{GenQueRem07}. 

Now, as stated in Section~\ref{sec:bb}, the results in \cite{GenRem04} imply that the $\binom{d}{k}$ summands in $T_n(k)$ are asymptotically independent in a `fixed d' scenario. Recalling $\mu_n(k)$ and $\sigma_n^2(k)$ from \eqref{eq:musn2} (with explicit formulas provided in Lemma~\ref{lem:snvar}),  this motivates the introduction of the following scaling sequences: 
\begin{align*}
\nu_n(k) &= \binom{d}k \cdot\mu_n(k) 
=  \binom{d}k\cdot \left\{\left(\ff{6}-\ff{6n} \right)^{k} +\left(n-1 \right)\left( \frac{-1}{6n}\right)^{k} \right\},
\end{align*}
and
\begin{align} \label{eq:deltan}
\bar \delta_n(k) =  \sqrt{\sigma_n^2(k)\cdot \binom{d}k },
\qquad
\delta_n(k) =  \sqrt{\frac{2}{90^k}\cdot \binom{d}k}.
\end{align}
The following theorem is the main result of this paper, which immediately gives rise to consistent asymptotic tests. 

\begin{theorem} \label{theo:main}
Under the null hypothesis $H_5$, if $d=d_n\to\infty$, we have
\[
\frac{T_n(2) - \nu_n(2)}{\delta_n(2)} \weak  \Nc(0,1).
\]
Moreover, under the null hypothesis $H_{4m-3}$ and  if $d=d_n\to\infty$ such that $d=o\left( n^{\frac{1}{m-1}}\right) $, we have
\[
\left( \frac{T_n(2)-\nu_n(2)}{\delta_n(2)}, \ldots, \frac{T_n(m)-\nu_n(m)}{\delta_n(m)}\right)  \weak  \Nc(0,1)^{\otimes (m-1)}.
\]
Finally, the same results are true if $\delta_n(m)$ is replaced by $
\bar \delta_n(m).
$
\end{theorem}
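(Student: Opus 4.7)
The starting point is to interchange the inner and outer sums in the definition of $T_n(k)$. Setting $\bI_{ij} = (I_{i,j}^{(1)}, \dots, I_{i,j}^{(d)})$ and letting $e_k$ denote the $k$th elementary symmetric polynomial, \eqref{eq:snam} and \eqref{eq:tnell} yield
\[
T_n(k) \;=\; \frac{1}{n}\sum_{i,j=1}^n e_k(\bI_{ij})
\;=\; \underbrace{\frac{1}{n}\sum_{i=1}^n e_k(\bI_{ii})}_{=:D_n(k)} \;+\; \underbrace{\frac{2}{n}\sum_{1\le i<j\le n} e_k(\bI_{ij})}_{=:U_n(k)}.
\]
I would show that $D_n(k)$ together with the mean of $U_n(k)$ reproduces $\nu_n(k)$ up to a negligible remainder (this uses \eqref{eq:expi}--\eqref{eq:munk} from Lemma~\ref{lem:snvar} and simple identities for elementary symmetric polynomials), and that the entire random fluctuation is carried, up to $o_{\Prob}(\delta_n(k))$, by the U-statistic piece $U_n(k)$. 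The key point is that $U_n(k)$ is essentially a rank-based U-statistic of degree~2 with kernel $e_k(\bI_{ij})$, and the Moebius construction guarantees that kernels belonging to different sets $A$ are \emph{orthogonal} under $H_k$, so that $\Var(U_n(k))$ is governed by the diagonal $A=A'$ terms, giving $\sigma_n^2(k)\binom{d}{k}\{1+o(1)\}$ as in \eqref{eq:deltan}.

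\textbf{Martingale embedding and one-dimensional CLT.} Conditional on the column-ranks being close to their uniform ideals, I would write $U_n(k)$ as a martingale with respect to the filtration $\cF_\ell=\sigma(\bX_1,\dots,\bX_\ell)$ by setting
\[
M_\ell^{(k)} \;=\; \frac{2}{n}\sum_{i=1}^{\ell-1}\bigl\{ e_k(\bI_{i\ell}) - \Exp\bigl[e_k(\bI_{i\ell})\mid \cF_{\ell-1}\bigr]\bigr\},
\qquad \ell=2,\dots,n,
\]
so that $U_n(k)-\Exp[U_n(k)] = \sum_{\ell=2}^n M_\ell^{(k)}+r_n(k)$ with a remainder $r_n(k)$ that vanishes in probability faster than $\delta_n(k)$ (the remainder is present because the ranks make $e_k(\bI_{ij})$ depend on all observations, not only on $\bX_i,\bX_j$; it is handled by a Hoeffding-type projection showing that the rank-based kernels differ from their $F_p(X_{ip})$-versions only by a term of order $n^{-1/2}$ times a bounded process). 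For the resulting martingale difference array I would apply the classical martingale CLT of Hall--Heyde: one must verify (i) convergence of the conditional variance $\sum_\ell \Exp[(M_\ell^{(k)})^2\mid \cF_{\ell-1}]\to 1$ after dividing by $\delta_n^2(k)$, and (ii) a conditional Lindeberg condition, which in practice I would replace by a Lyapunov condition on fourth moments.

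\textbf{Moment bounds and the growth condition.} The orthogonality of Moebius components under $H_k$ implies that $\Exp[e_k(\bI_{ij})^2]$ decomposes into $\binom{d}{k}$ equal diagonal contributions of order $90^{-k}$, while the fourth moment $\Exp[e_k(\bI_{ij})^4]$ is controlled by sums over $4$-tuples of index sets $(A_1,A_2,A_3,A_4)$ whose symmetric difference determines the vanishing of the cross moments under the null. This is precisely where the hypothesis $H_{4m-3}$ enters: if $|A_1\triangle A_2\triangle A_3\triangle A_4|\le 4(m-1)$ and the marginal copula on this union is independent, the cross expectation vanishes, and only the $A_1=A_2=A_3=A_4$ terms survive. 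A careful combinatorial bookkeeping then yields the Lyapunov bound
\[
\frac{\sum_\ell \Exp[(M_\ell^{(k)})^4]}{\delta_n^4(k)} \;=\; O\!\left( \frac{d^{k-1}}{n}\right),
\]
which tends to zero exactly under $d=o(n^{1/(m-1)})$ for every $k\le m$. I expect the verification of this bound, together with the analogous control on conditional variance fluctuations, to be the \emph{main technical obstacle}, as one has to enumerate intersections of quadruples of $k$-subsets of $\{1,\dots,d\}$ and match the combinatorial gains against the variance normalization.

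\textbf{Joint convergence and the final replacement.} To upgrade the one-dimensional convergence to joint convergence, I would apply the Cram\'er--Wold device: for fixed $c_2,\dots,c_m\in\R$, the linear combination $\sum_{k=2}^m c_k (T_n(k)-\nu_n(k))/\delta_n(k)$ is again a sum of martingale differences, and the above scheme applies verbatim provided one shows that cross-variance terms
\[
\sum_\ell \Exp\!\bigl[ M_\ell^{(k)} M_\ell^{(k')} \mid \cF_{\ell-1}\bigr],\qquad k\neq k',
\]
are of smaller order than $\delta_n(k)\delta_n(k')$. This is again a direct consequence of the orthogonality of Moebius components: for $A\neq A'$ the product $e_A\cdot e_{A'}$ decomposes into a sum of $e_B$ with $|B|\le k+k'$, all having vanishing mean under $H_{k+k'}\supset H_{4m-3}$. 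Hence the limiting covariance matrix is diagonal and the joint limit is $\cN(0,1)^{\otimes(m-1)}$. Finally, the replacement of $\delta_n(k)$ by $\bar\delta_n(k)$ follows from Lemma~\ref{lem:snvar}, which shows $\bar\delta_n(k)/\delta_n(k)=1+O(n^{-1})$ uniformly in $k\le m$, so Slutsky's lemma applies.
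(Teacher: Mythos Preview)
Your proposal has a genuine structural gap: you build the martingale with respect to the \emph{observation} filtration $\cF_\ell=\sigma(\bX_1,\dots,\bX_\ell)$, but the rank-based quantities $I_{i,j}^{(p)}$ depend on \emph{all} $n$ observations through $R_{ip}=\sum_{j=1}^n 1_{\{X_{jp}\le X_{ip}\}}$. Consequently $e_k(\bI_{i\ell})$ is not $\cF_\ell$-measurable, and your increments $M_\ell^{(k)}$ are not martingale differences as written. You acknowledge this and propose to absorb the discrepancy into a remainder $r_n(k)$ via a ``Hoeffding-type projection'' replacing ranks by $F_p(X_{ip})$. That is precisely the Stute-representation route, and the paper explicitly notes (see the remark following Proposition~\ref{prop:mm}) that this approach only succeeds under a far more restrictive growth condition on $d$; in particular it would not deliver the first assertion of the theorem, which requires \emph{no} condition on $d$ for $k=2$.

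The paper instead runs the martingale CLT with respect to the \emph{coordinate} filtration $\cF_{n,r}=\sigma(\bm U^{(p)}:1\le p\le r)$, $r=1,\dots,d$. Under $H_k$ the rank vectors in different coordinates are independent, so $\tilde I_{i,j}^{(r)}$ is independent of $\cF_{n,r-1}$ and centered; this makes $X_{n,r}=\delta_n^{-1}(k)\sum_{\mathbf p_k:p_k=r}\tilde M_{n,\mathbf p_k}$ a genuine martingale difference array \emph{directly in the ranks}, with no projection remainder. The growth condition $d=o(n^{1/(m-1)})$ does not enter through the Lyapunov bound as you suggest (the paper's Lyapunov term is $O(d^{-1})$, see Lemma~\ref{lyapunovlem}); it enters only in Step~1 (Proposition~\ref{prop:tt}), where one replaces $I_{i,j}^{(p)}$ by its centered version $\tilde I_{i,j}^{(p)}$ for $k\ge 3$. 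For $k=2$ that replacement is an exact identity, which is why the first statement is condition-free---a feature your observation-indexed approach cannot reproduce.
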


\begin{corollary}\label{cor:test} 
Under the conditions of Theorem~\ref{theo:main}, we have
\[
\bar T_n(m) = \frac1{\sqrt{m-1}} \sum_{k=2}^m \frac{T_n(k)- \nu_n(k)}{\bar \delta_n(k)} \weak \Nc(0,1),
\]
which implies that the test which rejects $H_m$  iff $\bar T_n(m) > u_{1-\alpha}$, the $1-\alpha$-quantile of the standard normal distribution, has asymptotic level $\alpha$. The same result is true if $\bar \delta_n(k)$ is replaced by $
\delta_n(k)$, which does however yield worse finite-sample performance.
\end{corollary}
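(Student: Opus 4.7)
The plan is to deduce the corollary directly from the joint weak convergence already established in Theorem~\ref{theo:main}, combined with the continuous mapping theorem and Slutsky's lemma. Since $m$ is a fixed integer, the defining sum in $\bar T_n(m)$ has only $m-1$ summands, so no additional limit theorem (e.g., a CLT for a growing number of components) is required; the argument is a short post-processing of Theorem~\ref{theo:main}.

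Concretely, I would set $Z_n = \big((T_n(k)-\nu_n(k))/\bar\delta_n(k)\big)_{k=2}^m$, a random vector in $\R^{m-1}$, and let $\varphi:\R^{m-1}\to\R$ denote the continuous linear functional $\varphi(z_2,\ldots,z_m)=(m-1)^{-1/2}\sum_{k=2}^m z_k$. By Theorem~\ref{theo:main}, $Z_n \weak Z=(Z_2,\ldots,Z_m)$ with i.i.d.\ standard normal coordinates, so the continuous mapping theorem gives
\[
\bar T_n(m) = \varphi(Z_n) \weak \varphi(Z) = (m-1)^{-1/2}\sum_{k=2}^m Z_k \sim \Nc(0,1),
\]
since the right-hand side is a linear combination of independent centered Gaussians with total variance $(m-1)^{-1}\cdot(m-1)=1$. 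The test-level assertion is then a standard consequence of weak convergence: the c.d.f.\ of $\Nc(0,1)$ is continuous at $u_{1-\alpha}$, so $\Prob(\bar T_n(m)>u_{1-\alpha}) \to \alpha$ under the conditions of Theorem~\ref{theo:main}.

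For the final assertion, that $\bar\delta_n(k)$ may be replaced by $\delta_n(k)$, I would invoke the variance expansion from Lemma~\ref{lem:snvar}, namely $\sigma_n^2(k) = (2/90^k)(1+O(n^{-1}))$ for each fixed $k\in\{2,\ldots,m\}$. This yields $\bar\delta_n(k)/\delta_n(k) \to 1$; crucially the ratio $\bar\delta_n(k)/\delta_n(k)$ does not involve $\binom{d}{k}$, so no growth assumption on $d$ enters here. Writing
\[
\frac{T_n(k)-\nu_n(k)}{\delta_n(k)} = \frac{\bar\delta_n(k)}{\delta_n(k)}\cdot\frac{T_n(k)-\nu_n(k)}{\bar\delta_n(k)}
\]
and applying Slutsky's lemma coordinatewise, the joint weak convergence carries over verbatim, and the same $\varphi$-step delivers the $\Nc(0,1)$ limit. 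No genuine obstacle arises; the only mild point to keep track of is that the deterministic ratios $\bar\delta_n(k)/\delta_n(k)$ are purely one-dimensional in $n$ and decouple from the high-dimensional regime, so that swapping the normalizations interacts trivially with the condition $d=o(n^{1/(m-1)})$.
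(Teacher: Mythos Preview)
Your proposal is correct and is precisely the intended argument: the paper does not give a separate proof of the corollary, since it is an immediate consequence of the joint weak convergence in Theorem~\ref{theo:main} via the continuous mapping theorem, with the interchangeability of $\bar\delta_n(k)$ and $\delta_n(k)$ already contained in the last sentence of Theorem~\ref{theo:main} (established there by the same Slutsky-plus-Lemma~\ref{lem:snvar} reasoning you spell out).
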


Remarkably, the result of Theorem~\ref{theo:main} for $m=2$ does not require any condition on $d$ at all, which is akin to the pairwise independence tests from \cite{LeuDrt18}; see their Theorem 4.1. For detecting higher order dependencies $m\ge 3$ however, the condition $d_n=o(n^{1/(m-1)})$ becomes more and more restrictive. This finding will later be confirmed by the simulation study in Section~\ref{sec:sims}. It is also worthwhile to mention that some of the intermediate results from the proofs straightforwardly extent to the case $\{T_n(k_n)-\nu_n(k_n)\}/\bar \delta_n(k_n)$ with $k_n\to\infty$ at a sufficiently small rate. However, in view of the following remark, we do not pursue a rigorous extension any further.

\begin{remark}[Computational Cost] \label{rem:cost}
The computational cost of calculating ranks for a sample of size $n$ is $\Theta(n\log(n))$ (\citealp{Cor09}, Section II). Given the ranks, calculating $S_{n,A}$ requires $\Theta(k n^2)$ computations for a single set $|A| \subset\{1, \dots, d\}$ with $|A|=k$. Hence, calculating $T_n(k)$ for $k$ fixed requires $\Theta(k n^2 d^k)$ computations, again given the ranks. Overall, calculating $\bar T_n(m)$ has a computational cost of $\Theta(d n \log(n)  + \sum_{k=2}^m k n^2 d^k) = \Theta(m n^2 d^m)$.	
\end{remark}


\section{Proof of Theorem~\ref{theo:main}} \label{sec:proofmain}

In this section, we illustrate how the assertion  of Theorem~\ref{theo:main} can be obtained from a sequence of intermediate results. Those intermediate results are summarized in suitable propositions that will be proven in Section~\ref{sec:proofs} below.

By Slutsky's lemma and in view of \eqref{eq:varn} from Lemma~\ref{lem:snvar}, it is sufficient to prove the results of Theorem~\ref{theo:main}  for the expressions that involve $\delta_n(\ell)$. The proof is decomposed into three steps:

\medskip
\noindent
\textbf{Step 1: Reduction to centred summands}.
Versions of $S_{n,A}^{\rm M}$ in \eqref{eq:snam} and $T_n(k)$ in \eqref{eq:tnell} that are based on centred summands may be defined as follows:
\begin{align} \label{eq:snamt}
\tilde T_n (k) :=\sum\limits_{\substack{A \subset \{1, \dots, d\} \\ |A|=k }} \tilde S_{n,A}^{\rm M},  \qquad 
\tilde S_{n,A}^{\rm M} 
:=
\frac1n \sum_{i, j=1}^n   \prod_{p \in A} \tilde  I_{i,j}^{(p)},
\end{align}
where, in view of \eqref{eq:expi},
\begin{align} 
\tilde I_{i,j}^{(p)}  
&= 
I_{i,j}^{(p)} -  \E[ I_{i,j}^{(p)} ]  
\label{eq:bii}
=
I_{i,j}^{(p)} - \Big(\frac16 - \dfrac1{6n}\Big)  1_{\{i=j\}} + \frac1{6n} 1_{\{i\neq j\}} \\ 
\nonumber
&=
\begin{cases}
 \dfrac{n+1}{3n} + \dfrac{R_{ip}(R_{ip}-1)}{2n(n+1)} + \dfrac{R_{jp}(R_{jp}-1)}{2n(n+1)}  - 
\dfrac{\max(R_{ip}, R_{jp})}{n+1} &,\quad i\ne j, \\
\dfrac{n+2}{6n} + \dfrac{R_{ip}^2}{n(n+1)} - \dfrac{R_{ip}}{n} & , \quad i=j.
\end{cases}
\end{align}

\begin{proposition}\label{prop:tt}
We have
\[
\frac{T_n(2)-\nu_n(2)}{\delta_n(2)}= \frac{\tilde T_n(2)}{\delta_n(2)}.
\]
Moreover, for any fixed $k\in\N_{\ge 3}$, if $H_{4k-7}$ is met and if $d=o\left( n^{\frac{1}{k-1}}\right) $, we have, as $n\to\infty$,
\[
\frac{T_n(k)-\nu_n(k)}{\delta_n(k)}= \frac{\tilde T_n(k)}{\delta_n(k)} + o_\Prob(1).
\]
\end{proposition}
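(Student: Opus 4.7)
The plan is to exhibit an exact algebraic decomposition of the residual $T_n(k) - \nu_n(k) - \tilde T_n(k)$, kill the low-order contributions by a rank identity, and (for $k \geq 3$) bound the remaining ones via Chebyshev.

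First, writing $I_{i,j}^{(p)} = \tilde I_{i,j}^{(p)} + e_{ij}$ with $e_{ij} = a\, 1_{\{i=j\}} + b\, 1_{\{i\neq j\}}$, $a=(n-1)/(6n)$, $b=-1/(6n)$, per \eqref{eq:expi} and \eqref{eq:bii}, one expands
\[
\prod_{p\in A}(\tilde I_{i,j}^{(p)} + e_{ij}) = \sum_{B\subseteq A} e_{ij}^{|A\setminus B|}\prod_{p\in B}\tilde I_{i,j}^{(p)}
\]
and averages $n^{-1}\sum_{i,j}$. The term $B=A$ reconstructs $\tilde S_{n,A}^{\rm M}$, the term $B=\emptyset$ gives $a^k + (n-1)b^k = \mu_n(k)$ in view of \eqref{eq:munk}, and after aggregating over $|A|=k$ and reindexing by $\ell=|B|$ this yields
\[
T_n(k) - \nu_n(k) - \tilde T_n(k) = \sum_{\ell=1}^{k-1}\binom{d-\ell}{k-\ell}\Bigl\{a^{k-\ell}D_n(\ell) + b^{k-\ell}E_n(\ell)\Bigr\},
\]
with $D_n(\ell) := n^{-1}\sum_{|B|=\ell}\sum_i\prod_{p\in B}\tilde I_{i,i}^{(p)}$ and $E_n(\ell) := n^{-1}\sum_{|B|=\ell}\sum_{i\neq j}\prod_{p\in B}\tilde I_{i,j}^{(p)}$.

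A direct computation using $\sum_j R_{jp} = n(n+1)/2$ and $\sum_j R_{jp}^2 = n(n+1)(2n+1)/6$ then gives the rank identity $\sum_{j=1}^n I_{i,j}^{(p)} = 0$ for every $i,p$; subtracting the corresponding identity $\sum_j \E[I_{i,j}^{(p)}] = a + (n-1)b = 0$ yields $\sum_j \tilde I_{i,j}^{(p)} = 0$, and a parallel evaluation of $\sum_i I_{i,i}^{(p)}$ produces $\sum_i \tilde I_{i,i}^{(p)} = 0$. Hence $D_n(1) = E_n(1) = 0$ identically, and for $k=2$ the decomposition above collapses to the single term $\ell=1$, proving the first claim exactly.

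For $k \geq 3$, the aim is to show that each summand with $\ell\in\{2,\dots,k-1\}$ is $o_\Prob(\delta_n(k))$, via Chebyshev. Under $H_{4k-7}$, any set of at most $2(k-1)$ columns of the data matrix is jointly independent, so in $\E[D_n(\ell)^2]$ and $\E[E_n(\ell)^2]$ only the diagonal $B=B'$ survives (any $B\neq B'$ carries an isolated mean-zero factor at some $p\in B\triangle B'$). On the diagonal, exchangeability combined with $\sum_i\tilde I_{i,i}^{(p)}=0$ forces $\Cov(\tilde I_{i,i}^{(p)},\tilde I_{i',i'}^{(p)}) = -\Var(\tilde I_{i,i}^{(p)})/(n-1) = O(1/n)$, so that $\Var(D_n(\ell)) = O(d^\ell/n)$ for $\ell\geq 2$; analogously, the identity $\sum_j\tilde I_{i,j}^{(p)}=0$ cascades through the three overlap regimes $|\{i,j\}\cap\{i',j'\}|\in\{0,1,2\}$ to produce $O(1/n^2)$ and $O(1/n)$ bounds on the pairwise covariances of $\tilde I_{i,j}^{(p)}$, controlling $\Var(E_n(\ell))$. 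The dominant contribution to the residual variance is then the $D_n$-term at $\ell=2$, of order $d^{2k-2}/n$, which is $o(d^k) \asymp o(\delta_n(k)^2)$ precisely when $d^{k-2} = o(n)$, a consequence of $d^{k-1}=o(n)$.

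The principal obstacle is the variance bound for $E_n(\ell)$: the quadruple sum over $(i,j,i',j')$ does not telescope directly, and the row-sum identity has to be iterated across all three overlap regimes to extract the $1/n^2$ factor that tames the otherwise combinatorially dangerous $n^4$ counting factor; this is also where the explicit hypothesis $H_{4k-7}$ is used, as the factorisations of the mixed expectations $\E[\prod_{p\in B}\tilde I_{i,j}^{(p)}\prod_{p\in B'}\tilde I_{i',j'}^{(p)}]$ may need joint independence of columns over index sets of that order.
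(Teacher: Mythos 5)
Your decomposition is correct and your argument goes through, but it takes a genuinely different route from the paper at the decisive step. The paper also splits off the centred part via the multinomial identity (its $\tilde U_n(k)$ and $\tilde V_n(k)$ are exactly your $E_n(\ell)$- and $D_n(\ell)$-aggregates, since $E_n(\ell)=\tilde M_n(\ell)$ and $D_n(\ell)=\tilde N_n(\ell)$), but it then controls these cross terms by forward appeal to Proposition~\ref{prop_s1} (the martingale CLT, giving $\tilde M_n(\ell)=O_\Prob(\delta_n(\ell))$, which is where $(4k-7)$-wise independence enters) and to Proposition~\ref{prop:nn} (giving $\tilde N_n(\ell)=O_\Prob(n^{-1/2}\delta_n(\ell))$). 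You instead use direct Chebyshev bounds: the cross-$B$ terms in the second moments vanish by factorisation over coordinates (only $2(k-1)$-wise independence is needed, which $H_{4k-7}$ comfortably supplies for $k\ge3$), and the diagonal terms are controlled by the covariance orders of $\tilde I^{(p)}_{i,j}$ across the overlap regimes $|\mathbf i|\in\{2,3,4\}$ — these are precisely the paper's Lemma~\ref{lem:2m}, and your proposed derivation of the order bounds from the exact identities $\sum_{j}\tilde I^{(p)}_{i,j}=0$ and $\sum_i \tilde I^{(p)}_{i,i}=0$ (conditioning-free, by multiplying the identity by $\tilde I^{(p)}_{1,2}$ or $\tilde I^{(p)}_{1,1}$, taking expectations and using exchangeability, iterated from overlap $1$ down to overlap $0$) does work, so the "principal obstacle" you flag is genuinely closable. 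Your route buys three things: it is self-contained within Step 1 (no forward dependency on the CLT machinery), it needs only $2(k-1)$-wise rather than $(4k-7)$-wise independence, and — because your rank identities make $D_n(1)=E_n(1)=0$ exactly, killing the term that drives the paper's bound on $\tilde V_n(k)$ — your worst surviving term is the $\ell=2$ diagonal one, so you in fact only need $d^{k-2}=o(n)$ rather than $d^{k-1}=o(n)$; under the stated hypothesis this is automatic, so the proposition follows. The paper's route, in exchange, recycles results it needs anyway and avoids any explicit variance bookkeeping for the cross terms. Your $k=2$ argument (collapse to the identically vanishing $\ell=1$ term) is the same observation the paper makes via the determinism of rank sums.
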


The proof is given in Section~\ref{sec:ps1}.
The result implies that, under the given conditions on $d$, we may deduce the weak convergence result in Theorem~\ref{theo:main} from respective weak convergence results on the tilde versions. Note that Proposition~\ref{prop:tt} is the only result requiring the growth condition on $d=d_n$.

\medskip
\noindent
\textbf{Step 2: Negligibility of summands $\bm {i=j}$.} The double sum in the definition of $\tilde S_{n,A}^{\rm M}$ in \eqref{eq:snamt} may be split into
$
\tilde S_{n,A}^{\rm M}  = \tilde M_{n,A} + \tilde N_{n,A},
$
where
\begin{align} \label{eq:mnnn}
\tilde M_{n,A} = \frac2n \sum_{i<j} \prod_{p \in A} \tilde I_{i,j}^{(p)}, \qquad
\tilde N_{n,A} = \frac1n \sum_{i=1}^n \prod_{p \in A} \tilde I_{i,i}^{(p)}.
\end{align}
As a consequence, we may write $\tilde T_n(k) = \tilde M_n(k) +\tilde N_n(k) $, where
\begin{align} \label{eq:mnnn2}
\tilde M_n(k) =\sum\limits_{\substack{A \subset \{1, \dots, d\} \\ |A|=k }} \tilde M_{n,A}, \qquad
\tilde N_n(k) =\sum\limits_{\substack{A \subset \{1, \dots, d\} \\ |A|=k }}\tilde N_{n,A}.
\end{align}
The proof of the next result is given in Section~\ref{sec:ps2}.

\begin{proposition}\label{prop:nn}
For fixed $k\in\N_{\ge 2}$, if $H_{2k}$ is met, we have 
$\frac{\tilde N_n(k)}{\delta_n(k)}  =O_\Prob(n^{-\frac{1}{2}})
$.
As a consequence,
\[
 \frac{\tilde T_n(k)}{\delta_n(k)} = \frac{\tilde M_n(k)}{{\delta_n(k)}} + o_\Prob(1).
\]
\end{proposition}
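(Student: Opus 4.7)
The plan is to establish $\E\!\left[\left(\tilde N_n(k)/\delta_n(k)\right)^{2}\right] = O(n^{-1})$ and then close the first assertion by Markov's inequality; the second statement will follow at once, since $n^{-1/2}=o(1)$ and $\tilde T_n(k) = \tilde M_n(k) + \tilde N_n(k)$.

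First I would interchange summations to write $\tilde N_n(k) = n^{-1}\sum_{i=1}^{n} \sum_{|A|=k} \prod_{p\in A} \tilde I_{i,i}^{(p)}$ and expand
\[
\E[\tilde N_n(k)^2] \;=\; \sum_{|A|=k}\sum_{|B|=k} \E[\tilde N_{n,A}\,\tilde N_{n,B}].
\]
The key structural fact I plan to exploit is that each $\tilde I_{i,i}^{(p)}$ is a \emph{centered} function of the single rank $R_{ip}$, as is visible from \eqref{eq:bii}. Since $R_{\cdot,p}$ depends only on the $p$th marginal sample and $|A\cup B|\le 2k$, under $H_{2k}$ the rank vectors $\{R_{\cdot,p}\}_{p\in A\cup B}$ are independent across $p$. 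Combined with $\E[\tilde I_{i,i}^{(p)}]=0$, this forces every cross term to vanish as soon as $A\ne B$: for any fixed pair $(i,i')$ the expectation of $\prod_{p\in A}\tilde I_{i,i}^{(p)}\prod_{q\in B}\tilde I_{i',i'}^{(q)}$ factorizes across $A\cap B$, $A\setminus B$ and $B\setminus A$, and any coordinate $r\in A\triangle B$ produces an isolated mean-zero factor. Hence only the diagonal terms $A=B$ survive.

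It then remains to bound $\E[\tilde N_{n,A}^2]$ for a single set $A$ with $|A|=k$. Writing $\tilde I_{i,i}^{(p)} = f_p(R_{ip})$ for the explicit bounded centered function $f_p$ read off from \eqref{eq:bii}, independence across $p\in A$ gives
\[
\E[\tilde N_{n,A}^2] \;=\; \frac{1}{n^2}\sum_{i,i'=1}^{n} \prod_{p\in A} \E\bigl[f_p(R_{ip})\, f_p(R_{i'p})\bigr].
\]
Since $(R_{1p},\dots,R_{np})$ is a uniform permutation of $\{1,\dots,n\}$ and $\sum_{r=1}^n f_p(r) = 0$, standard identities yield $\E[f_p(R_{ip})^2] = O(1)$ uniformly in $n,p$ and
\[
\E[f_p(R_{ip})\,f_p(R_{i'p})] \;=\; -\frac{1}{n-1}\,\E[f_p(R_{ip})^2] \;=\; O(n^{-1}) \qquad (i\ne i').
\]
Splitting the double sum into the $n$ diagonal contributions (each $O(1)$) and the $n(n-1)$ off-diagonal ones (each $O(n^{-k})$) then gives $\E[\tilde N_{n,A}^2] = O(n^{-1})$ as soon as $k\ge 2$.

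Summing over $|A|=k$ and dividing by $\delta_n(k)^2 = \tfrac{2}{90^k}\binom{d}{k}$ yields the desired $O(n^{-1})$ bound on the normalized second moment, after which Markov's inequality closes the first assertion. The only delicate point is the cross-term bookkeeping: the cancellation genuinely relies on the assumption $H_{2k}$ rather than merely $H_k$, since $|A\triangle B|$ can be as large as $2(k-1)$ and every such coordinate needs to carry an independent mean-zero factor. Beyond this structural step the argument reduces to explicit one-dimensional moment computations for a uniform permutation, which I do not intend to spell out in full.
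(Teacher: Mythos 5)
Your proposal is correct and takes essentially the same route as the paper's proof: a second-moment bound on $\tilde N_n(k)/\delta_n(k)$ in which all cross terms with $A\neq B$ vanish because, under $H_{2k}$, an index in $A\triangle B$ yields an isolated centred factor, while the surviving diagonal terms $A=B$ contribute $n^{-2}\{n\cdot O(1)+n(n-1)\cdot O(n^{-k})\}=O(n^{-1})$ after normalization by $\delta_n^2(k)\propto\binom{d}{k}$, followed by Markov's inequality. The only cosmetic differences are that you obtain the needed moments from the uniform-permutation identity $\E[f_p(R_{ip})f_p(R_{i'p})]=-\tfrac{1}{n-1}\E[f_p(R_{ip})^2]$ (which is consistent with the paper's Lemma~\ref{lem:2m}) instead of citing that lemma, and a harmless slip in your final remark: $|A\cup B|$ can be as large as $2k$ (disjoint $A,B$), which is exactly why $H_{2k}$ is the right assumption.
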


\medskip
\noindent
\textbf{Step 3: Asymptotic normality of ${(\tilde M_n(2), \dots, \tilde M_n(m))}$.} The $\tilde M$-terms may be identified as martingales, and a martingale central limit theorem may be applied to deduce the following result, whose proof is given in Sections~\ref{sec:ps3a} and \ref{sec:ps3b}. 

\begin{proposition}
\label{prop:mm} 
Fix $m\in\N_{\ge 2}$, and suppose that $H_{4m-3}$ is met. Then
\[
\left( \frac{\tilde M_n(2)}{\delta_n(2)}, \dots, \frac{\tilde M_n(m)}{\delta_n(m)}\right)  \weak  \Nc_{m-1}(0,I_{m-1}),
\]
with $I_{m-1}$ the $(m-1)$-dimensional unit matrix.
\end{proposition}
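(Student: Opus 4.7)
The plan is to reduce Proposition~\ref{prop:mm} to a martingale central limit theorem applied to a canonical $U$-statistic approximation of $\tilde M_n(k)$. By the Cram\'er--Wold device, it suffices to show, for arbitrary $(a_2,\dots,a_m)\in\R^{m-1}$, that
\[
W_n := \sum_{k=2}^m \frac{a_k\,\tilde M_n(k)}{\delta_n(k)}\weak \cN\Bigl(0,\ \textstyle\sum_{k=2}^m a_k^2\Bigr),
\]
since a diagonal limiting covariance forces the joint limit to have independent standard normal components. Setting $V_{ip}=F_p(X_{ip})$, which are i.i.d.\ uniform and, under $H_k$, jointly independent across any $p\in A$ with $|A|\le k$, I would introduce the oracle version
\[
\check I_{ij}^{(p)} := \int_0^1\!\bigl(1_{\{V_{ip}\le u\}}-u\bigr)\bigl(1_{\{V_{jp}\le u\}}-u\bigr)\diff u = \tfrac13+\tfrac{V_{ip}^2+V_{jp}^2}{2}-\max(V_{ip},V_{jp}),
\]
which is canonical in the sense that $\Exp[\check I_{ij}^{(p)}\mid V_{jp}]=0$. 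Using uniform control of the oscillation $R_{ip}/(n+1)-V_{ip}=O_\Prob(n^{-1/2})$, one would prove $\tilde M_n(k)-\check M_n(k)=o_\Prob(\delta_n(k))$, where $\check M_n(k)$ is the analogue of $\tilde M_n(k)$ with $\tilde I$ replaced by $\check I$; crucially, the normalisation $\delta_n(k)$ already absorbs the $\binom{d}{k}$ blow-up, so no growth condition on $d=d_n$ enters here.

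After this reduction, $W_n$ is, up to $o_\Prob(1)$, the $U$-statistic $(2/n)\sum_{i<j}K_n(\bm V_i,\bm V_j)$ with symmetric kernel
\[
K_n(\bm v,\bm w) = \sum_{k=2}^m\frac{a_k}{\delta_n(k)}\sum_{|A|=k}\prod_{p\in A}h(v_p,w_p),\qquad h(v,w):=\tfrac13+\tfrac{v^2+w^2}{2}-\max(v,w).
\]
Since $\Exp[h(V,w)]=0$ for every fixed $w$, the kernel $K_n$ is canonical in both arguments under $H_m$, i.e.\ $\Exp[K_n(\bm V_i,\bm V_j)\mid\bm V_j]=0$ and $\Exp[K_n(\bm v,\bm V_j)]=0$. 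With respect to the filtration $\cF_j=\sigma(\bm V_1,\dots,\bm V_j)$ the increments $D_{n,j}:=(2/n)\sum_{i<j}K_n(\bm V_i,\bm V_j)$ therefore form a martingale difference array summing to the oracle version of $W_n$, and I would invoke a standard martingale CLT (e.g.\ Hall--Heyde, Corollary~3.1), which reduces matters to
\[
V_n:=\sum_{j=2}^n\Exp[D_{n,j}^2\mid\cF_{j-1}]\pto \sum_{k=2}^m a_k^2 \quad\text{and}\quad \sum_{j=2}^n\Exp\bigl[D_{n,j}^2\,1_{\{|D_{n,j}|>\eps\}}\bigr]\to 0.
\]

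For the conditional variance, the main term comes from the diagonal $i=i'$ in the expansion $D_{n,j}^2=(4/n^2)\sum_{i,i'<j}K_n(\bm V_i,\bm V_j)K_n(\bm V_{i'},\bm V_j)$, whose overall mean equals $(2(n-1)/n)\Exp[K_n(\bm V_1,\bm V_2)^2]$. The central calculation---also the mechanism producing the diagonal limiting covariance and hence the asserted asymptotic independence across $k$---is the \emph{Moebius orthogonality}: under $H_{2m}$, for $A\neq B$ with $|A|,|B|\le m$ there exists $p\in A\triangle B$ whose isolated factor $\Exp[h(V_p,\cdot)]=0$ kills the expectation, so only the terms with $A=B$ (hence $k=k'$) survive and give, via $\Exp[h(V_1,V_2)^2]=1/90$ (consistent with $\sigma_n^2(k)\to 2\cdot 90^{-k}$ of Lemma~\ref{lem:snvar}) and $\delta_n(k)^2=\binom{d}{k}\cdot 2\cdot 90^{-k}$, the limit $\sum_{k=2}^m a_k^2$. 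Concentration $V_n\pto\Exp[V_n]$ and the Lindeberg condition both reduce to fourth-moment bounds on $D_{n,j}$ involving products of four copies of $K_n$ and up to $4m$ coordinate indices; the constraint that non-vanishing expectations require each coordinate to appear with multiplicity at least two, combined with the tightest admissible overlap patterns among quadruples $(A_1,\dots,A_4)$ of subsets, is what forces the independence requirement $H_{4m-3}$ stated in the proposition. This combinatorial bookkeeping---enumerating the overlap patterns and verifying that off-diagonal contributions are of strictly smaller order than the diagonal ones---is the principal difficulty of the proof.
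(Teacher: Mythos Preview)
Your approach differs fundamentally from the paper's in two respects: the choice of filtration and the handling of ranks. The paper applies the martingale CLT with a \emph{coordinate}-indexed filtration $\mathcal F_{n,r}=\sigma(\bm U^{(p)}:p\le r)$, $r=1,\dots,d$, working directly with the rank-based quantities $\tilde I_{i,j}^{(p)}$; the martingale increment at step $r$ collects all $\tilde M_{n,A}$ with $\max A=r$. Under the null, coordinates are independent, so $\tilde I_{i,j}^{(r)}$ is independent of $\mathcal F_{n,r-1}$, and the martingale property follows immediately without any oracle replacement. Your proposal instead first replaces ranks by true uniforms and then runs an \emph{observation}-indexed martingale on the resulting canonical $U$-statistic.

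The gap in your argument is the oracle step. You assert that ``the normalisation $\delta_n(k)$ already absorbs the $\binom{d}{k}$ blow-up, so no growth condition on $d=d_n$ enters here'', but this is exactly the point the authors flag as problematic: in the Remark following Proposition~\ref{prop:mm} they state that their attempts at such a reduction (via a Stute-type representation) required the restrictive growth condition $d=o(n^{1/6}(\log n)^{-1/2})$, which is why they abandoned it in favour of the direct coordinate-filtration approach. Your heuristic about the pointwise oscillation $R_{ip}/(n+1)-V_{ip}=O_\Prob(n^{-1/2})$ does not control $\sum_{|A|=k}(\tilde M_{n,A}-\check M_{n,A})$; the errors $\tilde M_{n,A}-\check M_{n,A}$ share rank fluctuations across subsets $A$ containing the same coordinate. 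It is true that Moebius-type orthogonality kills the cross-covariances $\Cov(\tilde M_{n,A}-\check M_{n,A},\tilde M_{n,B}-\check M_{n,B})$ for $A\ne B$ (so the variance reduces to $\binom{d}{k}\Var(\tilde M_{n,A}-\check M_{n,A})$), but you would then still need to prove $\tilde M_{n,A}-\check M_{n,A}\to 0$ in $L^2$ for a single $A$---a non-trivial fixed-dimensional result that you do not establish. In short, you have identified the combinatorial bookkeeping as the principal difficulty, but the paper's experience suggests that the oracle reduction itself is the harder step; their coordinate-indexed filtration is precisely the device that bypasses it.
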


Finally, Theorem~\ref{theo:main} is a mere consequence of Propositions~\ref{prop:tt}, \ref{prop:nn} and \ref{prop:mm}. \qed

\begin{remark}
The proof of Proposition~\ref{prop:mm} is based on a direct application of a martingale central limit theorem to the rank-based statistics $\tilde M_n(k)$. This is in contrast to many other asymptotic results on the empirical copula process and functionals thereof, which often rely on first reducing the problem to the case of `known marginals' (i.e., $\hat{\bm U}_i$ gets replaced by $\bm U_i$ with $U_{ip}=F_p(X_{ip})$) and then working with the independent sample $\bm U_1, \dots, \bm U_d$. An important and powerful intermediate result when following this approach is the Stute representation going back \cite{Stu84}; see Proposition 4.2 in \cite{Seg12} for a formulation under feasible, non-restrictive smoothness conditions.  The authors' attempts of generalizing the result of Proposition 4.2 in \cite{Seg12}  to the case $d=d_n\to\infty$ (irrespective of whether $H_0$ is met or not) were only partly successful in that we needed the restrictive growth condition $d_n=o(n^{-\frac{1}{6}}(\log n)^{-\frac{1}{2}})$; the error in Equation (4.1) in \cite{Seg12} was then shown to be of the order $O(d_n^{\frac{3}{2}} n^{-\frac{1}{4}} (\log n)^{\frac{3}{4}})$ almost surely. In view of this restriction, we did not pursue this approach any further.
\end{remark}

\section{Finite-sample results} \label{sec:sims}

A large-scale simulation study was performed to investigate the level and power properties in finite-sample situations. Special attention is paid to models for which we have pairwise independence, but higher order dependence. Note that the high-dimensional independence  tests proposed in  \cite{LeuDrt18} do not have any power against such alternatives by construction. 

The design of the simulation study is akin to \cite{LeuDrt18}. In particular, several data generating processes are considered for each combination of 
$
n\in \left\{16,32,64,128\right\}$ and  $d\in \left\{4,8,16,32,64,128,256\right\}.
$
Note that we can restrict attention to any arbitrary marginal distribution functions. The following models are considered:

\begin{compactenum}
\item \textbf{Mutual $d$-variate Independence.} $X_1, \dots, X_d$ are independent, i.e., $H=H_d$ from \eqref{eq:Hall} is met.
\item  \textbf{Gaussian copula with constant pairwise dependence.}   
Let $\bm X\sim \Nc_d(\bm 0, \bm \Sigma)$ with $\bm \Sigma=(\sigma_{pq})_{p,q=1}^d$ and
$
\sigma_{pq}=  \bm 1_{\{p=q\}} + \rho  \bm 1_{\{p\neq q\}},
$
where the correlation parameter $\rho=\rho_d >0$ is chosen in such a way that the sum over all pairwise values of Kendall's tau, i.e., 
\[
\|\bm \tau\|_2^2 := \sum_{1\le p<q \le d} (\tau_{pq})^2  = \frac{d(d-1)}{2} \tau_{12}^2
\]
is constant in $d$ and takes values in $\{0.1, 0.3, 0.7\}$.  Recall that $\tau_{pq}=\frac2\pi \arcsin(\sigma_{pq})$ (\citealp{Fan02}, Theorem 3.1), whence
$
\rho = \rho_d=\sin\Big( \frac\pi2 \sqrt{\frac{2 \| \bm \tau\|^2_2}{d(d-1)}} \Big)=O(d^{-1}).
$
The model is taken from \cite{LeuDrt18} and allows for a comparison with their results.

{\footnotesize
\begin{table}[!t]
\centering
\begin{tabular}{ c|c|| rrrrrrr || rrrrrrr } 
\hline \hline
\multicolumn{2}{l}{} & \multicolumn{7}{c}{\textit{Finite variance scaling $\bar \delta_n$}} & \multicolumn{7}{c}{\textit{Asymptotic variance scaling $\delta_n$}} \\ \hline \hline
Test & $n \setminus  d$ &\multicolumn{1}{c}{$4$} & \multicolumn{1}{c}{$8$} & \multicolumn{1}{c}{$16$} & \multicolumn{1}{c}{$32$}& \multicolumn{1}{c}{$64$}& \multicolumn{1}{c}{$128$}& \multicolumn{1}{c||}{$256$} 
&\multicolumn{1}{c}{$4$} & \multicolumn{1}{c}{$8$} & \multicolumn{1}{c}{$16$} & \multicolumn{1}{c}{$32$}& \multicolumn{1}{c}{$64$}& \multicolumn{1}{c}{$128$}& \multicolumn{1}{c}{$256$} \\
\hline
 \addlinespace[.2cm]
  \multicolumn{9}{l}{\quad \textit{Model 1: mutual independence}} \\ \hline
$\mathcal S_{2}$ &  & 7.2 & 4.8 & 6.6 & 6.2 & 5.2 & 6.6 & 6.8 & 4.8 & 1.6 & 2.8 & 2.6 & 3.2 & 2.8 & 3.0 \\ 
  $\mathcal S_{3}$ &  & 6.8 & 7.2 & 15.4 & 19.6 & 28.0 & 34.2 & 37.4 & 2.2 & 3.6 & 6.2 & 9.4 & 19.8 & 29.0 & 31.4 \\ 
  $\mathcal T_{3}$ & 16 & 3.4 & 5.8 & 6.2 & 10.4 & 17.4 & 26.2 & 29.8 & 1.6 & 1.2 & 1.6 & 6.0 & 10.0 & 18.6 & 23.8 \\ 
  $\mathcal S_{4}$ &  &  &  &  &  &  &  &  & 2.2 & 18.2 & 29.4 & 34.4 & 43.4 & 46.0 & 42.4 \\ 
  $\mathcal T_{4}$ &  &  &  &  &  &  &  &  & 2.4 & 11.4 & 24.2 & 30.6 & 39.8 & 45.4 & 42.0 \\  \hline
  $\mathcal S_{2}$ &  & 4.8 & 7.2 & 3.4 & 6.4 & 5.0 & 6.0 & 5.2 & 4.0 & 5.4 & 3.0 & 5.0 & 3.2 & 4.2 & 4.0 \\ 
  $\mathcal S_{3}$ &  & 9.0 & 5.8 & 8.2 & 16.6 & 20.6 & 23.4 & 31.6 & 5.0 & 4.2 & 5.0 & 10.8 & 15.6 & 19.4 & 27.8 \\ 
  $\mathcal T_{3}$ & 32 & 4.8 & 5.0 & 4.2 & 8.2 & 12.0 & 14.2 & 22.6 & 4.0 & 3.4 & 2.6 & 6.2 & 8.0 & 9.8 & 19.0 \\ 
  $\mathcal S_{4}$ &  &  &  &  &  &  &  &  & 6.6 & 13.4 & 26.6 & 40.2 & 44.8 & 42.6 & 46.6 \\ 
  $\mathcal T_{4}$ &  &  &  &  &  &  &  &  & 4.4 & 9.6 & 18.0 & 36.0 & 42.4 & 40.8 & 45.8 \\  \hline
  $\mathcal S_{2}$ &  & 7.2 & 6.2 & 7.6 & 5.2 & 5.2 & 4.8 & 5.6 & 6.0 & 5.8 & 6.6 & 4.6 & 4.4 & 3.2 & 4.6 \\ 
  $\mathcal S_{3}$ &  & 5.8 & 6.4 & 7.6 & 8.6 & 11.8 & 15.4 & 25.2 & 5.0 & 4.2 & 6.2 & 6.2 & 9.6 & 13.6 & 24.0 \\ 
  $\mathcal T_{3}$ & 64 & 5.2 & 5.2 & 6.0 & 4.6 & 6.2 & 9.8 & 16.2 & 5.0 & 4.2 & 4.4 & 3.8 & 4.8 & 8.6 & 14.8 \\ 
  $\mathcal S_{4}$ &  &  &  &  &  &  &  &  & 4.8 & 9.0 & 25.6 & 33.4 & 41.8 & 45.0 & 47.2 \\ 
  $\mathcal T_{4}$ &  &  &  &  &  &  &  &  & 5.4 & 7.2 & 17.8 & 29.8 & 39.2 & 43.2 & 46.4 \\  \hline
  $\mathcal S_{2}$ &  & 8.0 & 4.8 & 6.6 & 5.0 & 6.0 & 5.8 & 5.2 & 6.8 & 4.0 & 5.8 & 4.8 & 5.2 & 5.8 & 4.4 \\ 
  $\mathcal S_{3}$ &  & 7.0 & 5.6 & 4.6 & 9.0 & 7.2 & 11.6 & 16.2 & 6.6 & 4.8 & 4.2 & 7.8 & 6.0 & 10.8 & 15.0 \\ 
  $\mathcal T_{3}$ & 128 & 6.8 & 4.4 & 5.2 & 6.0 & 4.8 & 6.8 & 10.6 & 6.2 & 3.6 & 4.2 & 5.6 & 4.6 & 6.0 & 9.2 \\ 
  $\mathcal S_{4}$ &  &  &  &  &  &  &  &  & 6.6 & 10.4 & 22.0 & 30.0 & 36.6 & 43.2 & 46.8 \\ 
  $\mathcal T_{4}$ &  &  &  &  &  &  &  &  & 5.8 & 6.8 & 13.6 & 25.2 & 33.2 & 40.2 & 44.6 \\ 
  \hline \hline
\end{tabular} \medskip
\caption{Empirical rejections probabilities in \% for the independence model (Model 1).}	\label{tab:m1}
\vspace{-.5cm}
\end{table}
}

\item \textbf{Inductive Model.} 
The following model exhibits pairwise independence but not triplewise independence: let $X_1, X_2$ be i.i.d. standard uniform. We inductively construct, for $k\in\{3, \dots, d\}$,
\[
X_k = \begin{cases}
	X_{k-2} + X_{k-1} &,\quad X_{k-2} + X_{k-1} \le 1 \\
	X_{k-2} + X_{k-1} -1 &,\quad X_{k-2} + X_{k-1} > 1.
\end{cases}
\]
Note that dependence only arises for subvectors $\bm X_A$ for which $A$ contains some set $\{\ell, \ell+1, \ell+2\}$. In particular, out of the $\binom{d}{3}$ triplets, only $d-2$ are not independent, which is a proportion of $O(d^{-2})$. The tests' power should hence be decreasing in $d$.
\item \textbf{Geisser-Mantel Model}. 
The following model is inspired by \cite{GeiMan62} and is defined for values $d$ such that $d=p(p-1)/2$ for some $p\ge3$. Since not every value $d\in\{4,8,16,32,64,128,256\}$ is of that form, we instead consider $d\in\{3,6,10,28,55,120,231\}$ with respective value $p\in\{3, 4, 5, 8, 11, 16, 21\}$. 
Within the simulation results, those values are still identified with $\{4,8,16,32,64,128,256\}$. The model has one parameter $m\in\N$, and is defined by the following algorithm. First, simulate a sample of size $p+m$ from $\Nc_p(0, \bm I_p)$. Then, calculate the empirical correlation matrix of that sample, and store the pairwise correlation coefficients in a vector $\bm X$ of length $d$. The coordinates of $\bm X$ are pairwise independent for any $m$, but exhibit significant $k$-variate dependence for any $k\ge 3$. The smaller $m$, the larger the dependence. Heuristically, the dependence stems from the fact that the correlation matrix is positive-definite.
We implement this model for the choice of $m = \left\lfloor p/2\right\rfloor.$

\item \textbf{Truncated Romano-Siegel Model.}
This model is a truncated version of the Romano-Siegel model in Section 4.2 of \cite{GenRem04}, and is defined as follows. Let $Z_1,Z_2,Z_3$ iid standard normal random variables. We construct: $$X_1 = |Z_1| \cdot \textrm{sign}(Z_2Z_3),\quad X_2 = Z_2, \quad X_3 = Z_3.$$
The resulting triplet $\left( X_1,X_2,X_3\right) $ exhibits pairwise independence but not mutual independence. By generating $Z_4,Z_5,Z_6$ iid standard normal random variables independently of $(Z_1,Z_2,Z_3)$, we duplicate the method to construct $X_4,X_5,X_6$. We repeat this process until we obtain vectors of length $d\in\{3,6,15,30,63,126,255\}$, which correspond to the largest dimension smaller or equal to $d\in\{4,8,16,32,64,128,256\}$ that is divisible by 3. Similar as for the inductive model, out of the $\binom{d}{3}$ triplets, only $d/3$ are not independent, which is a proportion of $O(d^{-2})$. The tests' power should hence be decreasing in $d$.
\end{compactenum}

{\footnotesize
\begin{table}[thb!]
\centering
\begin{adjustwidth}{-.2cm}{}
\begin{tabular}{ c|c|| rrrrrrr || rrrrrrr } 
\hline \hline
\multicolumn{2}{l}{} & \multicolumn{7}{c}{\textit{Finite variance scaling $\bar \delta_n$}} & \multicolumn{7}{c}{\textit{Asymptotic variance scaling $\delta_n$}} \\ \hline \hline
Test & $n \setminus  d$ &\multicolumn{1}{c}{$4$} & \multicolumn{1}{c}{$8$} & \multicolumn{1}{c}{$16$} & \multicolumn{1}{c}{$32$}& \multicolumn{1}{c}{$64$}& \multicolumn{1}{c}{$128$}& \multicolumn{1}{c||}{$256$} 
&\multicolumn{1}{c}{$4$} & \multicolumn{1}{c}{$8$} & \multicolumn{1}{c}{$16$} & \multicolumn{1}{c}{$32$}& \multicolumn{1}{c}{$64$}& \multicolumn{1}{c}{$128$}& \multicolumn{1}{c}{$256$} \\
\hline
 \addlinespace[.2cm]
  \multicolumn{9}{l}{\quad \textit{Model 2-1: Gaussian $\| \bm \tau\|_2^2=0.1$}} \\ \hline
  $\mathcal S_{2}$ &  & 24.0 & 12.8 & 9.6 & 7.0 & 4.4 & 7.0 & 4.6 & 18.0 & 8.0 & 5.4 & 4.0 & 2.6 & 4.0 & 1.4 \\ 
  $\mathcal S_{3}$ & 16 & 8.4 & 11.0 & 13.0 & 24.4 & 30.8 & 32.6 & 38.6 & 4.0 & 5.0 & 6.8 & 16.8 & 20.2 & 25.8 & 31.8 \\ 
  $\mathcal T_{3}$ &  & 17.2 & 9.0 & 8.6 & 16.4 & 17.0 & 24.4 & 31.6 & 11.2 & 4.0 & 3.6 & 8.2 & 12.4 & 17.8 & 24.6 \\  \hline
  $\mathcal S_{2}$ &  & 41.6 & 22.8 & 10.6 & 7.4 & 5.2 & 6.6 & 3.6 & 36.4 & 18.0 & 7.2 & 6.4 & 3.8 & 5.0 & 2.8 \\ 
  $\mathcal S_{3}$ & 32 & 6.8 & 4.4 & 9.8 & 12.0 & 19.0 & 24.2 & 30.2 & 3.8 & 2.4 & 6.0 & 8.0 & 14.2 & 20.8 & 26.2 \\ 
  $\mathcal T_{3}$ &  & 28.4 & 11.4 & 9.2 & 7.0 & 13.0 & 15.4 & 22.2 & 24.6 & 9.6 & 6.2 & 5.0 & 9.0 & 12.8 & 19.0 \\  \hline
  $\mathcal S_{2}$ &  & 71.0 & 40.4 & 22.2 & 10.6 & 9.2 & 6.8 & 5.0 & 69.0 & 37.8 & 19.4 & 9.4 & 8.0 & 4.8 & 4.4 \\ 
  $\mathcal S_{3}$ & 64 & 6.8 & 7.6 & 6.2 & 10.4 & 11.6 & 15.6 & 23.8 & 5.8 & 6.0 & 4.2 & 8.6 & 9.8 & 13.8 & 22.2 \\ 
  $\mathcal T_{3}$ &  & 57.4 & 28.8 & 11.6 & 10.0 & 10.8 & 10.8 & 17.4 & 56.6 & 26.8 & 9.6 & 8.0 & 8.8 & 9.8 & 16.0 \\  \hline
  $\mathcal S_{2}$ &  & 95.4 & 74.0 & 42.8 & 19.8 & 10.6 & 9.4 & 6.0 & 95.2 & 73.0 & 41.8 & 18.6 & 9.4 & 8.8 & 6.0 \\ 
  $\mathcal S_{3}$ & 128 & 9.2 & 6.8 & 8.2 & 7.2 & 9.8 & 11.8 & 20.2 & 8.8 & 6.0 & 7.6 & 6.6 & 9.2 & 11.2 & 19.0 \\ 
  $\mathcal T_{3}$ &  & 90.2 & 56.0 & 32.4 & 14.4 & 9.2 & 10.4 & 14.0 & 90.0 & 55.4 & 30.6 & 13.8 & 8.8 & 9.4 & 13.4 \\  \hline
\addlinespace[.2cm]
  \multicolumn{9}{l}{\quad \textit{Model 2-1: Gaussian $\| \bm \tau\|_2^2=0.3$}} \\ \hline
  $\mathcal S_{2}$ &  & 53.0 & 25.6 & 17.0 & 9.8 & 6.0 & 7.6 & 5.0 & 44.8 & 19.2 & 9.6 & 5.6 & 3.2 & 3.6 & 2.6 \\ 
  $\mathcal S_{3}$ & 16 & 8.8 & 11.6 & 14.6 & 19.8 & 26.2 & 34.8 & 37.0 & 3.8 & 4.8 & 6.8 & 12.0 & 17.4 & 29.2 & 32.4 \\ 
  $\mathcal T_{3}$ &  & 37.8 & 17.2 & 9.6 & 11.8 & 17.0 & 26.6 & 31.4 & 30.8 & 10.6 & 4.6 & 6.4 & 9.0 & 18.6 & 26.2 \\  \hline
  $\mathcal S_{2}$ &  & 81.8 & 52.4 & 29.0 & 17.0 & 9.4 & 6.4 & 6.4 & 79.4 & 47.0 & 24.6 & 13.8 & 6.4 & 4.2 & 4.8 \\ 
  $\mathcal S_{3}$ & 32 & 7.6 & 6.6 & 9.8 & 14.0 & 20.6 & 25.0 & 34.2 & 5.0 & 3.6 & 5.8 & 10.2 & 14.2 & 21.4 & 31.0 \\ 
  $\mathcal T_{3}$ &  & 74.6 & 37.0 & 20.2 & 13.6 & 12.8 & 18.2 & 27.0 & 72.0 & 31.4 & 15.4 & 9.8 & 8.0 & 15.4 & 22.6 \\  \hline
  $\mathcal S_{2}$ &  & 98.2 & 84.4 & 61.6 & 30.0 & 15.4 & 10.0 & 6.0 & 98.2 & 83.8 & 58.8 & 27.0 & 13.6 & 9.0 & 5.6 \\ 
  $\mathcal S_{3}$ & 64 & 14.4 & 9.6 & 6.0 & 8.0 & 14.0 & 17.2 & 25.0 & 12.0 & 7.8 & 5.8 & 6.2 & 11.2 & 15.6 & 23.8 \\ 
  $\mathcal T_{3}$ &  & 96.2 & 71.6 & 40.2 & 19.8 & 15.0 & 13.6 & 19.0 & 95.8 & 70.4 & 37.6 & 16.8 & 12.0 & 11.2 & 17.2 \\  \hline
  $\mathcal S_{2}$ &  & 100.0 & 99.6 & 92.2 & 65.6 & 32.4 & 16.2 & 9.6 & 100.0 & 99.6 & 91.8 & 64.8 & 31.6 & 15.8 & 9.0 \\ 
  $\mathcal S_{3}$ & 128 & 24.0 & 7.2 & 5.2 & 6.8 & 8.2 & 12.8 & 17.0 & 22.8 & 7.2 & 5.0 & 6.2 & 7.2 & 11.4 & 16.2 \\ 
  $\mathcal T_{3}$ &  & 100.0 & 97.8 & 79.6 & 44.4 & 19.8 & 14.0 & 14.8 & 100.0 & 97.8 & 79.2 & 42.8 & 18.0 & 12.8 & 14.0 \\  \hline
\addlinespace[.2cm]
  \multicolumn{9}{l}{\quad \textit{Model 2-1: Gaussian $\| \bm \tau\|_2^2=0.7$}} \\ \hline
  $\mathcal S_{2}$ &  & 84.6 & 56.6 & 29.2 & 15.6 & 11.2 & 8.0 & 8.0 & 80.4 & 47.6 & 22.0 & 11.2 & 6.4 & 3.6 & 3.0 \\ 
  $\mathcal S_{3}$ & 16 & 9.4 & 10.8 & 15.6 & 21.8 & 23.0 & 32.2 & 35.8 & 4.6 & 5.8 & 8.6 & 16.2 & 16.2 & 27.0 & 33.0 \\ 
  $\mathcal T_{3}$ &  & 74.4 & 40.2 & 20.2 & 19.2 & 16.8 & 25.2 & 32.8 & 68.4 & 29.8 & 12.2 & 10.8 & 10.8 & 18.8 & 28.0 \\  \hline
  $\mathcal S_{2}$ &  & 99.2 & 89.0 & 62.8 & 31.0 & 17.2 & 11.2 & 7.6 & 99.0 & 87.0 & 57.8 & 26.4 & 14.4 & 9.2 & 5.6 \\ 
  $\mathcal S_{3}$ & 32 & 18.8 & 10.2 & 8.2 & 8.8 & 18.2 & 22.0 & 31.0 & 14.4 & 7.8 & 5.6 & 5.8 & 14.4 & 18.0 & 27.6 \\ 
  $\mathcal T_{3}$ &  & 98.8 & 79.6 & 44.0 & 17.8 & 17.6 & 16.4 & 25.4 & 98.4 & 76.6 & 39.6 & 14.4 & 11.4 & 12.4 & 23.2 \\  \hline
  $\mathcal S_{2}$ &  & 100.0 & 99.6 & 92.8 & 67.0 & 33.6 & 18.4 & 10.0 & 100.0 & 99.6 & 92.0 & 64.2 & 31.0 & 16.4 & 7.8 \\ 
  $\mathcal S_{3}$ & 64 & 46.6 & 11.8 & 7.2 & 9.0 & 11.0 & 16.0 & 22.6 & 42.8 & 10.8 & 6.0 & 7.8 & 9.4 & 14.2 & 20.4 \\ 
  $\mathcal T_{3}$ &  & 100.0 & 98.8 & 80.8 & 44.4 & 22.0 & 15.2 & 18.4 & 100.0 & 98.6 & 80.4 & 41.6 & 18.6 & 13.2 & 16.0 \\  \hline
  $\mathcal S_{2}$ &  & 100.0 & 100.0 & 99.8 & 97.2 & 72.8 & 36.6 & 17.6 & 100.0 & 100.0 & 99.8 & 97.2 & 71.6 & 35.4 & 17.2 \\ 
  $\mathcal S_{3}$ & 128 & 86.8 & 24.2 & 9.2 & 5.8 & 5.0 & 11.4 & 16.6 & 86.2 & 23.0 & 8.2 & 5.0 & 4.4 & 10.6 & 15.6 \\ 
  $\mathcal T_{3}$ &  & 100.0 & 100.0 & 100.0 & 88.4 & 50.2 & 25.4 & 17.4 & 100.0 & 100.0 & 100.0 & 88.4 & 49.6 & 23.8 & 17.0 \\ 
  \hline \hline
\end{tabular} \medskip
\end{adjustwidth}
\caption{Empirical rejections probabilities in \% for the Gaussian Model (Model 2). }	\label{tab:m2}
\vspace{-.8cm}
\end{table}
}

{\footnotesize
\begin{table}[t!]
\centering
\begin{adjustwidth}{-.9cm}{}
\begin{tabular}{ c|c|| rrrrrrr || rrrrrrr } 
\hline \hline
\multicolumn{2}{l}{} & \multicolumn{7}{c}{\textit{Finite variance scaling $\bar \delta_n$}} & \multicolumn{7}{c}{\textit{Asymptotic variance scaling $\delta_n$}} \\ \hline \hline
Test & $n \setminus  d$ &\multicolumn{1}{c}{$4$} & \multicolumn{1}{c}{$8$} & \multicolumn{1}{c}{$16$} & \multicolumn{1}{c}{$32$}& \multicolumn{1}{c}{$64$}& \multicolumn{1}{c}{$128$}& \multicolumn{1}{c||}{$256$} 
&\multicolumn{1}{c}{$4$} & \multicolumn{1}{c}{$8$} & \multicolumn{1}{c}{$16$} & \multicolumn{1}{c}{$32$}& \multicolumn{1}{c}{$64$}& \multicolumn{1}{c}{$128$}& \multicolumn{1}{c}{$256$} \\
\hline
 \addlinespace[.2cm]
  \multicolumn{9}{l}{\quad \textit{Model 3: Inductive Model}} \\ \hline
  $\mathcal S_{2}$ &  & 8.2 & 11.6 & 9.4 & 8.0 & 6.0 & 5.2 & 5.6 & 5.2 & 6.8 & 6.6 & 3.6 & 3.2 & 3.2 & 2.4 \\ 
  $\mathcal S_{3}$ & 16 & 48.0 & 46.6 & 40.8 & 38.4 & 37.8 & 35.0 & 41.0 & 29.4 & 30.6 & 27.0 & 26.6 & 29.2 & 28.8 & 35.0 \\ 
  $\mathcal T_{3}$ &  & 31.4 & 36.8 & 27.6 & 27.2 & 26.4 & 28.2 & 34.6 & 17.6 & 21.8 & 16.8 & 16.8 & 15.6 & 19.8 & 27.8 \\  \hline
  $\mathcal S_{2}$ &  & 7.2 & 10.6 & 8.6 & 7.0 & 4.8 & 6.6 & 4.8 & 5.8 & 9.4 & 7.2 & 5.6 & 3.4 & 5.0 & 3.6 \\ 
  $\mathcal S_{3}$ & 32 & 100.0 & 95.6 & 86.8 & 69.8 & 55.4 & 43.2 & 41.6 & 98.4 & 93.6 & 80.0 & 63.6 & 48.2 & 38.8 & 38.0 \\ 
  $\mathcal T_{3}$ &  & 90.6 & 80.2 & 63.4 & 51.8 & 42.0 & 34.2 & 32.0 & 81.6 & 73.2 & 56.6 & 45.0 & 33.4 & 28.8 & 27.8 \\  \hline
  $\mathcal S_{2}$ &  & 8.8 & 11.4 & 10.6 & 7.2 & 5.2 & 6.0 & 7.6 & 7.8 & 10.4 & 9.2 & 5.8 & 4.0 & 5.4 & 6.2 \\ 
  $\mathcal S_{3}$ & 64 & 100.0 & 100.0 & 100.0 & 100.0 & 94.8 & 75.8 & 55.4 & 100.0 & 100.0 & 100.0 & 99.8 & 92.6 & 72.6 & 52.6 \\ 
  $\mathcal T_{3}$ &  & 100.0 & 100.0 & 99.2 & 96.8 & 82.0 & 59.4 & 45.2 & 100.0 & 100.0 & 98.6 & 95.0 & 79.0 & 54.8 & 43.0 \\  \hline
  $\mathcal S_{2}$ &  & 12.4 & 10.0 & 12.0 & 7.8 & 4.6 & 5.0 & 6.8 & 12.4 & 9.6 & 11.0 & 7.4 & 4.2 & 4.8 & 5.6 \\ 
  $\mathcal S_{3}$ & 128 & 100.0 & 100.0 & 100.0 & 100.0 & 100.0 & 100.0 & 96.0 & 100.0 & 100.0 & 100.0 & 100.0 & 100.0 & 100.0 & 95.8 \\ 
  $\mathcal T_{3}$ &  & 100.0 & 100.0 & 100.0 & 100.0 & 100.0 & 99.0 & 89.0 & 100.0 & 100.0 & 100.0 & 100.0 & 100.0 & 98.6 & 87.0 \\  \hline
\addlinespace[.2cm]
  \multicolumn{9}{l}{\quad \textit{Model 4: Geisser-Mantel Model}} \\ \hline
  $\mathcal S_{2}$ &  & 3.6 & 2.6 & 6.6 & 4.6 & 6.4 & 4.4 & 6.6 & 1.4 & 0.4 & 3.0 & 1.8 & 3.4 & 1.8 & 3.8 \\ 
  $\mathcal S_{3}$ & 16 & 21.0 & 16.2 & 59.2 & 67.8 & 78.8 & 78.6 & 26.4 & 8.8 & 3.8 & 45.6 & 59.0 & 75.2 & 75.6 & 10.4 \\ 
  $\mathcal T_{3}$ &  & 12.8 & 9.4 & 48.8 & 62.4 & 74.6 & 75.0 & 15.6 & 4.2 & 2.8 & 30.6 & 47.0 & 67.2 & 69.4 & 3.8 \\  \hline
  $\mathcal S_{2}$ &  & 3.6 & 1.8 & 6.4 & 5.0 & 9.4 & 6.4 & 6.4 & 2.8 & 1.0 & 4.8 & 3.4 & 6.2 & 3.8 & 5.2 \\ 
  $\mathcal S_{3}$ & 32 & 56.6 & 40.4 & 79.6 & 84.4 & 90.0 & 90.6 & 66.2 & 42.4 & 24.6 & 72.0 & 81.2 & 88.4 & 89.8 & 56.6 \\ 
  $\mathcal T_{3}$ &  & 37.0 & 23.0 & 69.6 & 79.4 & 86.2 & 89.2 & 51.0 & 25.6 & 12.2 & 59.2 & 71.8 & 83.4 & 87.6 & 40.4 \\  \hline
  $\mathcal S_{2}$ &  & 5.0 & 2.0 & 6.6 & 4.0 & 6.6 & 6.6 & 5.0 & 4.4 & 1.8 & 5.6 & 3.6 & 5.0 & 5.8 & 4.2 \\ 
  $\mathcal S_{3}$ & 64 & 93.0 & 89.4 & 97.4 & 97.8 & 98.6 & 99.2 & 93.2 & 90.6 & 83.2 & 96.2 & 97.4 & 98.4 & 99.2 & 92.6 \\ 
  $\mathcal T_{3}$ &  & 78.2 & 63.4 & 89.8 & 93.8 & 96.2 & 98.6 & 89.0 & 73.0 & 56.8 & 86.8 & 91.8 & 95.8 & 97.8 & 86.6 \\  \hline
  $\mathcal S_{2}$ &  & 7.0 & 1.0 & 6.6 & 5.0 & 6.6 & 6.4 & 8.6 & 7.0 & 0.8 & 5.6 & 4.4 & 6.2 & 6.2 & 8.2 \\ 
  $\mathcal S_{3}$ & 128 & 100.0 & 100.0 & 100.0 & 100.0 & 100.0 & 100.0 & 99.8 & 100.0 & 100.0 & 100.0 & 100.0 & 100.0 & 100.0 & 99.8 \\ 
  $\mathcal T_{3}$ &  & 99.6 & 99.4 & 100.0 & 99.8 & 100.0 & 100.0 & 99.8 & 99.4 & 99.2 & 100.0 & 99.6 & 100.0 & 99.8 & 99.8 \\  \hline
\addlinespace[.2cm]
  \multicolumn{9}{l}{\quad \textit{Model 5: Truncated Romano Siegel Model}} \\ \hline
  $\mathcal S_{2}$ &  & 3.6 & 6.8 & 5.2 & 4.6 & 4.0 & 7.0 & 2.8 & 1.6 & 3.2 & 2.6 & 2.4 & 2.4 & 3.4 & 1.2 \\ 
  $\mathcal S_{3}$ & 16 & 16.6 & 23.4 & 26.4 & 29.4 & 32.4 & 33.8 & 53.4 & 5.6 & 11.0 & 15.4 & 22.8 & 25.8 & 28.0 & 25.4 \\ 
  $\mathcal T_{3}$ &  & 12.6 & 13.2 & 15.0 & 20.0 & 24.2 & 27.6 & 22.4 & 3.6 & 6.6 & 8.6 & 12.8 & 16.4 & 21.6 & 4.4 \\  \hline
  $\mathcal S_{2}$ &  & 2.2 & 4.8 & 5.0 & 3.6 & 4.2 & 5.4 & 2.4 & 2.0 & 3.6 & 2.8 & 2.4 & 2.8 & 4.2 & 1.4 \\ 
  $\mathcal S_{3}$ & 32 & 82.4 & 57.6 & 39.8 & 34.2 & 36.2 & 33.6 & 98.8 & 66.8 & 44.0 & 30.2 & 28.0 & 30.6 & 30.2 & 98.0 \\ 
  $\mathcal T_{3}$ &  & 45.8 & 32.6 & 22.2 & 20.4 & 24.2 & 24.4 & 98.2 & 30.2 & 22.6 & 15.4 & 15.0 & 17.4 & 21.4 & 95.0 \\  \hline
  $\mathcal S_{2}$ &  & 2.2 & 5.2 & 3.6 & 3.8 & 4.6 & 6.8 & 4.2 & 1.6 & 4.2 & 2.4 & 3.4 & 3.8 & 6.0 & 3.8 \\ 
  $\mathcal S_{3}$ & 64 & 100.0 & 100.0 & 84.8 & 63.8 & 46.2 & 37.2 & 100.0 & 100.0 & 100.0 & 80.8 & 58.4 & 43.0 & 34.6 & 100.0 \\ 
  $\mathcal T_{3}$ &  & 100.0 & 90.4 & 60.6 & 41.6 & 30.8 & 26.6 & 100.0 & 100.0 & 86.8 & 55.0 & 38.0 & 26.4 & 24.2 & 100.0 \\  \hline
  $\mathcal S_{2}$ &  & 0.8 & 3.6 & 4.2 & 4.0 & 5.0 & 4.8 & 3.4 & 0.6 & 2.8 & 4.0 & 3.6 & 4.4 & 4.2 & 3.0 \\ 
  $\mathcal S_{3}$ & 128 & 100.0 & 100.0 & 100.0 & 99.2 & 85.6 & 62.4 & 100.0 & 100.0 & 100.0 & 100.0 & 98.8 & 84.8 & 61.0 & 100.0 \\ 
  $\mathcal T_{3}$ &  & 100.0 & 100.0 & 99.2 & 91.8 & 66.8 & 46.4 & 100.0 & 100.0 & 100.0 & 98.8 & 90.4 & 64.0 & 43.2 & 100.0 \\ 
  \hline \hline
\end{tabular} \medskip
\end{adjustwidth}
\caption{Empirical rejections probabilities in \% for models involving pairwise independence but not mutual independence (Model 3 - 5). }	\label{tab:m345}
\vspace{-.5cm}
\end{table}
}

We study the performance of the following eight tests, with intended level $\alpha$ fixed to $\alpha=5\%$: 
first, for $k\in\{2,3,4\}$ and recalling the asymptotic variance $\delta_n(k)$ from \eqref{eq:deltan}, we consider the test which rejects $H$ if $\delta_n^{-1}(k)\left(  T_n(k)-\nu_n(k)\right)> 1.645 = u_{0.95}$, which we denote by $\mathcal S_k(\delta)$. Likewise, we consider the analogue based on the explicit finite-sample variance $\bar \delta_n(k)$, which we denote by $\mathcal S_{k}(\bar \delta)$ and which we only apply for $k\in\{2,3\}$ since we were not able to compute $\bar \delta_n(4)$ explicitly. Moreover, we also consider the tests from Corollary~\ref{cor:test} with $m\in\{3,4\}$, which we denote by $\mathcal T_m(\delta)$ or $\mathcal T_m(\bar\delta)$ (the latter only for $m=3$) depending on whether $\delta_n(k)$ or $\bar \delta_n(k)$  has been used for scaling the underlying test statistic.

For each of the above models and combinations of $n$ and $d$ (with occasional modifications for $d$ as explained in the model descriptions above), we simulate 500 random i.i.d.\ samples. The performance of each test is assessed by  calculating empirical rejection probabilities, which are presented in Tables~\ref{tab:m1}-\ref{tab:m345}.

Table~\ref{tab:m1} contains the results for mutual independence, i.e., Model 1. It can be seen that the tests $\mathcal S_2(\delta)$ and $\mathcal S_2(\bar \delta)$ accurately hold their intended level, with small level exceedances up to 7.4\% only for small values of $n$. The tests $\mathcal S_3(\delta)$ and $\mathcal S_3(\bar \delta)$, however, do not hold their level for large values of $d$ (relative to $n$); for instance, for $n=128$, the level approximation is acceptable only for $d\le 64$. This observation is consistent with the theoretical results: asymptotic level approximation is only guaranteed if $d=o(n^{\frac{1}{2}})$. Next, expectable, the tests $\mathcal T_3$ are somewhat in between $\mathcal S_2$ and $\mathcal S_3$ and provide acceptable level approximations for slightly larger values of $d$ compared to $\mathcal S_3$ alone. Comparing the $\delta$ and $\bar \delta$ versions, the former are slightly more conservative, with the difference between the two vanishing for increasing sample size $n$. Regarding the tests $\mathcal S_4$ and $\mathcal T_4$, which theoretically require $d=o(n^{\frac{1}{3}})$, we observe a reasonable level approximation only for $d=4$, which appears reasonable since $128^{\frac{1}{3}}\approx 5$. For this reason, we exclude these two tests  from the power study in Tables~\ref{tab:m2}-\ref{tab:m345}.

Table~\ref{tab:m2} contains results for the Gaussian models. For the $\mathcal S_2$ tests, we observe that the power is decreasing in $d$ and increasing $n$. This behavior may be explained by the fact that the overall pairwise dependence signal as measured by $\| \bm \tau \|_2^2$ is constant in $d$ and $n$, whence a decrease in $d$ and an increase in $n$ essentially increases the signal-to-noise ratio. Overall, the results are in fact akin to the results in Table~3 in  \cite{LeuDrt18}, which concern the exact same models for a number of related test statistics (some of which are inconsistent in general, for instance against alternatives which are pairwise dependent, but exhibit a non-zero population coefficient of Kendall's tau). Moving to higher order dependencies, we observe rather little power for small values of $n$ and $d$. For sufficiently large $n$, the power as a function of $d$ is u-shaped, which might be a result of a superposition of the signal-to-noise ratio decrease and the increase visible under the null hypothesis in Table~\ref{tab:m1} (as a result of the fact that the condition $d=o(n^{\frac{1}{2}})$ is not met). Finally, the results for tests $\mathcal T_3$ are again an average  of the results for $\mathcal S_2$ and $\mathcal S_3$, as expected.

Results for Models 3-5 concerning pairwise independence but higher order dependence are presented in Table~\ref{tab:m345}. Concerning the $\mathcal S_2$ tests, we observe no power at all, which is exactly the expected behavior. Note that one would obtain very similar results when applying the tests from \cite{LeuDrt18}, which are also designed to detect pairwise dependencies only. Concerning $\mathcal S_3$, we observe decent power behavior in all models under consideration. For the inductive model and the truncated Romano Siegel model, we observe a decreasing in power in $d$ (for all sufficiently large $n$), which was expectable in view of the discussion above: in both models, the proportion of dependent triples is of the order $O(d^{-2})$.

\section{Remaining Proofs and Auxiliary Results} \label{sec:proofs}

\subsection{Preliminaries and Notations}\label{sect:notation}
Throughout the proofs, sets $\ds{A\subset \{1, \dots, d\}:=S_d}$ of cardinality $k \in \{2, \dots, d\}$ will occasionally be identified with ordered integer vectors $\ds{\mathbf p_k \in \mathcal{P}(d,k)}$, where
\begin{align} \label{eq:pdk}
\mathcal{P}(d,k)=\left\{\mathbf p_k=(p_1, \dots, p_k) \in \{1, \dots, d\}^k:  p_1<\cdots<p_k\right\}.
\end{align}
Note that, for an arbitrary function $f:\{1, \dots, d\}^k \to \R$, we have the equivalent summation notations
\begin{align*} \sum\limits_{\mathbf{p}_k\in \mathcal{P}(d,k) }f(\mathbf{p}_k)& =\sum\limits_{1\le p_1<\cdots<p_k\le d}f(\mathbf{p}_k)= \sum_{p_{k}=1}^{d}\sum_{p_{k-1}=1}^{p_k-1}\cdots \sum_{p_1=1}^{p_2-1}f(\mathbf{p}_k). 
\end{align*}

For a vector $\bm x\in \R^d$ and a set $A\subset \{1, \dots, d\}$, we write $\bm x_A=(x_j)_{j \in A}$. If $A=\{\ell, \ell+1, \dots, \ell'\}$, we also write $\bm x_{\ell:\ell'}=x_{\{\ell, \ell+1, \dots, \ell'\}}$. Further, for an integer vector $\mathbf i$, we write  $|\mathbf i|$ for the number of distinct coordinates of $\mathbf i$.  For $m\in\N$ and an integer vector $\mathbf i =(i_1, \dots, i_{2m})$ of dimension $2m$, we define
\begin{align} \label{eq:plainphi}
\varphi_m(\mathbf i) = \E\left[ \tilde I^{(1)}_{i_{1},i_{2}} \tilde I^{(1)}_{i_{3},i_{4}} \cdots \tilde I^{(1)}_{i_{2m-1},i_{2m}}\right].
\end{align}

Throughout the proofs, we will repeatedly need integer vectors $\mathbf i$ of dimension 4. Of particular importance are the integer vectors taken from the following set
\begin{align} \label{eq:jn}
\mathcal J=\mathcal J_n
=
\left\{ \mathbf{i}= (i_1, i_2, i_3, i_4) \in \{1, \dots, n\}^4: i_1 < i_2, i_3 < i_4\right\}.
\end{align}
Note that $\mathcal J = \cup_{\ell=2}^4\mathcal{I}_\ell$, where
\begin{align} \label{eq:iell}
\mathcal{I}_\ell := \mathcal{I}_{\ell,n}  
:=
\left\{ \mathbf{i}=(i_1, i_2, i_3, i_4) \in \mathcal J: |\mathbf i|=\ell\right\},\qquad \ell\in\{2,3,4\},
\end{align}
and that
 \begin{align}\label{cardinal}
 	\zp[b]{\mathcal{J}} = \zp[b]{\mathcal{I}_4}=\frac{n^4}{4}\left( 1+O\left( n^{-1}\right) \right) ,\quad \zp[b]{\mathcal{I}_3} =O(n^3),\quad 
 	\zp[b]{\mathcal{I}_2} = \frac{n(n-1)}{2} .
 \end{align}

For real constants $a_1, \dots, a_q, b_1, \dots, b_q$ recall the multinomial identity
\begin{align}\label{multinomial}
	\prod_{i=1}^q (a_i+b_i) = \sum_{A \subset S_q}\prod_{j \in A} a_j \prod_{j \notin A}b_j,
\end{align} 
where the empty product is defined as $1$.
For two non-negative real sequences $(a_n)_n$ and $(b_n)_n$ we write $a_n\propto b_n$ if $a_n=O(b_n)$ and $b_n=O(a_n)$.

\subsection{Proof of Lemma~\ref{lem:snvar}}
\label{subsec:moml1}

We set $b_{i,j} :=
\E[I_{i,j}^{(p)}]$. We start by proving \eqref{eq:expi} for  fixed $p\in\{1, \dots, d\}$,  which follows from
	\begin{align*}
		\E\left[R_{ip}\right]
		&= 
		\sum_{k=1}^n k \p\left[R_{ip}=k\right]
		=
		\frac1n \sum_{k=1}^n k = \frac{n+1}2, \\
		\E\left[R_{ip}\left(R_{ip}-1 \right) \right]
		& = 
		\sum_{k=1}^n k(k-1) \p\left[R_{ip}=k\right]
		=
		\frac1n \sum_{k=1}^n k(k-1) 
		=
		\frac{1}3 (n-1) (n+1), \\
		\E\left[\max(R_{ip}, R_{jp}) \right]
		&=
		\sum_{k, \ell=1}^n \max(k,\ell) \p \left[R_{ip}=k, R_{jp}=\ell\right]  \\
		&= 
		\frac1{n(n-1)} \sum_{k, \ell=1, k \ne \ell }^n  \max(k,\ell) 
		= \frac{2(n+1)}{3} \qquad(i \ne j),
	\end{align*}
and some simple calculations. Next, the assertion in \eqref{eq:munk} readily follow from \eqref{eq:expi} and
	\[
	\E_{H_k} [S_{n,A}^{\rm M} ] = \E[I_{i,j}^{(p)}]^k + (n-1) \E[I_{i,i}^{(p)}]^{k}.
	\]
	Finally, regarding \eqref{eq:varn} we have
$
\Var(S_{n,A}^{\rm M})
=
\frac{1}{n^2} \sum_{\mathbf i \in \mathcal{J}} \Cov(I_{i_1,i_2}^{(p)}, I_{i_3,i_4}^{(p)}),
$
where $\mathcal J$ has been defined in \eqref{eq:jn} and where
\begin{align*}
c_{\mathbf{i}} := \Cov(I_{i_1,i_2}^{(p)}, I_{i_3,i_4}^{(p)}) 
&=
\Cov\Big( 
\prod_{p \in A} I_{i_1,i_2}^{(p)}, \prod_{p \in A} I_{i_3,i_4}^{(p)}\Big) \\
&=
\prod_{p\in A} \E[ I_{i_1,i_2}^{(p)}  I_{i_3,i_4}^{(p)} ] - \prod_{p \in A}  \E[ I_{i_1,i_2}^{(p)}]  \E[ I_{i_3,i_4}^{(p)}] \\
&=
\E[ I_{i_1,i_2}^{(p)}  I_{i_3,i_4}^{(p)} ]^{k} -  \E[ I_{i_1,i_2}^{(p)}]^k  \E[ I_{i_3,i_4}^{(p)}]^k 
=: \tilde c_{\mathbf i}^k - b_{(i_1, i_2)}^k b_{(i_3,i_4)}^k.
\end{align*}
The latter expression is independent of $p$, but does depend on the number and position of equal indizes among $\ds{\mathbf{i}=(i_1, \ldots, i_4)}$. In general, one of the following 4 cases must occur:
\begin{compactitem}
\item $|\mathbf i|=4$, this happens for $n(n-1)(n-2)(n-3)$ quadruples.
\item $|\mathbf i|=3$, this is the case for $6n(n-1)(n-2)$ quadruples.
\item $|\mathbf i|=2$, this is the case for $4n(n-1)$ quadruples for which three indizes are equal and different from the remaining one, and for $3n(n-1)$ quadruples for which there are two pairs of equal indizes. 
\item $|\mathbf i|=1$, this is the case for $n$ quadruples.
\end{compactitem}
As a consequence,
\begin{align*}
\Var(S_{n,A}^{\rm M})
&=  \frac1{n^2} \Big\{ n(n-1)(n-2)(n-3) c_{(1,2,3,4)}   \\
& \hspace{2cm} + n(n-1)(n-2) \{  2 c_{(1,1,2,3)}  + 4 c_{(1,2,2,3)} \} \\
& \hspace{2cm} + n(n-1) \{ 4 c_{(1,1,1,2)} + c_{(1,1,2,2)} + 2 c_{(1,2,1,2)}\} + 
n c_{(1,1,1,1)} \Big\}.
\end{align*}
The assertion then follows from Equations \eqref{eq:ccc1}--\eqref{eq:ccc2} in Appendix~\ref{subsec:ausmom} after some tedious calculations, which have been checked with a computer algebra system.
\qed

\subsection{Results on higher order moments}
\label{subsec:momh}

Lemma~\ref{lem:snvar}  implies that $\tilde I_{i,j}^{(p)}$ defined in \eqref{eq:bii} is centered, which may be written as
$
\varphi_1\left( i,j\right)= 0$.
The following three lemmas provide explicit formulas for second order moments and bounds on third and fourth order moments, respectively. Recall that
$
\mathcal J=\mathcal J_n=\{ \mathbf{i}\in \{1, \dots, n\}^4: i_1 < i_2, i_3 < i_4\},
$
and that $\ds{|\mathbf i|}$ denotes the number of distinct coordinates of $\mathbf i$. 
Observe that for any $\mathbf{i} \in \mathcal{J}$, we have $2\le |\mathbf{i} |\le 4$.

\begin{lemma}[Second order moments] 
\label{lem:2m}
For any $p\in\{1, \dots, d\}$ and $\ds{\mathbf{i} =(i_1,\ldots,i_4)\in \mathcal J}$, we have
\[
\varphi_2(\mathbf{i}) = \Cov(\tilde I_{i_1, i_2}^{(p)}, \tilde I_{i_3, i_4}^{(p)})
=
\begin{cases}
\frac{1}{45n^2} & , \text{ if } |\mathbf{i}|=4, \\
- \frac{2n^2-3n-4}{180n^2(n+1)}=-\frac{1}{90n}+O(n^{-2}) & , \text{ if } |\mathbf{i}|=3, \\
\frac{(n-2)(n^2-n-1)}{90n^2(n+1)}
=\ff{90}+O(n^{-1}) & , \text{ if } |\mathbf{i}|=2.
\end{cases}
\]
As a consequence, for $\mu \in \{2,3,4\}$,
\begin{align}\label{eq:vfi2}
	\sup_{\mathbf i \in \mathcal J: |\mathbf i|=\mu} |\varphi_2\left(\mathbf{i}\right)| &= O\left( n^{2-\mu}\right).
\end{align}
Moreover, for $i,j \in \{1, \dots, n\}$,
\[
\varphi_2((i,i,j,j)) = \Cov(\tilde I_{i, i}^{(p)}, \tilde I_{j,j}^{(p)})
=
\begin{cases}
\frac{(n-2)(n-1)(n+2)}{180n^2(n+1)} = \frac{1}{180} + O(n^{-1}) & , \text{ if } i=j, \\
 -\frac{(n-2)(n+2)}{180n^2(n+1)}  = -\frac{1}{180n}  + O(n^{-2})& , \text{ if } i \ne j.
\end{cases}
\]
\end{lemma}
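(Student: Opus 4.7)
\textbf{Proof plan for Lemma~\ref{lem:2m}.} The strategy is to exploit the symmetry inherent in $I_{i,j}^{(p)}$ together with the rank-exchangeability under continuous marginals, reducing the problem to three (resp.\ two) canonical covariance computations.

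First I would observe that $I_{i,j}^{(p)}$ (and hence $\tilde I_{i,j}^{(p)}$) is symmetric in $(i,j)$, so $\varphi_2(\mathbf i)$ is invariant under swapping $i_1\leftrightarrow i_2$ or $i_3\leftrightarrow i_4$. Combined with the fact that, for fixed $p$, the rank vector $(R_{1p},\dots,R_{np})$ is uniformly distributed over permutations of $\{1,\dots,n\}$ (so that the law of $(R_{i_1 p},\dots,R_{i_4 p})$ depends only on the partition structure of $\mathbf i$), this shows $\varphi_2(\mathbf i)$ depends only on $|\mathbf i|$ for $\mathbf i \in \mathcal J$. The three cases collapse to the canonical representatives $(1,2,3,4)$, $(1,2,1,3)$, and $(1,2,1,2)$, the last one being simply $\Var(\tilde I_{1,2}^{(p)})$ which was already computed in Lemma~\ref{lem:snvar} for $k=1$; this provides a sanity check for the entry $\frac{1}{90}+O(n^{-1})$.

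Next, in each case I would expand the product $\tilde I_{i_1,i_2}^{(p)}\tilde I_{i_3,i_4}^{(p)}$ using the explicit expression from \eqref{eq:bii} and take expectations termwise. The required ingredients are joint rank moments of the form $\E[R_{i_1 p}^{a_1}R_{i_2 p}^{a_2}R_{i_3 p}^{a_3}R_{i_4 p}^{a_4}]$ together with moments involving $\max$-factors such as $\E[\max(R_{i_1p},R_{i_2p})\max(R_{i_3p},R_{i_4p})]$ and cross terms $\E[R_{i_kp}^{a}\max(R_{i_1p},R_{i_2p})]$. All of these reduce, via the uniform-permutation distribution of ranks, to finite sums of the form $\sum_{\text{ordered tuples}} k_1^{a_1}\cdots k_r^{a_r}$ over distinct values in $\{1,\dots,n\}$, which evaluate in closed form using the standard power-sum identities $\sum_{k=1}^n k^j$ for $j\le 4$.

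For the final assertion on $\varphi_2((i,i,j,j))$, the same strategy applies, but the computation is simpler because $\tilde I_{i,i}^{(p)}$ depends only on $R_{ip}$ through the polynomial $R_{ip}^2/(n(n+1))-R_{ip}/n$, so only univariate and bivariate rank moments are needed; the two subcases $i=j$ and $i\ne j$ then follow directly.

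The main obstacle is purely bookkeeping in the $|\mathbf i|=3$ case: the term $\E[\max(R_{1p},R_{2p})\max(R_{1p},R_{3p})]$ forces a split according to the ordering of the three distinct ranks $R_{1p},R_{2p},R_{3p}$, yielding $3!=6$ equiprobable subcases each of which must be summed separately. Since the resulting expressions are rational functions of $n$ whose cancellations produce the stated leading orders $O(n^{2-\mu})$, I would, as in the proof of Lemma~\ref{lem:snvar}, carry out the final simplifications with a computer algebra system to guarantee correctness; the displayed asymptotic expansions in the statement then follow by Taylor-expanding the rational-function answers in $1/n$, and the uniform bound \eqref{eq:vfi2} is read off directly from the three exact formulas. \qed
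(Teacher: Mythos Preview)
Your proposal is correct and follows essentially the same route as the paper: reduce by rank-exchangeability and the $(i,j)$-symmetry of $I_{i,j}^{(p)}$ to canonical index configurations, expand via \eqref{eq:bii}, evaluate the resulting rank moments (including the $\max$-terms) as finite sums over permutations, and verify the rational-function simplifications with a computer algebra system. The only quibble is your parenthetical ``sanity check'' that $\Var(\tilde I_{1,2}^{(p)})$ was computed in Lemma~\ref{lem:snvar} for $k=1$: that lemma treats $\sigma_n^2(k)=\Var(S_{n,A}^{\rm M})$ for $|A|=k\ge 2$, not $\Var(\tilde I_{1,2}^{(p)})$ itself, so this cross-reference does not actually provide the check you describe (though the relevant quantity $\tilde c_{1,2,1,2}$ is indeed computed in the appendix alongside the other $\tilde c_{\mathbf i}$).
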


For proving a Lyapunov condition, we will require a bound on fourth order moments:
\begin{lemma}[Fourth order moments]
\label{lem:4m}
For any $\ds{\mathbf{i}=(i_1,\ldots,i_8)\in \mathcal{J}^2}$ such that $\ds{5\le \zp[b]{\mathbf{i}}\le 8}$, 
\begin{align}
\label{moment_goal}
\varphi_4\left( \mathbf{i} \right)&= \E\left[\tilde I^{(1)}_{i_1,i_2}\tilde I^{(1)}_{i_3,i_4}\tilde I^{(1)}_{i_5,i_6}\tilde I^{(1)}_{i_7,i_8}\right] = O\left( n^{4-\zp[b]{\mathbf{i}} }\right)
\end{align}
uniformly in $\mathbf i$ (in the sense of \eqref{eq:vfi2}).
\end{lemma}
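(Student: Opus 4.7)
\medskip\noindent\textbf{Proof plan.}
The strategy is to reduce the fourth-order mixed moment $\varphi_4(\mathbf{i})$ to a finite expansion based on a Hoeffding-type decomposition of the two-argument function $h(r,s) := \tilde I^{(1)}_{i,j}$ evaluated at $(R_{i,1},R_{j,1})=(r,s)$, and then to bound each piece using the explicit moments of a uniform random permutation. The key observation is that the joint law of $(R_{1,1},\ldots,R_{n,1})$ is that of a uniformly random permutation $\pi$ of $\{1,\ldots,n\}$ regardless of the joint distribution of $\mathbf X$; hence $\varphi_4(\mathbf i)$ is deterministic and depends on $\mathbf i$ only through the partition of $\{1,\ldots,8\}$ induced by the coincidences $i_a=i_b$, so that only finitely many types must be handled for each $|\mathbf i|\in\{5,6,7,8\}$.

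I would then introduce the Hoeffding projections
\[
h_1(r)=\frac{1}{n-1}\sum_{s\ne r}h(r,s),\qquad h_2(r,s)=h(r,s)-h_1(r)-h_1(s),
\]
which satisfy $\sum_{r=1}^n h_1(r)=0$ and $\sum_{s\ne r}h_2(r,s)=0$ for every $r$. A direct computation, of the same nature as those in the proof of Lemma~\ref{lem:snvar}, shows that the specific combination of contributions in the explicit formula for $h(r,s)$ yields the uniform bounds $\|h_1\|_\infty=O(n^{-1})$ and $\|h_2\|_\infty=O(1)$; roughly, the $r^2$-terms coming from $r(r-1)/[2n(n+1)]$ and from $-\max(r,s)/(n+1)$ cancel at leading order upon averaging over $s\ne r$, leaving only a remainder of order $1/n$ in $h_1$.

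Plugging the decomposition into each of the four factors of the product $\prod_{k=1}^4 h(R_{i_{2k-1},1},R_{i_{2k},1})$ produces an expansion with $3^4=81$ monomials, each of the shape $\prod_{k\in K_1}h_1\cdot\prod_{k\in K_2}h_2$ with $K_1\cup K_2=\{1,2,3,4\}$ disjoint. Taking expectation under $\pi$, the $h_1$-factors contribute a trivial factor of size $O(n^{-|K_1|})$ via the sup-norm bound, whereas the $h_2$-block expectations enjoy further cancellations by double centering, in the same manner as the ones producing $\varphi_2(\mathbf i)=O(n^{2-|\mathbf i|})$ in Lemma~\ref{lem:2m}; they can be quantified by an inclusion-exclusion on the diagonal contributions to the corresponding permutation sums. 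A bookkeeping exercise based on the partition structure of $\mathbf i$, which controls how many rank slots are shared across factors, then yields the overall bound $|\varphi_4(\mathbf i)|=O(n^{4-|\mathbf i|})$ uniformly over $\mathbf i$.

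The main obstacle will be twofold. First, one must establish the uniform bound $\|h_1\|_\infty=O(n^{-1})$, which requires a careful opening of the formulas to expose the cancellation between the quadratic rank terms and the maximum-of-ranks term. Second, the combinatorial bookkeeping over the 81 monomials and the four partition types $|\mathbf i|\in\{5,6,7,8\}$ is tedious; fortunately, since only an upper bound is required (rather than an exact expression as in Lemma~\ref{lem:snvar}), substantial simplifications are possible, and the crude bound $|\varphi_4|\le \|h\|_\infty^4=O(1)$ already settles many degenerate cases, so that only the terms involving the full structure of the permutation need to be tracked precisely.
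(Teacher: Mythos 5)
Your plan is a genuinely different route from the paper's: the paper proves Lemma~\ref{lem:4m} by rewriting $\tilde I^{(1)}_{i,j}$ via \eqref{itilde}, expanding the fourfold product, and bounding the resulting terms $\E[\prod_j \Xi_{i_j,\ell_{\psi(j)}}]$ through iterated conditioning on ranks (using \eqref{eq:condcdf}) together with fully explicit expansions for the hardest cases in Sections~\ref{sect:ka-1}--\ref{sect:ka-2} and Appendices~\ref{sect:ka-3}--\ref{sect:ka-4}; you instead propose a finite-population Hoeffding decomposition of the kernel $h(r,s)=\frac{n+1}{3n}+\frac{r(r-1)}{2n(n+1)}+\frac{s(s-1)}{2n(n+1)}-\frac{\max(r,s)}{n+1}$ under the uniform-permutation law of the ranks. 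Your key preliminary claim $\norm{h_1}_\infty=O(n^{-1})$ is correct (the quadratic rank terms do cancel against the averaged maximum), though your stated degeneracy is not: with your definitions $\sum_{s\ne r}h_2(r,s)=h_1(r)=O(n^{-1})$, not $0$. That is fixable, but the real gap lies in the treatment of the monomials containing $h_2$-factors, which is exactly where the content of the lemma sits. The bookkeeping you describe --- sup-norm factors $O(n^{-|K_1|})$ from the $h_1$'s plus unspecified ``cancellations by double centering'' for the $h_2$-blocks --- cannot deliver the rate: for $|\mathbf i|=8$ the pure $h_2$-term $\E[h_2(R_1,R_2)h_2(R_3,R_4)h_2(R_5,R_6)h_2(R_7,R_8)]$ must be $O(n^{-4})$, while sup-norms give only $O(1)$, and even a single conditioning step (the conditional mean of one $h_2$ given the other six ranks is $O(n^{-2})$) gives only $O(n^{-2})$ once you pass to absolute values, because absolute values destroy the near-degeneracy of the remaining factors; similarly the mixed term with three $h_1$'s and one $h_2$ is only $O(n^{-3})$ by your accounting.

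To close this you would have to expand each conditional expectation explicitly, e.g.
\begin{align*}
\E\bigl[h_2(R_7,R_8)\mid R_1,\dots,R_6\bigr]
=\frac{-2\sum_{j\le 6}h_1(R_j)+\sum_{j\ne j'}h_2(R_j,R_{j'})}{(n-6)(n-7)},
\end{align*}
which re-couples the previously disjoint pairs, and then recurse over the coincidence patterns so generated, tracking at each step which rank slots remain free; only then do the gains compound to the claimed $O(n^{4-|\mathbf i|})$, and the recursion has to be carried out for every pattern with $5\le|\mathbf i|\le 8$ (shared indices across factors create chain-like couplings needing separate treatment, cf.\ the distinct cases $\kappa_{A,i}=-1,\dots,-4$ in the paper). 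This recursive conditional-centering argument is asserted in your proposal (``a bookkeeping exercise'', ``inclusion-exclusion on the diagonal contributions'') but never made, so as written the proof does not go through. If you do carry it out, you would obtain a more conceptual derivation that avoids the paper's computer-algebra case analysis, which would be a worthwhile alternative; at present, however, the hardest step is missing rather than merely sketched.
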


Finally, a particular third order moment is needed as well:
\begin{lemma}[Third order moment]\label{lem:3m} 
For $\ds{\mathbf{i}_0 =\left( 1,2,1,2,3,4\right) }$, one has: 
\[
\varphi_3\left(\mathbf{i}_0 \right)= \E \Big[\left(\tilde  I^{(1)}_{1,2}\right)^2 \tilde I^{(1)}_{3,4} \Big] =O\left( n^{-1}\right) .
\]
\end{lemma}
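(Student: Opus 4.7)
My plan is to exploit the fact that $\tilde I^{(1)}_{i,j}$ depends on the data only through the first-coordinate ranks $R_{i,1}$ and $R_{j,1}$. Under the continuity assumption on the marginal of coordinate~$1$, the vector $(R_{1,1}, \dots, R_{n,1})$ is a uniform random permutation of $\{1, \dots, n\}$, regardless of the joint distribution of $\bm X$; in particular, $(R_{1,1}, R_{2,1}, R_{3,1}, R_{4,1})$ is uniform on the set of ordered 4-tuples of pairwise distinct elements of $\{1, \dots, n\}$. Introducing bounded functions $f, g: \{1, \dots, n\}^2 \to \R$ with $(\tilde I^{(1)}_{1,2})^2 = f(R_{1,1}, R_{2,1})$ and $\tilde I^{(1)}_{3,4} = g(R_{3,1}, R_{4,1})$, read off directly from \eqref{eq:bii}, the target expectation becomes
\[
\varphi_3(\mathbf i_0) = \frac{1}{n(n-1)(n-2)(n-3)} \sum_{\substack{(i_1, i_2, i_3, i_4) \\ \text{pairwise distinct}}} f(i_1, i_2)\, g(i_3, i_4).
\]

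The pivotal input is the centering identity $\E[\tilde I^{(1)}_{3,4}] = 0$ from Lemma~\ref{lem:snvar}, which under the permutation distribution translates to $\sum_{i_3 \ne i_4} g(i_3, i_4) = 0$. For fixed $i_1 \ne i_2$, this identity rewrites the inner sum over index pairs disjoint from $\{i_1, i_2\}$ as minus the sum over overlapping pairs:
\[
\sum_{\substack{i_3 \ne i_4 \\ \{i_3, i_4\} \cap \{i_1, i_2\} = \emptyset}} g(i_3, i_4) \;=\; -\sum_{\substack{i_3 \ne i_4 \\ \{i_3, i_4\} \cap \{i_1, i_2\} \ne \emptyset}} g(i_3, i_4).
\]
The right-hand sum has only $4n - 6 = O(n)$ nonzero summands (inclusion-exclusion: $2(n-1)$ with $i_3 \in \{i_1, i_2\}$ plus $2(n-2)$ with $i_4$ only in $\{i_1, i_2\}$), and each is uniformly $O(1)$ since the expression in \eqref{eq:bii} gives $|\tilde I^{(p)}_{i,j}| = O(1)$ via $R(R-1)/(2n(n+1)) \le 1/2$ and $\max(R,R')/(n+1) \le 1$. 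Summing this $O(n)$ bound against the uniformly bounded $f(i_1, i_2)$ over the $n(n-1)$ pairs $(i_1, i_2)$ yields a numerator of order $O(n^3)$; dividing by the $\Theta(n^4)$ normalization gives $\varphi_3(\mathbf i_0) = O(n^{-1})$.

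There is no substantial obstacle. Morally, the argument reflects the fact that dependence between disjoint index blocks under a uniform random permutation is of order $1/n$, while the centering $\E[\tilde I^{(1)}_{3,4}]=0$ kills the leading $O(1)$ contribution in $\E[(\tilde I^{(1)}_{1,2})^2 \tilde I^{(1)}_{3,4}]$. The only mildly tedious point is the bookkeeping for the overlap count and the uniform boundedness of $f$ and $g$, both of which are elementary.
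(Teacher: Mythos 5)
Your proof is correct, and it takes a genuinely different route from the paper's. The paper proves Lemma~\ref{lem:3m} with the same machinery it sets up for Lemma~\ref{lem:4m}: it writes $\tilde I^{(1)}_{i,j}$ via the representation \eqref{itilde} as $\frac1{6n}+\frac1{n+1}\sum_\ell \Xi_{i,\ell}\Xi_{j,\ell}$, expands the triple product so that only the sum $\sum_{\ell_1,\ell_2,\ell_3}\E[\Xi_{1,\ell_1}\Xi_{2,\ell_1}\Xi_{1,\ell_2}\Xi_{2,\ell_2}\Xi_{3,\ell_3}\Xi_{4,\ell_3}]$ is non-trivial, and then conditions on $\mathcal G_{\{1,2,3\}}$, using the conditional rank distribution \eqref{eq:condcdf} to show the inner conditional expectation is uniformly $O(n^{-1})$ for $\ell_3\ge 7$. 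You instead work directly with the fact that $(R_{1,1},\dots,R_{n,1})$ is a uniform permutation, so the target is an average of $f(i_1,i_2)g(i_3,i_4)$ over pairwise distinct quadruples; the centering $\sum_{i_3\ne i_4}g(i_3,i_4)=0$ (which is exactly $\E[\tilde I^{(1)}_{3,4}]=0$ under the uniform distribution of $(R_{3,1},R_{4,1})$ on ordered distinct pairs) converts the disjoint-pair inner sum into minus a sum over the $4n-6$ overlapping pairs, each term uniformly bounded, giving a numerator of order $O(n^3)$ against the $\Theta(n^4)$ normalization. Your counting and the uniform boundedness of $f,g$ read off \eqref{eq:bii} are both right, and no independence hypothesis on the coordinates of $\bm X$ is needed, matching the lemma. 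What your route buys is a more elementary and transparent argument that in fact proves a slightly more general statement (any bounded function of one rank block times a bounded, centered function of a disjoint rank block has expectation $O(n^{-1})$ by sampling-without-replacement combinatorics); what the paper's route buys is uniformity with the rest of Section~\ref{subsec:momh}, since the same $\Xi$-representation and conditioning scheme carry the fourth-moment bounds of Lemma~\ref{lem:4m}, where a pure overlap-counting argument would not suffice on its own.
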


\begin{proof}[Proof of Lemma~\ref{lem:2m}]
All formulas can be deduced from Equations~\eqref{eq:bb1}--\eqref{eq:bb16} in Section~\ref{subsec:ausmom} after some tedious calculations that have been checked with computer algebra systems. Exemplary calculations can be found in Section~\ref{sec:2m}.
\end{proof}

\begin{proof}[Proof of Lemma~\ref{lem:4m}]
We define $\ds{\psi : S_{8}\mapsto S_4 }$ by: 
\[
\psi(x ) = \frac{x}{2}1_{\{x\text{ even}\}}  +\frac{x+1}{2}1_{\{x\text{ odd}\}}.
\]

First of all, note that, for any $1\le i\neq j\le n$ and $1\le p\le d$:
\begin{align}
\label{itilde}
\tilde I^{(1)}_{i,j} &=  \ff{6n} + \ff{n+1}\sum_{\ell=1}^n \Xi_{i,\ell}\cdot \Xi_{j,\ell},\quad\text{ where }~ \Xi_{i,\ell}=  1_{\{ R_{i1}\leq \ell \}} - \frac{\ell}{n}.
\end{align}
Let $\mathbf i =\mathbf{i}_{1:8} \in \mathcal{J}^2$. Then, together with the multinomial formula from (\ref{multinomial}),
\begin{align}
\varphi_{4}\left(\mathbf{i} \right)
&=  \nonumber
\E\Big[\prod_{s=1}^4 \Big\{ \ff{6n} + \ff{n+1}\sum_{\ell=1}^n\Big(  \prod_{j \in \psi^{-1}(\{s\})} \Xi_{i_j,\ell}\Big)  \Big\} \Big] \\
&=  \nonumber
\frac{1}{(6n)^4} + \sum_{A \subset S_4, A \ne \emptyset}\ff{\left( 6n\right)^{\zp[b]{A^c}}(n+1)^{\zp[b]{A}} }\E\Big[\prod_{s\in A} \sum_{\ell=1}^n \prod_{j \in \psi^{-1}(\{s\})}\Xi_{i_j,\ell} \Big] \\
&=   \label{eq:phi1i} 
\frac{1}{(6n)^4} +\sum_{A \subset S_4, A \ne \emptyset}\ff{\left( 6n\right)^{\zp[b]{A^c}}(n+1)^{\zp[b]{A}} }  \sum\limits_{ \bm \ell = (\ell_s)_{s \in A} \in (1:n)^{|A|}} E_{n,\bm \ell} (A),
\end{align}
where, for nonempty $\ds{A\subset S_4}$ and $\ell =(\ell_s)_{s \in A} \in \{1, \dots, n\}^{|A|}$,  
\begin{align} \label{eq:enlab}
E_{n,\bm \ell} (A)= \E\Big[ \prod\limits_{{j \in\psi^{-1}(A) }}\Xi_{i_j,\ell_{\psi(j)}}\Big]
\end{align}
(note that we notationally suppress the dependence on $\mathbf i$). Note that $|\Xi_{i_j,\ell_{\psi(j)}}| \le 1$ and hence $|E_{n,\bm \ell} (A)| \le 1$.  As a consequence, for proving Lemma~\ref{lem:4m}, it is sufficient to restrict the sum over $\bm \ell$ in \eqref{eq:phi1i} to the case where at least one coordinate of $\bm \ell$ is larger than 7. Hence, the proof of Lemma~\ref{lem:4m} is finished once we show that 
 \begin{align}\label{eq:summand_rate}
	E_{n,\bm \ell} (A)&=O\left( n^{\kappa_{A,\mathbf{i}}}\right),\quad \kappa_{A,\mathbf{i}}:= 8-\zp[b]{\mathbf{i}}-\zp[b]{A} 
\end{align}
for all $\varnothing \ne A \subset S_4$, all $\bm \ell =(\ell_s)_{s\in A} \in \{1, \dots, n \}^{|A|}$ with at least one coordinate larger than $7$ and all $\mathbf i \in \{1, \dots, n \}^8$ with $\ds{\zp[b]{\mathbf{i}}=5,\ldots,8}$.  Clearly, it is sufficient to consider the case where the components of $\bm \ell$ are non-decreasing (by symmetry). 

The proof of \eqref{eq:summand_rate} is based on careful case-by-case study. First, since $|E_{n,\bm \ell} (A)| \le 1,$ nothing has to be shown for the case $\kappa_{A,i}\ge 0$.  The other cases are treated in the subsequent Sections~\ref{sect:ka-1} and \ref{sect:ka-2} and in Appendix~\ref{sect:ka-3} and  \ref{sect:ka-4}. 
\end{proof}

\subsubsection{Proof of \eqref{eq:summand_rate} for $\kappa_{A,i}= -1$.}\label{sect:ka-1}
Throughout, we will write $R_i$ instead of $R_{ip}$ for simplicity. For $D \subset \{1, \dots, n\}$, let
\[
\ds{\mathcal{G}_{D} = \sigma\left(R_{i}: i \in D \right)}.
\]
Some straightforward arguments imply that, for $i\in\{1, \dots, n\}$ and $D \subset\{1, \dots, n\}\setminus \{i\}$, we have
\begin{align} \label{eq:condcdf}
\p(R_i \le k \mid \mathcal G_D) 
&=
\begin{cases} \dfrac{k-Z_D(k)}{n-|D|} &, Z_D(k) \le k, \\
0&, \text{else},
\end{cases}
\end{align}
where $Z_D(k) = \sum_{j\in D} 1_{\{R_j \le k\}}$. Fix some arbitrary index $h \in \psi^{-1}(A)$ with $\ell_{\psi(h)}  \ge 7$ and let $B_h =  \psi^{-1}(A) \setminus \{h\}$. Then, by iterated expectation,
\begin{align}  \label{eq:eit}
E_{n,\bm \ell} (A)
&= 
\E\Big[ \prod\limits_{{j \in \psi^{-1}(A)}}\Xi_{j,\ell_{\psi(j)}}\Big]
=
\E\Big[ \E\Big[\Xi_{h,\ell_{\psi(h)}} | \mathcal G_{B_h} \Big] \prod\limits_{{j \in B_h}}\Xi_{j,\ell_{\psi(j)}}\Big]
\end{align}
In view of \eqref{eq:condcdf}, since $\ell_{\psi(h)}  \ge 7$ by assumption,  we have $\ell_{\psi(h)} \ge  |B_h| = 2|A|-1$ and  may hence write
\begin{align}  \label{eq:innercond}
\E\left[\Xi_{h, \ell_{\psi(h)}}\big|\mathcal{G}_{B_h}  \right] 
&=
\frac{\ell_{\psi(h)} - Z_{B_h}(\ell_{\psi(h)})}{n-2|A|+1}-\frac{\ell_{\psi(h)}}{n} 
=
\frac{(2|A|-1)\ell_{\psi(h)} - n  Z_{B_h}(\ell_{\psi(h)} )}{n(n-2|A|+1)}.
\end{align}
As a consequence, in view of the fact that $\Xi_{j,\ell_{\psi(j)}}$ is bounded by 1 and that $Z_{B_h}(\ell_{\psi(h)} )  \le 2|A|-1$, we immediately obtain that the integrand in the outer expectation in \eqref{eq:eit} is uniformly bounded by $2(2|A|-1)/(n-2|A|+1)=O(n^{-1})$.   \qed

\subsubsection{Proof of \eqref{eq:summand_rate} for $\kappa_{A,i}= -2$.}\label{sect:ka-2} 

We only give the proof for the case $(|\mathbf{i}|,|A|)=(8,2)$; the other two cases $(|\mathbf{i}|,|A|)=(7,3)$ and $(|\mathbf{i}|,|A|)=(6,4)$ may be treated similarly by a careful selection of the conditioning variables in the subsequent arguments.

When $\ds{\zp[b]{\mathbf{i}} = 8}$, we can assume, by equidistribution, that $\mathbf i=(1, 2, \dots, 8)$; i.e., $i_j=j$ for $j\in S_8$. Likewise, we may assume that $\ell_1 \le \ell_2$. We need to show that 
\begin{align} \label{eq:goal8}
	E_{n,\bm \ell} (A)& = \E\Big[ \prod\limits_{{j \in\psi^{-1}(A) }}\Xi_{j,\ell_{\psi(j)}}\Big]  = O(n^{-\zp[b]{A}}) . 
\end{align}
for every $A \subset \{1, \dots, 4\}$ with $|A|=2$. 

We start from \eqref{eq:eit} and \eqref{eq:innercond} and invoke another application of iterated expectation. More precisely, choose $h' \in \psi^{-1}(A)$ such that $\psi(h')=\psi(h)$ and write
\[
E_{n,\bm \ell} (A)
=
\E\Big[  \E\Big[  
\frac{(2|A|-1)\ell_{\psi(h)} - n  Z_{B_h}(\ell_{\psi(h)} )}{n(n-2|A|+1)}
\cdot \Xi_{h',\ell_{\psi(h')}} \Big| \mathcal{G}_{B_{h,h'}} \Big]
\cdot \prod\limits_{j \in \mathcal{G}_{B_{h,h'}} }\Xi_{j,\ell_{\psi(j)}} \Big],
\]
where $B_{h,h'}=\psi^{-1}(A) \setminus \{h,h'\}$ (recall that $\ell_{\psi(h)} \ge 7 \ge |B_h|$). 

Next, let $|A|=2$, which is generically represented by the particular case $A=\{1,2\}$ with $\psi^{-1}(A)=\{1, 2,3,4\}$. Further, for simplicity,  we restrict attention to $h=1$ and $h'=2$.   Then, observing that $\ell \equiv \ell_{\psi(1)}=\ell_{\psi(2)}$, 
 the previous display simplifies to
\begin{align}\label{eq:cui}
E_{n,\bm \ell} (A)
=
\E\Big[  \E\Big[  
\frac{3\ell - n  Z_{\{2,3,4\}}(\ell)}{n(n-3)}
\cdot \Xi_{2,\ell} \Big| \mathcal{G}_{\{3,4\}} \Big]\cdot \prod\limits_{j \in \{3,4\} }\Xi_{j,\ell_{\psi(j)}} \Big].
\end{align}
We start by expanding the expression inside the conditional expectation, which may be written as
\begin{align*}  
&\phantom{{}={}}
\frac1{n^2(n-3)} \Big\{3 n  \ell 1_{\{R_{2} \le \ell \}}- 3 \ell^{2} - n^2 \sum_{j \in \{2,3,4\}}  1_{\{R_{j} \le \ell, R_{2} \le \ell\}} + n \ell \sum_{j \in \{2,3,4\}}  1_{\{R_{j} \le \ell\}}
\Big\} \\
&=
\frac1{n^2(n-3)} \Big\{(4 n  \ell -n^2) 1_{\{R_{2} \le \ell \}}- 3 \ell^{2} - n^2 \sum_{j \in \{3,4\}}  1_{\{R_{j} \le \ell, R_{2} \le \ell\}} + n \ell \sum_{j \in \{3,4\}}  1_{\{R_{j} \le \ell\}}
\Big\} .
\end{align*}
By \eqref{eq:condcdf}, we obtain, for $j\in\{3,4\}$,
\[
\p(R_2\le \ell, R_{j}\le \ell \mid \mathcal{G}_{\{3,4\}}) 
= 
1_{\{R_{j} \le \ell\}} \p(R_2\le \ell  \mid \mathcal{G}_{\{3,4\}})
=
1_{\{R_{j} \le \ell\}}  \dfrac{\ell-Z_{\{3,4\}}(\ell)}{n-2}.
\]
As a consequence, the conditional expectation in \eqref{eq:cui} can be written as
\begin{align*}
&\phantom{{}={}}
\frac1{n^2(n-3)}  \Big\{(4 n \ell-n^2) \dfrac{\ell-Z_{\{3,4\}}(\ell)}{n-2} - 3\ell^2  - n^2  \dfrac{\ell-Z_{\{3,4\}}(\ell)}{n-2}  Z_{\{3,4\}}(\ell)  + n\ell Z_{\{3,4\}}(\ell) \Big\} \\
&=
\frac1{n^2(n-2)(n-3)} \Big\{ F_{n1} + F_{n2} \Big\}, 
\end{align*}
where
\[
F_{n1} = n\ell^2-n^2\ell  , \qquad F_{n2} = 6 \ell^2 + Z_{\{3,4\}}(n^2-6n\ell+n^2Z_{\{3,4\}}).
\]
Hence, the assertion in \eqref{eq:goal8} with $|A|=2$ is shown once we prove that
\[
 \E\Big[F_{n\kappa} \prod_{j\in\{3,4\}} \Xi_{j, \ell_{\psi(j)}} \Big] =O(n^2), \quad \kappa \in \{1,2\}.
\]
For $\kappa=2$, the assertion is obvious in view of the fact that $|\Xi_{j, \ell_{\psi(j)}}| \le 1$ and that $|\ell| \le n$ and $Z_{\{3,4\}} \le 2$, which implies $|F_{n2}| \le 24n^2$. For $\kappa=1$, we need to condition once again: writing $\ell'=\ell_{\psi(3)}=\ell_{\psi(4)}$ and using that $|F_{n1}|\le n^3$, we have, by the arguments that lead to \eqref{eq:innercond},
\begin{align*}
 \E\Big[F_{n1} \prod_{j\in\{3,4\}} \Xi_{j, \ell_{\psi(j)}} \Big] 
&=
F_{n1}\E\Big[ \E\Big[ \Xi_{3, \ell'} \mid \mathcal G_{\{4\}} \Big]  \Xi_{4, \ell'}  \Big]   
 =
 F_{n1}  \E\Big[ \Big( \frac{\ell'  - n 1_{\{R_4 \le \ell'\}}}{n(n-1)}  \Big)
 \Xi_{4, \ell_{\psi(4)}}  \Big].
\end{align*}
The assertion then follows from $|F_{n1}| \le n^3, \ell' \le n$ and  $|\Xi_{4, \ell_{\psi(4)}}|\le 1$. \qed

\begin{proof}[Proof of Lemma \ref{lem:3m}]
In view of \eqref{itilde}, we may write
\[
\varphi_3\left( \mathbf{i}_0 \right) 
= 
\E \Big[\left(\tilde  I^{(1)}_{1,2}\right)^2 \tilde I^{(1)}_{3,4} \Big]
=
\E\Big[ \Big(\frac1{6n} + \frac1{n+1} \sum_{\ell=1}^n \Xi_{1,\ell} \Xi_{2,\ell}\Big)^2 \Big(\frac1{6n} + \frac1{n+1} \sum_{\ell=1}^n \Xi_{3,\ell} \Xi_{4,\ell}\Big) \Big],
\]
and we need to show that this expression is $O(n^{-1})$.
Expanding the product and using the fact that $\Xi_{i,\ell}$ is bounded by one, the only non-trivial summand is
\[
\frac{1}{(n+1)^3} \sum_{\ell_1, \ell_2, \ell_3=1}^n \E[ \Xi_{1,\ell_1} \Xi_{2,\ell_1} \Xi_{1,\ell_2} \Xi_{2,\ell_2} \Xi_{3,\ell_3} \Xi_{4,\ell_3}]
\]
It is sufficient to show that each summand with $\ell_3 \ge 7$ is $O(n^{-1})$, uniformly in $\bm \ell=(\ell_1, \ell_2, \ell_3)$. Conditioning on $\mathcal G_{\{1,2,3\}}$, and applying the same argument that lead to \eqref{eq:eit}, we have
\[
\E[ \Xi_{1,\ell_1} \Xi_{2,\ell_1} \Xi_{1,\ell_2} \Xi_{2,\ell_2} \Xi_{3,\ell_3} \Xi_{4,\ell_3}]
=
\Exp\Big[ \frac{3\ell_3-nZ_{\{1,2,3\}}(\ell_3)}{n(n-3) }  \Xi_{1,\ell_1} \Xi_{2,\ell_1} \Xi_{1,\ell_2} \Xi_{2,\ell_2} \Xi_{3,\ell_3} \Big]
\]
The integrand is bounded by $\frac{6}{n-3}$, which implies the assertion.
\end{proof}

\subsection{Results and proofs for Step 1} \label{sec:ps1}

\begin{proof}[Proof of Proposition~\ref{prop:tt}]
The first statement  regarding $T_n(2)$ readily follows from the fact that, for any arbitrary function $f$,
$
\sum_{i \ne j} f(R_{ip}, R_{jp}) = \sum_{i \ne j} f(i,j).
$
Concerning the case $k\ge 3$, we may use \eqref{eq:munk} to write 
	 \begin{align*}
	 T_n(k) - \nu_n(k)
	 &=	
	 \sum_{\mathbf{p}_k\in \mathcal{P}(d,k)} \left\{	S^{\textrm{M}}_{n,\mathbf{p}_k}-\E\left[S^{\textrm M}_{n,\mathbf{p}_k}\right]\right\}
	 = M_n(k) +  N_n(k),
	 \end{align*}
	 where 
	 \begin{align*}
	 M_n(k) &= \sum_{\mathbf{p}_k\in \mathcal{P}(d,k)}\left\{M_{n,\mathbf{p}_k}-\left(n-1 \right)\left( \frac{-1}{6n}\right)^{k} \right\}, \quad
		N_n(k) =
		\sum_{\mathbf{p}_k\in \mathcal{P}(d,k)}\left\{	N_{n,\mathbf{p}_k}-\left(\ff{6}-\ff{6n} \right)^{k}\right\},  
	\end{align*}
	and where
	\begin{align*} 
	 M_{n,\mathbf p_k} = \frac1n \sum_{i\ne j}^n \prod_{\ell=1}^k I_{i,j}^{(p_\ell)}, \qquad
 N_{n,\mathbf p_k} = \frac1n \sum_{i=1}^n \prod_{\ell=1}^k I_{i,i}^{(p_\ell)}.
\end{align*}

Since $\tilde M_n(k) = \tilde M_n(k) +  \tilde N_n(k)$, it is hence sufficient to show that
\begin{align} \label{eq:suff1}
M_n(k)  =  \tilde M_n(k) + o_\Prob\left( \delta_n(k)\right) , \qquad 
N_n(k)  =  \tilde N_n(k) + o_\Prob\left( \delta_n(k)\right) .
\end{align}

Regarding the first assertion in \eqref{eq:suff1}, note that we may rewrite
\begin{align} \label{eq:mnk1}
M_n(k)
&= 
\frac{1}{n}\sum_{\mathbf{p}_k\in \mathcal{P}(d,k)}\sum_{i\neq j}^n\left\{ \prod_{\ell=1}^k I^{(p_\ell)}_{i,j} -\left( \frac{-1}{6n}\right)^{k} \right\}.
\end{align}

 As a consequence of the multinomial identity (\ref{multinomial}) and the definition of $\ds{\tilde I_{i,j}^{(p)}}$ in \eqref{eq:bii},
	\begin{align*}
	\prod_{\ell=1}^k   I^{(p_\ell)}_{i,j}=\prod_{\ell=1}^k \left(  \tilde I^{(p_\ell)}_{i,j} -\ff{6n}\right) &= \prod_{\ell=1}^k  \tilde I^{(p_\ell)}_{i,j} + \sum\limits_{\substack{A\subset S_k \\ \zp[b]{A}<k}} 
\left(\frac{-1}{6n}\right)^{\zp[b]{A^c}}
	\prod_{\ell\in A} \tilde I^{(p_\ell)}_{i,j} ,	
	\end{align*}
	which allows to rewrite \eqref{eq:mnk1} as
	\begin{align}
	\nonumber
	M_n(k)&= \frac{1}{n}\sum_{\mathbf{p}_k\in \mathcal{P}(d,k)}\sum_{i\neq j}^n\left\{  \prod_{\ell=1}^k  \tilde I^{(p_\ell)}_{i,j} + \sum\limits_{\substack{A\subset S_k \\ 0<\zp[b]{A}<k}}  \left(\frac{-1}{6n}\right)^{\zp[b]{A^c}} \prod_{\ell\in A} \tilde I^{(p_\ell)}_{i,j} \right\}
		\\ 
		&= \tilde M_n(k) +  \frac{1}{n}\sum\limits_{\substack{A\subset S_k \\ 0<\zp[b]{A}<k}}  \left(\frac{-1}{6n}\right)^{\zp[b]{A^c}}  \sum_{\mathbf{p}_k\in \mathcal{P}(d,k)} \sum_{i\neq j}^n \prod_{\ell\in A} \tilde I^{(p_\ell)}_{i,j}
		\equiv\tilde M_n(k) + \tilde U_n(k),
		\label{eq:mtmu}
	\end{align}
	where
	\[
	\tilde U_n(k) = \sum\limits_{\substack{A\subset S_k \\ 0<\zp[b]{A}<k}} 
	\left(\frac{-1}{6n}\right)^{\zp[b]{A^c}} 
	\sum_{\mathbf{p}_k\in \mathcal{P}(d,k)} \tilde M_{n,\mathbf{p}_A}
	\]
	and
where $\tilde M_{n,\mathbf{p}_k}$ and $\tilde M_n(k)$ are defined in \eqref{eq:mnnn} and \eqref{eq:mnnn2}, respectively. Thus, the first assertion in \eqref{eq:suff1} is shown once we prove that $\tilde U_n(k) = o_\Prob\left( \delta_n(k)\right) $. 
For that purpose, consider, for fixed $A\subset \{1, \dots, k\}$ with $0<|A|<k$, the partition mapping
\[
\pi_A:\mathcal{P}(d,k)\to \mathcal{P}(d,\zp[b]{A})\times\mathcal{P}(d,k-\zp[b]{A}) ,
\qquad \mathbf p_k \mapsto \pi_A(\mathbf{p}_{k}) = \left( \mathbf{p}_A,\mathbf{p}_{A^c}\right), 
\]
which clearly is a bijection. 
We may then write
\begin{align}  \label{eq:un2}
\tilde U_n (k)
&= \nonumber
\sum\limits_{\substack{A\subset S_k \\ 0<\zp[b]{A}<k}} \left(\frac{-1}{6n}\right)^{\zp[b]{A^c}} 
 \sum_{ \left( \mathbf{p}_A,\mathbf{p}_{A^c}\right) \in \mathcal{P}(d,\zp[b]{A})\times\mathcal{P}(d,k-\zp[b]{A}) } 
 \tilde M_{n,\mathbf{p}_A} \\
&=
\sum\limits_{\substack{A\subset S_k \\ 0<\zp[b]{A}<k}} \left(\frac{-1}{6n}\right)^{\zp[b]{A^c}}  \binom{d}{k-|A|}
 \sum_{ \mathbf{p}_A  \in \mathcal{P}(d,\zp[b]{A}) } \tilde M_{n,\mathbf{p}_A}.
\end{align}
By Proposition~\ref{prop_s1} applied with $k= |A|$ in the proposition's notation and $(4k-7)$-wise independence, we have
\[
{\binom{d}{\zp[b]{A}}^{-\frac{1}{2}}}\sum_{\mathbf{p}_A\in \mathcal{P}(d,\zp[b]{A})} \tilde M_{n,\mathbf{p}_A} =O_\Prob(1).
\]
As a consequence, each summand in the outer sum in \eqref{eq:un2} is of the order $O_\Prob( n^{|A|-k} d^{k-\frac{|A|}2})) $,
which, together with $\delta_n(k)^{-1}=O( d^{-\frac{k}{2}})$, implies that
\[
\frac{\tilde U_n (k)}{\delta_n(k)} 
= 
\sum\limits_{\substack{A\subset S_k \\ 0<\zp[b]{A}<k}} 
O_\Prob\Big(\Big( \frac{d}{n^2}\Big)^{\frac{k-|A|}{2}}\Big) = O_\Prob\Big( \Big( \frac{d}{n^2}\Big)^{\frac{1}{2}} \Big)  =o_\Prob(1)
\]
by the assumption on $d$.

It remains to prove the second assertion in \eqref{eq:suff1}. In view of $I^{(p)}_{i,i}=\tilde I^{(p)}_{i,i}+\left(\frac{1}{6}-\frac{1}{6n}\right)$ by the definition in \eqref{eq:bii}, exactly the same arguments that lead to $M_n(k) = \tilde M_n(k) + \tilde U_n(k)$ in \eqref{eq:mtmu} allow to write $N_n(k) = \tilde N_n(k) + \tilde V_n(k)$, where
\begin{align*}
\tilde V_n(k)
&=
\sum\limits_{\substack{A\subset S_k \\ 0<\zp[b]{A}<k}} 
{\left( \ff{6}-\ff{6n}\right)^{\zp[b]{A^c}}}
\sum_{\mathbf{p}_k\in \mathcal{P}(d,k)}\tilde N_{n,\mathbf{p}_A} \\
&=
\sum\limits_{\substack{A\subset S_k \\ 0<\zp[b]{A}<k}} {\left( \ff{6}-\ff{6n}\right)^{\zp[b]{A^c}}}  \binom{d}{k-\zp[b]{A}}
\sum_{\mathbf{p}_A\in \mathcal{P}(d,\zp[b]{A})} \tilde N_{n,\mathbf{p}_A}.
\end{align*}
From Proposition~\ref{prop:nn}, 
we have 
\[
{\binom{d}{\zp[b]{A}}^{-\frac{1}{2}}}\sum_{\mathbf{p}_A\in \mathcal{P}(d,\zp[b]{A})} \tilde N_{n,\mathbf{p}_A} =O_\Prob\left(n^{-\ff{2}}\right).
\]
As consequence, since $\binom{d}{\ell}=O(d^{\ell})$ and $\delta_n^{-1}(k)=O(d^{-\frac{k}{2}})$,
\[
\frac{\tilde V_n (k)}{\delta_n(k)} = \sum\limits_{\substack{A\subset S_k \\ 0<\zp[b]{A}<k}} 
O_\Prob\left( n^{-\ff{2}}d^{\frac{k-|A|}{2}}\right)  = O_\Prob\left( n^{-\ff{2}} d^{\frac{k-1}{2}}\right) ,
\]
which converges to zero by the assumption on $d$.
\end{proof}

\subsection{Results and proofs for Step 2}\label{sec:ps2}

\begin{proof}[Proof of Proposition~\ref{prop:nn}] 
Recall $\tilde N_{n,A}$ from \eqref{eq:mnnn} and $\delta_n(k)$ from \eqref{eq:deltan}. It is sufficient to show that
\begin{align} \label{eq:l2conv}
\delta_n^{-2}(k) \E\Big[ \tilde N_{n}^2(k) \Big] =O(n^{-1}).
\end{align}
For $\mu\in\{0,\dots, k\}$, let
\[
\mathcal{Z}_{k}(\mu) = \left\{ (\mathbf{p}_{k,1},\mathbf{p}_{k,2})\in \mathcal{P}(d,k) \times \mathcal{P}(d,k) : \zp[b]{\mathbf{p}_{k,1}\cap \mathbf{p}_{k,2}}=\mu   \right\}.
\]
We may then decompose
\begin{align*}
\tilde N_{n}^2(k)
=
\sum\limits_{\substack{\mathbf{p}_{k,1},\mathbf{p}_{k,2} \in \mathcal{P}(d,k) }} 
  \tilde N_{n, \mathbf{p}_{k,1} }(k) \tilde N_{n,\mathbf{p}_{k,2}  }(k)
= 
\sum_{\mu=0}^k \tilde N_{n,\mu}(k),
\end{align*}
where
\[
\tilde N_{n,\mu}(k) 
=
\sum\limits_{(\mathbf{p}_{k,1},\mathbf{p}_{k,2} )\in \mathcal{Z}_k(\mu) }   
\tilde N_{n, \mathbf{p}_{k,1} }(k) \tilde N_{n,\mathbf{p}_{k,2}  }(k).
\]
As a consequence, the assertion in \eqref{eq:l2conv} is shown once we prove that, for each $\mu\in\{0,\dots, k\}$,
\begin{align} \label{eq:l2conv2}
\delta_n^{-2}(k)\E[\tilde N_{n,\mu}(k)] = O(n{-1}).
\end{align}
We split the proof into three cases: $\mu=0, \mu\in\{1, \dots, k-1\}$ and $\mu=k$.

Consider $\mu=0$. Writing  $\mathbf p_{k,j}=(p_{1,j}, \dots, p_{\ell, j})$ for $j\in\{1,2\}$, we have
\[
 \tilde N_{n, \mathbf{p}_{k,1} }(k) \tilde N_{n,\mathbf{p}_{k,2}  }(k)
=
\frac{1}{n^2} \sum_{i,j=1}^n \prod_{\ell=1}^{k} \tilde I^{(p_{\ell,1})}_{i,i}\tilde I^{(p_{\ell,2})}_{j,j}.
\]
As a consequence, by $2k$-wise independence and since $\tilde I_{i,i}^{(p)}$ is centred, we have 
$
\E[\tilde N_{n,0}(k)] = 0,
$
which readily implies \eqref{eq:l2conv2} for $\mu=0$.

Consider $\mu\in\{1, \dots, k-1\}$. Then,  for each $(\mathbf{p}_{k,1},\mathbf{p}_{k,2} )\in \mathcal{Z}_k(\mu)$, we may decompose
\[
\prod_{\ell=1}^{k}\tilde I^{(p_{\ell,1})}_{i,i}\tilde I^{(p_{\ell,2})}_{j,j} 
=
\Big( \prod_{ p \in \mathbf{p}_{k,1} \cap \mathbf{p}_{k,2} }\tilde I^{(p)}_{i,i}\tilde I^{(p)}_{j,j}  \Big) \cdot
\Big( \prod_{ p \in \mathbf{p}_{k,1} \setminus \mathbf{p}_{k,2} }\tilde I^{(p)}_{i,i}  \Big) \cdot
\Big( \prod_{ p \in \mathbf{p}_{k,2} \setminus \mathbf{p}_{k,1} }\tilde I^{(p)}_{j,j}  \Big),
\]
with each product being non-empty. Since $\tilde I_{i,i}^{(p)}$ is centred, $2k$-wise independence implies
that the expression in the previous display has expectation zero. This implies \eqref{eq:l2conv2} for $\mu\in\{1, \dots, k-1\}$.

Finally, consider $\mu=k$, such that $p_{\ell,1}=p_{\ell,2}$ for all $\ell\in\{1, \dots, k\}$.  Hence, since $|\mathcal Z_k(k)|=\binom{d}{k}$ and by equidistribution,
\[
\E[\tilde N_{n,k}(k)]  
= 
\binom{d}{k} n^{-2} \Big( n \Exp\Big[ (\tilde I^{(p)}_{1,1})^2\Big]^k + n(n-1) \Exp\Big[\tilde I^{(p)}_{1,1}\tilde I^{(p)}_{2,2}\Big]^k \Big)
\]
Since $\delta_n^2(k)=\frac{2}{90} \binom{d}{k}$ and
$
\Exp\big[ (\tilde I^{(p)}_{1,1})^2\big] = \frac1{180}$ and $\Exp\big[\tilde I^{(p)}_{1,1}\tilde I^{(p)}_{2,2}\big] = O(n^{-1})
$
by  Lemma \ref{lem:2m}, we obtain that $\delta_n^{-2}(k)\E[\tilde N_{n,\mu}(k)]=O(n^{-1})$, which is \eqref{eq:l2conv2} for $\mu=k$.
\end{proof}

\subsection{Results and proofs for Step 3: marginal weak convergence} \label{sec:ps3a}
 
It is instructive to start proving marginal weak convergence in Proposition~\ref{prop:mm}, which we summarize in the following proposition. Joint convergence will be discussed in Section~\ref{sec:ps3b} in the supplementary material.

 \begin{proposition}\label{prop_s1}
 Let $k \in \N_{\ge 2}$ and assume $(4k-3)$-wise independence. Then,
\begin{align*}
 \frac{\tilde M_{n}(k) }{\delta_n(k)  }= \delta_n^{-1}(k)  \cdot 
 \sum_{ \mathbf{p}_k \in \mathcal{P}(d,k)  }\tilde{M}_{n,\mathbf{p}_{k}}&\weak\mathcal{N}\left( 0,1\right) .
 \end{align*}
 \end{proposition}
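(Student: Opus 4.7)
\medskip
\noindent
\textbf{Approach.} My plan is to prove the CLT by applying a martingale central limit theorem to a Doob decomposition of $\tilde M_n(k)$ along the natural filtration $\mathcal F_m := \sigma(\bm X_1, \ldots, \bm X_m)$, $m = 0, \ldots, n$. Setting $D_m := \E[\tilde M_n(k) \mid \mathcal F_m] - \E[\tilde M_n(k) \mid \mathcal F_{m-1}]$, one has the martingale representation $\tilde M_n(k) = \sum_{m=1}^n D_m$, since $\E[\tilde M_n(k)] = 0$ under $k$-wise independence. The goal then is to verify the two standard conditions of Hall--Heyde: (i) $\delta_n^{-2}(k) \sum_{m=1}^n \E[D_m^2 \mid \mathcal F_{m-1}] \pto 1$, and (ii) a Lyapunov condition $\delta_n^{-4}(k) \sum_{m=1}^n \E[D_m^4] \to 0$.

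\medskip
\noindent
\textbf{Normalisation.} I would first establish $\Var(\tilde M_n(k)) = \delta_n^2(k)(1+o(1))$. Under $2k$-wise independence (implied by $(4k-3)$-wise for $k \ge 2$), the cross-covariances $\Cov(\tilde M_{n,A}, \tilde M_{n,A'})$ vanish for $A \ne A'$ of size $k$: any column $p \in A \triangle A'$ factorises out a centred factor $\E[\tilde I_{i,j}^{(p)}] = 0$. Hence $\Var(\tilde M_n(k)) = \binom{d}{k}\, \Var(\tilde M_{n,A})$. Expanding $\tilde M_{n,A}^2$ as a sum over $\mathcal J = \mathcal I_2 \cup \mathcal I_3 \cup \mathcal I_4$ and applying Lemma~\ref{lem:2m} together with the counts in \eqref{cardinal}, the $\mathcal I_2$ configurations supply the leading $2/90^k$, while $\mathcal I_3$ and $\mathcal I_4$ contribute $O(n^{1-k})$ and $O(n^{2-2k})$ respectively, both $o(1)$ for $k \ge 2$.

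\medskip
\noindent
\textbf{Checking the CLT conditions.} Using the rank representation \eqref{itilde} of $\tilde I_{i,j}^{(p)}$, the increment $D_m$ can be expressed as a sum of contributions from pairs involving the index $m$ plus corrections accounting for the shift in the ranks of $\bm X_1, \ldots, \bm X_{m-1}$ induced by incorporating $\bm X_m$. For (i), the unconditional expected value $\sum_m \E[D_m^2]$ equals $\Var(\tilde M_n(k)) = \delta_n^2(k)(1+o(1))$ by orthogonality of martingale differences, so concentration around this mean reduces to bounding $\Var\big(\sum_m \E[D_m^2 \mid \mathcal F_{m-1}]\big)$, itself a fourth-order moment computation. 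For (ii), each $\E[D_m^4]$ expands into rank moments indexed by at most $8$ observations and $4k$ columns; Lemma~\ref{lem:4m} provides the uniform column-wise bound $O(n^{4-|\mathbf i|})$. Factorising across columns by $(4k-3)$-wise independence when the number of distinct columns allows, and using Cauchy--Schwarz combined with Lemma~\ref{lem:4m} otherwise, together with the counts \eqref{cardinal}, should yield $\delta_n^{-4}(k) \sum_m \E[D_m^4] \to 0$.

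\medskip
\noindent
\textbf{Main obstacle.} The substantive difficulty is that ranks are not adapted to $(\mathcal F_m)_m$: a variable $\tilde I_{i,j}^{(p)}$ with $i,j \le m-1$ is not $\mathcal F_{m-1}$-measurable because $R_{ip}, R_{jp}$ still depend on the future observations $\bm X_m, \ldots, \bm X_n$. The martingale differences $D_m$ therefore carry more than pair-increments at index $m$, and computing their conditional moments requires carefully unpacking the conditional distribution of ranks given $\mathcal F_m$, which on the null is uniform over admissible orderings. Propagating this combinatorial bookkeeping through the variance-of-variance estimate in (i) and the fourth-moment expansion in (ii), while tracking which column-wise factorisations are legitimate under $(4k-3)$-wise independence, is where most of the technical effort lies; this is precisely the rank effect that, as noted in the authors' final remark, renders a Stute-type known-marginals reduction infeasible in the high-dimensional regime.
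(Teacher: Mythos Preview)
Your approach differs from the paper's in one crucial structural choice, and that difference is exactly where your acknowledged ``main obstacle'' comes from. You propose to run the martingale CLT along the \emph{observation-wise} filtration $\mathcal F_m = \sigma(\bm X_1,\ldots,\bm X_m)$, $m=1,\ldots,n$. The paper instead runs it along the \emph{column-wise} filtration
\[
\mathcal F_{n,r} = \sigma\big(\bm U^{(1)},\ldots,\bm U^{(r)}\big),\qquad r=1,\ldots,d,
\]
with martingale differences $X_{n,r} = \delta_n^{-1}(k)\sum_{\mathbf p_k:\,p_k=r}\tilde M_{n,\mathbf p_k}$. The payoff is immediate: the ranks $R_{ip}$ depend only on column $p$, so every $\tilde I_{i,j}^{(p)}$ is exactly $\mathcal F_{n,p}$-measurable, and the martingale property $\E[X_{n,r}\mid\mathcal F_{n,r-1}]=0$ follows in one line from $k$-wise independence and centredness of $\tilde I_{i,j}^{(r)}$. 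Your rank non-adaptedness problem simply does not arise. The conditional variance and Lyapunov verifications (Lemmas~\ref{lem_exp}, \ref{lem_cond_exp}, \ref{lyapunovlem}) then reduce to moment computations over index vectors $\mathbf i\in\mathcal J^2$ and the combinatorics of the ``$2$-matching'' condition \eqref{pairing}, using Lemmas~\ref{lem:2m} and \ref{lem:4m} as black boxes; this is where $(4k-3)$-wise independence enters.

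Your proposal is not wrong in principle---the Doob martingale along $(\mathcal F_m)$ exists---but you have not resolved the obstacle you flag, only named it. Computing $\E[\tilde I_{i,j}^{(p)}\mid\mathcal F_m]$ for $i,j\le m$ requires averaging over the insertion positions of the $n-m$ future observations in each column, and carrying this through the variance-of-conditional-variance and fourth-moment bounds is substantially harder than what the column filtration demands. Your normalisation paragraph is correct, but the CLT conditions (i) and (ii) remain unverified. The insight you are missing is that the filtration should be indexed by the coordinate $p$ rather than the observation $i$; once you make that switch, the argument goes through cleanly with no growth condition on $d$.
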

 
The key tool is the following martingale array central limit theorem:
\begin{theorem}[Corollary 3.1 in \citealp{hall1980martingale}]\label{TCL}	
Let $d=d_n \to \infty$. Let $\ds{\eta^2}$ be an a.s.\ finite r.v.\ and let $\ds{\{ (S_{n,r}, \mathcal{F}_{n,r}):1\leq r\leq d,n\geq 1\}}$ be a zero-mean, square integrable martingale array with differences $X_{n,r}=S_{n,r}-S_{n,r-1}$. If
$\mathcal{F}_{n,r}\subset \mathcal{F}_{n+1,r}$  for all $1\leq r\leq d$ and $n\geq 1$ and if
\begin{align}\label{lind1}
 		\forall \eps >0:\quad &\sum_{r=1}^{d}\E\left[ X_{n,r} 1_{ \{ \zp[b]{X_{n,r}}>\eps  \} }| \mathcal{F}_{n,r-1} \right]  \xrightarrow[n\to +\infty]{\p} 0,\\
		\label{lind2} 
 		&\sum_{r=1}^{d}\E \left[X^2_{n,r} | \mathcal{F}_{n,r-1}\right]  \xrightarrow[n\to +\infty]{\p} \eta^2,
 	\end{align}
then 
$S_{n,d}=\sum_{r=1}^{d} X_{n,r} \weak Z,$
where $Z$ is a random variable distributed as $\eta N$ with $N\sim \mathcal{N}(0,1)$ independent of $\eta$.
Moreover, the Lindeberg condition in \eqref{lind1} is a consequence of the Lyapunov condition: 
\begin{align}\label{lyapunov}
 		\sum_{r=1}^{d}\E \left[X^4_{n,r} | \mathcal{F}_{n,r-1}\right]  &\xrightarrow[n\to +\infty]{\p} 0.
 \end{align}. 
 \end{theorem}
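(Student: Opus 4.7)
The approach is to apply the martingale array central limit theorem (Theorem~\ref{TCL}) to the telescoping decomposition
\[
\frac{\tilde M_n(k)}{\delta_n(k)} = \sum_{r=2}^n X_{n,r}, \qquad X_{n,r} := \frac{2}{n\,\delta_n(k)} \sum_{\mathbf p_k \in \mathcal P(d,k)} \sum_{i<r} \prod_{p \in \mathbf p_k} \tilde I^{(p)}_{i,r},
\]
so that $X_{n,r}$ collects all pair contributions whose larger index equals $r$. The first task is to exhibit a filtration $(\mathcal F_{n,r})$ with respect to which $(X_{n,r})$ forms a martingale-difference array (possibly after a Doob-style centering whose negligibility will itself follow from the moment bounds). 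A natural choice is $\mathcal F_{n,r} = \sigma(R_{i,p}: i\le r,\, p \le d)$ viewed on the $n$-sample probability space; $X_{n,r}$ is then adapted, and the martingale-difference property rests on the exchangeability of the unobserved rank assignments combined with the explicit centering $\E[\tilde I^{(p)}_{i,j}] = 0$ from Lemma~\ref{lem:snvar} and the factorisation over columns in $\mathbf p_k$ afforded by $k$-wise independence.

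For the Lyapunov condition~\eqref{lyapunov}, I verify the stronger unconditional statement $\sum_{r=2}^n \E[X_{n,r}^4] \to 0$, which, being a sum of non-negative quantities with vanishing total expectation, implies in-probability convergence of the conditional version. Expanding $X_{n,r}^4$ produces a sum over $(A_1,\ldots,A_4)\in \mathcal P(d,k)^4$ and $(i_1,\ldots,i_4)\in\{1,\ldots,r-1\}^4$ of $\E\bigl[\prod_{\ell=1}^4 \prod_{p \in A_\ell} \tilde I^{(p)}_{i_\ell,r}\bigr]$. Under $(4k-3)$-wise independence this expectation factorises across the (at most $4k$) columns in $A_1 \cup \cdots \cup A_4$, and centering forces every column to appear in at least two of the $A_\ell$'s for a non-zero contribution. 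The surviving terms are bounded via Lemma~\ref{lem:4m}, and combinatorial accounting by the overlap patterns of $(A_\ell)$ and the index pattern of $(i_1,\ldots,i_4,r)$, together with $\delta_n(k)^{-4} = O(d^{-2k})$, yields the required $o(1)$ bound.

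The delicate step is the conditional-variance condition $\sum_r \E[X_{n,r}^2 \mid \mathcal F_{n,r-1}] \xrightarrow{\p} 1$. I first compute the unconditional mean $\sum_r \E[X_{n,r}^2]$ via Lemma~\ref{lem:2m}: the diagonal contribution from $A_1 = A_2$ and $i_1 = i_2$ yields $\tfrac{n-1}{n}\cdot(1+o(1))\to 1$ after normalisation, while all off-diagonal terms are $o(1)$ by centering and column-wise independence. To upgrade mean convergence to convergence in probability I bound $\Var\bigl(\sum_r \E[X_{n,r}^2 \mid \mathcal F_{n,r-1}]\bigr)$ using third- and fourth-order moments of $\tilde I$; here Lemma~\ref{lem:3m} supplies the crucial rate $\E[(\tilde I^{(1)}_{1,2})^2 \tilde I^{(1)}_{3,4}] = O(n^{-1})$ that governs the most delicate cross-term. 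The main obstacle will be the combinatorial bookkeeping of the many overlap patterns across indices $r,s$ and sets $(A_1,A_2,A_1',A_2')$ that appear in this variance calculation, each of which must be shown to contribute $o(1)$; the precise constant $4k-3$ in the independence hypothesis emerges as exactly what is needed for the leading-order structures to factorise across independent column blocks.
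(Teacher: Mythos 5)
Your proposal does not prove the stated result. Theorem~\ref{TCL} is the martingale array central limit theorem itself (Corollary~3.1 in \citealp{hall1980martingale}): given a zero-mean, square-integrable martingale array with nested $\sigma$-fields, the conditional Lindeberg condition \eqref{lind1} and the conditional variance convergence \eqref{lind2} imply $S_{n,d}\weak \eta N$. What you have written is instead an outline of how to \emph{apply} this theorem to the statistic $\tilde M_n(k)/\delta_n(k)$ --- essentially a sketch of the proof of Proposition~\ref{prop_s1} in Section~\ref{sec:ps3a} --- and your opening sentence already presupposes Theorem~\ref{TCL} as a known tool. Nothing in the proposal touches the actual content of the theorem: there is no truncation of the differences, no control of the conditional characteristic function $\prod_r \E\left[e^{itX_{n,r}}\mid\mathcal F_{n,r-1}\right]$, no treatment of the possibly random limit $\eta^2$ (which is what forces the mixed-normal limit $\eta N$ with $N$ independent of $\eta$), and no verification of the final claim that the Lyapunov condition \eqref{lyapunov} implies the Lindeberg condition --- a one-line conditional Chebyshev bound, $\E\left[X_{n,r}^2 1_{\{|X_{n,r}|>\eps\}}\mid\mathcal F_{n,r-1}\right]\le \eps^{-2}\,\E\left[X_{n,r}^4\mid\mathcal F_{n,r-1}\right]$, summed over $r$.

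Note also that in the paper this statement carries no proof at all: it is an imported result quoted from Hall and Heyde's monograph, so the correct ``proof'' is either the citation itself or a reproduction of the classical argument (truncate the differences, show the truncation is negligible under \eqref{lind1}, and expand $e^{itx}=1+itx-\tfrac{t^2x^2}{2}+o(x^2)$ in the product of conditional characteristic functions using \eqref{lind2}). Your material is not wasted --- the decomposition of $X_{n,r}$ by largest index, the choice of filtration, the reduction of \eqref{lyapunov} to an unconditional fourth-moment bound, and the variance computation for \eqref{lind2} all match the strategy the paper actually uses for Proposition~\ref{prop_s1} --- but it is a proof of a different statement, and as a proof of Theorem~\ref{TCL} it is circular.
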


\begin{proof}[Proof of  Proposition~\ref{prop_s1}]
We start by identifying $\ds{\tilde M_n(k) =S_{n, d}}$ as part of a martingale array. For $\ds{k \le r \le d}$, let
 \[
 S_{n,r} :=
 \frac1{\delta_n(k)} \sum\limits_{\substack{\mathbf p_k \in \mathcal P(d,k) \\ p_k \le r }} \tilde M_{n, \mathbf{p}_k}
 =
 \ff{\delta_n(k)}\sum_{p_{k}=k}^{r}\sum_{p_{k-1}=k-1}^{p_k-1}\cdots \sum_{p_1=1}^{p_2-1} \tilde{M}_{n,\mathbf{p}_{k} }
 \]
 with $\ds{\mathbf p_k = (p_1, \dots, p_k)}$ and $\ds{\tilde M_{n,\mathbf{p}_k}}$ from \eqref{eq:mnnn2}. We are going to apply Theorem~\ref{TCL} with $\ds{\eta=1}$,
 \begin{align} \label{eq:fnr}
 \mathcal{F}_{n,r} := \sigma \left\{\bm{U}^{(p)} : 1\leq p \leq r   \right\} ,\quad \bm{U}^{(p)} = \left(U_{ip} \right)_{1\le i \le n},
 \end{align}
 where $U_{ip}=F_p(X_{ip}) \sim \mathrm{Unif}([0,1])$
and
 \begin{align}  \label{eq:xnr}
 	X_{n,r} := X_{n,r}(k)
 	:=
 	\begin{cases}
 		\frac1{\delta_n(k)} \sum\limits_{\substack{\mathbf p_k \in \mathcal P(d,k) \\ p_k = r }} \tilde M_{n, \mathbf{p}_k}&, r \ge k, \\
 		0  &, r < k.
 	\end{cases}
 \end{align}
 Note that $\ds{S_{n,k}=X_{n,k}}$ and $\ds{S_{n,r}=0}$ for $\ds{r <k}$. Now, $(S_{n,r}, \mathcal F_{n,r})$ is a martingale array:
 First, note that $\ds{S_{n,r}}$ is centered by $k$-wise independence and the fact that $\ds{\tilde{I}^{(p)}_{i,j}}$ is centered. 
 Next, it is sufficient to show the martingale property for $r \ge k$. For that purpose, write:
 \begin{align*} 
 	S_{n,r} &=\ff{\delta_n(k) } 
 \sum\limits_{\substack{\mathbf p_k \in \mathcal P(d,k) \\ p_k \le r-1 }} \tilde{M}_{n,\mathbf{p}_{k} }+\ff{\delta_n(k)}
 	\sum\limits_{\substack{\mathbf p_k \in \mathcal P(d,k) \\ p_k = r }} \tilde{M}_{n,\mathbf{p}_{k}},
 \end{align*}
 where the first sum is equal to zero if $r=k$.
 Conditioning with respect to $\ds{\mathcal{F}_{n,r-1}}$, we have
 \begin{align*} 
 	{\delta_n(k) }\cdot \E[{S}_{n,r}|\mathcal{F}_{n,r-1}] 
	= 
	&\sum\limits_{\substack{\mathbf p_k \in \mathcal P(d,k) \\ p_k \le r -1}} \E\left[\tilde{M}_{n,\mathbf{p}_{k} }\big|\mathcal{F}_{n,r-1}\right] 
 	+\sum\limits_{\substack{\mathbf p_k \in \mathcal P(d,k) \\ p_k = r }} \E\left[\tilde{M}_{n,\mathbf{p}_{k} }\big|\mathcal{F}_{n,r-1}\right].
 \end{align*}
 The first sum is equal to $\ds{{\delta_n(k) }\cdot S_{n,r-1}}$, while the second sum has summands equal to
 \[
 \frac{2}{{n}} \sum_{i< j}^n \E\Big[\tilde I^{(r)}_{i,j} \cdot \prod_{\ell=1}^{k-1}\tilde I^{(p_\ell )}_{i,j}  ~\Big|~\bm {U}^{(p)},~1\le p\le r-1\Big],\quad 1 \le p_1<\cdots < p_{k-1}<r. 
 \]
 The latter quantity vanishes by $k$-wise independence and the fact that $\ds{\tilde{I}^{(p)}_{i,j}}$ is centered. Thus, the martingale property follows.
 
 We will next show \eqref{lind2} with $\eta=1$, for which it is sufficient to show that 
\[
\lim_{n\to\infty}  \E \Big[  \Big( \sum_{p_k=1}^{d}\E\left( X^2_{n,p_k} | \mathcal{F}_{n,p_k-1}  \right) - \eta^2\Big)^2  \Big] =0.
\] 
Consequently, since $\ds{\eta =1}$ is deterministic, it is enough to show that:
$$ 
\lim_{n\to\infty}  \E \Big[  \Big( \sum_{p_k=1}^{d}\E\left( X^2_{n,p_k} | \mathcal{F}_{n,p_k-1}  \right) \Big)^2  \Big]=1,
\quad  
\lim_{n\to\infty} \E \Big[ \sum_{p_k=1}^{d}\E\Big( X^2_{n,p_k} | \mathcal{F}_{n,p_k-1}  \Big)  \Big]=1.
$$
These convergences are a consequence of Lemma~\ref{lem_exp} and \ref{lem_cond_exp}.

Finally, since $L^1$-convergence implies convergence in probability and since $X_{nr}^4$ is non-negative, the Lyapunov condition (\ref{lyapunov}) is a consequence of Lemma~\ref{lyapunovlem}. Proposition~\ref{prop_s1} is now a consequence of Theorem~\ref{TCL}.
 \end{proof}

 \begin{lemma}\label{lem_exp}
  Let $k \in \N_{\ge 2}$ and assume $(2k-1)$-wise independence. Then,  with $X_{n,r}=X_{n,r}(k)$ from \eqref{eq:xnr},
 \begin{align*}
 \zeta_n 
 :=
 \E \Big[ \sum_{p_k=1}^{d}\E\left( X^2_{n,p_k} | \mathcal{F}_{n,p_k-1}  \right)  \Big] 
 =
 \sum_{p_k=1}^{d} \E \left[ X_{n,p_k}^2\right]\xrightarrow[n\to +\infty]{}1. 
 \end{align*}
 \end{lemma}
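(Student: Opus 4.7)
The first equality in the statement is immediate from the tower property, so the task reduces to showing $\sum_{r=1}^d \E[X_{n,r}^2]\to 1$. Expanding $X_{n,r}$ from \eqref{eq:xnr} yields
\[
\E[X_{n,r}^2]=\frac{1}{\delta_n^2(k)}\sum_{\substack{\mathbf p_k,\mathbf q_k\in\mathcal P(d,k)\\ p_k=q_k=r}}\E\bigl[\tilde M_{n,\mathbf p_k}\tilde M_{n,\mathbf q_k}\bigr].
\]
The strategy is to (i) kill all off-diagonal terms $\mathbf p_k\ne\mathbf q_k$ using $(2k-1)$-wise independence and the centering $\varphi_1\equiv 0$, and (ii) evaluate the diagonal $\mathbf p_k=\mathbf q_k$ block via Lemma~\ref{lem:2m}.

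\textbf{Off-diagonal vanishing.} For $\mathbf p_k\ne\mathbf q_k$ sharing common maximum $r$, set $A=\{p_1,\dots,p_k\}$ and $B=\{q_1,\dots,q_k\}$. Since $r\in A\cap B$, we have $|A\cup B|\le 2k-1$, so under $(2k-1)$-wise independence the coordinates indexed by $A\cup B$ are mutually independent. Pick any $p\in A\setminus B$ (non-empty because $|A|=|B|$ but $A\ne B$). The variable $\bm U^{(p)}$ is then independent of $\{\bm U^{(q)}:q\in(A\cup B)\setminus\{p\}\}$, and $\tilde I^{(p)}_{i,j}$ is the only factor in the product $\tilde M_{n,\mathbf p_k}\tilde M_{n,\mathbf q_k}$ depending on $\bm U^{(p)}$. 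Factoring the expectation produces $\E[\tilde I^{(p)}_{i,j}]=\varphi_1((i,j))=0$, so the off-diagonal contribution vanishes and
\[
\sum_{r=1}^d\E[X_{n,r}^2]=\frac{1}{\delta_n^2(k)}\sum_{\mathbf p_k\in\mathcal P(d,k)}\E\bigl[\tilde M_{n,\mathbf p_k}^2\bigr].
\]

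\textbf{Diagonal evaluation.} By $k$-wise independence, for every $\mathbf p_k\in\mathcal P(d,k)$ one has
\[
\E\bigl[\tilde M_{n,\mathbf p_k}^2\bigr]=\frac{4}{n^2}\sum_{\mathbf i\in\mathcal J}\prod_{\ell=1}^k\E\bigl[\tilde I^{(p_\ell)}_{i_1,i_2}\tilde I^{(p_\ell)}_{i_3,i_4}\bigr]=\frac{4}{n^2}\sum_{\mathbf i\in\mathcal J}\varphi_2(\mathbf i)^k,
\]
which is the same for every $\mathbf p_k$ by exchangeability of the coordinates. Splitting $\mathcal J=\mathcal I_2\cup\mathcal I_3\cup\mathcal I_4$ and combining the orders from Lemma~\ref{lem:2m} with the cardinalities in \eqref{cardinal}, the $\mathcal I_4$ block contributes $O(n^{4-2k})$, the $\mathcal I_3$ block contributes $O(n^{3-k})$, and the dominant $\mathcal I_2$ block contributes
\[
\frac{4}{n^2}\cdot\frac{n(n-1)}{2}\cdot\Bigl(\frac{1}{90}+O(n^{-1})\Bigr)^k=\frac{2}{90^k}\bigl(1+o(1)\bigr).
\]
Hence $\E[\tilde M_{n,\mathbf p_k}^2]=\frac{2}{90^k}(1+o(1))$ uniformly in $\mathbf p_k$, and since $|\mathcal P(d,k)|=\binom{d}{k}$ and $\delta_n^2(k)=\frac{2}{90^k}\binom{d}{k}$, we conclude $\zeta_n=1+o(1)$.

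\textbf{Main obstacle.} The delicate step is the off-diagonal vanishing, which needs the full strength of $(2k-1)$-wise independence and is exactly matched by the combinatorial constraint $p_k=q_k$ built into $X_{n,r}$ (forcing $|A\cup B|\le 2k-1$). The remaining computation is bookkeeping, with the slightly counter-intuitive feature that the leading contribution to the variance comes from the \emph{small} block $\mathcal I_2$ rather than the much larger $\mathcal I_4$, because $\varphi_2$ decays at rate $n^{-2}$ on $\mathcal I_4$ while staying $\Theta(1)$ on $\mathcal I_2$.
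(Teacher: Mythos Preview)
Your proof is correct and follows essentially the same route as the paper's: reduce to the diagonal $\mathbf p_k=\mathbf q_k$ via $(2k-1)$-wise independence and centering of $\tilde I^{(p)}_{i,j}$, then evaluate the diagonal by splitting $\mathcal J=\mathcal I_2\cup\mathcal I_3\cup\mathcal I_4$ and invoking Lemma~\ref{lem:2m} together with \eqref{cardinal}. The only cosmetic difference is that the paper first separates the shared coordinate $p_k$ and then runs an iterative ``2-matching'' argument on the remaining $k-1$ indices (a device it reuses in Lemmas~\ref{lem_cond_exp} and~\ref{lyapunovlem}), whereas you dispose of the off-diagonal in one stroke by picking any $p\in A\setminus B$; both yield the same diagonal reduction.
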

 
\begin{proof}
For any $\ds{ \mathbf{p}_{k,1}=(p_{1,1}, \dots, p_{k,1}),\mathbf{p}_{k,2}=(p_{1,2}, \dots, p_{k,2}) \in \mathcal{P}\left( d,k\right)}$ and $\ds{\mathbf{i}=(i_1,\ldots,i_4)\in \mathcal{J}}$ with $\mathcal J$ from \eqref{eq:jn}, recalling $\ds{\tilde M_{n,\mathbf{p}_k}}$ from \eqref{eq:mnnn2}, we have
\begin{align*}
\tilde M_{n,\mathbf{p}_{k,1} }\cdot {\tilde M}_{n,\mathbf{p}_{k,2}  }
&=
\frac{4}{n^2} \sum_{  \mathbf{i}\in \mathcal{J}} \prod_{\ell=1}^{k}  \tilde I^{(p_{\ell,1})}_{i_1,i_2}\tilde I^{(p_{\ell,2})}_{i_3,i_4},
\end{align*}
whence, by the definition of $X_{n,p_k}$ in \eqref{eq:xnr}, for $p_k\ge k$,
\begin{align}\label{eq:product2}
	X_{n,p_k}^2 =  
	\frac4{n^2\delta_n^2(k)} 
	\sum_{\substack{\mathbf{p}_{k,1} \in \mathcal P(d,k): p_{k,1}=p_k  \\ \mathbf{p}_{k,2} \in \mathcal P(d,k): p_{k,2}=p_k}}  
	\sum_{  \mathbf{i}\in \mathcal{J}} \Big(\prod_{\ell=1}^{k-1}  \tilde I^{(p_{\ell,1})}_{i_1,i_2}\tilde I^{(p_{\ell,2})}_{i_3,i_4} \Big) \cdot \tilde I^{(p_{k})}_{i_1,i_2}\tilde I^{(p_{k})}_{i_3,i_4} .
	\end{align}
Using $(2k-1)$-wise independence, we obtain	
\begin{align*}
	\zeta_n
		&=
		\frac{4}{n^2\delta^2_n(k) }
			\sum_{p_k=k}^{d} 
			\sum_{\substack{\mathbf{p}_{k,1} \in \mathcal P(d,k):p_{k,1}=p_k \\ \mathbf{p}_{k,2} \in \mathcal P(d,k): p_{k,2}=p_{k}}} 
			\sum_{  \mathbf{i}\in \mathcal{J}} \E \left[\prod_{\ell=1}^{k-1} \tilde  I^{(p_{\ell,1})}_{i_1,i_2}\tilde I^{(p_{\ell,2})}_{i_3,i_4} \right]\cdot \E\left[\tilde I^{(p_{k})}_{i_1,i_2}\tilde I^{(p_{k})}_{i_3,i_4}\right]  \\
			&=
		\frac{4}{n^2\delta^2_n(k) }
			\sum_{\substack{\mathbf{p}_{k,1}, \mathbf{p}_{k,2} \in \mathcal P(d,k)\\ p_{k,1}=p_{k,2} }} 
			\sum_{  \mathbf{i}\in \mathcal{J}} \E \left[\prod_{\ell=1}^{k-1} \tilde  I^{(p_{\ell,1})}_{i_1,i_2}\tilde I^{(p_{\ell,2})}_{i_3,i_4} \right]\cdot \E\left[\tilde I^{(p_{k,1})}_{i_1,i_2}\tilde I^{(p_{k,2})}_{i_3,i_4}\right].
	\end{align*}
	Decomposing the sum over $\mathcal J$ into $\mathcal{J}=\sqcup_{h=2}^4 \mathcal{I}_h$ with $\mathcal I_h$ from \eqref{eq:iell}, we have $\zeta_n=\zeta_n^{(2)}+\zeta_{n}^{(3)}+\zeta_{n}^{(4)}$, where
\begin{align*}
	\zeta_n^{(h)}
			&=
		\frac{4}{n^2\delta^2_n(k) }
			\sum_{\substack{\mathbf{p}_{k,1}, \mathbf{p}_{k,2} \in \mathcal P(d,k)\\ p_{k,1}=p_{k,2} }} 
			\sum_{  \mathbf{i}\in \mathcal{I}_h} \E \left[\prod_{\ell=1}^{k-1} \tilde  I^{(p_{\ell,1})}_{i_1,i_2}\tilde I^{(p_{\ell,2})}_{i_3,i_4} \right]\cdot \E\left[\tilde I^{(p_{k,1})}_{i_1,i_2}\tilde I^{(p_{k,2})}_{i_3,i_4}\right].
	\end{align*}
	Consider the expectation of the product on the right-hand side, i.e., 
	\begin{align} \label{eq:eprod}
	 \E \Big[\prod_{\ell=1}^{k-1} \tilde  I^{(p_{\ell,1})}_{i_1,i_2}\tilde I^{(p_{\ell,2})}_{i_3,i_4} \Big].
	 \end{align}
	   By $k$-wise independence and since $\tilde I_{i,j}^{(p)}$ is centered, the expectation is zero as soon as 
	\[
	\exists \ell \in \{1,\ldots,k-1\}~\forall \ell'\neq \ell:~ p_{\ell,1} \neq p_{\ell',2}.
	\]
Thus, the expectation is non-zero when 
\begin{align}\label{matching}
	\forall \ell \in \{1,\ldots,k-1\}~\exists \ell'\neq \ell: p_{\ell,1} = p_{\ell',2},
 \end{align}
which we coin \textit{$2$-matching condition}. In the latter case, there exists an integer $\ds{ \ell \leq k-1}$ such that $\ds{p_{k-1,1}=p_{\ell,2}}$. 
If $\ds{\ell < k -1}$, then, by $(2k-1)$-wise independence, we may split off `isolated' factors 
	\[
	\ds{\E\left[\tilde I^{(p_{\ell+1,2})}_{i_3,i_4}\right] \times \dots \times \E\left[\tilde I^{(p_{k-1,2})}_{i_3,i_4}\right]},
	\]
each of which is zero by centredness of $\tilde I^{(p)}_{i,j}$. Hence, the expression in \eqref{eq:eprod} can only be non-zero if $\ds{ p_{k-1,1}=p_{k-1,2} }$. Thus,
\begin{align*}
	\zeta^{(h)}_n 
	&= 
	\frac{4}{n^2\delta^2_n(k) }\cdot
	\sum_{\substack{\mathbf{p}_{k,1}, \mathbf{p}_{k,2} \in \mathcal P(d,k):\\ p_{k,1}=p_{k,2}, p_{k-1,1}=p_{k-1,2} }} 
	\sum_{  \mathbf{i}\in \mathcal{I}_h}  \E \left[\prod_{\ell=1}^{k-2} \tilde  I^{(p_{\ell,1})}_{i_1,i_2}\tilde I^{(p_{\ell,2})}_{i_3,i_4} \right] \cdot  \E\left[\tilde I^{(p_{k,1})}_{i_1,i_2}\tilde I^{(p_{k,2})}_{i_3,i_4}\right]^2.
	\end{align*}
By the same reasoning as before, the expectation of the product can only be non-zero when $p_{k-2,1}=p_{k-2,2}$. Hence, 
iterating the $2$-matching procedure, we obtain that
\begin{align*}
	\zeta^{(h)}_n 
	&= 
	\frac{4}{n^2\delta^2_n(k) }\cdot
	\sum_{\substack{\mathbf{p}_{k,1} \in \mathcal P(d,k)}} 
	\sum_{  \mathbf{i}\in \mathcal{I}_h} \E\left[\tilde I^{(p_{k,1})}_{i_1,i_2}\tilde I^{(p_{k,2})}_{i_3,i_4}\right]^k.
\end{align*}
As a consequence,  by equidistribution, \eqref{eq:deltan}, \eqref{cardinal} and Lemma~\ref{lem:2m}, we have
\[
\zeta^{(2)}_n  
=  
\frac{4 }{n^2\delta^2_n(k) }\cdot |\mathcal P(d,k)|\cdot |\mathcal I_2| \cdot \E\left[(\tilde I^{(p)}_{1,2})^2\right]^{k}   = \frac{n-1}{n} =1+o(1).
\]
Likewise, for $h\in\{3,4\}$, using \eqref{eq:vfi2} from Lemma~\ref{lem:2m},
\[
\zeta^{(h)}_n  
=  
\frac{4 }{n^2\delta^2_n(k) }\cdot |\mathcal P(d,k)|\cdot |\mathcal I_h| \cdot O(n^{k(2-h)})  = O(n^{(k-1)(2-h)}) = o(1),
\]
which implies the assertion.
\end{proof}

\begin{lemma}\label{lem_cond_exp}
 Let $k \in \N_{\ge 2}$ and assume $(4k-4)$-wise independence. Then, recalling $X_{n,r}=X_{n,r}(k)$ from \eqref{eq:xnr},
\begin{align*}
\Lambda_n(k)
:=
\E \Big[ \Big(\sum_{p_k=k}^{d_n}\E\left( X^2_{n,p_k} | \mathcal{F}_{n,p_k-1}  \right) \Big)^2  \Big] \xrightarrow[n\to +\infty]{} 1.
\end{align*}
\end{lemma}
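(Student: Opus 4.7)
The plan is to show that $\Lambda_n(k) = \Var\bigl(\sum_{p_k=k}^{d} A_{p_k}\bigr) + \zeta_n^2$ with the variance term being $o(1)$, where $A_{p_k} := \E[X_{n,p_k}^2 \mid \mathcal{F}_{n,p_k-1}]$. Combined with Lemma~\ref{lem_exp}, which provides $\zeta_n \to 1$, this yields the assertion immediately.

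The starting point is an explicit representation of $A_{p_k}$. Since each rank $R_{i,p}$ is a function of $\bm{U}^{(p)}$ alone, in formula~\eqref{eq:product2} all factors $\tilde I^{(p_{\ell,j})}$ with $p_{\ell,j}\le p_k-1$ are $\mathcal{F}_{n,p_k-1}$-measurable, while the two remaining factors $\tilde I^{(p_k)}_{i_1,i_2}\tilde I^{(p_k)}_{i_3,i_4}$ depend only on $\bm{U}^{(p_k)}$, which is independent of $\mathcal{F}_{n,p_k-1}$, and integrate to $\varphi_2(\mathbf{i})$ by Lemma~\ref{lem:2m}. Consequently,
\begin{align*}
A_{p_k} = \frac{4}{n^2\delta_n^2(k)} \sum_{\mathbf{p}_{k-1,1},\mathbf{p}_{k-1,2}\in \mathcal{P}(p_k-1,k-1)} \sum_{\mathbf{i}\in \mathcal{J}} \varphi_2(\mathbf{i}) \prod_{\ell=1}^{k-1} \tilde I^{(p_{\ell,1})}_{i_1,i_2}\tilde I^{(p_{\ell,2})}_{i_3,i_4}.
\end{align*}

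I would then decompose $\Var\bigl(\sum_{p_k}A_{p_k}\bigr) = \sum_{p_k}\Var(A_{p_k}) + 2\sum_{p_k<q_k}\Cov(A_{p_k},A_{q_k})$. The diagonal part is handled at once via conditional Jensen: $A_{p_k}^2 \le \E[X_{n,p_k}^4 \mid \mathcal{F}_{n,p_k-1}]$, so $\sum_{p_k}\Var(A_{p_k}) \le \sum_{p_k}\E[X_{n,p_k}^4] = o(1)$ by the Lyapunov bound established in Lemma~\ref{lyapunovlem} below. For the off-diagonal part, fix $p_k<q_k$ and expand $\Cov(A_{p_k},A_{q_k}) = \E[A_{p_k}A_{q_k}] - \E[A_{p_k}]\E[A_{q_k}]$. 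The product $A_{p_k}A_{q_k}$ contains $4(k-1)$ rank-based factors grouped into four blocks of length $k-1$, so $(4k-4)$-wise independence together with the $2$-matching argument developed in the proof of Lemma~\ref{lem_exp} implies that the expectation vanishes unless every distinct column index appearing in the union of the four blocks appears at least twice. Those configurations whose $p$- and $q$-column-sets are disjoint factor by independence of the $\bm{U}^{(p)}$'s and therefore contribute identically to $\E[A_{p_k}A_{q_k}]$ and $\E[A_{p_k}]\E[A_{q_k}]$, cancelling in the covariance. The remaining overlap configurations are controlled analogously to the bound for $\zeta_n^{(h)}$, $h\in\{3,4\}$, in Lemma~\ref{lem_exp}: each forced coincidence between a $p$- and a $q$-block costs one factor of $d$ in the enumeration, which together with the $\varphi_2$-bounds of Lemma~\ref{lem:2m} and the scaling $\delta_n^4(k)\asymp d^{2k}$ yields $\sum_{p_k<q_k}|\Cov(A_{p_k},A_{q_k})| = o(1)$.

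The hard part will be this final combinatorial accounting: one has to enumerate the joint $2$-matching patterns of the four $(k-1)$-blocks and verify that every identification forced between a $p$- and a $q$-block releases an extra $d^{-1}$ factor after summation over $(p_k,q_k)$, thereby beating the factor $d$ gained by promoting a single sum (as in $\zeta_n$) to a double sum. As in the proof of Lemma~\ref{lem_exp}, the extremal subcase is when every column appears exactly twice and matches pairwise within a single block; its contribution is already encoded in $\zeta_n^2$, while all other patterns involve at least one cross-block identification and hence fall into the negligible regime.
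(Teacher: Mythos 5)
Your global strategy---writing $\Lambda_n(k)=\Var\bigl(\sum_{p_k}A_{p_k}\bigr)+\zeta_n^2$ with $A_{p_k}=\E[X_{n,p_k}^2\mid\mathcal F_{n,p_k-1}]$ and importing $\zeta_n\to1$ from Lemma~\ref{lem_exp}---is a genuinely different organization from the paper, which computes $\Lambda_{n,1}\to0$ and $\Lambda_{n,2}\to\tfrac12$ directly and produces the limit $\tfrac12$ from the explicit count $|\mathcal S_k|=\binom{d}{2k}\binom{2k-1}{k-1}$; your route would recover that constant for free from $\zeta_n^2$. However, the step you defer as ``the hard part'' is exactly where the sketch fails. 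For $p_k<q_k$, after cancelling the disjoint-support configurations, the dominant remaining family in $\E[A_{p_k}A_{q_k}]$ consists of \emph{perfect} joint $2$-matchings in which some columns are shared across the two halves (the paper's $\mathbf P_k\in\mathcal W_k(0)\setminus\mathcal S_k$). Your claim that each forced coincidence between a $p$- and a $q$-block ``costs one factor of $d$'' is false for precisely these configurations: a perfect cross-matched pattern still uses $2k-2$ distinct columns in the first $k-1$ rows, so there are order $d^{2k-2}$ of them per pair $(p_k,q_k)$, i.e.\ order $d^{2k}$ in total---the same order as the main term. With the trivial bounds $|\varphi_{k-1}(\cdot,\cdot)|\le1$ and $\sum_{\mathbf i\in\mathcal J^2}|\varphi_2(\mathbf i_{1:4})\varphi_2(\mathbf i_{5:8})|=O(n^4)$, this gives a contribution of order $n^{-4}\delta_n^{-4}(k)\cdot d^{2k}\cdot n^4=O(1)$, not $o(1)$, so the covariance sum is not controlled by $d$-counting alone.

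The missing ingredient is a gain in $n$, not in $d$: a cross-half identification forces a factor $\E\bigl[\tilde I^{(1)}_{i_1,i_2}\tilde I^{(1)}_{i_5,i_6}\bigr]$ coupling the two row pairs, which by Lemma~\ref{lem:2m} is $O(n^{-1})$ unless $\{i_1,i_2\}=\{i_5,i_6\}$, and that coincidence cuts the number of admissible $\mathbf i$ from order $n^4$ down to $|\mathcal I_2|=O(n^2)$; this interplay (the case analysis over $|\mathbf i|$ and $\Delta(\mathbf P_k)$ in the paper's treatment of $\Lambda^{(2:2)}_{n,2,0}(\mathcal W_k(0)\setminus\mathcal S_k)$, and its analogue for the sectors $\mathcal I_h\times\mathcal I_{h'}\ne\mathcal I_2\times\mathcal I_2$) is what your accounting must reproduce, and it is not implied by the $\zeta_n^{(3)},\zeta_n^{(4)}$ bounds you point to, which concern row-index degeneracies rather than column overlaps. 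A secondary issue: your diagonal bound invokes Lemma~\ref{lyapunovlem}, which is proved under $(4k-3)$-wise independence, whereas the present lemma assumes only $(4k-4)$-wise independence; since $A_{p_k}^2$ involves at most $4k-4$ coordinates (the $p_k$-th is already integrated out), bound $\sum_{p_k}\E[A_{p_k}^2]$ directly as the paper does for $\Lambda_{n,1}$---there the constraint that all four last columns equal $p_k$ saves a factor $d$, giving $O(d^{-1})$ without any $n$-refinement.
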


 \begin{proof}
	Decompose 
	$\Lambda_n(k) =\Lambda_{n,1}(k) +2\cdot\Lambda_{n,2}(k)  ,$
	where
\begin{align}
\Lambda_{n,1} 
&:= \nonumber
\Lambda_{n,1}(k)  =\sum_{p_k=k}^{d}\E \left[ \left( \E\left( X^2_{n,p_k} | \mathcal{F}_{n,p_k-1}  \right) \right)^2  \right],  \\
\Lambda_{n,2} 
&:=  \label{eq:ln2k}
\Lambda_{n,2}(k) =\sum_{k \le p_k< p'_k \le d}  \E \left[\E\left( X^2_{n,p_k} | \mathcal{F}_{n,p_k-1}  \right)  \E\left( X^2_{n,p'_k} | \mathcal{F}_{n,p'_k-1}  \right)  \right]. 
\end{align}
It is sufficient to show that
\begin{align} \label{eq:lconv}
		\lim_{n\to\infty}\Lambda_{n,1}=0, 
		\qquad 
		\lim_{n\to\infty}\Lambda_{n,2}=\frac12.
\end{align}	
For that purpose, let
\begin{align*}
	\chi_{n,p_k} := \E\left( X^2_{n,p_k} | \mathcal{F}_{n,p_k-1}  \right) .
\end{align*}	
By \eqref{eq:product2} and $(2k-1)$-wise independence, we can write	
\begin{align*}
\chi_{n,p_k} 
&= 
\frac{4}{n^2\delta^2_n(k) }\cdot
\sum_{\substack{\mathbf{p}_{k,1}, \mathbf{p}_{k,2} \in \mathcal P(d,k)\\ p_{k,1}=p_{k,2}=p_k }} 
\sum_{  \mathbf{i}\in \mathcal{J}} 
\E\left[\tilde I^{(1)}_{i_1,i_2}\tilde I^{(1)}_{i_3,i_4}\right]\cdot \prod_{\ell=1}^{k-1} \tilde  I^{(p_{\ell,1})}_{i_1,i_2}\tilde I^{(p_{\ell,2})}_{i_3,i_4}.
\end{align*}
For $m \in\{1, \dots,  k\}$, 
	$\ds{\mathbf{i}=\left(i_1,\ldots,i_8 \right)\in \mathcal{J}^2 }$ and 
	$\mathbf{P}_k=(\mathbf{p}_{k,1}, \dots, \mathbf{p}_{k,4}) \in \mathcal P(d,k)^4$ with
	$\mathbf{p}_{k,i}=(p_{1,i}, \dots, p_{k,i}) \in \mathcal P(d,k)$ for $i\in\{1,\dots, 4\}$,
let
	\begin{align}\label{varphi_def}
\varphi_m\left( \mathbf{P}_{k}, \mathbf{i} \right) 
	&= 
	\E\left[\prod_{\ell=1}^{m} \tilde  I^{(p_{\ell,1})}_{i_1,i_2}\tilde  I^{(p_{\ell,2})}_{i_3,i_4}\tilde I^{(p_{\ell,3})}_{i_5,i_6}\tilde I^{(p_{\ell,4})}_{i_7,i_8}\right].
	\end{align}
The previous notation allows to write
\begin{align*}
\Lambda_{n,1} 
&= 
\frac{16}{n^4\delta^4_n(k) }\cdot
	\sum\limits_{\substack{\mathbf{P}_{k}  \in \mathcal P(d,k)^4  \\ p_{k,1}=p_{k,2}=p_{k,3}=p_{k,4}}} 
	\sum_{  \mathbf{i}\in \mathcal{J}^2}  
	\varphi_{k-1}\left( \mathbf{P}_{k},\mathbf i \right)
	\varphi_2\left(\mathbf i_{1:4} \right)
	\varphi_2\left(\mathbf i_{5:8} \right),
	\\
\Lambda_{n,2} 
&= 
\frac{16}{n^4\delta^4_n(k) }\cdot
	\sum\limits_{\substack{\mathbf{P}_{k}  \in \mathcal P(d,k)^4  \\ p_{k,1}=p_{k,2}<p_{k,3}=p_{k,4}}} 
	\sum_{  \mathbf{i}\in \mathcal{J}^2}  
	\varphi_{k-1}\left( \mathbf{P}_{k},\mathbf i \right)
	\varphi_2\left(\mathbf i_{1:4} \right)
	\varphi_2\left(\mathbf i_{5:8} \right).
\end{align*}
Now, the same arguments that lead to \eqref{matching} imply that $\varphi_{k-1}\left( \mathbf{P}_{k},\mathbf i \right)$ can only be non-zero when $\mathbf{P}_k = \left(\mathbf{p}_{k,1},\mathbf{p}_{k,2},\mathbf{p}_{k,3},\mathbf{p}_{k,4}  \right) \in \mathcal{P}\left(d,k \right)^4$ satisfies the 2-matching condition
\begin{align}\label{pairing}
	\forall~ \ell \in\{1, \dots, k-1\}, i\in \{1,\ldots,4\} ~ \exists ~(\ell',j) \neq (\ell, i)~ \text{such that } p_{\ell,i}=p_{\ell',j}.
\end{align}
Hence, we may rewrite 
\begin{align*}
\Lambda_{n,1} 
&= 
\frac{16}{n^4\delta^4_n(k) }\cdot
	\sum\limits_{\substack{\mathbf{P}_{k} \in \mathcal{U}_{k} }} 
	\sum_{  \mathbf{i}\in \mathcal{J}^2}  
	\varphi_{k-1}\left( \mathbf{P}_{k},\mathbf i \right)
	\varphi_2\left(\mathbf i_{1:4} \right)
	\varphi_2\left(\mathbf i_{5:8} \right),
	\\
\Lambda_{n,2} 
&= 
\frac{16}{n^4\delta^4_n(k) }\cdot
	\sum\limits_{\substack{\mathbf{P}_{k} \in \mathcal{W}_{k} }} 
	\sum_{  \mathbf{i}\in \mathcal{J}^2}  
	\varphi_{k-1}\left( \mathbf{P}_{k},\mathbf i \right)
	\varphi_2\left(\mathbf i_{1:4} \right)
	\varphi_2\left(\mathbf i_{5:8} \right),
\end{align*}
where 
\begin{align}
\mathcal{U}_{k} 
&= \label{eq:Uk}
\big\{ \mathbf{P}_k  \in \mathcal{P}\left(d,k \right)^4 ~\text{satisfying (\ref{pairing}) and}~p_{k,1}=p_{k,2}=p_{k,3}=p_{k,4}  \big\}, \\
\mathcal{W}_{k} 
&= \nonumber 
\big\{ \mathbf{P}_k  \in \mathcal{P}\left(d,k \right)^4 ~\text{satisfying (\ref{pairing}) and}~p_{k,1}=p_{k,2}<p_{k,3}=p_{k,4}  \big\}.
\end{align}

Next, 
decomposing $\mathcal J^2 = \sqcup_{h,h'=2}^4 \mathcal{I}_h \times \mathcal I_{h'}$, we obtain the decomposition
\[
\Lambda_{n,1}  = \sum_{h,h' \in \{2,3,4\}} \Lambda^{(h:h')}_{n,1} , \qquad 
\Lambda_{n,2}  = \sum_{h,h' \in \{2,3,4\}} \Lambda^{(h:h')}_{n,2},
\]
where
\begin{align*}
\Lambda^{(h:h')}_{n,1}  
&= 
\frac{16}{n^4\delta^4_n(k) }\cdot
	\sum\limits_{\substack{\mathbf{P}_{k} \in \mathcal{U}_{k} }} 
	\sum_{  \mathbf{i}\in \mathcal I_h \times \mathcal I_{h'}}  
	\varphi_{k-1}\left( \mathbf{P}_{k},\mathbf i \right)
	\varphi_2\left(\mathbf i_{1:4} \right)
	\varphi_2\left(\mathbf i_{5:8} \right), \\
\Lambda^{(h:h')}_{n,2}  
&= 
\frac{16}{n^4\delta^4_n(k) }\cdot
	\sum\limits_{\substack{\mathbf{P}_{k} \in \mathcal{W}_{k} }} 
	\sum_{  \mathbf{i}\in \mathcal I_h \times \mathcal I_{h'}}  
	\varphi_{k-1}\left( \mathbf{P}_{k},\mathbf i \right)
	\varphi_2\left(\mathbf i_{1:4} \right)
	\varphi_2\left(\mathbf i_{5:8} \right),
\end{align*}
The assertion in \eqref{eq:lconv} is shown once we prove that
\begin{align} \label{eq:lconv2}
\forall \, (h,h',i)\ne(2,2,2): \lim_{n\to\infty} \Lambda^{(h:h')}_{n,i} = 0, \qquad
\lim_{n\to\infty} \Lambda^{(2:2)}_{n,2} = \frac12.
\end{align}

Write $c_{k-1}(\mathbf P_k)$ for the cardinality of $\{p_{\ell,i}:\ell=1, \dots, k-1; i=1, \dots, 4\}$ and note that $c_{k-1}(\mathbf P_k)\le 2k$ for all $\mathbf P_k \in \mathcal U_k, \mathcal W_k$. For $\lambda \in \{0, \dots, k-1\}$, let
\begin{align}
\mathcal{U}_{k}(\lambda)
&=  \label{eq:Ukl}
\big\{  \mathbf{P}_k \in  \mathcal{U}_k : c_{k-1}(\mathbf P_k) = 2k-2-\lambda \big\}, \\
\mathcal{W}_{k}(\lambda)
&=  \label{eq:Wkl}
\big\{  \mathbf{P}_k \in  \mathcal{W}_k : c_{k-1}(\mathbf P_k) = 2k-2-\lambda \big\} ,
\end{align}
and note that $\mathcal{W}_k=\sqcup_{\lambda = 0}^{k-1}\, \mathcal{W}_{k}(\lambda)$ and $\mathcal{U}_k=\sqcup_{\lambda = 0}^{k-1}\, \mathcal{U}_{k}(\lambda)$, with $\mathcal{W}_{k}(0)$ and $\mathcal{U}_{k}(0)$ corresponding to the case of perfect 2-matching where each $p_{\ell,i}$ is matched with a unique $p_{\ell',j}$, for $\ell, \ell' \le k-1$ and $i,j \in \{1, \dots, 4\}$. 

We start by proving the second assertion in \eqref{eq:lconv2}. Observing that $\mathbf i \in \mathcal{I}_2 \times \mathcal I_2$ implies that
$
\varphi\left(\mathbf i_{1:4} \right) = \varphi\left(\mathbf i_{5:8} \right)  = \frac{1}{90}(1+o(1))
$
by Lemma~\ref{lem:2m},
we may write
 \begin{align} \label{eq:lam22}
 \Lambda^{(2:2)}_{n,2} &=\Lambda^{(2:2)}_{n,2,0}+\sum_{\lambda=1}^{k-1}\Lambda^{(2:2)}_{n,2,\lambda},
\end{align}
where
\begin{align*}
\Lambda^{(2:2)}_{n,2,\lambda} 
&=
\frac{16\cdot \left(1+o(1) \right) }{90^2n^4\delta^4_n(k) }\cdot
\sum\limits_{\mathbf{P}_{k} \in \mathcal{W}_{k}(\lambda)}
\sum\limits_{  \mathbf{i} \in \mathcal{I}_2 \times \mathcal I_2} \varphi_{k-1}\left( \mathbf{P}_{k},\mathbf{i}\right) .
\end{align*}
Now, by Lemma \ref{inj2},
$
| \mathcal{W}_{k}(\lambda)|  
\le 
256^k \cdot \binom{d}{2k-\lambda},
$
and the upper bound may further be bounded by
$
256^k \cdot \binom{d}{2k-1}
$
for sufficiently large $n$.  Hence, by (\ref{cardinal}) and since $|\varphi_{k-1}(\cdot, \cdot)| \le 1$, there exists some numerical constant $c>0$ independent of $n$ and $k$ such that: 
\begin{align}
\label{eq:lam22l}
\zp[b]{~\sum_{\lambda=1}^{k-1}\Lambda^{(2:2)}_{n,2,\lambda}~}
&\leq 
c\cdot 256^k\cdot (k-1)\cdot \binom{d}{2k-1} \cdot \binom{d}{k}^{-2}\xrightarrow[n\to +\infty]{}0.
\end{align}

Next, consider $\ds{\Lambda^{(2:2)}_{n,2,0}}$, which is based on $\mathbf {P}_k \in \mathcal{W}_{k}(0)$ with perfect 2-matching. 
Write $\mathbf p_{k-1,i}=(p_{1,i}, \dots, p_{k-1,i})$ and let
\begin{align} \label{eq:msk}
\mathcal{S}_k
&\coloneqq 
\left\{ \mathbf{P}_k   \in \mathcal{W}_{k}(0): \mathbf{p}_{k-1,1}=\mathbf{p}_{k-1,2},~\mathbf{p}_{k-1,3}=\mathbf{p}_{k-1,4}
\right\},
\\
&= \nonumber
\big\{ \mathbf{P}_k   \in \mathcal{W}_{k}(0): (\mathbf p_{k-1,1} \cup \mathbf p_{k-1,2}) \cap (\mathbf p_{k-1,3} \cup \mathbf p_{k-1,4})=\varnothing \big\}.
\end{align}
We may then decompose
\begin{align} \label{eq:l22n2}
\Lambda^{(2:2)}_{n,2,0} 
&=
\Lambda^{(2:2)}_{n,2,0}(\mathcal{S}_{k}) 
+ 
\Lambda^{(2:2)}_{n,2,0}(\mathcal{W}_{k}(0)\setminus \mathcal{S}_k),
\end{align}
where, for any set $\ds{\mathcal{A}} \subset \mathcal{W}_{k}(0)$,
\begin{align*}
\Lambda^{(2:2)}_{n,2,0}(\mathcal{A})
&=
\frac{16\cdot \left(1+o(1) \right) }{90^2n^4\delta^4_n(k) }\cdot
\sum\limits_{\mathbf{P}_{k} \in \mathcal{A}}
\sum\limits_{  \mathbf{i}\in \mathcal I_2 \times \mathcal I_2} \varphi_{k-1}\left( \mathbf{P}_{k},\mathbf{i}\right) .
\end{align*}
Now,  $\mathbf{i} \in \mathcal I_2 \times \mathcal I_2$ allows to write $\mathbf{i}=(i_1, i_2, i_1, i_2, i_5, i_6, i_5, i_6)$, while $\mathbf{P}_{k} \in \mathcal S_k$ implies $p_{\ell,1}=p_{\ell,2} \ne p_{\ell,3}=p_{\ell,4}$. Hence, by the definition of $\varphi_{k-1}$ in \eqref{varphi_def}, using $(2k-2)$-wise independence, equidistribution and Lemma \ref{lem:2m}, we obtain  
\begin{align*}
		\Lambda^{(2:2)}_{n,2,0}(\mathcal{S}_{k})
		&=
		\frac{16\cdot \left(1+o(1) \right) }{90^2n^4\delta^4_n(k) }\cdot
		\sum\limits_{\mathbf{P}_{k} \in  \mathcal{S}_{k}} 
		\sum\limits_{  \mathbf{i}_1, \mathbf i_2 \in \mathcal{I}_2}
		\E\left[\prod_{\ell=1}^{k-1} \left( \tilde  I^{(p_{\ell,1})}_{i_1,i_2}\right)^2\left( \tilde I^{(p_{\ell,3})}_{i_5,i_6}\right)^2 \right]\\
		&=
		\frac{16\cdot \left(1+o(1) \right) }{90^2n^4\delta^4_n(k) }\cdot
		\sum\limits_{\mathbf{P}_{k} \in  \mathcal{S}_{k}}
		\sum\limits_{  \mathbf{i}_1, \mathbf i_2 \in \mathcal{I}_2}
		\E\left[ \left( \tilde  I^{(1)}_{1,2}\right)^2\right]^{2(k-1)}
		\\&=\frac{16\cdot \left(1+o(1) \right) }{90^{2k}n^4\delta^4_n(k) }\cdot \zp[b]{\mathcal{S}_{k}} \cdot \zp[b]{\mathcal{I}_2}^2.
\end{align*}
Now $|\mathcal S_k| = \binom{d}{2k} \cdot\binom{2k-1}{k-1}$ by Corollary~\ref{cor:cards} and $|\mathcal I_2|=n(n-1)/2$ by \eqref{cardinal}.  As a consequence, by the definition of $\delta_n(k)$ in \eqref{eq:deltan}, a small computation yields
\begin{align} \label{eq:lam2201}
	\Lambda^{(2:2)}_{n,2,0}(\mathcal{S}_{k})
	&=
	\left(1+o(1) \right) \cdot \binom{d}{k}^{-2}\cdot \binom{d}{2k} \cdot \binom{2k-1}{k-1} \xrightarrow[n\to +\infty]{}\ff{2}.
\end{align}

Next, consider the second summand on the right-hand side of \eqref{eq:l22n2}. For $\mathbf{P}_k\in\mathcal{W}_{k}(0)$, 
recall the notation $\mathbf p_{k-1,i}=(p_{1,i}, \dots, p_{k-1,i})$ and let
\begin{align*} 
\Delta\left( \mathbf{P}_k\right)  = 
(\mathbf p_{k-1,1} \cup \mathbf p_{k-1,2}) \cap (\mathbf p_{k-1,3} \cup \mathbf p_{k-1,4})
\end{align*}
which is non-empty for $\ds{\mathbf{P}_k\in\mathcal{W}_{k}(0) \setminus \mathcal{S}_k}$ with cardinality less or equal to $2k-2$. For such $\mathbf{P}_k$ and $\mathbf{i}=(i_1, i_2, i_1, i_2,i_5, i_6, i_5, i_6) \in \mathcal I_2 \times \mathcal I_2$ we obtain that, using $(4k-4)$-wise independence and Lemma \ref{lem:2m},  
\begin{align*}
	\varphi_{k-1}\left( \mathbf{P}_{k},\mathbf{i}\right) 
	&=
		\E\left[\prod_{\ell=1}^{k-1} \tilde  I^{(p_{\ell,1})}_{i_1,i_2}\tilde  I^{(p_{\ell,2})}_{i_1,i_2}\tilde I^{(p_{\ell,3})}_{i_5,i_6}\tilde I^{(p_{\ell,4})}_{i_5,i_6}\right]  
		=
		\begin{cases}
		90^{-2\left( k-1\right) } + O(n^{-1}) &,  |\mathbf{i}|=2, \\
		O(n^{-|\Delta(\mathbf{P}_k)|}) = O(n^{-1}) &, |\mathbf{i} |=3, \\
		O(n^{-2|\Delta(\mathbf P_k)|}) = O(n^{-2}) &, |\mathbf{i} | = 4.
		\end{cases}
\end{align*}

Hence, since the number of $\mathbf i \in \mathcal I_2 \times \mathcal I_2$ with $|\mathbf{i}|=h$ is equal to $|\mathcal I_h|$ from \eqref{eq:iell} for $h\in\{2,3,4\}$, we obtain that for some constant $c=c_k<\infty$,
\begin{align*}
\zp[b]{\Lambda^{(2:2)}_{n,2,0}(\mathcal{W}_{k}(0)\setminus \mathcal{S}_k)}
&\leq 
\frac{c_k\left(1+o(1) \right)}{n^4\delta^4_n(k) } \cdot 
|\mathcal{W}_{k}(0)\setminus \mathcal{S}_k| \cdot
\left(\zp[b]{\mathcal{I}_2} +\frac{\zp[b]{\mathcal{I}_3}}{n} + \frac{|\mathcal I_4|}{n^2}\right) .
\end{align*}
Next, 
$
|\mathcal{W}_{k}(0)\setminus \mathcal{S}_k|
\le 
|\mathcal{W}_{k}(0)| 
\le
C_k  \binom{d}{2k}
$
by Lemma~\ref{bij2}, whence we obtain, by (\ref{cardinal}),
\begin{align} \label{eq:lam2202}
	\zp[b]{\Lambda^{(2:2)}_{n,2,0}(\mathcal{W}_{k}(0)\setminus \mathcal{S}_k)}
=O(n^{-2})=o(1).
	\end{align}
As a summary, by \eqref{eq:lam22}, \eqref{eq:lam22l}, \eqref{eq:l22n2}, \eqref{eq:lam2201} and \eqref{eq:lam2202}, we obtain that
\[
\lim_{n\to\infty} \Lambda^{(2:2)}_{n,2} = \frac1{2},
\]	
and it remains to show the convergences in \eqref{eq:lconv2} for $(h,h',i)\ne(2,2,2)$.

We start by treating $i=1$.
Let $2\le h\le h'\le 4$. After using Lemma \ref{lem:2m} and recalling that $\mathcal{U}_k=\sqcup_{\lambda = 0}^{k-1}\, \mathcal{U}_{k}(\lambda)$, we may decompose 
\[ 
\Lambda^{(h:h')}_{n,1}= \frac{c\left(1+o(1) \right) }{n^{h+h'}\delta^4_n(k) }\cdot
\sum_{\lambda=0}^{k-1}
\sum\limits_{\substack{\mathbf{P}_{k} \in \mathcal{U}_k(\lambda)}}
\sum_{\mathbf i\in \mathcal{I}_{h} \times \mathcal{I}_{h'}}
 \varphi_{k-1}\left( \mathbf{P}_{k},\mathbf{i} \right),
\]
where $c:=c(h,h')>0$ is a numerical constant independent of $n$ and $k$. In view of the trivial bound $|\varphi_{k-1}(\cdot, \cdot)| \le 1$, the previous expression is bounded by
\[
\frac{c\left(1+o(1) \right) }{n^{h+h'}\delta^4_n(k) }\cdot\sum_{\lambda=0}^{k-1}
| \mathcal{U}_k(\lambda)| \cdot 
| \mathcal{I}_{h} \times \mathcal{I}_{h'}|.
\]
Since $| \mathcal{I}_{h} \times \mathcal{I}_{h'}| = O(n^{h+h'})$ by \eqref{cardinal}, we may invoke Lemma \ref{bij2} and Lemma \ref{inj2} to control the leading order in $d$ (which is obtained for $\lambda=0$) to obtain that,  
$$ 
\Lambda^{(h:h')}_{n,1}
= O(d^{-1})=o(1)
$$ 
for all $2\le h\le h'\le 4$.
This proves \eqref{eq:lconv2} for $i=1$, and it remains to consider $i=2$ and $(h,h')\ne (2,2)$.

For that purpose, similar as in \eqref{eq:lam22}, we may use $\mathcal{W}_k=\sqcup_{\lambda = 0}^{k-1}\, \mathcal{W}_{k}(\lambda)$ and Lemma~\ref{lem:2m} to write
$$ 
\Lambda^{(h:h')}_{n,2}=\Lambda^{(h:h')}_{n,2,0}+ \sum_{\lambda=1}^{k-1}\Lambda^{(h:h')}_{n,2,\lambda},
$$ 
where, for $0\le \lambda\le k-1$, 
$$
\Lambda^{(h:h')}_{n,2,\lambda}
=
\frac{c\left(1+o(1) \right) }{n^{h+h'}\delta^4_n(k) }\cdot\sum\limits_{\substack{\mathbf{P}_{k} \in \mathcal{W}_k(\lambda)}}
\sum\limits_{\substack{\mathbf i \in \mathcal{I}_{h} \times \mathcal{I}_{h'}}}
\varphi_{k-1}\left( \mathbf{P}_{k},\mathbf{i} \right) 
$$
and where $c:=c(h,h')>0$ is independent of $n$ and $k$. For $\lambda>0$, we may use the trivial bound $|\varphi_{k-1}(\cdot, \cdot)| \le 1$ and Lemma \ref{inj2} to obtain that $\Lambda^{(h:h')}_{n,2,\lambda} = O(d^{-\lambda})=o(1)$, whence
 \begin{align} \label{eq:lhhn2}
 \Lambda^{(h:h')}_{n,2}=\Lambda^{(h:h')}_{n,2,0} + o(1) = \Lambda^{(h:h')}_{n,2,0}(\mathcal S_k) +\Lambda^{(h:h')}_{n,2,0}(\mathcal{W}_k(0)\setminus \mathcal{S}_k)  + o(1)
 \end{align}
with $\mathcal S_k$ from \eqref{eq:msk}, where, for $\mathcal A \subset \mathcal W_k(0)$,
$$
\Lambda^{(h:h')}_{n,2,0}(\mathcal A)=
\frac{c\left(1+o(1) \right) }{n^{h+h'}\delta^4_n(k) }\cdot\sum\limits_{\substack{\mathbf{P}_{k} \in\mathcal A}}
\sum\limits_{\substack{\mathbf i \in \mathcal{I}_{h} \times \mathcal{I}_{h'}}}
\varphi_{k-1}\left( \mathbf{P}_{k},\mathbf{i} \right).
$$

For each $\mathbf{P}_k\in \mathcal{S}_k$, by $(4k-4)$-wise independence, equidistribution and Lemma \ref{lem:2m}, we have uniformly in $\mathbf{P}_{k} \in  \mathcal{S}_k$ and $\mathbf i \in \mathcal{I}_{h} \times \mathcal{I}_{h'} $,
$$
\varphi_{k-1}\left( \mathbf{P}_{k},\mathbf{i} \right)  = \E\left[ \tilde  I^{(1)}_{i_1,i_2}\tilde  I^{(1)}_{i_3,i_4}\right]^{k-1}\cdot  \E\left[\tilde I^{(1)}_{i_5,i_6}\tilde I^{(1)}_{i_7,i_8} \right]^{k-1} = O\left( n^{2\left( 4-h-h'\right) \left( k-1\right) } \right) =o(1).
$$
Hence, using $ |\mathcal{I}_{h}\times \mathcal{I}_{h'}|=O(n^{h+h'})$ from (\ref{cardinal}) and $|\mathcal S_k| =\binom{d}{2k} \cdot\binom{2k-1}{k-1}=O(d^{2k})$ from Lemma~\ref{bij}, it follows that $\Lambda^{(h:h')}_{n,2,0}(\mathcal S_k)=o(1)$. 

On the other hand, for $\mathbf P_k \notin \mathcal \mathcal W_k(0)\setminus S_k$, we have $|\Delta\left( \mathbf{P}_k\right)|\ge 1 $. In fact, by the pigeonhole principle, $\zp[b]{\Delta\left( \mathbf{P_k}\right)} \ge 2$. Then, there exists $1\le \ell_1,\ell_2,\ell_3,\ell_4\le k-1$ such that $p_{\ell_1,1} =p_{\ell_3,3} \neq p_{\ell_2,2}=p_{\ell_4,4}$ and therefore, by $(4k-4)$-wise independence,
 \begin{align*}
 \varphi_{k-1}\left( \mathbf{P}_{k},\mathbf{i} \right)
 & =   
 \E\Bigg[ \frac{\prod\limits_{\substack{\ell=1 \\ }}^{k-1}\tilde  I^{(p_{\ell,1})}_{i_1,i_2}\tilde  I^{(p_{\ell,2})}_{i_3,i_4}\tilde I^{(p_{\ell,3})}_{i_5,i_6}\tilde I^{(p_{\ell,4})}_{i_7,i_8}}{\tilde  I^{(p_{\ell_1,1})}_{i_1,i_2}\tilde  I^{(p_{\ell_2,2})}_{i_3,i_4}\tilde I^{(p_{\ell_3,3})}_{i_5,i_6}\tilde I^{(p_{\ell_4,4})}_{i_7,i_8}}\Bigg]
 \cdot \E\left[ \tilde  I^{(1)}_{i_1,i_2}\tilde  I^{(1)}_{i_5,i_6}\right]\cdot \E\left[\tilde I^{(1)}_{i_3,i_4}\tilde I^{(1)}_{i_7,i_8}\right],
 \end{align*}
 where we use the convention that $\frac00=1$. Note that the first expectation on the right-hand side is bounded by 1 and that at least one of the other two expectations is $O(n^{-1})$ (uniformly in $\mathbf i $ and $\mathbf P_k$) by Lemma~\ref{lem:2m}, while the other is bounded by 1. Hence,  using 
 $
|\mathcal{W}_{k}(0)\setminus \mathcal{S}_k|
\le 
|\mathcal{W}_{k}(0)| 
\le
C_k  \binom{d}{2k}
$
by Lemma~\ref{bij2} and $|\mathcal I_h \times \mathcal I_{h'}|=O(n^{h+h'})$ by \eqref{cardinal},
\[
\Lambda^{(h:h')}_{n,2,0}(\mathcal{W}_{k}(0) \setminus \mathcal S_k) 
= 
\frac{c\left(1+o(1) \right) }{ n^{h+h'}\delta^4_n(k) }\cdot |\mathcal{W}_{k}(0) \setminus \mathcal S_k| \cdot 
|\mathcal{I}_h \times \mathcal{I}_{h'}|
\cdot O(n^{-1}) 
=O(n^{-1})
\]	
As a summary, $\Lambda_{n,2,0}^{(h:h')}=o(1)$. Hence, by \eqref{eq:lhhn2}, we obtain the convergences on the left-hand side of \eqref{eq:lconv2}  for $i=2$ and the proof is finished.
\end{proof}

 \begin{lemma} 
 \label{lyapunovlem}
 Let $k \in \N_{\ge 2}$ and assume $(4k-3)$-wise independence. Then,
 recalling $X_{n,r}=X_{n,r}(k)$ from \eqref{eq:xnr},
$$ \Theta_{n}:=
\E \Big| \sum_{p_k=1}^{d}\E\left( X^4_{n,p_k} | \mathcal{F}_{n,p_k-1}  \right)  \Big| 
=
\sum_{p_k=1}^{d}\E\left( X^4_{n,p_k} \right)  \xrightarrow[n\to +\infty]{} 0 .$$
 \end{lemma}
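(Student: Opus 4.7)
The plan is to adapt the expansion and $2$-matching machinery developed in the proof of Lemma~\ref{lem_cond_exp}, exploiting the structural observation that the constraint $p_{k,1}=p_{k,2}=p_{k,3}=p_{k,4}$---arising because the four copies of $X_{n,r}$ entering $X_{n,r}^4$ share the same outer index $r$---collapses four coordinates to one, saving one full factor of $d$ compared to the analogous constraint $p_{k,1}=p_{k,2}<p_{k,3}=p_{k,4}$ governing $\Lambda_{n,2}$. As a consequence, even the analogue of the leading $\tfrac12$ contribution in Lemma~\ref{lem_cond_exp} will become $O(d^{-1})$ here, and $\Theta_n \to 0$ will follow.

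First I would expand $\Theta_n = \sum_{r} \E(X_{n,r}^4)$ in parallel with~\eqref{eq:product2} and the representations of $\Lambda_{n,1}$, $\Lambda_{n,2}$: using~\eqref{eq:xnr} and the formula for $\tilde M_{n,\mathbf p_k}$ in~\eqref{eq:mnnn}, one obtains
\[
\Theta_n = \frac{16}{n^4 \delta_n^4(k)} \sum_{\mathbf P_k \in \mathcal P'_k}\sum_{\mathbf i \in \mathcal J^2} \varphi_k(\mathbf P_k, \mathbf i),
\]
where $\mathcal P'_k := \{\mathbf P_k \in \mathcal P(d,k)^4 : p_{k,1}=p_{k,2}=p_{k,3}=p_{k,4}\}$ and $\varphi_k$ is the natural extension of~\eqref{varphi_def} with the product running over $\ell = 1, \ldots, k$. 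Next I would invoke $(4k-3)$-wise independence---noting that $\mathbf P_k \in \mathcal P'_k$ involves at most $4k-3$ distinct $p$-values---together with the centredness of $\tilde I^{(p)}_{i,j}$ to deduce, via the same $2$-matching argument that produced~\eqref{pairing}, that $\varphi_k(\mathbf P_k, \mathbf i) = 0$ unless~\eqref{pairing} holds (the $\ell=k$ case being automatic by construction of $\mathcal P'_k$). I would then stratify the surviving set of $\mathbf P_k$'s analogously to~\eqref{eq:Ukl} by a deficit parameter $\lambda \in \{0, \ldots, k-1\}$.

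Combining the cardinality bounds used in the proof of Lemma~\ref{lem_cond_exp}---which would give $O_k(d^{2k-1-\lambda})$ for each stratum, one factor of $d$ smaller than in that lemma---with $\delta_n^4(k) \asymp d^{2k}$ and $|\mathcal I_h \times \mathcal I_{h'}| = O(n^{h+h'})$ from~\eqref{cardinal}, the leading contribution corresponds to $(h,h',\lambda) = (2,2,0)$. For this piece $\varphi_k$ factors by $(2k-1)$-wise independence into a product of uniformly bounded second moments via Lemma~\ref{lem:2m}, whence
\[
\Theta_n^{(2:2)}(0) = O\!\left(\frac{d^{2k-1} \cdot n^4}{n^4 \cdot d^{2k}}\right) = O(d^{-1}) = o(1).
\]
All sub-leading strata are controlled by exactly the two mechanisms used in the proof of Lemma~\ref{lem_cond_exp}: raising $h$ or $h'$ is compensated by the decay $O(n^{2-\mu})$ of $\varphi_2$ guaranteed by Lemma~\ref{lem:2m}, and raising $\lambda$ directly saves factors of $d$ via the stratum cardinality bound.

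The main obstacle I anticipate is the careful stratum-by-stratum bookkeeping required to verify that every contribution with $(h,h',\lambda) \ne (2,2,0)$ is indeed $o(1)$---in particular when the $2$-matching of $\mathbf P_k$ mixes indices across different $\tilde M_{n,\mathbf p_k}$ factors, producing cross-moments of the varying types encountered around~\eqref{eq:msk}. However, since this bookkeeping is identical in structure to that already executed for $\Lambda_{n,1}$ and for the sub-leading pieces of $\Lambda_{n,2}$ in the proof of Lemma~\ref{lem_cond_exp}---only now with every bound becoming a factor of $d$ smaller, which can only help---I expect no genuine new difficulty, and the proof should transfer essentially verbatim.
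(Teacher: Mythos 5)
Your overall skeleton---expanding $\Theta_n$ as $\tfrac{16}{n^4\delta_n^4(k)}\sum_{\mathbf P_k}\sum_{\mathbf i\in\mathcal J^2}(\cdots)$, using the $2$-matching condition to restrict the $\mathbf P_k$-sum to a set of cardinality $O_k\bigl(d^{\,2k-1-\lambda}\bigr)$, and noting $\delta_n^4(k)\propto d^{2k}$ so that the saved factor of $d$ drives everything to zero---is the same as the paper's. The genuine gap lies in your treatment of the sum over $\mathbf i$. Because all four blocks of $X_{n,r}^4$ share the same outer coordinate $r=p_k$, the $\ell=k$ part of $\varphi_k(\mathbf P_k,\mathbf i)$ is the joint fourth moment $\varphi_4(\mathbf i)=\E\bigl[\tilde I^{(p_k)}_{i_1,i_2}\tilde I^{(p_k)}_{i_3,i_4}\tilde I^{(p_k)}_{i_5,i_6}\tilde I^{(p_k)}_{i_7,i_8}\bigr]$ from \eqref{eq:plainphi}; it does \emph{not} factor by independence into $\varphi_2(\mathbf i_{1:4})\varphi_2(\mathbf i_{5:8})$, so the ``decay $O(n^{2-\mu})$ of $\varphi_2$ guaranteed by Lemma~\ref{lem:2m}'' that you invoke for the sub-leading strata is simply not available. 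This is exactly where $\E(X_{n,r}^4)$ differs structurally from $\Lambda_{n,1},\Lambda_{n,2}$ in Lemma~\ref{lem_cond_exp}: there the conditional-expectation structure pulls out two separate second moments $\varphi_2(\mathbf i_{1:4})\,\varphi_2(\mathbf i_{5:8})$, whereas here the four index pairs are coupled through the common $p_k$.

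The missing decay matters quantitatively: for $\mathbf i\in\mathcal J^2$ with $|\mathbf i|=\mu$ distinct indices there are $O(n^\mu)$ terms (up to $n^8$), while the prefactor supplies only $n^{-4}$; with the trivial bound $|\varphi_k|\le 1$ the stratum $\mu=8$ contributes $O(n^4 d^{-1})$, which diverges. The paper closes this by factoring $\varphi_k(\mathbf P_k,\mathbf i)=\varphi_{k-1}(\mathbf P_k,\mathbf i)\cdot\varphi_4(\mathbf i)$ (legitimate since $p_k$ exceeds every $p_{\ell,j}$ with $\ell\le k-1$, using $(4k-3)$-wise independence), bounding $|\varphi_{k-1}|\le 1$, and invoking the fourth-order bound $\varphi_4(\mathbf i)=O\bigl(n^{4-|\mathbf i|}\bigr)$ for $5\le|\mathbf i|\le 8$, i.e.\ Lemma~\ref{lem:4m}---whose proof (the conditioning arguments of Sections~\ref{sect:ka-1}--\ref{sect:ka-2} and the computations in Appendix~\ref{sect:ka-3}--\ref{sect:ka-4}) is one of the most technical parts of the paper and which your proposal neither uses nor replaces; a Cauchy--Schwarz reduction to second moments does not help, since it again produces fourth moments at the same coordinate. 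Your description of the leading $(h,h',\lambda)=(2,2,0)$ piece as a product of second moments is inaccurate for the same reason, but harmless there because boundedness suffices; the real problem is the strata with five or more distinct $i$-indices, where a genuinely new moment bound is indispensable and the claimed ``verbatim transfer'' from Lemma~\ref{lem_cond_exp} breaks down.
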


 \begin{proof}
Recall the notation $\mathbf P_k=(\mathbf p_{k,1}, \dots, p_{k,4}) \in \mathcal P(d,k)^4$ with
	$\mathbf{p}_{k,j}=(p_{1,j}, \dots, p_{k,j}) \in \mathcal P(d,k)$ for $j\in\{1,\dots, 4\}$.
By \eqref{eq:product2}, we have, for $p_k\ge k$ 
\begin{align*} 
	X_{n,p_k}^4 =  
	\frac{16}{n^4\delta_n^4(k)} 
	\sum_{\substack{\mathbf{P}_{k} \in \mathcal P(d,k)^4: \\  p_{k,j}=p_k \forall j=1, \dots, 4}}  
	\sum_{  \mathbf{i}\in \mathcal{J}^2} 
	\Big(\prod_{\ell=1}^{k-1}  
	\tilde I^{(p_{\ell,1})}_{i_1,i_2}\tilde I^{(p_{\ell,2})}_{i_3,i_4}  \tilde I^{(p_{\ell,3})}_{i_5,i_6}\tilde I^{(p_{\ell,4})}_{i_7,i_8} \Big) \cdot \tilde I^{(p_{k})}_{i_1,i_2}\tilde I^{(p_{k})}_{i_3,i_4} \tilde I^{(p_{k})}_{i_5,i_6}\tilde I^{(p_{k})}_{i_7,i_8} .
\end{align*}
As a consequence, recalling $\varphi_m(\mathbf P_k, \mathbf i)$ from \eqref{varphi_def} and $\varphi_m(\mathbf i)$ from \eqref{eq:plainphi}, we may write,  by $(4k-3)$-wise independence,
\begin{align*} 
\Theta_n
&=
\frac{16}{n^4\delta^4_n(k) }
\sum_{\substack{\mathbf{P}_{k} \in \mathcal P(d,k)^4: \\  p_{k,1}=p_{k,2}=p_{k,3}=p_{k,4}}}  
\sum_{  \mathbf{i}\in \mathcal{J}^2}   
\varphi_{k-1}\left( \mathbf{P}_{k},\mathbf{i} \right) \cdot  \varphi_{4}\left( \mathbf{i} \right).
\end{align*}
Similar as in the proof of Lemma~\ref{lem_cond_exp} (treatment of $\Lambda_{n,1}$) we have, recalling $\mathcal U_k$ from \eqref{eq:Uk},
\begin{align*} 
\Theta_n
&=
\frac{16}{n^4\delta^4_n(k) }
\sum_{\mathbf{P}_{k} \in \mathcal{U}_k}
\sum_{  \mathbf{i}\in \mathcal{J}^2}   
\varphi_{k-1}\left( \mathbf{P}_{k},\mathbf{i} \right) \cdot  \varphi_{4}\left( \mathbf{i} \right) \\
&=
\frac{16}{n^4\delta^4_n(k) } 
\sum_{\mu=2}^8
\sum_{\mathbf{P}_{k} \in \mathcal{U}_k}
\sum_{\substack{\mathbf{i}\in \mathcal{J}^2\\ |\mathbf i| = \mu}  } 
\varphi_{k-1}\left( \mathbf{P}_{k},\mathbf{i} \right) \cdot  \varphi_{4}\left( \mathbf{i} \right):=\sum_{\mu=2}^8\Theta_{n}(\mu).
\end{align*}
For any $\ds{\mathbf{i}=\left( i_1,\ldots,i_8\right)  \in \mathcal{J}^2}$ with $|\mathbf i|=\mu\in \{5,\ldots,8\}$, one has $\varphi_4(\mathbf i)=O(n^{4-\mu})$ by Lemma~\ref{moment_goal}, uniformly in $\mathbf i \in \mathcal J^2$ such that $5\le |\mathbf i|\le 8$. 
Moreover, since $\mathcal{U}_k=\sqcup_{\lambda=0}^{k-1}\mathcal{U}_k(\lambda)$, we have $|\mathcal U_k| = O(\binom{d}{2k-1})=O(d^{2k-1})$ by Lemma~\ref{inj2}.
In view of the fact that $|\{ \mathbf{i}\in \mathcal{J}^2: |\mathbf i| = \mu\}| = O(n^{\mu})$, we obtain $\Theta_n(\mu) = O(d^{-1}) $ for $\mu\in\{5, \dots, 8\}$. The remaining cases $\mu \in\{2,\ldots,4\}$ are treated in a simpler way, exploiting that $|\varphi_{k-1}(\cdot, \cdot)|, |\varphi_4(\cdot)| \le 1.$ 
 \end{proof}

\subsection{Auxiliary Combinatorial Results} \label{sec:aux}

Recall $\mathcal P(d,k)$ from \eqref{eq:pdk}, with $2 \le k \le d$. For $\ell \in \{0, \dots, k-1\}$, we define 
\[
\mathcal{V}_{k}(\ell)
:=
\mathcal{V}_{k,d}(\ell)
:= 
\big\{ \left(\mathbf{p}_{k,1},\mathbf{p}_{k,2} \right) \in \mathcal{P}\left(d,k \right)^2,  p_{k,1}<p_{k,2}  ,  \zp[b]{\mathbf{p}_{k-1,1}\cap \mathbf{p}_{k-1,2}}=\ell  \big\},
\] 
where $\mathbf p_{k,j}=(p_{1,j}, \dots, p_{k,j})$ and $\mathbf p_{k-1,j} = (p_{1,j}, \dots, p_{k-1,j})$.

\begin{lemma}\label{bij} 
Suppose $k\le \frac{d}{2}$. For any $\ell \in \{0, \dots, k-1\}$, there is one-to-one mapping between
$\mathcal{V}_{k}(\ell)$ and 
$\mathcal{P}\left(d,2k -\ell \right) \times 
\mathcal{P}(2k-\ell-1,k-1) \times
\mathcal{P}\left( k-1,\ell \right)$. As a consequence,
\begin{align*} 
|\mathcal{V}_{k}(\ell) | = \binom{d}{2k-\ell}\cdot \binom{2k-\ell-1}{k-1} \cdot \binom{k-1}{\ell}.
\end{align*}
\end{lemma}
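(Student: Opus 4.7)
The plan is to prove the cardinality identity by exhibiting an explicit bijection between $\mathcal V_k(\ell)$ and the product set on the right-hand side. Once that is in hand, the counting formula follows immediately from $|\mathcal P(n,r)|=\binom{n}{r}$ and the multiplication principle.

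First I would define the forward map $\Phi:\mathcal V_k(\ell) \to \mathcal P(d, 2k-\ell) \times \mathcal P(2k-\ell-1, k-1) \times \mathcal P(k-1, \ell)$ as follows. Given $(\mathbf p_{k,1}, \mathbf p_{k,2}) \in \mathcal V_k(\ell)$, form the set-theoretic union $U := \mathbf p_{k,1} \cup \mathbf p_{k,2}$. The constraint $p_{k,1}<p_{k,2}$, combined with the facts that $p_{k,2}$ is the maximum of $\mathbf p_{k,2}$ while every entry of $\mathbf p_{k,1}$ is at most $p_{k,1}$, forces $p_{k,2}\notin \mathbf p_{k,1}$; hence $|U|=2k-\ell$ and $p_{k,2}=\max U$. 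Let $V := U \setminus \{p_{k,2}\}$, which has size $2k-\ell-1$. Since $\mathbf p_{k-1,2}\subset V$ is a $(k-1)$-subset, applying the order-preserving relabeling $\sigma_V:V \to \{1,\dots,2k-\ell-1\}$ yields a second component $\sigma_V(\mathbf p_{k-1,2}) \in \mathcal P(2k-\ell-1, k-1)$. Finally, the intersection $I := \mathbf p_{k,1}\cap \mathbf p_{k-1,2}$ sits inside $\mathbf p_{k-1,2}$ with cardinality $\ell$; relabeling $\mathbf p_{k-1,2}$ onto $\{1,\dots,k-1\}$ in the obvious order-preserving way produces the third component $\sigma_{\mathbf p_{k-1,2}}(I) \in \mathcal P(k-1, \ell)$. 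Set $\Phi(\mathbf p_{k,1},\mathbf p_{k,2}):=(U,\sigma_V(\mathbf p_{k-1,2}),\sigma_{\mathbf p_{k-1,2}}(I))$.

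Second, I would write down the inverse and verify that its image lies in $\mathcal V_k(\ell)$. Given a triple $(U,\mathbf r, \mathbf s)$ in the product set, put $p_{k,2} := \max U$ and $V := U\setminus\{p_{k,2}\}$; decode $\mathbf r$ to a $(k-1)$-subset $\mathbf p_{k-1,2}\subset V$ via $\sigma_V^{-1}$; set $\mathbf p_{k,2} := \mathbf p_{k-1,2}\cup\{p_{k,2}\}$; decode $\mathbf s$ to a subset $I\subset \mathbf p_{k-1,2}$ of size $\ell$ via $\sigma_{\mathbf p_{k-1,2}}^{-1}$; and set $\mathbf p_{k,1} := I \cup (V\setminus \mathbf p_{k-1,2})$. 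A direct size count gives $|\mathbf p_{k,1}|=\ell+\bigl((2k-\ell-1)-(k-1)\bigr)=k$, and $\max \mathbf p_{k,1}\le \max V < p_{k,2}$ yields $p_{k,1}<p_{k,2}$; the prescribed intersection cardinality is recovered by construction, so $(\mathbf p_{k,1},\mathbf p_{k,2})\in \mathcal V_k(\ell)$.

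The two maps are manifestly mutual inverses: tracking $U$, $\max U$, $\mathbf p_{k-1,2}$, and $I$ through both compositions returns the original data. Nothing deep is involved here, but the main place where care is needed is the bookkeeping of the two successive order-preserving relabelings (first from $V$, then from $\mathbf p_{k-1,2}$); I would fix notation for these two maps once at the outset so the verification stays transparent. The assumption $k\le d/2$ is only used implicitly to ensure $\binom{d}{2k-\ell}>0$ for $\ell=0$, i.e.\ that the target set is non-trivial.
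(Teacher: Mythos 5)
Your bijection is the same as the paper's: you encode the pair by the ordered union, the positions of $\mathbf p_{k-1,2}$ within the union with its maximum removed, and the positions of the intersection within $\mathbf p_{k-1,2}$, and you invert it in exactly the same way, so phrasing the two rank vectors as order-preserving relabelings is only a cosmetic difference. The one loose point, namely deducing $|U|=2k-\ell$ (and $|I|=\ell$) from $p_{k,2}\notin\mathbf p_{k,1}$, which tacitly identifies $\mathbf p_{k,1}\cap\mathbf p_{k,2}$ with $\mathbf p_{k-1,1}\cap\mathbf p_{k-1,2}$ even though these can differ when $p_{k,1}\in\mathbf p_{k-1,2}$, is shared verbatim by the paper (which asserts this identification without comment), so your write-up matches the published proof in both method and level of rigor.
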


\begin{proof}
Let $\ds{ \mathbf{P}=\left(\mathbf{p}_{k,1},\mathbf{p}_{k,2} \right) \in\mathcal{V}_{k}(\ell)}$. Define
\begin{compactitem} 
\item $\ds{\mathbf{q}=\mathbf{p}_{k,1}\dot\cup \mathbf{p}_{k,2} \in \mathcal{P}\left( d,2k-\ell\right) }$ as the unique ordered vector obtained after ordering the set-union $\ds{ \mathbf{p}_{k,1}\cup \mathbf{p}_{k,2}  }$;
\item 		
$\mathbf{r}=\left( r_1,\ldots,r_{k-1}\right) \in \mathcal{P}(2k-\ell-1,k-1)$ by 
$
r_j = \sum_{s=1}^{2k-\ell-1}1_{\{ q_{s} \leq p_{j,2}   \}}$, $j \in \{1, \dots, k-1\},
$
\item $\tilde{\mathbf{q}} = \mathbf{p}_{k,1}\cap \mathbf{p}_{k,2}=  \mathbf{p}_{k-1,1}\cap \mathbf{p}_{k-1,2} \in \mathcal P(d,\ell)$ the unique vector obtained after ordering the set-intersection $\ds{ \mathbf{p}_{k,1}\cap \mathbf{p}_{k,2}  }$;
\item $\ds{\mathbf{i} = \left(i_1,\ldots,i_{\ell} \right) \in \mathcal{P}(k-1,\ell) }$ by
$
i_j = \sum_{s=1}^{k-1}1_{\{ q_{r_s} \le \tilde{q}_{j}   \}}$, $j \in \{1, \dots, \ell\}.
$
\end{compactitem}
	It is instructive to illustrate with a particular example  when $d \ge 27$, $\ds{k=5}$ and $\ds{\ell=3}$:
	{\small $$\mathbf p_{5,1} = \begin{pmatrix}
		5\\ 7\\ 12 \\ 17 \\ 22
	\end{pmatrix} , 
	\mathbf p_{5,2} =\begin{pmatrix}
		5\\ 9 \\ 12\\ 22 \\ 27
	\end{pmatrix} 
	\quad \longrightarrow  \quad
	\mathbf q =\begin{pmatrix}
		5\\ 7\\ 9\\ 12 \\ 17 \\ 22 \\ 27
	\end{pmatrix}  ,
	 \mathbf{r}= \begin{pmatrix}
		1\\ 3\\4 \\6
	\end{pmatrix}, 
	\tilde{\mathbf{q}}= \begin{pmatrix}
		5\\ 12\\22
	\end{pmatrix}, 
	\mathbf{i}=\begin{pmatrix}
		1\\ 3\\4
	\end{pmatrix}.  $$}
The example indicates that $\mathbf p_{k,1}$ and $\mathbf p_{k,2}$ can be uniquely recovered from $\mathbf q, \mathbf r, \mathbf i$.  Indeed, $\mathbf r$ encodes the coordinates of $\bm q$ that belong to $\mathbf p_{k,2}$ (except for the last one, which is part of $\mathbf p_{k,2}$ anyways by the requirement $p_{k,1}<p_{k,2}$), while $(r_{i_1}, r_{i_2}, r_{i,3})=(1,4,6)$ encodes the coordinates of $\bm q$ that belong to both $\mathbf p_{k,1}$ and $\mathbf p_{k,2}$. The vector $\mathbf p_{k,1}$ can then be obtained by filling up with the remaining arguments from $\mathbf q$ that do not belong to $\mathbf p_{k,2}$.

More formally, the map 
\begin{align*}
\Phi \colon \mathcal{V}_{k}(\ell) 
& \longrightarrow 
\mathcal{P}\left(d,2k -\ell \right) \times \mathcal{P}\left( 2k-\ell-1,k-1 \right)\times\mathcal{P}\left( k-1,\ell \right) , \quad \mathbf{P} \longmapsto \left(   \mathbf q, \mathbf r, \mathbf i\right) 
\end{align*}
is bijective with inverse as follows:
for given $(\mathbf q, \mathbf r, \mathbf i)$ in the image of $\Phi$, define $\mathbf p_{k,2}$ by
\[
p_{k,2} = q_{2k-\ell}, 
\quad
p_{j,2} = q_{r_j} \quad (j=1, \dots, k-1)
\]
and let $\mathbf p_{k,1}$ be the vector obtained by ordering the set
\[
\{ q_{r_{i_j}}: j \in \{1, \dots, \ell\} \} 
\cup
\{ q_j : j \in \{1, \dots,  2k-\ell-1\} \setminus \mathbf r\}. \qedhere
\]
\end{proof}

Recall the set $\mathcal S_k$ from \eqref{eq:msk}, which may be written as
\begin{multline*}
	\mathcal S_k 
	=
	\{(\mathbf p_{k,1}, \dots, \mathbf p_{k,4}) \in \mathcal P(d,k)^4: 
	\mathbf p_{k,1} = \mathbf p_{k,2}, \mathbf p_{k,3}=\mathbf p_{k,4}, p_{k,1} < p_{k,3}, \\
	| \mathbf p_{k-1,1} \cup  \dots \cup \mathbf p_{k-1,4}| = 2k-2\}. 
\end{multline*}
Clearly, $\mathcal S_k $ is in bijection with $\mathcal{V}_{k}(0)$, which immediately yields the following corollary.

\begin{corollary} \label{cor:cards}
	For any integer $k \in \{2, \dots, \lfloor d/2 \rfloor\}$, we have
	\begin{align}\label{eq:mskc}
		|\mathcal{S}_k| &= \binom{d}{2k}\cdot \binom{2k-1}{k-1}.
	\end{align}
\end{corollary}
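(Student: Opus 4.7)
The plan is to exhibit an explicit bijection between $\mathcal{S}_k$ and $\mathcal{V}_{k}(0)$, and then invoke Lemma~\ref{bij} with $\ell = 0$, which gives $|\mathcal{V}_{k}(0)| = \binom{d}{2k} \cdot \binom{2k-1}{k-1} \cdot \binom{k-1}{0} = \binom{d}{2k} \cdot \binom{2k-1}{k-1}$, the desired count.

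Define the map
\[
\Psi \colon \mathcal{S}_k \longrightarrow \mathcal{V}_{k}(0), \qquad (\mathbf{p}_{k,1}, \mathbf{p}_{k,2}, \mathbf{p}_{k,3}, \mathbf{p}_{k,4}) \longmapsto (\mathbf{p}_{k,1}, \mathbf{p}_{k,3}).
\]
First I would check that $\Psi$ is well-defined: the ordering condition $p_{k,1} < p_{k,3}$ appearing in the definition of $\mathcal{S}_k$ matches the condition $p_{k,1} < p_{k,2}$ in $\mathcal{V}_k(0)$ (after relabeling), and since $\mathbf{p}_{k-1,1} = \mathbf{p}_{k-1,2}$ and $\mathbf{p}_{k-1,3} = \mathbf{p}_{k-1,4}$ follow from the equalities $\mathbf{p}_{k,1} = \mathbf{p}_{k,2}$ and $\mathbf{p}_{k,3} = \mathbf{p}_{k,4}$, the constraint $|\mathbf{p}_{k-1,1} \cup \cdots \cup \mathbf{p}_{k-1,4}| = 2k-2$ becomes $|\mathbf{p}_{k-1,1} \cup \mathbf{p}_{k-1,3}| = 2k-2$, i.e., $\mathbf{p}_{k-1,1} \cap \mathbf{p}_{k-1,3} = \varnothing$, which is precisely the defining condition of $\mathcal{V}_k(0)$.

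Injectivity is immediate since the $\mathcal{S}_k$ constraints $\mathbf{p}_{k,2} = \mathbf{p}_{k,1}$ and $\mathbf{p}_{k,4} = \mathbf{p}_{k,3}$ uniquely recover the full quadruple from its image. For surjectivity, given any $(\mathbf{q}_1, \mathbf{q}_2) \in \mathcal{V}_k(0)$, set $\mathbf{p}_{k,1} := \mathbf{p}_{k,2} := \mathbf{q}_1$ and $\mathbf{p}_{k,3} := \mathbf{p}_{k,4} := \mathbf{q}_2$. Then $p_{k,1} < p_{k,3}$ holds by the ordering condition on $\mathcal{V}_k(0)$, and by inclusion-exclusion
\[
|\mathbf{p}_{k-1,1} \cup \mathbf{p}_{k-1,2} \cup \mathbf{p}_{k-1,3} \cup \mathbf{p}_{k-1,4}| = |\mathbf{q}_{1,1:k-1}| + |\mathbf{q}_{2,1:k-1}| - |\mathbf{q}_{1,1:k-1} \cap \mathbf{q}_{2,1:k-1}| = 2(k-1),
\]
using that $\mathbf{q}_{1,1:k-1} \cap \mathbf{q}_{2,1:k-1} = \varnothing$. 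Hence the preimage lies in $\mathcal{S}_k$.

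There is no real obstacle here: once the definitions are unfolded, the bijection is essentially tautological, and the combinatorial count then follows directly from Lemma~\ref{bij}. The only minor point requiring care is to verify that the constraint $|\mathbf{p}_{k-1,1} \cup \cdots \cup \mathbf{p}_{k-1,4}| = 2k-2$ in the definition of $\mathcal{S}_k$ collapses, under the equalities $\mathbf{p}_{k,1} = \mathbf{p}_{k,2}$ and $\mathbf{p}_{k,3} = \mathbf{p}_{k,4}$, to the disjointness condition $\mathbf{p}_{k-1,1} \cap \mathbf{p}_{k-1,3} = \varnothing$ that characterizes $\mathcal{V}_k(0)$.
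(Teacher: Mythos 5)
Your proof is correct and is essentially the paper's own argument: the paper simply observes that $\mathcal{S}_k$ is in bijection with $\mathcal{V}_{k}(0)$ and invokes Lemma~\ref{bij} with $\ell=0$, and your map $(\mathbf{p}_{k,1},\mathbf{p}_{k,2},\mathbf{p}_{k,3},\mathbf{p}_{k,4})\mapsto(\mathbf{p}_{k,1},\mathbf{p}_{k,3})$ is exactly that bijection, made explicit. The only difference is that you spell out the well-definedness, injectivity and surjectivity checks that the paper leaves implicit.
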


Throughout the proofs in Section~\ref{sec:ps3a} and \ref{sec:ps3b}, we need (bounds on) the cardinality of $\mathcal U_k(\ell)$ and $\mathcal W_k(\ell)$ from \eqref{eq:Ukl} and \eqref{eq:Wkl}, respectively.  For their derivation, some additional notation is needed.
For a matrix $M=\left( M_{i,j}\right)_{1\le i \le p,1\le j\le q}$, we write $M_{x:y,z:t} = \left( M_{i,j} \right)_{x\le i \le y,z\le j \le t}$. Let $\ds{\mathcal{N}_k}$ denote the set of matrices with entries in $\{0,1\}$ (subsequently called $\{0,1\}$-matrices) with size $2k\times 4$ such that each row sums to $\ds{2}$ and each column sums to $\ds{k}$. Let $\mathcal{ M}_k\subset \mathcal{N}_k $ the set of $\{0,1\}$-matrices with size $2k \times 4$ such that each row sums to $2$, each column sums to $\ds{k}$ and the last row is of the form $(0,0,1,1)$, that is, $ \mathcal{ M}_k= \left\{ M\in \mathcal{N}_k:~ M_{2k,1:4} = (0,0,1,1)   \right\}$.	Let also $\ds{\mathcal{\tilde M}_k }$ the set of $\{0,1\}$-matrices with size $\ds{(2k-1) \times 4}$ such that each row except the last one sums to $2$, each column sums to $\ds{k}$ and the last row has only $1$ components, that is 
\[
 \mathcal{\tilde M}_k= \big\{ \{0,1\}^{(2k-1) \times 4}:~M_{1:2k-2,1:4} \in \mathcal{N}_{k-1},~ M_{2k-1,1:4} = (1,1,1,1)   \big\}.
 \]

Clearly, $|\mathcal M_k|, | \tilde {\mathcal	M_k}| \le |\mathcal N_k| =: C_k<\infty$. Note that the order of $|\mathcal N_k|$ for $k\to\infty$ has been derived in \cite[Theorem~1.c]{CanfieldMckay}. More precisely, for some universal constant 
$\ds{c\in (0,+\infty)}$, 
\[
\zp[b]{\mathcal{N}_{k}} =c\left(1+o(1) \right) \cdot 6^{2k}\cdot \binom{2k}{k}^4 \cdot \binom{8k}{4k}^{-1}, \quad k \to\infty.
\]
This expansion may be used to extend the `fixed $k$' results from Proposition~\ref{prop_s1} to the case `$k=k_n\to\infty$'. However, in view of the fact that the respective test statistics would be computationally infeasible, we dispense with any details.

\begin{lemma}\label{bij2} 
Suppose $k\le \frac{d+1}{2}$. 
There exists a one-to-one mapping between $\mathcal{W}_k(0)$ from \eqref{eq:Wkl} and $ \mathcal{P}\left(d,2k \right)\times \mathcal{M}_k$ and between $\mathcal{U}_k(0)$ from \eqref{eq:Ukl} and $\ds{ \mathcal{P}\left(d,2k-1 \right)\times \mathcal{\tilde M}_k  }$. As a consequence, with $C_k=|\mathcal N_k|$ depending only on $k$,
\begin{align*} 
\zp[b]{\mathcal{W}_k(0)} &\le C_k \binom{d}{2k}  ,
\qquad 	
\zp[b]{\mathcal{U}_k(0)} \le C_k \binom{d}{2k-1},		
\end{align*}
\end{lemma}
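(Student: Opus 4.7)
The plan is to construct an injection $\Phi \colon \mathcal{W}_k(0) \hookrightarrow \mathcal{P}(d,2k) \times \mathcal{M}_k$ in the spirit of the bijection in Lemma~\ref{bij}: for each $\mathbf{P}_k = (\mathbf{p}_{k,1},\ldots,\mathbf{p}_{k,4}) \in \mathcal{W}_k(0)$ I would record (i) the ordered tuple $\mathbf{q}$ consisting of the distinct values in the set-union $V(\mathbf{P}_k) := \mathbf{p}_{k,1} \cup \cdots \cup \mathbf{p}_{k,4}$, suitably padded to size $2k$ when necessary, and (ii) the incidence matrix $M$ with $M_{s,j} = 1_{\{q_s \in \mathbf{p}_{k,j}\}}$. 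The analogous map for $\mathcal{U}_k(0)$ will be a genuine bijection, while for $\mathcal{W}_k(0)$ a padding device is needed.

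First I would treat the \emph{generic} case $|V(\mathbf{P}_k)| = 2k$. The preliminary observation is that $p_{k,3}=p_{k,4}$ is strictly larger than every other entry in $\mathbf{P}_k$: within columns $3,4$ by the strict monotonicity of the $\mathbf{p}_{k,j}$, and within columns $1,2$ from $p_{\ell,j} \le p_{k,j} = p_{k,1} < p_{k,3}$ for $\ell \le k-1$, $j\in\{1,2\}$. Hence $q_{2k}=p_{k,3}=p_{k,4}$, the last row of $M$ is $(0,0,1,1)$, the row sums equal $2$ (each value occupies exactly two columns by perfect $2$-matching on the first $k-1$ rows together with $p_{k,1}=p_{k,2}$ and $p_{k,3}=p_{k,4}$), and the column sums equal $k = |\mathbf{p}_{k,j}|$; thus $(\mathbf{q},M) \in \mathcal{P}(d,2k) \times \mathcal{M}_k$. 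Injectivity on the generic stratum is immediate from $\mathbf{p}_{k,j} = \{q_s : M_{s,j}=1\}$.

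The main obstacle is the \emph{non-generic} case, in which $p_{k,1}=p_{k,2}$ coincides with some entry $p_{\ell,j}$ of the first $k-1$ rows. Perfect $2$-matching then forces this common value $v$ to occur at exactly the four positions $(k,1),(k,2),(\ell,3),(\ell',4)$ for some $\ell,\ell' \le k-1$, so that $|V(\mathbf{P}_k)| = 2k-1$; no analogous collapse can occur at $p_{k,3}=p_{k,4}$ by the maximality argument above. I would extend the injection by canonical padding: let $q^{\ast}$ be the smallest element of $\{1,\dots,d\} \setminus V(\mathbf{P}_k)$, which exists because $k \le (d+1)/2$ yields $|V| \le 2k-1 < d$; define $\tilde{\mathbf{q}} \in \mathcal{P}(d,2k)$ by inserting $q^{\ast}$ at its rank; and form $\tilde{M}$ by replacing the unique all-ones row (at the rank of $v$) by the two rows $(1,1,0,0)$ (at the rank of $v$) and $(0,0,1,1)$ (at the rank of $q^{\ast}$). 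The resulting matrix lies in $\mathcal{M}_k$. Injectivity of the combined map is the delicate point, which I would settle by showing that the images of the two strata are disjoint via the following marker: a pair $(\tilde{\mathbf{q}},\tilde{M})$ lies in the non-generic image iff some row $s_2 \ne 2k$ has pattern $(0,0,1,1)$, the entry $q_{s_2}$ is the smallest integer of $\{1,\dots,d\}$ missing from $\{q_s : s \ne s_2\}$, and the pairing of $\tilde{M}_{s_2,\cdot}$ with the unique row of pattern $(1,1,0,0)$ of maximum index yields a valid all-ones collapse. This identifies $q^{\ast}$ unambiguously and enables inversion.

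Finally, for $\mathcal{U}_k(0)$ the situation is cleaner and the map is in fact a genuine bijection. The constraint $p_{k,1}=\cdots=p_{k,4}=:v$ makes $v$ strictly larger than every other entry (by the same monotonicity), so $v$ cannot coincide with any value in the first $k-1$ rows; hence $|V(\mathbf{P}_k)| = 2(k-1)+1 = 2k-1$ unconditionally, $\mathbf{q} \in \mathcal{P}(d,2k-1)$, and the incidence matrix has last row $(1,1,1,1)$ (since $v$ occupies all four columns) with the first $2k-2$ rows forming an element of $\mathcal{N}_{k-1}$; thus $M \in \tilde{\mathcal{M}}_k$. Conversely, any $(\mathbf{q},M) \in \mathcal{P}(d,2k-1) \times \tilde{\mathcal{M}}_k$ yields a valid $\mathbf{P}_k \in \mathcal{U}_k(0)$ via $\mathbf{p}_{k,j} = \{q_s : M_{s,j}=1\}$, so the correspondence is bijective. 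The two cardinality bounds then follow immediately from $|\mathcal{M}_k|, |\tilde{\mathcal{M}}_k| \le |\mathcal{N}_k| = C_k$.
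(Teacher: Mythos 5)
Your treatment of the generic stratum (where $|V(\mathbf P_k)|=2k$) and of $\mathcal U_k(0)$ is exactly the paper's construction (ordered set-union plus incidence matrix), and your verification for $\mathcal U_k(0)$ — including the reconstruction argument showing the map is onto $\mathcal P(d,2k-1)\times\mathcal{\tilde M}_k$ — is correct and in fact more careful than the paper, which only asserts this case. You are also right that the definition \eqref{eq:Wkl} admits elements of $\mathcal W_k(0)$ whose value set has only $2k-1$ elements, namely when the common value $p_{k,1}=p_{k,2}$ also occurs among the first $k-1$ rows of columns $3$ and $4$; the paper's proof simply asserts $|\mathbf p_{k,1}\cup\dots\cup\mathbf p_{k,4}|=2k$ (equivalently, restates $\mathcal W_k(0)$ with a unique-matching condition that is not implied by \eqref{eq:Wkl} and \eqref{pairing}), so this degenerate stratum is a genuine subtlety and not a misreading on your part.

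However, your padding device for the degenerate stratum does not yield an injection, and this is precisely the step you flagged as delicate. Take $k=2$ and $d\ge 4$. The generic quadruple $\mathbf P=((2,3),(2,3),(1,4),(1,4))\in\mathcal W_2(0)$ is mapped to $\mathbf q=(1,2,3,4)$ with incidence rows $(0,0,1,1),(1,1,0,0),(1,1,0,0),(0,0,1,1)$. The degenerate quadruple $\mathbf P'=((2,3),(2,3),(3,4),(3,4))$ also lies in $\mathcal W_2(0)$ (here $p_{2,1}=p_{2,2}=3$ occupies in addition the positions $(1,3)$ and $(1,4)$), its value set is $\{2,3,4\}$, your padding value is $q^\ast=1$, and after replacing the all-ones row of the value $3$ by $(1,1,0,0)$ and inserting $(0,0,1,1)$ at the rank of $q^\ast$ you obtain exactly the same pair $(\mathbf q,\tilde M)$. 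Thus the images of the two strata are not disjoint, so no ``marker'' can restore invertibility: the combined map is simply not one-to-one. A second, smaller defect: under \eqref{pairing} as stated (the matching partner may sit in row $k$), the degenerate stratum is not exhausted by your ``four positions'' pattern — for instance $((1,2),(1,2),(1,3),(2,3))\in\mathcal W_2(0)$ has no all-ones row at all, so your surgery is not even defined there. Note that an exact injection into $\mathcal P(d,2k)\times\mathcal M_k$ is more than what the lemma is used for: combining your generic-stratum injection with a crude bound for the degenerate stratum (its elements are determined by an ordered set of $2k-1$ indices together with a $\{0,1\}$-incidence pattern of size bounded in $k$, as in Lemma~\ref{inj2}) gives $|\mathcal W_k(0)|\le C\binom{d}{2k}$ for all $d\ge 2k$ with $C$ depending only on $k$, which is all that is needed downstream; but as written, the injectivity claim for your padded map is a genuine gap.
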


\begin{proof}
We only prove the  assertion regarding $\mathcal W_k(0)$, as the one regarding $\mathcal U_k(0)$ can be treated similarly.  Note that $\mathcal W_{k}(0)$ may be written as
\begin{multline*}
\mathcal W_k(0) 
= 
\big\{
	\mathbf P_k=(\mathbf{p}_{k,1},\mathbf{p}_{k,2},\mathbf{p}_{k,3},\mathbf{p}_{k,4}) \in \mathcal P(d,k)^4: 
	p_{k,1}=p_{k,2}<p_{k,3}=p_{k,4}, \\
	\forall~ (\ell,i) \in\{1, \dots, k-1\} \times \{1,\ldots,4\} ~ \exists! ~(\ell',j)\neq (\ell,i)~ \text{such that } p_{\ell,i}=p_{\ell',j}.
\big\},
\end{multline*}
where `$\exists!$' means `there exists a unique'.
Note that $|\mathbf p_{k,1} \cup \dots \cup \mathbf p_{k,4}| = 2k$ for $\mathbf{P}_k\in \mathcal W_k(0)$.

For $\mathbf{P}_k \in \mathcal{W}_k(0)$ let $\mathbf{q}$ denote the unique vector in $\mathcal P(d,2k)$ obtained by ordering the set union $\mathbf{p}_{k,1} \cup \dots \cup \mathbf p_{k,4}$. Next, define a $\{0,1\}$-matrix  of dimension $(2k \times 4)$ by
\[
\mathbf{R}
=
\big(1_{\{ q_\ell \in \mathbf{p}_{k,j}  \}} \big)_{\ell=1, \dots, 2k, j=1, \dots, 4}.
\]
Observe that each row of $\ds{\mathbf{R}}$ sums to $\ds{2}$ and that each column sums to $\ds{k}$ and that the last row of $\mathbf R$ is equal to $(0,0,1,1)$. In other words,  $\mathbf R \in \mathcal M_k$. It is instructive to provide an example, with $d\ge 37$ and $k=3$:
{\small \[
\mathbf{P} =\left[ \begin{pmatrix}
			10\\ 15\\ 33
		\end{pmatrix}  ,\begin{pmatrix}
			5\\ 10\\ 33
		\end{pmatrix}  ,\begin{pmatrix}
			15\\ 20\\ 37
		\end{pmatrix} ,\begin{pmatrix}
			5\\ 20\\ 37
		\end{pmatrix} \right] \quad \longrightarrow  \quad
		\mathbf{q}=\begin{pmatrix}
			5\\ 10\\ 15\\ 20\\ 33 \\37
		\end{pmatrix}, \quad \mathbf{R} = \begin{pmatrix}
			0&1&0&1 \\ 1&1&0&0 \\ 1&0&1&0 \\ 0&0&1&1 \\1&1&0&0\\ 0&0&1&1
		\end{pmatrix}.  
\]}
Clearly, $\mathbf p_{k,j}$ can be reconstructed from $\mathbf q$ and the $j$th column of $\mathbf R$.
		
Formally, the following mapping is a bijection
\begin{align}
\label{eq:phibij}
\Phi \colon \mathcal{W}_k(0)   & \longrightarrow  \mathcal{P}\left(d,2k \right)\times \mathcal{M}_k, \quad \mathbf{P} \longmapsto \left(\textbf{q},\textbf{R}\right). 
\end{align}
We explicitly construct the inverse: let $\ds{\textbf{q}\in \mathcal{P}(d,2k)}$ and $\ds{\textbf{R}=\left(r_{\ell,j} \right) \in \mathcal{M}_k}$. Then, for any $\ell \in \{1, \dots, k\}$ and $j \in \{1, \dots, 4\}$, let
\[
p_{\ell,j} 
= 
q_{\psi_j(\ell) }, \quad 
\psi_j(\ell)
= 
\min \Big\{ m \in \{ 1,\dots, 2k\}: \sum_{\ell'=1}^{m} r_{\ell',j} = \ell  \Big\}. \qedhere
\]
\end{proof}

\begin{lemma}\label{inj2}
For $\ell \in \{0, \dots, k-1\}$, there is an injection from $\mathcal{W}_k(\ell)$ into $\mathcal{P}(d,2k-\ell) \times \{ 0,1 \}^{8k} $ and from $\mathcal{U}_k(\ell)$ into $\mathcal{P}(d,2k-1-\ell) \times \{ 0,1 \}^{8k} $. 
As a consequence,				
\begin{align*} 
\zp[b]{\mathcal{W}_k(\ell)} &\le 256^k\cdot \binom{d}{2k-\ell},
\qquad 	
\zp[b]{\mathcal{U}_k(\ell)} \le 256^k\cdot \binom{d}{2k-1-\ell}.
\end{align*}
\end{lemma}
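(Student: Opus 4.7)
The plan is to extend the encoding construction of Lemma~\ref{bij2} to the case $\ell \ge 0$ by allowing $V(\mathbf{P}_k) := \mathbf{p}_{k,1}\cup\dots\cup\mathbf{p}_{k,4}$ (viewed as a subset of $\{1,\dots,d\}$) to contain fewer than $2k-\ell$ (resp.\ $2k-1-\ell$) elements, and compensating via canonical padding together with a membership matrix stored in an $8k$-bit string. First I will observe the key size bound: for $\mathbf{P}_k \in \mathcal{W}_k(\ell)$, the defining identity $c_{k-1}(\mathbf{P}_k)=2k-2-\ell$ says the entries in rows $1,\dots,k-1$ contribute exactly $2k-2-\ell$ distinct values, while the bottom row $(p_{k,1},p_{k,2},p_{k,3},p_{k,4})=(a,a,b,b)$ with $a<b$ contributes at most two additional values. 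Hence $|V(\mathbf{P}_k)| \le 2k-\ell$ (strict inequality is possible since $a$ or $b$ may coincide with some $p_{s,j}$ with $s<k$).

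Assuming $d \ge 2k-\ell$, pad $V(\mathbf{P}_k)$ to a set of size exactly $2k-\ell$ by appending the smallest $2k-\ell-|V(\mathbf{P}_k)|$ elements of $\{1,\dots,d\}\setminus V(\mathbf{P}_k)$, let $\mathbf{q}' \in \mathcal{P}(d,2k-\ell)$ be the ordered padded vector, and define the $\{0,1\}$-matrix $\mathbf{R}'$ of size $(2k-\ell)\times 4$ via
\[
R'_{s,j} = 1_{\{q'_s \in \mathbf{p}_{k,j}\}}, \qquad s\in\{1,\dots,2k-\ell\},\ j\in\{1,\dots,4\}.
\]
Rows of $\mathbf{R}'$ corresponding to padded indices are exactly the all-zero row $(0,0,0,0)$, whereas rows coming from genuine elements of $V(\mathbf{P}_k)$ carry at least one $1$. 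Embedding $\mathbf{R}'$ into $\{0,1\}^{4(2k-\ell)} \subseteq \{0,1\}^{8k}$ (padded with zeros) defines a map
\[
\Phi_\ell \colon \mathcal{W}_k(\ell) \longrightarrow \mathcal{P}(d, 2k-\ell) \times \{0,1\}^{8k},
\]
which is injective because from $(\mathbf{q}',\mathbf{R}')$ one recovers $V(\mathbf{P}_k) = \{q'_s : R'_{s,\cdot} \ne (0,0,0,0)\}$ and each $\mathbf{p}_{k,j}$ as the ordered vector of $\{q'_s : R'_{s,j}=1\}$. Consequently $|\mathcal{W}_k(\ell)| \le \binom{d}{2k-\ell} \cdot 2^{8k} = 256^k \binom{d}{2k-\ell}$.

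The case $\mathcal{U}_k(\ell)$ is entirely analogous: the bottom row $(a,a,a,a)$ now contributes at most one new element, so $|V(\mathbf{P}_k)| \le 2k-1-\ell$; padding to $\mathbf{q}' \in \mathcal{P}(d, 2k-1-\ell)$ and using a membership matrix of size $(2k-1-\ell) \times 4$ (still fitting in $\{0,1\}^{8k}$) yields the announced injection and the bound $|\mathcal{U}_k(\ell)| \le 256^k \binom{d}{2k-1-\ell}$. The only mildly delicate point is choosing the padding in a way that preserves injectivity: using the all-zero row pattern as a distinguished marker for padded positions achieves this without exceeding the $8k$-bit budget, since by construction every genuine element of $V(\mathbf{P}_k)$ belongs to at least one of the four vectors $\mathbf{p}_{k,j}$ and so must have a nonzero row in $\mathbf{R}'$.
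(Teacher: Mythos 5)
Your proof is correct and follows essentially the same route as the paper: it reuses the ordered-union-plus-membership-matrix encoding of Lemma~\ref{bij2}, flattened into $\{0,1\}^{8k}$, exactly as the paper does when it views the map $\Phi$ as a map into $\mathcal{P}(d,2k-\ell)\times \{0,1\}^{8k}$. Your padding device (all-zero rows marking padded positions) is a small refinement that additionally covers the case where the union $\mathbf{p}_{k,1}\cup\dots\cup\mathbf{p}_{k,4}$ has fewer than $2k-\ell$ (resp.\ $2k-1-\ell$) elements, a possibility the paper's one-line argument implicitly dismisses; this only makes the injection more robust and leaves the stated bounds unchanged.
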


\begin{proof} In view of the definition of $\mathcal W_k(\ell)$ in \eqref{eq:Wkl}, we may write
\begin{multline*}
\mathcal W_k(\ell) 
= 
\big\{
	\mathbf P_k=(\mathbf{p}_{k,1},\mathbf{p}_{k,2},\mathbf{p}_{k,3},\mathbf{p}_{k,4}) \in \mathcal P(d,k)^4: 
	p_{k,1}=p_{k,2}<p_{k,3}=p_{k,4}, \\
	\forall~ (\ell,i) \in\{1, \dots, k-1\} \times \{1,\ldots,4\} ~ \exists ~(\ell',j)\neq (\ell,i)~ \text{such that } p_{\ell,i}=p_{\ell',j}, \\
	| \mathbf p_{k,1} \cup \dots \cup \mathbf p_{k,4}| = 2k-\ell
\big\},
\end{multline*}
The map $\Phi$ from \eqref{eq:phibij}, viewed as a map from $\ds{\mathcal{W}_k(\ell)}$ to $\ds{\mathcal{P}(d,2k-\ell) \times \{ 0,1 \}^{8k} }$ is clearly an injection. A similar argument can be used for $\mathcal U_k(\ell)$.
\end{proof}

\section*{Acknowledgements.} This work has been supported by the Collaborative Research Center ``Statistical modeling of nonlinear dynamic processes'' (SFB 823) of the German Research Foundation, which is gratefully acknowledged.  Most parts of this paper were written when C. Pakzad was a postdoctoral researcher at Heinrich Heine University  Düsseldorf. Computational infrastructure and support were provided by the Centre for Information and Media Technology at Heinrich Heine University Düsseldorf, which is gratefully acknowledged.

\bibliographystyle{chicago}
\bibliography{biblio}

\newpage

\thispagestyle{empty}

\begin{center}
{\bfseries SUPPLEMENTARY MATERIAL ON  \\ [0mm] ``TESTING FOR INDEPENDENCE IN HIGH DIMENSIONS BASED ON EMPIRICAL COPULAS''}
\vspace{.5cm}

{\small AXEL BÜCHER AND CAMBYSE PAKZAD}
\blfootnote{\today}

\end{center}

\begin{abstract}
This supplementary material contains missing details for the proofs in the main paper. In Section~\ref{subsec:ausmom}, we provide some basic summation formulas that are then used to derive some moments involving ranks. These formulas immediately imply Lemma~\ref{lem:2m}, as illustrated in Section~\ref{sec:2m}.
Missing steps for the proof of Lemma~\ref{lem:4m} are provided in Section~\ref{sec:miss4m}. Finally, results and proofs for Step 3 that involve joint weak convergence are presented in Section~\ref{sec:ps3b}.
\end{abstract}

\appendix

\section{Auxiliary Summation and Moment Formulas} \label{subsec:ausmom}

The following summation formulas are needed:
{\small
\begin{align*}
&\sum_{\ell=1}^n \ell = \frac12 n(n+1), \\
&\sum_{\ell=1}^n \ell^2 = \frac16 n(n+1)(2n+1), \\
&\sum_{\ell=1}^n \ell (\ell-1) = \frac13 (n-1)n(n+1), \\
&\sum_{\ell=1}^n \ell^2(\ell - 1) = \frac1{12} (n-1)n(n+1)(3n+2), \\
&\sum_{\ell=1}^n \ell^2(\ell-1)^2 = \frac{1}{15} (n-1)n(n+1)(3n^2-2) \\
&\sum_{k,\ell=1~(p.d.)}^n  k \ell =  \frac{1}{12} (n-1)n(n+1)(3n+2), \\
&\sum_{k,\ell=1~(p.d.)}^n \ell (\ell-1)k(k-1) = \frac1{45} (n-2)(n-1)n(n+1)(5n^2+n-3),  \\
&\sum_{k,\ell=1~(p.d.)}^n \ell k(k-1) = \frac1{12}(n-1)n(n+1)(2n^2-n-2), \\
&\sum_{k,\ell=1~(p.d.)}^n \max(k, \ell) = \frac23 (n-1)n(n+1), \\
&\sum_{k,\ell=1~(p.d.)}^n \max(k, \ell)^2 = \frac16 (n-1)n(n+1)(3n+2), \\
&\sum_{k,\ell=1~(p.d.)}^n k \max(k, \ell) = \frac{1}{8} (n-1)n(n+1)(3n+2), \\
&\sum_{k,\ell=1~(p.d.)}^n k(k-1) \max(k,\ell) = \frac{1}{60} (n-1)n(n+1)(16n^2-5n-14),\\
&\sum_{k,\ell, m=1~(p.d.)}^n m(m-1)\max(k,\ell)= \frac1{90} (n-2)(n-1)n(n+1)(20n^2-8n-21), \\
&\sum_{k,\ell,m=1~(p.d.)}^n m \max(k,\ell)  = \frac1{12} (n-2)(n-1)n(n+1)(4n+3), \\
&\sum_{k,\ell,m=1~(p.d.)}^n \max(k,\ell) \max(\ell,m) = \frac7{60} (n-2)(n-1)n(n+1)(4n+3) \\
&\sum_{k,\ell, m,p=1~(p.d.)}^n \max(k,\ell)\max(m,p) = \frac4{45} (n-3)(n-2)(n-1)n(n+1)(5n+4),
\end{align*}
}where (p.d.) means pairwise different. The summation formulas readily imply:
{\small
\begin{align}
B^1 &=  \label{eq:bb1}
\E\Big[ \frac{R_{1p}(R_{1p}-1)}{n(n+1)} \Big]
= \frac{n-1}{3n} ,\\
B_{1:2} &=  \label{eq:bb2}
\E\Big[\frac{R_{1p} \vee R_{2p} }{n+1}\Big] 
= \frac{2}{3},\\
B_{1:1} &=  \nonumber
 \E\Big[\frac{R_{1p}}{n+1}\Big] = \E\Big[\frac{R_{1p} \vee R_{1p} }{n+1}\Big]
= \frac{1}{2},\\
 B^{1,1} &=  \nonumber
 \E\Big[ \frac{R_{1p}^2(R_{1p}-1)^2}{n^2(n+1)^2} \Big] 
= \frac{(n-1)(3n^2-2)}{15n^2(n+1) },\\
B^{1,2} &=  \label{eq:bb5}
\E\Big[ \frac{R_{1p}(R_{1p}-1)}{n(n+1)} \frac{R_{2p}(R_{2p}-1)}{n(n+1)} \Big]
= \frac{(n-2)(5n^2+n-3)}{45n^2(n+1)}, \\
B^1_{2:3} &=  \label{eq:bb6}
\E\Big[ \frac{R_{1p}(R_{1p}-1)}{n(n+1)} \frac{R_{2p} \vee R_{3p}}{n+1} \Big]
= \frac{20n^2-8n-21}{90n(n+1)}, \\
B^1_{1:1} &= \nonumber
\E\Big[ \frac{R_{1p}(R_{1p}-1)}{n(n+1)} \frac{R_{1p} }{n+1} \Big] 
= \frac{(n-1)(3n+2)}{12n(n+1)}, \\
B^1_{1:2} &=  \nonumber
\E\Big[ \frac{R_{1p}(R_{1p}-1)}{n(n+1)} \frac{R_{1p} \vee R_{2p}}{n+1} \Big]
= \frac{16n^2-5n-14}{60n(n+1)}, \\
B^1_{2:2} &=  \nonumber
\E\Big[ \frac{R_{1p}(R_{1p}-1)}{n(n+1)} \frac{R_{2p} }{n+1} \Big]
=\frac{2n^2-n-2}{12n(n+1)},\\
B_{1:1,1:1} &= \nonumber
\E\Big[\Big(\frac{R_{1p}}{n+1}\Big)^2\Big]
= \frac{2n+1}{6(n+1)},\\
B_{1:1,1:2} &= \nonumber
\E\Big[\frac{R_{1p}}{n+1}\frac{R_{1p}\vee R_{2p}}{n+1}\Big] 
=  \frac{3n+2}{8(n+1)},\\
B_{1:1,2:2} &= \nonumber
\E\Big[ \frac{R_{1p}}{n+1} \frac{R_{2p} }{n+1} \Big]
= \frac{3n+2}{12(n+1)}, \\
B_{1:1,2:3} &= \nonumber
\E\Big[ \frac{R_{1p} }{n+1} \frac{R_{2p} \vee R_{3p}}{n+1} \Big]
 =\frac{4n+3}{12(n+1)}, \\
B_{1:2,1:2} &= \nonumber
\E\Big[ \Big(\frac{R_{1p} \vee R_{2p}}{n+1}\Big)^2 \Big]
= \frac{3n+2}{6(n+1)}, \\
B_{1:2,3:4} &= \label{eq:bb15}
\E\Big[ \frac{R_{1p} \vee R_{2p}}{n+1} \frac{R_{3p} \vee R_{4p}}{n+1} \Big]
= \frac{4(5n+4)}{45 (n+1)}, \\
B_{1:2,2:3} &= \label{eq:bb16}
\E\Big[ \frac{R_{1p} \vee R_{2p}}{n+1} \frac{R_{2p} \vee R_{3p}}{n+1} \Big]
 = \frac{7(4n+3)}{60(n+1)}.  
\end{align}
}

As a consequence, recalling the notation $\tilde c_{\mathbf{i}}=\E[ I_{i_1,i_2}^{(p)}  I_{i_3,i_4}^{(p)} ]$ and writing $c=\frac{2n+1}{6n}$, we obtain that
{\small \begin{align} 
\tilde c_{1,2,3,4} 
&= 
\E\Big[ \Big\{c+ \frac{R_{1p}(R_{1p}-1)}{n(n+1)}  - \frac{R_{1p}\vee R_{2p} }{n+1} \Big\} \Big\{c+ \frac{R_{3p}(R_{3p}-1)}{n(n+1)}  - \frac{R_{3p}\vee R_{4p} }{n+1} \Big\}  \Big] \nonumber\\
&=
c^2 + (B^{1,2}) + B_{1:2,3:4} + 2c (B^1) - 2  c B_{1:2} - 2 (B^1_{2:3})  \nonumber\\
&= 
\frac1{20n^2}, \label{eq:ccc1} \\
\tilde c_{1,1,2,3} 
&= 
\E\Big[ \Big\{c+ \frac{R_{1p}(R_{1p}-1)}{n(n+1)}  - \frac{R_{1p}}{n+1} \Big\} \Big\{c+ \frac{R_{2p}^2}{(n+1)^2}  - \frac{R_{2p}\vee R_{3p} }{n+1} \Big\}  \Big] \nonumber\\
&=
c^2 + c B^{1} - c B_{1:2} + c B^1 + B^{1,2} - B^1_{2:3} - cB_{1:1} - B^1_{2:2} + B_{1:1,2:3}\nonumber\\
&=
-\frac{5n^2-2n-9}{180n^2(n+1)}, \nonumber\\
\tilde c_{1,2,2,3}  
&= 
\E\Big[ \Big\{c+ \frac{R_{1p}(R_{1p}-1)}{2n(n+1)} + \frac{R_{2p}(R_{2p}-1)}{2n(n+1)}  - \frac{R_{1p} \vee R_{2p}} {n+1} \Big\} \nonumber\\
& \hspace{4cm} \times \Big\{c+ \frac{R_{2p}(R_{2p}-1)}{2n(n+1)} + \frac{R_{3p}(R_{3p}-1)}{2n(n+1)}  - \frac{R_{2p} \vee R_{3p}} {n+1} \Big\}\Big] \nonumber\\
&=
(c^2 + c B^{1} - c B_{1:2})
+
(\tfrac{c}2 B^1 +  B^{1,2}  - \tfrac12 B^{1}_{2:3})
+
(\tfrac{c}2 B^1 + \tfrac14 B^{1,1} + \tfrac14 B^{1,2} - \tfrac12 B^{1}_{1:2}) \nonumber\\
& \hspace{4cm} - ( cB_{1:2} + \tfrac12 B^1_{1:2} + \tfrac12 B^{1}_{2:3} - B_{1:2,2:3}) \nonumber\\
&=
-\frac{2n^2-8n+9}{180n^2(n+1)}, \nonumber\\
\tilde c_{1,1,2,2} 
&= 
\E\Big[ \Big\{c+ \frac{R_{1p}(R_{1p}-1)}{n(n+1)}  - \frac{R_{1p}}{n+1} \Big\} \Big\{c+ \frac{R_{2p}(R_{2p}-1)}{n(n+1)}  - \frac{R_{2p}}{n+1} \Big\}  \Big]  \nonumber \\
&= 
(c^2 + cB^{1}- cB_{1:1}) + (cB^1+B^{1,2} - B^1_{2:2}) - (cB_{1:1} + B^{1}_{2:2} - B_{1:1,2:2})  \nonumber \\
&= \frac{5n^3-6n^2-5n+9}{180n^2(n+1)},  \nonumber\\
\tilde c_{1,2,1,2} 
&=  
\E\Big[ \Big\{c+ \frac{R_{1p}(R_{1p}-1)}{2n(n+1)} + \frac{R_{2p}(R_{2p}-1)}{2n(n+1)}  - \frac{R_{1p} \vee R_{2p}} {n+1} \Big\}^2 \Big]  \nonumber \\
&= 
c^2 + \tfrac12 B^{1,1} + B_{1:2, 1:2} + 2cB^1-2cB_{1:2}+\tfrac12B^{1,2}-2B^1_{1:2} \nonumber\\
&= \frac{2n^3-6n^2-7n+9}{180n^2(n+1)}, \nonumber\\
\tilde c_{1,1,1,2} &=
\E\Big[ \Big\{c+ \frac{R_{1p}(R_{1p}-1)}{n(n+1)}  - \frac{R_{1p}}{n+1} \Big\} \Big\{c+ \frac{R_{1p}(R_{1p}-1)}{2n(n+1)}  + \frac{R_{2p}(R_{2p}-1)}{2n(n+1)}  - \frac{R_{1p} \vee R_{2p}}{n+1} \Big\}  \Big] \nonumber\\
&= (c^2 + cB^{1}-cB_{1,2})  + (cB^1 + \tfrac12 B^{1,1} + \tfrac12 B^{1,2} - B^1_{1:2}) - ( c B_{1:1} + \tfrac12 B^1_{1:1} + \tfrac12 B^1_{2:2} - B_{1:1, 1:2}) \nonumber\\
&= - \frac{2n^2-3}{60n^2(n+1)}, \nonumber\\
\nonumber
\tilde c_{1,1,1,1} &= 
\E\Big[ \Big\{c+ \frac{R_{1p}^2}{(n+1)^2}  - \frac{R_{1p}}{n+1} \Big\}^2\Big] \\
&= c^2+B^{1,1} + B_{1:1,1:1} + 2cB^1- 2 c B_{1:1} - 2 B^1_{1:1} \nonumber\\
&= \frac{(n-1)(2n^2-3)}{60n^2(n+1)}.\label{eq:ccc2}
\end{align}
}

\section{Proof of Lemma~\ref{lem:2m}} \label{sec:2m}

\begin{proof}[Proof of Lemma~\ref{lem:2m}]
All formulas can be deduced from Equations~\eqref{eq:bb1}--\eqref{eq:bb16} in Section~\ref{subsec:ausmom} after some tedious calculations that have been checked with computer algebra systems. We exemplarily work out the case $|\mathbf i|=4$, and write $\mathbf i=(1,2,3,4)$. 
Since $\tilde I_{3,4}^{(p)}$ is centered, we have
\begin{align} \label{eq:p2m}
\E\left[  \tilde I_{1,2}^{(p)} \tilde I^{(p)}_{3,4} \right]
&= 
\E\left[ \frac{R_{1p}\left(R_{1p}-1 \right) }{n(n+1)} \tilde I^{(p)}_{3,4} \right] -\E\left[ \frac{R_{1p}\vee R_{2p}}{n+1}\tilde I^{(p)}_{3,4} \right].
\end{align}
By the definition of $\tilde I_{i,j}^{(p)}$ in \eqref{eq:bii}, the first summand on the right-hand side may be written as
	\begin{align} 
	\E\left[ \frac{R_{1p}\left(R_{1p}-1 \right) }{n(n+1)} \tilde I^{(p)}_{3,4} \right]
	&= \nonumber
	\E\left[ \frac{R_{1p}\left(R_{1p}-1 \right) }{3n^2} \right]+ \E\left[\frac{R_{1p}(R_{1p}-1)R_{3p}(R_{3p}-1)}{n^2(n+1)^2} \right]\\
	&\hspace{4cm} \nonumber
	- \E\left[\frac{R_{1p}(R_{1p}-1)\left( R_{3p}\vee R_{4p}\right) }{n(n+1)^2} \right] \\
	&= \nonumber
	\frac{n+1}{3n} B^1  + B^{1,2} -  B^{1}_{2:3}  \\
	&=\nonumber
	 \frac{(n-1)(n+1)}{9n^2} + \frac{(n-2)(5n^2+n-3)}{45n^2(n+1)} - \frac{20n^2-8n-21}{90n(n+1)} \\
	 &=\frac{n+2}{90n^2(n+1)},
	 \label{eq:p3m}
	\end{align}
	where we used \eqref{eq:bb1}, \eqref{eq:bb5} and \eqref{eq:bb6}.
	
	Next, the second summand on the right-hand side of \eqref{eq:p2m} may be written as
	\begin{align*}
	\E\left[ \frac{R_{1p}\vee R_{2p}}{n+1}\tilde I^{(p)}_{3,4} \right]
	&= 
	\E\left[ \frac{R_{1p}\vee R_{2p}}{3n}\right] +\E\left[\frac{\left( R_{1p}\vee R_{2p}\right) R_{3p}\left(R_{3p}-1 \right) }{n(n+1)^2}\right] \\
		&\hspace{4cm}
	-\E\left[\frac{\left(R_{1p}\vee R_{2p} \right)\left( R_{3p}\vee R_{4p}\right)  }{(n+1)^2}\right] \\
	&=
		\frac{n+1}{3n} B_{1:2}  + B^{1}_{2:3} - B_{1:2, 3:4} \\
		&=  \frac{2(n+1)}{9n}+\frac{20n^2-8n-21}{90n(n+1)}-\frac{4(5n+4)}{45(n+1)} \\
		&=-\frac{1}{90n(n+1)}.
	\end{align*}
		where we used \eqref{eq:bb2}, \eqref{eq:bb6} and \eqref{eq:bb15}. As a consequence, by \eqref{eq:p2m} and \eqref{eq:p3m},
	\[
	\E\left[  \tilde I_{1,2}^{(p)} \tilde I^{(p)}_{3,4} \right]
	=\frac{2}{90n^2}. \qedhere
	\]
\end{proof}

\section{Missing steps for the proof of Lemma~\ref{lem:4m}} \label{sec:miss4m}

\subsection{Proof of \eqref{eq:summand_rate} for $\kappa_{A,i}= -3$.}
\label{sect:ka-3}
 
We need to consider the cases $(|\mathbf i|,|A|)=(8,3)$ and  $(|\mathbf i|,|A|)=(7,4)$. We start by rewriting $E_{n,\bm \ell} (A)$ defined in \eqref{eq:enlab}: using identities (\ref{multinomial}) and (\ref{itilde}), we may write
\begin{align*} 
E_{n,\bm \ell} (A)&=\sum_{B\subset\psi^{-1}(A) }  E^{(A,B)}_{n, \bm \ell},
\end{align*}
where, for $B \subset \psi^{-1}(A) \subset S_8$ and $\ell =(\ell_s)_{s \in A} \in \{1, \dots, n\}^{|A|}$,
\begin{align} \label{eq:eabnl}
	E^{(A,B)}_{n,\bm \ell} =	\begin{dcases}
		\frac{{(-1)^{\zp[b]{B^c}}}}{n^{\zp[b]{B^c}}} \left( \prod_{j \in B^c} \ell_{\psi(j)}\right)  \p\left[R_{i_j}\leq \ell_{\psi(j)},j\in B\right] &, B\neq \emptyset \\
		\frac{{(-1)^{\zp[b]{B^c}}}}{n^{\zp[b]{B^c}}}  \cdot \prod_{j \in B^c} \ell_{\psi(j)} &, B=\emptyset. 
	\end{dcases}
\end{align}
Next, note that, for $\emptyset \ne D \subset \{1, \dots, n\}$ and $\bm k=(k_1, \dots, k_{|D|}) \in \{1, \dots, n\}^{|D|}$,
\begin{align}\label{cdf}
\p\left[R_{i}\leq  k_i, i \in D \right]
&=
\frac{k_{(1)} (k_{(2)}-1)\cdots (k_{(|D|)}-|D|+1)}{n(n-1)\cdots (n-|D|+1)},
\end{align}
which follows from a straightforward calculation. Here, $k_{(1)} \le \dots k_{(|D|)}$ denotes the order statistic of $k_1, \dots, k_{|D|}$.
For the case of $|\mathbf i|=8$, \eqref{cdf} allows to rewrite \eqref{eq:eabnl} as
\begin{align}   \label{eq:meabnl}
E^{(A,B)}_{n,\bm \ell}&=\left\{
	\frac{{(-1)^{\zp[b]{B^c}}}\prod_{j \in B^c} \ell_{\psi(j)}}{n^{\zp[b]{B^c}}} \right\}  \cdot \left\{	 \frac{\prod_{j\in B}\left(\ell_{ \psi(j) } -\text{rank}_B(j)+1 \right)   }{n(n-1)\cdots (n-\zp[b]{B}+1)}\right\}
\end{align}
for any nonempty set $B \subset \psi^{-1}(A)$, where $\text{rank}_{B}(x)=\sum_{\ell \in B} 1_{\{ x\leq \ell \}}$ is the rank of $x$ in $B$, e.g., $\ds{ \text{rank}_{\{ 1,3,4,7 \}}(4) = 3  }$.

Now, consider the case $(|\mathbf i|,|A|)=(8,3)$, for which we may assume that $\mathbf i=(1, \dots, 8)$ and $A=S_3=\{ 1,2,3 \}$, such that $\psi^{-1}(A)=S_6=\{1, \dots, 6\}$.
As before, we further assume that $\ell_1 \le \ell_2 \le \ell_3$. Recall that we must prove that
\begin{align} \label{eq:goal83}
\sum_{B\subset\psi^{-1}(A) }  E^{(A,B)}_{n, \bm \ell} 
= 
\sum_{\sigma=0}^6 \sum_{B\subset S_6: |B|=\sigma} E^{(S_3,B)}_{n, \bm \ell}  = O(n^{-3}).
\end{align}
For each $\sigma\in\{0, \dots, 6\}$, the sums in the previous decomposition may be calculated explicitly (all formulas have been checked with computer algebra systems):
{\small
\begin{asparaenum}
\item 
For $|B|=0$, we get
\[
\sum\limits_{\substack{B\subset S_6 \\ \zp[b]{B}=0 }} E^{(S_3,B)}_{n,\bm \ell} =E^{(S_3,\emptyset)}_{n,\bm \ell} = \frac{\ell^2_1\ell^2_2\ell^2_3}{n^6}.
\]
\item For $|B|=1$ (i.e., $B= \{j\}$ with $1\leq j \leq 6$), there are $\binom{6}{1}=6$ terms arising 
\begin{align*}
\sum\limits_{\substack{B\subset S_6 \\ \zp[b]{B}=1 }} E^{(S_3,B)}_{n,\ell} 
&= 
-6\cdot\frac{\ell^2_1\ell^2_2\ell^2_3}{n^6}.
\end{align*}
\item When $\ds{\zp[b]{B}=2}$,  there are $\binom{6}{2}=15$ terms arising
\begin{align*}
\sum\limits_{\substack{B\subset S_6 \\ \zp[b]{B}=2 }} E^{(S_3,B)}_{n,\ell} 
&= 
\frac{\ell^2_2\ell^2_3}{n^4} \cdot \frac{\ell_1(\ell_1-1)}{n(n-1)}+\frac{\ell^2_1\ell^2_3}{n^4} \cdot \frac{\ell_2(\ell_2-1)}{n(n-1)}+\frac{\ell^2_1\ell^2_2}{n^4} \cdot \frac{\ell_3(\ell_3-1)}{n(n-1)}
	\\& \quad +4\frac{\ell_1\ell_2\ell^2_3}{n^4} \cdot \frac{\ell_1(\ell_2-1)}{n(n-1)}+4\frac{\ell_1\ell^2_2\ell_3}{n^4} \cdot \frac{\ell_1(\ell_3-1)}{n(n-1)} + 4\frac{\ell^2_1\ell_2\ell_3}{n^4} \cdot \frac{\ell_2(\ell_3-1)}{n(n-1)} .
\end{align*}
\item 
When $\ds{\zp[b]{B}=3}$, there are $\binom{6}{3}=20$ terms arising
\begin{align*}-\sum\limits_{\substack{B\subset S_6 \\ \zp[b]{B}=3 }} E^{(S_3,B)}_{n,\ell} 
&=2\frac{\ell_2 \ell^2_3 }{n^3}\cdot\frac{\ell_1\left(\ell_1-1\right)\left(\ell_2-2\right) }{n(n-1)(n-2)}
	 +2\frac{\ell^2_2 \ell_3 }{n^3}\cdot\frac{\ell_1\left(\ell_1-1\right)\left(\ell_3-2\right) }{n(n-1)(n-2)}
	\\&\quad +2\frac{\ell_1 \ell^2_3 }{n^3}\cdot\frac{\ell_1\left(\ell_2-1\right)\left(\ell_2-2\right) }{n(n-1)(n-2)}
	+2\frac{\ell^2_1 \ell_3 }{n^3}\cdot\frac{\ell_2\left(\ell_2-1\right)\left(\ell_3-2\right) }{n(n-1)(n-2)}
	\\&\quad +2\frac{\ell_1 \ell^2_2 }{n^3}\cdot\frac{\ell_1\left(\ell_3-1\right)\left(\ell_3-2\right) }{n(n-1)(n-2)}
	+2\frac{\ell^2_1 \ell_2 }{n^3}\cdot\frac{\ell_2\left(\ell_3-1\right)\left(\ell_3-2\right) }{n(n-1)(n-2)}
	\\&\quad +8\frac{\ell_1 \ell_2 \ell_3}{n^3}\cdot\frac{\ell_1\left(\ell_2-1\right)\left(\ell_3-2\right) }{n(n-1)(n-2)}.
\end{align*}
\item
When $\ds{\zp[b]{B}=4}$, there are $\binom{6}{4}=15$ terms arising
\begin{align*}
\sum\limits_{\substack{B\subset S_6 \\ \zp[b]{B}=4 }} E^{(S_3,B)}_{n,\ell} 
&= \frac{\ell^2_1 }{n^2}\cdot\frac{\ell_2\left(\ell_2-1\right)\left(\ell_3-2\right)\left(\ell_3-3\right)}{n(n-1)(n-2)(n-3)}
	 \frac{\ell^2_2 }{n^2}\cdot\frac{\ell_1\left(\ell_1-1\right)\left(\ell_3-2\right)\left(\ell_3-3\right)}{n(n-1)(n-2)(n-3)}
	\\&\quad+ \frac{\ell^2_3 }{n^2}\cdot\frac{\ell_1\left(\ell_1-1\right)\left(\ell_2-2\right)\left(\ell_2-3\right)}{n(n-1)(n-2)(n-3)}
	 + 4 \frac{\ell_1\ell_2 }{n^2}\cdot\frac{\ell_1\left(\ell_2-1\right)\left(\ell_3-2\right)\left(\ell_3-3\right)}{n(n-1)(n-2)(n-3)}
	\\&\quad+ 4 \frac{\ell_1\ell_3 }{n^2}\cdot\frac{\ell_1\left(\ell_2-1\right)\left(\ell_2-2\right)\left(\ell_3-3\right)}{n(n-1)(n-2)(n-3)}
	+ 4 \frac{\ell_2\ell_3 }{n^2}\cdot\frac{\ell_1\left(\ell_1-1\right)\left(\ell_2-2\right)\left(\ell_3-3\right)}{n(n-1)(n-2)(n-3)}
\end{align*}
\item
When $\ds{\zp[b]{B}=5}$, there are $\binom{6}{5}=6$ terms arising
\begin{align*}-\sum\limits_{\substack{B\subset S_6 \\ \zp[b]{B}=5 }} E^{(S_3,B)}_{n,\ell} 
&= 
	2\frac{\ell_1}{n}\cdot\frac{\ell_1\left( \ell_2-1\right)\left( \ell_2-2\right)  \left( \ell_3-3\right) \left( \ell_3-4\right)}{n(n-1)(n-2)(n-3)(n-4)}
	\\&\quad + 2\frac{\ell_2}{n}\cdot\frac{\ell_1\left( \ell_1-1\right)\left( \ell_2-2\right)  \left( \ell_3-3\right) \left( \ell_3-4\right) }{n(n-1)(n-2)(n-3)(n-4)}
	\\&\quad +2\frac{\ell_3}{n}\cdot\frac{\ell_1\left( \ell_1-1\right)\left( \ell_2-2\right)  \left( \ell_2-3\right) \left( \ell_3-4\right) }{n(n-1)(n-2)(n-3)(n-4)}
\end{align*}
\item
In the case $|B|=6$, i.e., $\ds{B=S_6}$, one has $\ds{\text{rank}_B(j)=j}$. This immediately implies
\begin{align*}\sum\limits_{\substack{B\subset S_6 \\ \zp[b]{B}=6 }} E^{(S_3,B)}_{n,\ell}
	=\frac{ \ell_{1}\left( \ell_{1}-1\right) \left(\ell_{2}-2 \right)\left(\ell_{2}-3 \right) \left(\ell_{3}-4\right)\left(\ell_{3}-5\right) }{n(n-1)\cdots (n-5)}.
\end{align*}
\end{asparaenum}}
\noindent 
Assembling terms, one obtains
\begin{align*} 
\sum_{B\subset S_6}  E^{(S_3,B)}_{n,\ell}
	&=  \frac{-\ell_1 \left( n-\ell_3 \right) \left( \sum_{j=0}^4 n^j p_j\left( \ell_1,\ell_2,\ell_3 \right)  \right) }{n^6 (n-1)(n-2)(n-3)(n-4)(n-5)},
\end{align*}
	where $\ds{p_j \in\R_{(6-j)\wedge 4  }\left[X_1,X_2,X_3\right]}$ for $\ds{0\leq j \leq 4}$ with $\R_k[X_j,j=1,\ldots,d]$ denoting the set of $d$-variate polynomials with total degree at most $k$.
As a consequence, since $\ell_j \le n$, the numerator is of order $\ds{O(n^8)}$, which implies \eqref{eq:goal83}.

Now, consider the case $(|\mathbf i|,|A|)=(7,4)$. Without loss of generality, we may consider $\mathbf i=(1,2,3,4,5,6,7,1)$. As before, we assume that $\ell_1 \le \ell_2 \le \ell_3 \le \ell_4$ and $A=S_4=\{1, 2, 3,4\}$, which implies $\psi^{-1}(A)=S_8$. Recall that we must prove that
\begin{align} \label{eq:goal74}
\sum_{B\subset\psi^{-1}(A) }  E^{(A,B)}_{n, \bm \ell} 
= 
\sum_{\sigma=0}^8 \sum_{B\subset S_8: |B|=\sigma} E^{(A,B)}_{n, \bm \ell}  = O(n^{-3}).
\end{align}

For each $\sigma\in\{0, \dots, 8\}$, since $i_1=1=i_8$, we may decompose
\begin{align} \label{eq:esi}
\sum_{B\subset S_8: |B|=\sigma} E^{(A,B)}_{n, \bm \ell} 
= 
\sum\limits_{\substack{B\subset S_{8} : |B|=\sigma\\ \{1,8\} \cup B \neq B  }}  E^{(S_4,B)}_{n,\ell}
+
\sum\limits_{\substack{B\subset S_{8}: |B|=\sigma \\ \{1,8\} \subset B }}  E^{(S_4,B)}_{n,\ell}.
\end{align}
We are going to invoke \eqref{eq:eabnl}. For that purpose note that, by \eqref{cdf}, 
\begin{align*}
	\p\left[R_{i_j}\leq \ell_{\psi(j)},j\in B\right] =	
	\begin{dcases} 
	\frac{\prod_{j\in B\setminus \{8\}  }\left(\ell_{ \psi(j)} -\text{rank}_{B\setminus \{8\} }(j)+1 \right)   }{n(n-1)\cdots (n-\zp[b]{B}+2)} &, \{1,8\} \subset B \\
		\frac{\prod_{j\in B}\left(\ell_{ \psi(j) } -\text{rank}_B(j)+1 \right)   }{n(n-1)\cdots (n-\zp[b]{B}+1)} & ,  \{1,8\} \cup B \neq B ,
	\end{dcases}
\end{align*}
see also  \eqref{eq:meabnl}. The previous formula allows to calculate the sums in \eqref{eq:esi} explicitly (all formulas have been checked with computer algebra systems):

{\small
\begin{asparaenum}
\item When $|B=0$, i.e., $\ds{B=\emptyset}$, we get: 
\[
\sum\limits_{\substack{B\subset S_8 \\ \zp[b]{B}=0 }}  E^{(S_4,B)}_{n,\ell} = E^{(S_4,\emptyset)}_{n,\ell} = \frac{\ell^2_1\ell^2_2\ell^2_3\ell^2_4}{n^8}.
\]
\item
When $\ds{\zp[b]{B}=1}$, i.e. $\ds{B= \{j\}}$ with $\ds{1\leq j \leq 8}$, there are $\binom{8}{1}=8$ terms arising
\begin{align*}
-\sum\limits_{\substack{B\subset S_8 \\ \zp[b]{B}=1 }}  E^{(S_4,B)}_{n,\ell} &= 8\cdot\frac{\ell^2_1\ell^2_2\ell^2_3\ell^2_4}{n^8}.
\end{align*}
\item
When $\ds{\zp[b]{B}=2}$,  there are $\binom{8}{2}=28$ terms arising \begin{align*}\sum\limits_{\substack{B\subset S_8 \\ \zp[b]{B}=2 }}  E^{(S_4,B)}_{n,\ell} &= \frac{\ell^2_2\ell^2_3\ell^2_4}{n^6} \cdot \frac{\ell_1(\ell_1-1)}{n(n-1)}+\frac{\ell^2_1\ell^2_3\ell^2_4}{n^6} \cdot \frac{\ell_2(\ell_2-1)}{n(n-1)}+\frac{\ell^2_1\ell^2_2\ell^2_4}{n^6} \cdot \frac{\ell_3(\ell_3-1)}{n(n-1)}+\frac{\ell^2_1\ell^2_2\ell^2_3}{n^6} \cdot \frac{\ell_4(\ell_4-1)}{n(n-1)}
	\\&\qquad +4\frac{\ell_1\ell_2\ell^2_3\ell^2_4}{n^6} \cdot \frac{\ell_1(\ell_2-1)}{n(n-1)}+4\frac{\ell_1\ell^2_2\ell_3\ell^2_4}{n^6} \cdot \frac{\ell_1(\ell_3-1)}{n(n-1)}+3\frac{\ell_1\ell^2_2\ell^2_3\ell_4}{n^6} \cdot \frac{\ell_1(\ell_4-1)}{n(n-1)}
	\\&\qquad + \frac{\ell_1\ell^2_2\ell^2_3\ell_4}{n^6} \cdot \frac{\ell_1}{n}
	\\&\qquad + 4\frac{\ell^2_1\ell_2\ell_3\ell^2_4}{n^6} \cdot \frac{\ell_2(\ell_3-1)}{n(n-1)} + 4\frac{\ell^2_1\ell_2\ell^2_3\ell_4}{n^6} \cdot \frac{\ell_2(\ell_4-1)}{n(n-1)} +  4\frac{\ell^2_1\ell^2_2\ell_3\ell_4}{n^6} \cdot \frac{\ell_3(\ell_4-1)}{n(n-1)}.
\end{align*}
\item
When $\ds{\zp[b]{B}=3}$, there are $\binom{8}{3}=56$ terms arising:
\begin{align*}-\sum\limits_{\substack{B\subset S_8 \\ \zp[b]{B}=3 }}  E^{(S_4,B)}_{n,\ell} &=2\frac{\ell_2 \ell^2_3 \ell^2_4}{n^5}\cdot\frac{\ell_1\left(\ell_1-1\right)\left(\ell_2-2\right) }{n(n-1)(n-2)}
	\\&\qquad +2\frac{\ell^2_2 \ell_3 \ell^2_4}{n^5}\cdot\frac{\ell_1\left(\ell_1-1\right)\left(\ell_3-2\right) }{n(n-1)(n-2)}
	\\&\qquad +\frac{\ell^2_2 \ell^2_3 \ell_4}{n^5}\cdot\frac{\ell_1\left(\ell_1-1\right)\left(\ell_4-2\right) }{n(n-1)(n-2)} + \frac{\ell^2_2 \ell^2_3 \ell_4}{n^5}\cdot\frac{\ell_1 \left(\ell_1-1\right) }{n(n-1)}
	\\&\qquad +2\frac{\ell_1 \ell^2_3 \ell^2_4}{n^5}\cdot\frac{\ell_1\left(\ell_2-1\right)\left(\ell_2-2\right) }{n(n-1)(n-2)}
	\\&\qquad +2\frac{\ell^2_1 \ell_3 \ell^2_4}{n^5}\cdot\frac{\ell_2\left(\ell_2-1\right)\left(\ell_3-2\right) }{n(n-1)(n-2)}
	\\&\qquad +2\frac{\ell^2_1 \ell^2_3 \ell_4}{n^5}\cdot\frac{\ell_2\left(\ell_2-1\right)\left(\ell_4-2\right) }{n(n-1)(n-2)}
	\\&\qquad +2\frac{\ell_1 \ell^2_2 \ell^2_4}{n^5}\cdot\frac{\ell_1\left(\ell_3-1\right)\left(\ell_3-2\right) }{n(n-1)(n-2)}
	\\&\qquad +2\frac{\ell^2_1 \ell_2 \ell^2_4}{n^5}\cdot\frac{\ell_2\left(\ell_3-1\right)\left(\ell_3-2\right) }{n(n-1)(n-2)}
	\\&\qquad +2\frac{\ell^2_1 \ell^2_2 \ell_4}{n^5}\cdot\frac{\ell_3\left(\ell_3-1\right)\left(\ell_4-2\right) }{n(n-1)(n-2)}
	\\&\qquad +\frac{\ell_1 \ell^2_2 \ell^2_3}{n^5}\cdot\frac{\ell_1\left(\ell_4-1\right)\left(\ell_4-2\right) }{n(n-1)(n-2)}+\frac{\ell_1 \ell^2_2 \ell^2_3}{n^5}\cdot\frac{\ell_1 \left(\ell_4-1\right) }{n(n-1)}
	\\&\qquad +2\frac{\ell^2_1 \ell_2 \ell^2_3}{n^5}\cdot\frac{\ell_2\left(\ell_4-1\right)\left(\ell_4-2\right) }{n(n-1)(n-2)}
	\\&\qquad +2\frac{\ell^2_1 \ell^2_2 \ell_3}{n^5}\cdot\frac{\ell_3\left(\ell_4-1\right)\left(\ell_4-2\right) }{n(n-1)(n-2)}
	\\&\qquad +8\frac{\ell^2_1 \ell_2 \ell_3 \ell_4}{n^5}\cdot\frac{\ell_2\left(\ell_3-1\right)\left(\ell_4-2\right) }{n(n-1)(n-2)}
	\\&\qquad +6\frac{\ell_1 \ell^2_2 \ell_3 \ell_4}{n^5}\cdot\frac{\ell_1\left(\ell_3-1\right)\left(\ell_4-2\right) }{n(n-1)(n-2)} +2\frac{\ell_1 \ell^2_2 \ell_3 \ell_4}{n^5}\cdot\frac{\ell_1 \left(\ell_3-1\right) }{n(n-1)}
	\\&\qquad +6\frac{\ell_1 \ell_2 \ell^2_3 \ell_4}{n^5}\cdot\frac{\ell_1\left(\ell_2-1\right)\left(\ell_4-2\right) }{n(n-1)(n-2)}+2\frac{\ell_1 \ell_2 \ell^2_3 \ell_4}{n^5}\cdot\frac{\ell_1 \left(\ell_2-1\right) }{n(n-1)}
	\\&\qquad +8\frac{\ell_1 \ell_2 \ell_3 \ell^2_4}{n^5}\cdot\frac{\ell_1\left(\ell_2-1\right)\left(\ell_3-2\right) }{n(n-1)(n-2)}.
\end{align*}
\item
When $|B|=4$, there are $\binom{8}{4}=70$ terms arising
\begin{align*}\sum\limits_{\substack{B\subset S_8 \\ \zp[b]{B}=4 }} E^{(S_4,B)}_{n,\ell} &= \frac{\ell^2_1 \ell^2_2}{n^4}\cdot\frac{\ell_3\left(\ell_3-1\right)\left(\ell_4-2\right)\left(\ell_4-3\right)}{n(n-1)(n-2)(n-3)}
	\\&\qquad +\frac{\ell^2_1 \ell^2_3}{n^4}\cdot\frac{\ell_2\left(\ell_2-1\right)\left(\ell_4-2\right)\left(\ell_4-3\right)}{n(n-1)(n-2)(n-3)}
	\\&\qquad +\frac{\ell^2_1 \ell^2_4}{n^4}\cdot\frac{\ell_2\left(\ell_2-1\right)\left(\ell_3-2\right)\left(\ell_3-3\right)}{n(n-1)(n-2)(n-3)}
	\\&\qquad + \frac{\ell^2_2 \ell^2_3 }{n^4}\cdot\frac{\ell_1 \left(\ell_1-1\right)\left(\ell_4-2\right)}{n(n-1)(n-2)}
	\\&\qquad +\frac{\ell^2_2 \ell^2_4}{n^4}\cdot\frac{\ell_1\left(\ell_1-1\right)\left(\ell_3-2\right)\left(\ell_3-3\right)}{n(n-1)(n-2)(n-3)}
	\\&\qquad +\frac{\ell^2_3\ell^2_4}{n^4}\cdot\frac{\ell_1\left(\ell_1-1\right)\left(\ell_2-2\right)\left(\ell_2-3\right)}{n(n-1)(n-2)(n-3)}
	\\&\qquad +12\frac{\ell_1 \ell_2 \ell_3 \ell_4}{n^4}\cdot\frac{\ell_1\left(\ell_2-1\right)\left(\ell_3-2\right)\left(\ell_4-3\right)}{n(n-1)(n-2)(n-3)}+4\frac{\ell_1 \ell_2 \ell_3 \ell_4}{n^4}\cdot\frac{\ell_1 \left(\ell_2-1\right)\left(\ell_3-2\right)}{n(n-1)(n-2)}
	\\&\qquad +4\frac{\ell^2_1 \ell_2 \ell_3 }{n^4}\cdot\frac{\ell_2\left(\ell_3-1\right)\left(\ell_4-2\right)\left(\ell_4-3\right)}{n(n-1)(n-2)(n-3)}
	\\&\qquad +4\frac{\ell^2_1 \ell_2 \ell_4 }{n^4}\cdot\frac{\ell_2\left(\ell_3-1\right)\left(\ell_3-2\right)\left(\ell_4-3\right)}{n(n-1)(n-2)(n-3)}
	\\&\qquad +4\frac{\ell^2_1 \ell_3 \ell_4}{n^4}\cdot\frac{\ell_2\left(\ell_2-1\right)\left(\ell_3-2\right)\left(\ell_4-3\right)}{n(n-1)(n-2)(n-3)}
	\\&\qquad +2\frac{\ell_1 \ell^2_2 \ell_3 }{n^4}\cdot\frac{\ell_1\left(\ell_3-1\right)\left(\ell_4-2\right)\left(\ell_4-3\right)}{n(n-1)(n-2)(n-3)}+2\frac{\ell_1 \ell^2_2 \ell_3 }{n^4}\cdot\frac{\ell_1 \left(\ell_3-1\right)\left(\ell_4-2\right)}{n(n-1)(n-2)}
	\\&\qquad +3\frac{\ell_1 \ell^2_2 \ell_4 }{n^4}\cdot\frac{\ell_1\left(\ell_3-1\right)\left(\ell_3-2\right)\left(\ell_4-3\right)}{n(n-1)(n-2)(n-3)} + \frac{\ell_1 \ell^2_2 \ell_4 }{n^4}\cdot\frac{\ell_1 \left(\ell_3-1\right)\left(\ell_3-2\right)}{n(n-1)(n-2)}
	\\&\qquad +2\frac{\ell^2_2 \ell_3 \ell_4}{n^4}\cdot\frac{\ell_1\left(\ell_1-1\right)\left(\ell_3-2\right)\left(\ell_4-3\right)}{n(n-1)(n-2)(n-3)}+2\frac{\ell^2_2 \ell_3 \ell_4}{n^4}\cdot\frac{\ell_1 \left(\ell_1-1\right)\left(\ell_3-2\right)}{n(n-1)(n-2)}
	\\&\qquad +2\frac{\ell_1 \ell_2 \ell^2_3}{n^4}\cdot\frac{\ell_1\left(\ell_2-1\right)\left(\ell_4-2\right)\left(\ell_4-3\right)}{n(n-1)(n-2)(n-3)}+2\frac{\ell_1 \ell_2 \ell^2_3}{n^4}\cdot\frac{\ell_1 \left(\ell_2-1\right)\left(\ell_4-2\right)}{n(n-1)(n-2)}
	\\&\qquad +3\frac{\ell_1 \ell^2_3 \ell_4 }{n^4}\cdot\frac{\ell_1\left(\ell_2-1\right)\left(\ell_2-2\right)\left(\ell_4-3\right)}{n(n-1)(n-2)(n-3)}+\frac{\ell_1 \ell^2_3 \ell_4 }{n^4}\cdot\frac{\ell_1 \left(\ell_2-1\right)\left(\ell_2-2\right)}{n(n-1)(n-2)}
	\\&\qquad +2\frac{\ell_2 \ell^2_3 \ell_4 }{n^4}\cdot\frac{\ell_1\left(\ell_1-1\right)\left(\ell_2-2\right)\left(\ell_4-3\right)}{n(n-1)(n-2)(n-3)}+2\frac{\ell_2 \ell^2_3 \ell_4 }{n^4}\cdot\frac{\ell_1 \left(\ell_1-1\right)\left(\ell_2-2\right)}{n(n-1)(n-2)}
	\\&\qquad +4\frac{\ell_1 \ell_2 \ell^2_4}{n^4}\cdot\frac{\ell_1\left(\ell_2-1\right)\left(\ell_3-2\right)\left(\ell_3-3\right)}{n(n-1)(n-2)(n-3)}
	\\&\qquad +4\frac{\ell_1 \ell_3 \ell^2_4}{n^4}\cdot\frac{\ell_1\left(\ell_2-1\right)\left(\ell_2-2\right)\left(\ell_3-3\right)}{n(n-1)(n-2)(n-3)}
	\\&\qquad +4\frac{\ell_2 \ell_3 \ell^2_4}{n^4}\cdot\frac{\ell_1\left(\ell_1-1\right)\left(\ell_2-2\right)\left(\ell_3-3\right)}{n(n-1)(n-2)(n-3)}.
\end{align*}
\item
When $\ds{\zp[b]{B}=5}$, there are $\binom{8}{5}=56$ terms arising
\begin{align*}-\sum\limits_{\substack{B\subset S_8 \\ \zp[b]{B}=5 }}  E^{(S_4,B)}_{n,\ell}&=2\frac{\ell^2_1\ell_2}{n^3}\cdot\frac{\ell_2\left(\ell_3-1\right)\left(\ell_3-2\right)\left(\ell_4-3\right)\left(\ell_4-4\right)}{n(n-1)(n-2)(n-3)(n-4)}
	\\&\qquad +2\frac{\ell^2_1\ell_3}{n^3}\cdot\frac{\ell_2\left(\ell_2-1\right)\left(\ell_3-2\right)\left(\ell_4-3\right)\left(\ell_4-4\right)}{n(n-1)(n-2)(n-3)(n-4)}
	\\&\qquad +2\frac{\ell^2_1\ell_4}{n^3}\cdot\frac{\ell_2\left(\ell_2-1\right)\left(\ell_3-2\right)\left(\ell_3-3\right)\left(\ell_4-4\right)}{n(n-1)(n-2)(n-3)(n-4)}
	\\&\qquad  +\frac{\ell_1\ell^2_2}{n^3}\cdot\frac{\ell_1\left(\ell_3-1\right)\left(\ell_3-2\right)\left(\ell_4-3\right)\left(\ell_4-4\right)}{n(n-1)(n-2)(n-3)(n-4)} + \frac{\ell_1\ell^2_2}{n^3}\cdot\frac{\ell_1\left(\ell_3-1\right)\left(\ell_3-2\right)\left(\ell_4-3\right)}{n(n-1)(n-2)(n-3)}
	\\&\qquad  +2\frac{\ell^2_2\ell_3}{n^3}\cdot\frac{\ell_1\left(\ell_1-1\right)\left(\ell_3-2\right)\left(\ell_4-3\right)}{n(n-1)(n-2)(n-3)}
	\\&\qquad  +\frac{\ell^2_2\ell_4}{n^3}\cdot\frac{\ell_1\left(\ell_1-1\right)\left(\ell_3-2\right)\left(\ell_3-3\right)\left(\ell_4-4\right)}{n(n-1)(n-2)(n-3)(n-4)} +\frac{\ell^2_2\ell_4}{n^3}\cdot\frac{\ell_1\left(\ell_1-1\right)\left(\ell_3-2\right)\left(\ell_3-3\right)}{n(n-1)(n-2)(n-3)} 
	\\&\qquad  +\frac{\ell_1\ell^2_3}{n^3}\cdot\frac{\ell_1\left(\ell_2-1\right)\left(\ell_2-2\right)\left(\ell_4-3\right)\left(\ell_4-4\right)}{n(n-1)(n-2)(n-3)(n-4)} +\frac{\ell_1\ell^2_3}{n^3}\cdot\frac{\ell_1\left(\ell_2-1\right)\left(\ell_2-2\right)\left(\ell_4-3\right)}{n(n-1)(n-2)(n-3)}
	\\&\qquad  +2\frac{\ell_2\ell^2_3}{n^3}\cdot\frac{\ell_1\left(\ell_1-1\right)\left(\ell_2-2\right)\left(\ell_4-3\right)}{n(n-1)(n-2)(n-3)} 
	\\&\qquad  +\frac{\ell^2_3\ell_4}{n^3}\cdot\frac{\ell_1\left(\ell_1-1\right)\left(\ell_2-2\right)\left(\ell_2-3\right)\left(\ell_4-4\right)}{n(n-1)(n-2)(n-3)(n-4)} + \frac{\ell^2_3\ell_4}{n^3}\cdot\frac{\ell_1\left(\ell_1-1\right)\left(\ell_2-2\right)\left(\ell_2-3\right)}{n(n-1)(n-2)(n-3)} 
	\\&\qquad  +2\frac{\ell_1\ell^2_4}{n^3}\cdot\frac{\ell_1\left(\ell_2-1\right)\left(\ell_2-2\right)\left(\ell_3-3\right)\left(\ell_3-4\right)}{n(n-1)(n-2)(n-3)(n-4)} 
	\\&\qquad +2\frac{\ell_2\ell^2_4}{n^3}\cdot\frac{\ell_1\left(\ell_1-1\right)\left(\ell_2-2\right)\left(\ell_3-3\right)\left(\ell_3-4\right)}{n(n-1)(n-2)(n-3)(n-4)}
	\\&\qquad +2\frac{\ell_3\ell^2_4}{n^3}\cdot\frac{\ell_1\left(\ell_1-1\right)\left(\ell_2-2\right)\left(\ell_2-3\right)\left(\ell_3-4\right)}{n(n-1)(n-2)(n-3)(n-4)}
	\\&\qquad  +4\frac{\ell_1\ell_2\ell_3}{n^3}\cdot\frac{\ell_1\left(\ell_2-1\right)\left(\ell_3-2\right)\left(\ell_4-3\right)\left(\ell_4-4\right)}{n(n-1)(n-2)(n-3)(n-4)} + 4\frac{\ell_1\ell_2\ell_3}{n^3}\cdot\frac{\ell_1\left(\ell_2-1\right)\left(\ell_3-2\right)\left(\ell_4-3\right)}{n(n-1)(n-2)(n-3)}
	\\&\qquad  +6\frac{\ell_1\ell_2\ell_4}{n^3}\cdot\frac{\ell_1\left(\ell_2-1\right)\left(\ell_3-2\right)\left(\ell_3-3\right)\left(\ell_4-4\right)}{n(n-1)(n-2)(n-3)(n-4)} + 2\frac{\ell_1\ell_2\ell_4}{n^3}\cdot\frac{\ell_1\left(\ell_2-1\right)\left(\ell_3-2\right)\left(\ell_3-3\right)}{n(n-1)(n-2)(n-3)}
	\\&\qquad  +4\frac{\ell_2\ell_3\ell_4}{n^3}\cdot\frac{\ell_1\left(\ell_1-1\right)\left(\ell_2-2\right)\left(\ell_3-3\right)\left(\ell_4-4\right)}{n(n-1)(n-2)(n-3)(n-4)} + 4\frac{\ell_2\ell_3\ell_4}{n^3}\cdot\frac{\ell_1\left(\ell_1-1\right)\left(\ell_2-2\right)\left(\ell_3-3\right)}{n(n-1)(n-2)(n-3)}
	\\&\qquad  +6\frac{\ell_1\ell_3\ell_4}{n^3}\cdot\frac{\ell_1\left(\ell_2-1\right)\left(\ell_2-2\right)\left(\ell_3-3\right)\left(\ell_4-4\right)}{n(n-1)(n-2)(n-3)(n-4)} +2\frac{\ell_1\ell_3\ell_4}{n^3}\cdot\frac{\ell_1\left(\ell_2-1\right)\left(\ell_2-2\right)\left(\ell_3-3\right)}{n(n-1)(n-2)(n-3)} .
\end{align*}
\item
When $\ds{\zp[b]{B}=6}$, there are $\binom{8}{2}=28$ terms arising
\begin{align*}\sum\limits_{\substack{B\subset S_8 \\ \zp[b]{B}=6 }} E^{(S_4,B)}_{n,\ell} &= \frac{\ell^2_1}{n^2}\cdot\frac{\ell_2\left(\ell_2-1\right)\left(\ell_3-2\right)\left(\ell_3-3\right)\left(\ell_4-4\right)\left(\ell_4-5\right) }{n(n-1)(n-2)(n-3)(n-4)(n-5)} 
	\\&\quad+{\frac{\ell^2_2}{n^2}\cdot\frac{\ell_1\left(\ell_1-1\right)\left(\ell_3-2\right)\left(\ell_3-3\right)\left(\ell_4-4\right)}{n(n-1)(n-2)(n-3)(n-4)} }
	\\&\quad+{\frac{\ell^2_3}{n^2}\cdot\frac{\ell_1\left(\ell_1-1\right)\left(\ell_2-2\right)\left(\ell_2-3\right)\left(\ell_4-4\right)}{n(n-1)(n-2)(n-3)(n-4)} }
	\\&\quad+\frac{\ell^2_4}{n^2}\cdot\frac{\ell_1\left(\ell_1-1\right)\left(\ell_2-2\right)\left(\ell_2-3\right)\left(\ell_3-4\right)\left(\ell_3-5\right) }{n(n-1)(n-2)(n-3)(n-4)(n-5)} 
	\\&\quad +2\frac{\ell_1\ell_2}{n^2}\cdot\frac{\ell_1\left(\ell_2-1\right)\left(\ell_3-2\right)\left(\ell_3-3\right)\left(\ell_4-4\right)\left(\ell_4-5\right) }{n(n-1)(n-2)(n-3)(n-4)(n-5)}
	\\&\quad
	+2\frac{\ell_1\ell_2}{n^2}\cdot\frac{\ell_1\left(\ell_2-1\right)\left(\ell_3-2\right)\left(\ell_3-3\right)\left(\ell_4-4\right) }{n(n-1)(n-2)(n-3)(n-4)} 
	\\&\quad +2\frac{\ell_1\ell_3}{n^2}\cdot\frac{\ell_1\left(\ell_2-1\right)\left(\ell_2-2\right)\left(\ell_3-3\right)\left(\ell_4-4\right)\left(\ell_4-5\right) }{n(n-1)(n-2)(n-3)(n-4)(n-5)} 
	\\&\quad
	+2\frac{\ell_1\ell_3}{n^2}\cdot\frac{\ell_1\left(\ell_2-1\right)\left(\ell_2-2\right)\left(\ell_3-3\right)\left(\ell_4-4\right) }{n(n-1)(n-2)(n-3)(n-4)} 
	\\&\quad +3\frac{\ell_1\ell_4}{n^2}\cdot\frac{\ell_1\left(\ell_2-1\right)\left(\ell_2-2\right)\left(\ell_3-3\right)\left(\ell_3-4\right)\left(\ell_4-5\right) }{n(n-1)(n-2)(n-3)(n-4)(n-5)} 
	\\&\quad
	+\frac{\ell_1\ell_4}{n^2}\cdot\frac{\ell_1\left(\ell_2-1\right)\left(\ell_2-2\right)\left(\ell_3-3\right)\left(\ell_3-4\right) }{n(n-1)(n-2)(n-3)(n-4)} 
	\\&\quad+4\frac{\ell_2\ell_3}{n^2}\cdot\frac{\ell_1\left(\ell_1-1\right)\left(\ell_2-2\right)\left(\ell_3-3\right)\left(\ell_4-4\right) }{n(n-1)(n-2)(n-3)(n-4)} 
	\\&\quad+2\frac{\ell_2\ell_4}{n^2}\cdot\frac{\ell_1\left(\ell_1-1\right)\left(\ell_2-2\right)\left(\ell_3-3\right)\left(\ell_3-4\right)\left(\ell_4-5\right) }{n(n-1)(n-2)(n-3)(n-4)(n-5)} 
	\\&\quad
	+2\frac{\ell_2\ell_4}{n^2}\cdot\frac{\ell_1\left(\ell_1-1\right)\left(\ell_2-2\right)\left(\ell_3-3\right)\left(\ell_3-4\right) }{n(n-1)(n-2)(n-3)(n-4)} 
	\\&\quad +2\frac{\ell_3\ell_4}{n^2}\cdot\frac{\ell_1\left(\ell_1-1\right)\left(\ell_2-2\right)\left(\ell_2-3\right)\left(\ell_3-4\right)\left(\ell_4-5\right) }{n(n-1)(n-2)(n-3)(n-4)(n-5)} 
	\\&\quad  
	+2\frac{\ell_3\ell_4}{n^2}\cdot\frac{\ell_1\left(\ell_1-1\right)\left(\ell_2-2\right)\left(\ell_2-3\right)\left(\ell_3-4\right)}{n(n-1)(n-2)(n-3)(n-4)} .
\end{align*}
\item
When $\ds{\zp[b]{B}=7}$, there are $\binom{8}{1}=8$ terms arising
\begin{align*}
-\sum\limits_{\substack{B\subset S_8 \\ \zp[b]{B}=7 }} E^{(S_4,B)}_{n,\ell}
&= {\frac{\ell_1}{n}\cdot\frac{\ell_1\left( \ell_2-1\right)\left( \ell_2-2\right)  \left( \ell_3-3\right) \left( \ell_3-4\right) \left( \ell_4-5\right) \left( \ell_4-6\right) }{n(n-1)(n-2)(n-3)(n-4)(n-5)(n-6)}}
	\\&\quad+{\frac{\ell_1}{n}\cdot\frac{\ell_1\left( \ell_2-1\right)\left( \ell_2-2\right)  \left( \ell_3-3\right) \left( \ell_3-4\right) \left( \ell_4-5\right) }{n(n-1)(n-2)(n-3)(n-4)(n-5)}}
	\\&\quad + {2\frac{\ell_2}{n}\cdot\frac{\ell_1\left( \ell_1-1\right)\left( \ell_2-2\right)  \left( \ell_3-3\right) \left( \ell_3-4\right) \left( \ell_4-5\right)  }{n(n-1)(n-2)(n-3)(n-4)(n-5)}}
	\\&\quad +{2\frac{\ell_3}{n}\cdot\frac{\ell_1\left( \ell_1-1\right)\left( \ell_2-2\right)  \left( \ell_2-3\right) \left( \ell_3-4\right) \left( \ell_4-5\right)  }{n(n-1)(n-2)(n-3)(n-4)(n-5)}}
	\\&\quad +{\frac{\ell_4}{n}\cdot\frac{\ell_1\left( \ell_1-1\right)\left( \ell_2-2\right)  \left( \ell_2-3\right) \left( \ell_3-4\right) \left( \ell_3-5\right) \left( \ell_4-6\right) }{n(n-1)(n-2)(n-3)(n-4)(n-5)(n-6)}}
	\\&\quad +{\frac{\ell_4}{n}\cdot\frac{\ell_1\left( \ell_1-1\right)\left( \ell_2-2\right)  \left( \ell_2-3\right) \left( \ell_3-4\right) \left( \ell_3-5\right) }{n(n-1)(n-2)(n-3)(n-4)(n-5)}}.
\end{align*}
\item
When $\ds{\zp[b]{B}=8}$, i.e., $\ds{B=S_8}$, we have
 \begin{align*}\sum\limits_{\substack{B\subset S_8 \\ \zp[b]{B}=8 }}  E^{(S_4,B)}_{n,\ell}
&=\frac{ \ell_{1}\left( \ell_{1}-1\right) \left(\ell_{2}-2 \right)\left(\ell_{2}-3 \right) \left(\ell_{3}-4\right)\left(\ell_{3}-5\right)\left(\ell_{4}-6 \right)}{n(n-1)\cdots (n-6)}.
\end{align*}
\end{asparaenum}}

Assembling terms, one obtains 
	\[
	 \sum_{B\subset S_8}  E^{(S_4,B)}_{n,\ell}  = \frac{\ell_1 \left( n-\ell_4 \right) \left( \sum_{j=0}^6 n^j p_j\left( \ell_1,\ell_2,\ell_3,\ell_4 \right)  \right) }{n^8 (n-1)(n-2)(n-3)(n-4)(n-5)(n-6)},
	\]	where, for $\ds{j\in \{0,\ldots,6\}\setminus\{3\}}$, $\ds{p_j \in\R_{\left( 9-j\right)\wedge 6 }\left[X_1,\ldots,X_4\right]}$, and $p_3\in\R_{5}\left[X_1,\ldots,X_4\right]$.
	As a consequence, the numerator is of the order $\ds{O(n^{11})}$, which implies \eqref{eq:goal74}. 
	\qed

\subsection{Proof of \eqref{eq:summand_rate} for $\kappa_{A,i}= -4$.}\label{sect:ka-4} 
We need to consider the cases $(|\mathbf i|,|A|)=(8,4)$  for which we may assume that $\mathbf i=(1, \dots, 8)$ and $A=S_4=\{ 1,2,3,4 \}$, such that $\psi^{-1}(A)=S_8=\{1, \dots, 8\}$.
As before, we further assume that $\ell_1 \le \ell_2 \le \ell_3 \le \ell_4$. We proceed similarly as for the case $(|\mathbf i|,|A|)=(8,3)$. Recall that we must prove that
\begin{align} \label{eq:goal84}
\sum_{B\subset\psi^{-1}(A) }  E^{(A,B)}_{n, \bm \ell} 
= 
\sum_{\sigma=0}^8 \sum_{B\subset S_8: |B|=\sigma} E^{(S_4,B)}_{n, \bm \ell}  = O(n^{-4}).
\end{align}
For each $\sigma\in\{0,\dots, 8\}$, the sums in the previous decomposition may be calculated explicitly (all formulas have been checked with computer algebra systems):
{\small
\begin{asparaenum}
\item 
When $\ds{B=\emptyset}$, we get: $$\sum\limits_{\substack{B\subset S_8 \\ \zp[b]{B}=0 }}  E^{(S_4,B)}_{n,\ell} = E^{(S_4,\emptyset)}_n = \frac{\ell^2_1\ell^2_2\ell^2_3\ell^2_4}{n^8}.$$
\item
When $\ds{\zp[b]{B}=1}$, i.e. $\ds{B= \{j\}}$ with $\ds{1\leq j \leq 8}$, there are $\binom{8}{1}=8$ terms arising
\begin{align*}
-\sum\limits_{\substack{B\subset S_8 \\ \zp[b]{B}=1 }}  E^{(S_4,B)}_{n,\ell} &= 8\cdot\frac{\ell^2_1\ell^2_2\ell^2_3\ell^2_4}{n^8}.
\end{align*}
\item
When $\ds{\zp[b]{B}=2}$,  there are $\binom{8}{2}=28$ terms arising
\begin{align*}\sum\limits_{\substack{B\subset S_8 \\ \zp[b]{B}=2 }}  E^{(S_4,B)}_{n,\ell} &= \frac{\ell^2_2\ell^2_3\ell^2_4}{n^6} \cdot \frac{\ell_1(\ell_1-1)}{n(n-1)}+\frac{\ell^2_1\ell^2_3\ell^2_4}{n^6} \cdot \frac{\ell_2(\ell_2-1)}{n(n-1)}+\frac{\ell^2_1\ell^2_2\ell^2_4}{n^6} \cdot \frac{\ell_3(\ell_3-1)}{n(n-1)}+\frac{\ell^2_1\ell^2_2\ell^2_3}{n^6} \cdot \frac{\ell_4(\ell_4-1)}{n(n-1)}
	\\&\quad +4\frac{\ell_1\ell_2\ell^2_3\ell^2_4}{n^6} \cdot \frac{\ell_1(\ell_2-1)}{n(n-1)}+4\frac{\ell_1\ell^2_2\ell_3\ell^2_4}{n^6} \cdot \frac{\ell_1(\ell_3-1)}{n(n-1)}+4\frac{\ell_1\ell^2_2\ell^2_3\ell_4}{n^6} \cdot \frac{\ell_1(\ell_4-1)}{n(n-1)}
	\\&\quad + 4\frac{\ell^2_1\ell_2\ell_3\ell^2_4}{n^6} \cdot \frac{\ell_2(\ell_3-1)}{n(n-1)} + 4\frac{\ell^2_1\ell_2\ell^2_3\ell_4}{n^6} \cdot \frac{\ell_2(\ell_4-1)}{n(n-1)} +  4\frac{\ell^2_1\ell^2_2\ell_3\ell_4}{n^6} \cdot \frac{\ell_3(\ell_4-1)}{n(n-1)}.
\end{align*}
\item
When $\ds{\zp[b]{B}=3}$, there are $\binom{8}{3}=56$ terms arising:
\begin{align*}-\sum\limits_{\substack{B\subset S_8 \\ \zp[b]{B}=3 }}  E^{(S_4,B)}_{n,\ell} 
&=2\frac{\ell_2 \ell^2_3 \ell^2_4}{n^5}\cdot\frac{\ell_1\left(\ell_1-1\right)\left(\ell_2-2\right) }{n(n-1)(n-2)}
	\\&\qquad +2\frac{\ell^2_2 \ell_3 \ell^2_4}{n^5}\cdot\frac{\ell_1\left(\ell_1-1\right)\left(\ell_3-2\right) }{n(n-1)(n-2)}
	\\&\qquad +2\frac{\ell^2_2 \ell^2_3 \ell_4}{n^5}\cdot\frac{\ell_1\left(\ell_1-1\right)\left(\ell_4-2\right) }{n(n-1)(n-2)}
	\\&\qquad +2\frac{\ell_1 \ell^2_3 \ell^2_4}{n^5}\cdot\frac{\ell_1\left(\ell_2-1\right)\left(\ell_2-2\right) }{n(n-1)(n-2)}
	\\&\qquad +2\frac{\ell^2_1 \ell_3 \ell^2_4}{n^5}\cdot\frac{\ell_2\left(\ell_2-1\right)\left(\ell_3-2\right) }{n(n-1)(n-2)}
	\\&\qquad +2\frac{\ell^2_1 \ell^2_3 \ell_4}{n^5}\cdot\frac{\ell_2\left(\ell_2-1\right)\left(\ell_4-2\right) }{n(n-1)(n-2)}
	\\&\qquad +2\frac{\ell_1 \ell^2_2 \ell^2_4}{n^5}\cdot\frac{\ell_1\left(\ell_3-1\right)\left(\ell_3-2\right) }{n(n-1)(n-2)}
	\\&\qquad +2\frac{\ell^2_1 \ell_2 \ell^2_4}{n^5}\cdot\frac{\ell_2\left(\ell_3-1\right)\left(\ell_3-2\right) }{n(n-1)(n-2)}
	\\&\qquad +2\frac{\ell^2_1 \ell^2_2 \ell_4}{n^5}\cdot\frac{\ell_3\left(\ell_3-1\right)\left(\ell_4-2\right) }{n(n-1)(n-2)}
	\\&\qquad +2\frac{\ell_1 \ell^2_2 \ell^2_3}{n^5}\cdot\frac{\ell_1\left(\ell_4-1\right)\left(\ell_4-2\right) }{n(n-1)(n-2)}
	\\&\qquad +2\frac{\ell^2_1 \ell_2 \ell^2_3}{n^5}\cdot\frac{\ell_2\left(\ell_4-1\right)\left(\ell_4-2\right) }{n(n-1)(n-2)}
	\\&\qquad +2\frac{\ell^2_1 \ell^2_2 \ell_3}{n^5}\cdot\frac{\ell_3\left(\ell_4-1\right)\left(\ell_4-2\right) }{n(n-1)(n-2)}
	\\&\qquad +8\frac{\ell^2_1 \ell_2 \ell_3 \ell_4}{n^5}\cdot\frac{\ell_2\left(\ell_3-1\right)\left(\ell_4-2\right) }{n(n-1)(n-2)}
	\\&\qquad +8\frac{\ell_1 \ell^2_2 \ell_3 \ell_4}{n^5}\cdot\frac{\ell_1\left(\ell_3-1\right)\left(\ell_4-2\right) }{n(n-1)(n-2)}
	\\&\qquad +8\frac{\ell_1 \ell_2 \ell^2_3 \ell_4}{n^5}\cdot\frac{\ell_1\left(\ell_2-1\right)\left(\ell_4-2\right) }{n(n-1)(n-2)}
	\\&\qquad +8\frac{\ell_1 \ell_2 \ell_3 \ell^2_4}{n^5}\cdot\frac{\ell_1\left(\ell_2-1\right)\left(\ell_3-2\right) }{n(n-1)(n-2)}.
\end{align*}
\item
When $\ds{\zp[b]{B}=4}$, there are $\binom{8}{4}=70$ terms arising
\begin{align*}\sum\limits_{\substack{B\subset S_8 \\ \zp[b]{B}=4 }} E^{(S_4,B)}_{n,\ell} 
&= \frac{\ell^2_1 \ell^2_2}{n^4}\cdot\frac{\ell_3\left(\ell_3-1\right)\left(\ell_4-2\right)\left(\ell_4-3\right)}{n(n-1)(n-2)(n-3)}
	\\&\qquad +\frac{\ell^2_1 \ell^2_3}{n^4}\cdot\frac{\ell_2\left(\ell_2-1\right)\left(\ell_4-2\right)\left(\ell_4-3\right)}{n(n-1)(n-2)(n-3)}
	\\&\qquad +\frac{\ell^2_1 \ell^2_4}{n^4}\cdot\frac{\ell_2\left(\ell_2-1\right)\left(\ell_3-2\right)\left(\ell_3-3\right)}{n(n-1)(n-2)(n-3)}
	\\&\qquad +\frac{\ell^2_2 \ell^2_3 }{n^4}\cdot\frac{\ell_1\left(\ell_1-1\right)\left(\ell_4-2\right)\left(\ell_4-3\right)}{n(n-1)(n-2)(n-3)}
	\\&\qquad +\frac{\ell^2_2 \ell^2_4}{n^4}\cdot\frac{\ell_1\left(\ell_1-1\right)\left(\ell_3-2\right)\left(\ell_3-3\right)}{n(n-1)(n-2)(n-3)}
	\\&\qquad +\frac{\ell^2_3\ell^2_4}{n^4}\cdot\frac{\ell_1\left(\ell_1-1\right)\left(\ell_2-2\right)\left(\ell_2-3\right)}{n(n-1)(n-2)(n-3)}
	\\&\qquad +16\frac{\ell_1 \ell_2 \ell_3 \ell_4}{n^4}\cdot\frac{\ell_1\left(\ell_2-1\right)\left(\ell_3-2\right)\left(\ell_4-3\right)}{n(n-1)(n-2)(n-3)}
	\\&\qquad +4\frac{\ell^2_1 \ell_2 \ell_3 }{n^4}\cdot\frac{\ell_2\left(\ell_3-1\right)\left(\ell_4-2\right)\left(\ell_4-3\right)}{n(n-1)(n-2)(n-3)}
	\\&\qquad +4\frac{\ell^2_1 \ell_2 \ell_4 }{n^4}\cdot\frac{\ell_2\left(\ell_3-1\right)\left(\ell_3-2\right)\left(\ell_4-3\right)}{n(n-1)(n-2)(n-3)}
	\\&\qquad +4\frac{\ell^2_1 \ell_3 \ell_4}{n^4}\cdot\frac{\ell_2\left(\ell_2-1\right)\left(\ell_3-2\right)\left(\ell_4-3\right)}{n(n-1)(n-2)(n-3)}
	\\&\qquad +4\frac{\ell_1 \ell^2_2 \ell_3 }{n^4}\cdot\frac{\ell_1\left(\ell_3-1\right)\left(\ell_4-2\right)\left(\ell_4-3\right)}{n(n-1)(n-2)(n-3)}
	\\&\qquad +4\frac{\ell_1 \ell^2_2 \ell_4 }{n^4}\cdot\frac{\ell_1\left(\ell_3-1\right)\left(\ell_3-2\right)\left(\ell_4-3\right)}{n(n-1)(n-2)(n-3)}
	\\&\qquad +4\frac{\ell^2_2 \ell_3 \ell_4}{n^4}\cdot\frac{\ell_1\left(\ell_1-1\right)\left(\ell_3-2\right)\left(\ell_4-3\right)}{n(n-1)(n-2)(n-3)}
	\\&\qquad +4\frac{\ell_1 \ell_2 \ell^2_3}{n^4}\cdot\frac{\ell_1\left(\ell_2-1\right)\left(\ell_4-2\right)\left(\ell_4-3\right)}{n(n-1)(n-2)(n-3)}
	\\&\qquad +4\frac{\ell_1 \ell^2_3 \ell_4 }{n^4}\cdot\frac{\ell_1\left(\ell_2-1\right)\left(\ell_2-2\right)\left(\ell_4-3\right)}{n(n-1)(n-2)(n-3)}
	\\&\qquad +4\frac{\ell_2 \ell^2_3 \ell_4 }{n^4}\cdot\frac{\ell_1\left(\ell_1-1\right)\left(\ell_2-2\right)\left(\ell_4-3\right)}{n(n-1)(n-2)(n-3)}
	\\&\qquad +4\frac{\ell_1 \ell_2 \ell^2_4}{n^4}\cdot\frac{\ell_1\left(\ell_2-1\right)\left(\ell_3-2\right)\left(\ell_3-3\right)}{n(n-1)(n-2)(n-3)}
	\\&\qquad +4\frac{\ell_1 \ell_3 \ell^2_4}{n^4}\cdot\frac{\ell_1\left(\ell_2-1\right)\left(\ell_2-2\right)\left(\ell_3-3\right)}{n(n-1)(n-2)(n-3)}
	\\&\qquad +4\frac{\ell_2 \ell_3 \ell^2_4}{n^4}\cdot\frac{\ell_1\left(\ell_1-1\right)\left(\ell_2-2\right)\left(\ell_3-3\right)}{n(n-1)(n-2)(n-3)}.
\end{align*}
\item
When $\ds{\zp[b]{B}=5}$, there are $\binom{8}{5}=56$ terms arising
\begin{align*}-\sum\limits_{\substack{B\subset S_8 \\ \zp[b]{B}=5 }}  E^{(S_4,B)}_{n,\ell}&=2\frac{\ell^2_1\ell_2}{n^3}\cdot\frac{\ell_2\left(\ell_3-1\right)\left(\ell_3-2\right)\left(\ell_4-3\right)\left(\ell_4-4\right)}{n(n-1)(n-2)(n-3)(n-4)}
	\\&\qquad +2\frac{\ell^2_1\ell_3}{n^3}\cdot\frac{\ell_2\left(\ell_2-1\right)\left(\ell_3-2\right)\left(\ell_4-3\right)\left(\ell_4-4\right)}{n(n-1)(n-2)(n-3)(n-4)}
	\\&\qquad +2\frac{\ell^2_1\ell_4}{n^3}\cdot\frac{\ell_2\left(\ell_2-1\right)\left(\ell_3-2\right)\left(\ell_3-3\right)\left(\ell_4-4\right)}{n(n-1)(n-2)(n-3)(n-4)}
	\\&\qquad +2\frac{\ell_1\ell^2_2}{n^3}\cdot\frac{\ell_1\left(\ell_3-1\right)\left(\ell_3-2\right)\left(\ell_4-3\right)\left(\ell_4-4\right)}{n(n-1)(n-2)(n-3)(n-4)}
	\\&\qquad +2\frac{\ell^2_2\ell_3}{n^3}\cdot\frac{\ell_1\left(\ell_1-1\right)\left(\ell_3-2\right)\left(\ell_4-3\right)\left(\ell_4-4\right)}{n(n-1)(n-2)(n-3)(n-4)}
	\\&\qquad +2\frac{\ell^2_2\ell_4}{n^3}\cdot\frac{\ell_1\left(\ell_1-1\right)\left(\ell_3-2\right)\left(\ell_3-3\right)\left(\ell_4-4\right)}{n(n-1)(n-2)(n-3)(n-4)} 
	\\&\qquad +2\frac{\ell_1\ell^2_3}{n^3}\cdot\frac{\ell_1\left(\ell_2-1\right)\left(\ell_2-2\right)\left(\ell_4-3\right)\left(\ell_4-4\right)}{n(n-1)(n-2)(n-3)(n-4)}
	\\&\qquad +2\frac{\ell_2\ell^2_3}{n^3}\cdot\frac{\ell_1\left(\ell_1-1\right)\left(\ell_2-2\right)\left(\ell_4-3\right)\left(\ell_4-4\right)}{n(n-1)(n-2)(n-3)(n-4)}
	\\&\qquad +2\frac{\ell^2_3\ell_4}{n^3}\cdot\frac{\ell_1\left(\ell_1-1\right)\left(\ell_2-2\right)\left(\ell_2-3\right)\left(\ell_4-4\right)}{n(n-1)(n-2)(n-3)(n-4)}
	\\&\qquad +2\frac{\ell_1\ell^2_4}{n^3}\cdot\frac{\ell_1\left(\ell_2-1\right)\left(\ell_2-2\right)\left(\ell_3-3\right)\left(\ell_3-4\right)}{n(n-1)(n-2)(n-3)(n-4)}
	\\&\qquad +2\frac{\ell_2\ell^2_4}{n^3}\cdot\frac{\ell_1\left(\ell_1-1\right)\left(\ell_2-2\right)\left(\ell_3-3\right)\left(\ell_3-4\right)}{n(n-1)(n-2)(n-3)(n-4)}
	\\&\qquad +2\frac{\ell_3\ell^2_4}{n^3}\cdot\frac{\ell_1\left(\ell_1-1\right)\left(\ell_2-2\right)\left(\ell_2-3\right)\left(\ell_3-4\right)}{n(n-1)(n-2)(n-3)(n-4)}
	\\&\qquad +8\frac{\ell_1\ell_2\ell_3}{n^3}\cdot\frac{\ell_1\left(\ell_2-1\right)\left(\ell_3-2\right)\left(\ell_4-3\right)\left(\ell_4-4\right)}{n(n-1)(n-2)(n-3)(n-4)}
	\\&\qquad +8\frac{\ell_1\ell_2\ell_4}{n^3}\cdot\frac{\ell_1\left(\ell_2-1\right)\left(\ell_3-2\right)\left(\ell_3-3\right)\left(\ell_4-4\right)}{n(n-1)(n-2)(n-3)(n-4)}
	\\&\qquad +8\frac{\ell_2\ell_3\ell_4}{n^3}\cdot\frac{\ell_1\left(\ell_1-1\right)\left(\ell_2-2\right)\left(\ell_3-3\right)\left(\ell_4-4\right)}{n(n-1)(n-2)(n-3)(n-4)}
	\\&\qquad +8\frac{\ell_1\ell_3\ell_4}{n^3}\cdot\frac{\ell_1\left(\ell_2-1\right)\left(\ell_2-2\right)\left(\ell_3-3\right)\left(\ell_4-4\right)}{n(n-1)(n-2)(n-3)(n-4)}.
\end{align*}
\item
When $\ds{\zp[b]{B}=6}$, there are $\binom{8}{2}=28$ terms arising
\begin{align*}\sum\limits_{\substack{B\subset S_8 \\ \zp[b]{B}=6 }} E^{(S_4,B)}_{n,\ell} &= \frac{\ell^2_1}{n^2}\cdot\frac{\ell_2\left(\ell_2-1\right)\left(\ell_3-2\right)\left(\ell_3-3\right)\left(\ell_4-4\right)\left(\ell_4-5\right) }{n(n-1)(n-2)(n-3)(n-4)(n-5)} 
	\\&\qquad+\frac{\ell^2_2}{n^2}\cdot\frac{\ell_1\left(\ell_1-1\right)\left(\ell_3-2\right)\left(\ell_3-3\right)\left(\ell_4-4\right)\left(\ell_4-5\right) }{n(n-1)(n-2)(n-3)(n-4)(n-5)} 
	\\&\qquad+\frac{\ell^2_3}{n^2}\cdot\frac{\ell_1\left(\ell_1-1\right)\left(\ell_2-2\right)\left(\ell_2-3\right)\left(\ell_4-4\right)\left(\ell_4-5\right) }{n(n-1)(n-2)(n-3)(n-4)(n-5)} 
	\\&\qquad+\frac{\ell^2_4}{n^2}\cdot\frac{\ell_1\left(\ell_1-1\right)\left(\ell_2-2\right)\left(\ell_2-3\right)\left(\ell_3-4\right)\left(\ell_3-5\right) }{n(n-1)(n-2)(n-3)(n-4)(n-5)} 
	\\&\qquad+4\frac{\ell_1\ell_2}{n^2}\cdot\frac{\ell_1\left(\ell_2-1\right)\left(\ell_3-2\right)\left(\ell_3-3\right)\left(\ell_4-4\right)\left(\ell_4-5\right) }{n(n-1)(n-2)(n-3)(n-4)(n-5)} 
	\\&\qquad+4\frac{\ell_1\ell_3}{n^2}\cdot\frac{\ell_1\left(\ell_2-1\right)\left(\ell_2-2\right)\left(\ell_3-3\right)\left(\ell_4-4\right)\left(\ell_4-5\right) }{n(n-1)(n-2)(n-3)(n-4)(n-5)} 
	\\&\qquad+4\frac{\ell_1\ell_4}{n^2}\cdot\frac{\ell_1\left(\ell_2-1\right)\left(\ell_2-2\right)\left(\ell_3-3\right)\left(\ell_3-4\right)\left(\ell_4-5\right) }{n(n-1)(n-2)(n-3)(n-4)(n-5)} 
	\\&\qquad+4\frac{\ell_2\ell_3}{n^2}\cdot\frac{\ell_1\left(\ell_1-1\right)\left(\ell_2-2\right)\left(\ell_3-3\right)\left(\ell_4-4\right)\left(\ell_4-5\right) }{n(n-1)(n-2)(n-3)(n-4)(n-5)} 
	\\&\qquad+4\frac{\ell_2\ell_4}{n^2}\cdot\frac{\ell_1\left(\ell_1-1\right)\left(\ell_2-2\right)\left(\ell_3-3\right)\left(\ell_3-4\right)\left(\ell_4-5\right) }{n(n-1)(n-2)(n-3)(n-4)(n-5)} 
	\\&\qquad+4\frac{\ell_3\ell_4}{n^2}\cdot\frac{\ell_1\left(\ell_1-1\right)\left(\ell_2-2\right)\left(\ell_2-3\right)\left(\ell_3-4\right)\left(\ell_4-5\right) }{n(n-1)(n-2)(n-3)(n-4)(n-5)} .
\end{align*}
\item
When $\ds{\zp[b]{B}=7}$, there are $\binom{8}{1}=8$ terms arising: 
\begin{align*}-\sum\limits_{\substack{B\subset S_8 \\ \zp[b]{B}=7 }} E^{(S_4,B)}_{n,\ell}&= 2\frac{\ell_1}{n}\cdot\frac{\ell_1\left( \ell_2-1\right)\left( \ell_2-2\right)  \left( \ell_3-3\right) \left( \ell_3-4\right) \left( \ell_4-5\right) \left( \ell_4-6\right) }{n(n-1)(n-2)(n-3)(n-4)(n-5)(n-6)}
	\\&\qquad + 2\frac{\ell_2}{n}\cdot\frac{\ell_1\left( \ell_1-1\right)\left( \ell_2-2\right)  \left( \ell_3-3\right) \left( \ell_3-4\right) \left( \ell_4-5\right) \left( \ell_4-6\right) }{n(n-1)(n-2)(n-3)(n-4)(n-5)(n-6)}
	\\&\qquad +2\frac{\ell_3}{n}\cdot\frac{\ell_1\left( \ell_1-1\right)\left( \ell_2-2\right)  \left( \ell_2-3\right) \left( \ell_3-4\right) \left( \ell_4-5\right) \left( \ell_4-6\right) }{n(n-1)(n-2)(n-3)(n-4)(n-5)(n-6)}
	\\&\qquad +2\frac{\ell_4}{n}\cdot\frac{\ell_1\left( \ell_1-1\right)\left( \ell_2-2\right)  \left( \ell_2-3\right) \left( \ell_3-4\right) \left( \ell_3-5\right) \left( \ell_4-6\right) }{n(n-1)(n-2)(n-3)(n-4)(n-5)(n-6)}.
\end{align*}
\item
In the case $\ds{\zp[b]{B}=8}$, i.e., $\ds{B=S_8}$, one has $\ds{\text{rank}_B(j)=j}$, which implies 
\begin{align*}\sum\limits_{\substack{B\subset S_8 \\ \zp[b]{B}=8 }}  E^{(S_4,B)}_{n,\ell}
=\frac{ \ell_{1}\left( \ell_{1}-1\right) \left(\ell_{2}-2 \right)\left(\ell_{2}-3 \right) \left(\ell_{3}-4\right)\left(\ell_{3}-5\right)\left(\ell_{4}-6 \right)\left(\ell_{4}-7 \right) }{n(n-1)\cdots (n-7)}.
\end{align*}
\end{asparaenum}}

Assembling terms, we obtain
\begin{align*} 
\sum_{B\subset S_8}   E^{(S_4,B)}_{n,\ell}
	&=  \frac{\ell_1 \left( n-\ell_4 \right) \left( \sum_{j=0}^6 n^j p_j\left( \ell_1,\ldots,\ell_4 \right)  \right) }{n^8 (n-1)(n-2)(n-3)(n-4)(n-5)(n-6)(n-7)},
\end{align*}
where, for $\ds{0\leq j \leq 6}$, $\ds{p_j \in\R_{\left( 9-j\right)\wedge 6 }\left[X_1,\ldots,X_4\right]}$. 
As a consequence, the numerator is of the order $\ds{O(n^{11})}$, which implies \eqref{eq:goal84}. \qed

 \section{Results and proofs for Step 3: joint convergence} \label{sec:ps3b}

\begin{proof}[Proof of Proposition~\ref{prop:mm}]
For notational convenience, we only cover the case $m=3$ (note that the case $m=2$ was proven in Section~\ref{sec:ps3a}). By the Cram\'er-Wold device, it is sufficient to show that 
\begin{align} \label{eq:tbs3}
\lambda_2 \frac{\tilde M_n(2)}{\delta_n(2)} + \lambda_3 \frac{\tilde M_n(3)}{\delta_n(3)}
&\weak
\mathcal{N}_1\left( 0,\lambda_2^2+ \lambda_3^2\right) 
\end{align}
for all $\lambda_2, \lambda_3 \in \R$. For notational convenience, we restrict attention to $\lambda_2 = \lambda_3=1$.

For that purpose, recalling $\tilde M_n(k)$ from \eqref{eq:mnnn2}, observe that we can write
\begin{align*}
\frac{\tilde M_n(2)}{\delta_n(2)}+\frac{\tilde M_n(3)}{\delta_n(3)}
= 
\delta^{-1}_n(3)\cdot  \sum_{\ell=1}^{d} \sum_{q=1}^{\ell-1} 
\left[\frac{\delta_n(3)}{\delta_n(2)}\cdot   \tilde M_{n,\{ q,\ell \}}+\sum_{p=1}^{q-1} \tilde M_{n,\{ p,q,\ell \}}\right].
\end{align*}
 We are going to apply Theorem \ref{TCL} with $\eta=2$, with $ \mathcal{F}_{n,r} := \sigma\{U_{i,p} : i\in\{1, \dots, n\},p\in\{1, \dots, r\} \}$ as in \eqref{eq:fnr} and with 
\begin{align}
X_{n,r} 
&:=  \label{eq:xnr-mult}
\begin{dcases}
\delta^{-1}_n(3)\cdot \sum_{q =1}^{r-1} 
\left[\frac{\delta_n(3)}{\delta_n(2)}\cdot   \tilde M_{n,\{ q,r \}}+\sum_{p=1}^{q-1} \tilde M_{n,\{ p,q,r \}}\right] &, r \ge 2 \\
0 & , r=1  ,
\end{dcases}
\end{align}
where the empty sum is defined as zero (which is obtained for $q=1$).
Throughout this section, we always refer to the definition of $X_{n,r}$ in \eqref{eq:xnr-mult} rather than in \eqref{eq:xnr}, unless mentioned otherwise.

Clearly, $S_{n,r}$ has zero mean by 3-variate independence and centredness of $I_{i,j}^{(p)}$. Next, the martingale property is a mere consequence of
\begin{align*}
\E[X_{n,r} \mid \mathcal F_{n,r-1}]
&=
\frac2n \sum_{i<j}^n
\frac1{\delta_n(3)} \sum_{q =1}^{r-1} 
\left[\frac{\delta_n(3)}{\delta_n(2)}\cdot   \E[ \tilde I_{i,j}^{(q)}   \tilde I_{i,j}^{(r)}   \mid \mathcal F_{n,r-1} ]+\sum_{p=1}^{q-1}  \E[ \tilde I_{i,j}^{(p)}  \tilde I_{i,j}^{(q)}   \tilde I_{i,j}^{(r)}   \mid \mathcal F_{n,r-1} ] \right] \\
&=
\frac2n \sum_{i<j}^n
\frac1{\delta_n(3)} \sum_{q =1}^{r-1} 
\left[\frac{\delta_n(3)}{\delta_n(2)}\cdot   \tilde I_{i,j}^{(q)}  \E[  \tilde I_{i,j}^{(r)}  ]+\sum_{p=1}^{q-1}   \tilde I_{i,j}^{(p)}  \tilde I_{i,j}^{(q)}   \E[ \tilde I_{i,j}^{(r)}  ] \right],
\end{align*}
which is zero by centredness of $\tilde I_{i,j}^{(p)}$.

As in the proof of Proposition~\ref{prop_s1}, the condition in \eqref{lind2} with $\eta^2=2$ is a mere consequence of 
\[ 
\lim_{n\to\infty} 
\E \left[  \left( \sum_{r=1}^{d}\E\left( X^2_{n,r} | \mathcal{F}_{n,r-1}  \right) \right)^2 \right] = 4,
\qquad  
\lim_{n\to\infty} 
\E \left[ \sum_{r=1}^{d}\E\left( X^2_{n,r} | \mathcal{F}_{n,r-1}  \right)  \right]= 2.
\]
which follows from Lemma~\ref{lem_exp_sum} and \ref{lem_cond_exp_sum}.

Finally, since $L^1$-convergence implies convergence in probability and since $X_{nr}^4$ is non-negative, the Lyapunov condition (\ref{lyapunov}) is a consequence of Lemma~\ref{lem:lya3}. The weak convergence in \eqref{eq:tbs3} with $\lambda_2=\lambda_3=1$ is now a consequence of Theorem~\ref{TCL}.
\end{proof}

\begin{lemma}\label{lem_exp_sum}
Assume $5$-wise independence. Then, with $X_{n,r}$ from \eqref{eq:xnr-mult},
\begin{align*}
\lim_{n\to\infty}
\E \left[ \sum_{r=1}^{d}\E\left( X^2_{n,r} | \mathcal{F}_{n,r-1}  \right)  \right] =2.
\end{align*}
\end{lemma}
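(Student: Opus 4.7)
The plan is to exploit the fact that $X_{n,r}$ from \eqref{eq:xnr-mult} splits naturally as
$X_{n,r} = X_{n,r}(2) + X_{n,r}(3)$,
where $X_{n,r}(k)$ is exactly the quantity from \eqref{eq:xnr} for $k\in\{2,3\}$. Squaring and summing over $r$ then gives a decomposition into two pure squares, which can be dispatched by re-using the already proven Lemma \ref{lem_exp}, and a cross term whose expectation I will show vanishes identically for each $r$.

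For the two pure squares, Lemma \ref{lem_exp} applied with $k=2$ and with $k=3$ yields
$\sum_{r=1}^{d}\E[X_{n,r}(k)^2]\to 1$
provided that $(2k-1)$-wise independence holds. Since $5$-wise independence is assumed in the present statement, both contributions tend to $1$ and therefore produce the desired total of $2$ as soon as the cross term has been shown to vanish.

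The heart of the argument is therefore to show that $\E[X_{n,r}(2)\,X_{n,r}(3)]=0$ for every $r\ge 3$. Expanding $\tilde M_{n,\{q,r\}}\cdot\tilde M_{n,\{p',q',r\}}$ via \eqref{eq:mnnn}, the relevant expectations take the form
$\E\bigl[\tilde I^{(q)}_{i_1,i_2}\,\tilde I^{(r)}_{i_1,i_2}\,\tilde I^{(p')}_{i_3,i_4}\,\tilde I^{(q')}_{i_3,i_4}\,\tilde I^{(r)}_{i_3,i_4}\bigr]$
with $q,p',q'<r$ and $\mathbf i\in\mathcal J$. The multiset of involved columns contains at most four distinct values, so $5$-wise independence lets the expectation factor column by column. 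A short case analysis — distinguishing the subcases $q\notin\{p',q'\}$, $q=p'$ and $q=q'$ — shows that in every case at least one of the columns $q,p',q'$ appears only once in the product, giving a centered factor $\E[\tilde I^{(\cdot)}_{\cdot,\cdot}]=0$. Hence each summand vanishes exactly, not just asymptotically, and the cross term contributes zero.

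The only mild obstacle is the column-matching bookkeeping in this last step: one must verify that no overlap pattern between the pair $(q,r)$ and the triple $(p',q',r)$ produces a non-vanishing factorisation. Once this is checked, the combination of Lemma \ref{lem_exp} and the cross-term vanishing yields the limit $2$ directly.
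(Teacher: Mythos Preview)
Your proposal is correct and follows essentially the same route as the paper: decompose $X_{n,r}=X_{n,r}(2)+X_{n,r}(3)$, show the cross term $\E[X_{n,r}(2)X_{n,r}(3)]$ vanishes by $5$-wise independence and centredness of $\tilde I^{(p)}_{i,j}$ (your case analysis is exactly what the paper's terse ``by $5$-wise independence and centredness'' amounts to), and then invoke Lemma~\ref{lem_exp} for $k=2$ and $k=3$ to get each pure square summing to $1$. The only cosmetic difference is that the paper also displays the reduction of each pure square to its diagonal before citing Lemma~\ref{lem_exp}, whereas you leave that reduction inside the lemma; this does not change the argument.
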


\begin{proof}
By definition of $X_{n,r}$ in \eqref{eq:xnr-mult}, we have
\begin{align}
X^2_{n,r}
=
\frac1{\delta_n^2(2)} \sum_{q,q'=1}^{r-1}  \tilde M_{n,\{ q,r \}} \tilde M_{n,\{ q',r \}} 
& \nonumber
+ \frac2{\delta_n(2) \delta_n(3)} \sum_{q,q'=1}^{r-1} \sum_{p'=1}^{q'-1} \tilde M_{n,\{ q,r \}}\tilde M_{n,\{ p',q',r \}}  \\
& \label{eq:xnr2-mult}
+ \frac1{\delta_n^2(3)}\sum_{q,q'=1}^{r-1} \sum_{p=1}^{q-1}\sum_{p'=1}^{q'-1}  \tilde M_{n,\{ p,q,r \}} \tilde M_{n,\{ p',q',r \}} .
\end{align}
Taking the expectation, by  $5$-wise independence and centredness of $\tilde I_{i,j}^{(p)}$, 
\begin{align*}
\E \left[X^{2}_{n,r}\right] 
&= 
\frac1{\delta_n^2(2)} \sum_{q=1}^{r-1} \E \left[\left( \tilde M_{n,\{ q,r \}}\right)^2 \right]
+ 
\frac1{\delta_n^2(3)}\sum_{q=1}^{r-1} \sum_{p=1}^{q-1}\E \left[\left( \tilde M_{n,\{ p,q,r \}}\right)^2  \right].
\end{align*}
The assertion now follows from Lemma~\ref{lem_exp}, applied with $k=2$ and $k=3$.	
\end{proof}

\begin{lemma}\label{lem_cond_exp_sum}
Assume $8$-wise independence. Then, with $X_{n,r}$ from \eqref{eq:xnr-mult},
\begin{align*}
\Lambda_n
\coloneqq
\E \left[ \left(\sum_{r=1}^{d}\E\left( X^2_{n,r} | \mathcal{F}_{n,r-1}  \right) \right)^2  \right]\xrightarrow[n\to +\infty]{} 4.
\end{align*}
\end{lemma}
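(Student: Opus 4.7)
Write $X_{n,r} = Y_{n,r}^{(2)} + Y_{n,r}^{(3)}$, where
\[
Y_{n,r}^{(k)} = \frac{1}{\delta_n(k)} \sum\limits_{\substack{\mathbf p_k \in \mathcal P(d,k) \\ p_k = r}} \tilde M_{n, \mathbf p_k}, \qquad k \in\{2,3\},
\]
coincides with the single-$k$ martingale increment $X_{n,r}(k)$ defined in \eqref{eq:xnr}. Accordingly $X_{n,r}^2 = (Y_{n,r}^{(2)})^2 + 2 Y_{n,r}^{(2)} Y_{n,r}^{(3)} + (Y_{n,r}^{(3)})^2$. Setting $\chi_{n,r}^{(k)} = \E[(Y_{n,r}^{(k)})^2 \mid \mathcal F_{n,r-1}]$, $\xi_{n,r} = \E[Y_{n,r}^{(2)} Y_{n,r}^{(3)} \mid \mathcal F_{n,r-1}]$, $T_n^{(k)} = \sum_r \chi_{n,r}^{(k)}$ and $\Xi_n = \sum_r \xi_{n,r}$, one has
\[
\Lambda_n = \E\big[(T_n^{(2)} + T_n^{(3)} + 2\Xi_n)^2\big].
\]

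By Lemma~\ref{lem_cond_exp} applied with $k=2,3$ we have $\E[(T_n^{(k)})^2] \to 1$, and combined with $\E[T_n^{(k)}] \to 1$ from Lemma~\ref{lem_exp}, this yields $T_n^{(k)} \to 1$ in $L^2$. In particular $\E[T_n^{(2)} T_n^{(3)}] \to 1$ by Cauchy--Schwarz. Expanding the square, the desired convergence $\Lambda_n \to 4$ thus reduces to showing $\E[\Xi_n^2] \to 0$, since Cauchy--Schwarz then also forces $\E[T_n^{(k)} \Xi_n]\to 0$ for $k=2,3$. The proof of $\E[\Xi_n^2] \to 0$ will be the main obstacle.

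To this end, exploiting $4$-wise independence (ensured by $8$-wise independence) together with independence of the $r$-th column and $\mathcal F_{n,r-1}$ under $H$, one obtains the representation
\[
\xi_{n,r} = \frac{4}{n^2\delta_n(2)\delta_n(3)} \sum_{q < r} \sum_{1 \le p^* < q^* < r} \sum_{\mathbf i \in \mathcal J} \tilde I^{(q)}_{i_1,i_2}\tilde I^{(p^*)}_{i_3,i_4}\tilde I^{(q^*)}_{i_3,i_4} \cdot \E\big[\tilde I^{(r)}_{i_1,i_2} \tilde I^{(r)}_{i_3,i_4}\big].
\]
Squaring and summing over $r$, $\E[\Xi_n^2]$ becomes a sum, indexed by pairs $(r,r')$, by six column indices $(q,p^*,q^*,\bar q,\bar p^*, \bar q^*)\in\{1,\ldots,d\}^6$, and by $\mathbf i \in \mathcal J^2$, of eight-fold expectations of $\tilde I$'s controlled by $8$-wise independence. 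As in the proof of Lemma~\ref{lem_cond_exp}, centredness of $\tilde I^{(\cdot)}_{i,j}$ enforces a $2$-matching condition on the six column indices: each column value must appear an even number of times. In the dominant perfectly-matched case this admits at most $3$ distinct column values, yielding $O(d^3)$ configurations, to which one adds $O(d^2)$ choices for $(r,r')$. The row-index analysis parallels that of Lemma~\ref{lem_cond_exp}: each admissible matching produces four $\varphi_2$-factors (three from the matched columns and one from $\E[\tilde I^{(r)}\tilde I^{(r)}]\E[\tilde I^{(r')}\tilde I^{(r')}]$), and for all of them to be of constant order by Lemma~\ref{lem:2m} one needs $\mathbf i \in \mathcal I_2 \times \mathcal I_2$ to collapse so that the four pairs $(i_{2\ell-1}, i_{2\ell})$, $\ell=1,\ldots,4$, all coincide, leaving $|\mathcal I_2|=O(n^2)$ row configurations. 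Since $\delta_n^2(k) \propto d^k$, this gives the leading contribution
\[
\E[\Xi_n^2] = O\!\left(\frac{d^5 \cdot n^2}{n^4 \, \delta_n^2(2)\,\delta_n^2(3)}\right) = O(n^{-2}).
\]
Configurations with more column collapses (the $\lambda>0$ analogues of $\mathcal W_k(\lambda)$) contribute an extra $d^{-\lambda}$, while $\mathbf i \in \mathcal I_h\times\mathcal I_{h'}$ with $(h,h')\neq(2,2)$ contributes an extra $n^{-1}$, so all these sub-cases are subleading. The main technical work consists in enumerating the admissible $2$-matching configurations in this \emph{mixed-cardinality} setting (one $2$-set paired with one $3$-set), for which analogues of Lemmas~\ref{bij2} and \ref{inj2} suffice; since only upper bounds are needed (not exact asymptotics as in Corollary~\ref{cor:cards}), the combinatorial bookkeeping is less delicate than in Lemma~\ref{lem_cond_exp}.
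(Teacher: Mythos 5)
Your proposal is correct, and it takes a genuinely different (and leaner) route than the paper's. The paper expands $\Lambda_n$ into diagonal and off-diagonal parts $\Lambda_{n,1},\Lambda_{n,2}$ and then into the component products $\Lambda^{(\ell,\ell')}_{n,i}$, which forces a dedicated exact-asymptotics computation of the mixed term $\Lambda^{(1,3)}_{n,2}\to 1$ (with the $\mathcal S_k$-type counts and explicit $90$-power constants) in addition to separate vanishing arguments for the $(1,2)$, $(2,2)$, $(2,3)$ and all diagonal terms. You instead use Lemmas~\ref{lem_exp} and \ref{lem_cond_exp} as black boxes for $k=2$ and $k=3$ (the latter is exactly where the assumed $8$-wise independence enters) to get $T_n^{(2)},T_n^{(3)}\to 1$ in $L^2$, so that $\E[T_n^{(2)}T_n^{(3)}]\to 1$ comes for free and the hardest explicit computation of the paper's proof disappears; everything then reduces to $\E[\Xi_n^2]\to 0$, with the remaining cross terms controlled by Cauchy--Schwarz. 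Your $\Xi_n$-term corresponds precisely to the paper's $(2,2)$-terms (note $\chi^{(2)}_{n,r}=2\xi_{n,r}$ in the paper's notation), and your accounting --- column $2$-matching leaving $O(d^{3})$ admissible column values together with $O(d^{2})$ choices of $(r,r')$, exactly cancelling $\delta_n^2(2)\delta_n^2(3)\propto d^5$, so that in the perfectly matched case one needs the refined row-sum bound $o(n^4)$ (indeed $O(n^2)$, attained when the four row pairs coincide) --- matches the paper's own analysis of $\Lambda^{(2,2)}_{n,2}$, which the paper likewise settles only by an announced ``careful case-by-case analysis''. Two small points to tighten: in the perfectly matched case the summand carries five $\varphi_2$-factors (the two factors of $\Psi_{\mathbf i}$ plus three from the matched columns), not four, although this does not change the conclusion that the dominant row configuration is the full collapse of the four pairs; and the ``dominant case'' bookkeeping should be turned into a bound valid over all intersection patterns of the row pairs, which follows, e.g., from the uniform bound $\sup_{i_1<i_2}\sum_{i_3<i_4}|\varphi_2((i_1,i_2,i_3,i_4))|=O(1)$ implied by Lemma~\ref{lem:2m} --- i.e.\ exactly the case analysis that the paper defers as well, so your sketch is at no lower level of detail than the published argument.
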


\begin{proof} 
As in the proof of Lemma~\ref{lem_cond_exp}, we decompose
\[
\Lambda_n=\Lambda_{n,1}+\Lambda_{n,2},
\]
where
\[
\Lambda_{n,1} 
:=  
\sum_{r=1}^{d}\E \left[ \left( \E\left( X^2_{n,r} | \mathcal{F}_{n,r-1}  \right) \right)^2  \right] ,\quad 
\Lambda_{n,2} 
:= 
\sum_{r \ne r'}^d  \E \left[\E\left( X^2_{n,r} | \mathcal{F}_{n,r-1}  \right) \E\left( X^2_{n,r'} | \mathcal{F}_{n,r'-1}  \right)  \right]. 
\]
It is sufficient to show that
\begin{align} \label{eq:lconv-mult}
\lim_{n\to\infty}\Lambda_{n,1}=0, 
\qquad 
\lim_{n\to\infty}\Lambda_{n,2}=4.
\end{align}	
	
For that purpose, we start by decomposing, for $r\ge 2$, the random variable 
\begin{align} \label{eq:xidecomp}
\chi_{n,r} := \E\left( X^2_{n,r} | \mathcal{F}_{n,r-1}  \right)  =\sum_{  \ell =1}^3 \chi^{(\ell )}_{n,r},
\end{align}
where, in view of  \eqref{eq:xnr2-mult},
\begin{align*}
\chi^{(1)}_{n,r}
&:=
\frac1{\delta_n^2(2)} \sum_{q,q'=1}^{r-1} \E\left( \tilde M_{n,\{ q,r \}} \tilde M_{n,\{ q',r \}}  | \mathcal{F}_{n,r-1}  \right),\\
\chi^{(2)}_{n,r}
&:= 
\frac2{\delta_n(2) \delta_n(3)}  \sum_{q,q'=1}^{r-1} \sum_{p'=1}^{q'-1} \E \left(\tilde M_{n,\{ q,r \}} \tilde M_{n,\{ p',q',r \}}  | \mathcal{F}_{n,r-1} \right), \\
\chi^{(3)}_{n,r}
&:=
\frac1{\delta_n^2(3)} \sum_{q,q'=1}^{r-1} \sum_{p=1}^{q-1}\sum_{p'=1}^{q'-1} \E \left( \tilde M_{n,\{ p,q,r \}}\tilde M_{n,\{ p',q',r \}}  | \mathcal{F}_{n,r-1} \right).
\end{align*}
Here and throughout, empty sums are again interpreted as zero.
By $5$-wise independence and Lemma~\ref{lem:2m},
\begin{align*}
\chi^{(1)}_{n,r}
&=
\frac{4}{n^2\delta^{2}_n(2)} \sum_{q_1,q_2=1}^{r-1} \sum_{\mathbf{i} \in \mathcal{J}} 
\varphi_2\left( \mathbf{i} \right)  \tilde  I^{(q_1)}_{i_1,i_2}\tilde  I^{(q_2)}_{i_3,i_4}\\
\chi^{(2)}_{n,r} 
&=
\frac{8}{n^2\delta_n(2) \delta_n(3)} \sum_{q_1,q_2=1}^{r-1} \sum_{p_1=1}^{q_1-1} \sum_{\mathbf{i}\in \mathcal{J}}
\varphi_2\left( \mathbf{i} \right) \tilde I^{(p_1)}_{i_1,i_2}\tilde  I^{(q_1)}_{i_1,i_2}\tilde  I^{(q_2)}_{i_3,i_4}  \\
\chi^{(3)}_{n,r}
&=
\frac{4}{n^2\delta^{2}_n(3) } \sum_{q_1,q_2=1}^{r-1} \sum_{p_1=1}^{q_1-1}\sum_{p_2=1}^{q_2-1} \sum_{\mathbf{i}\in \mathcal{J}}\varphi_2\left(\mathbf{i}\right)\tilde I^{(p_1)}_{i_1,i_2}\tilde  I^{(q_1)}_{i_1,i_2}\tilde  I^{(p_2)}_{i_3,i_4} \tilde I^{(q_2)}_{i_3,i_4}.
\end{align*}
The first assertion in \eqref{eq:lconv-mult} is shown once we prove that
\begin{align} \label{eq:l1ll}
\Lambda_{n,1}^{(\ell, \ell')} 
:=
\sum_{r=1}^{d}\E\left[\chi^{(\ell)}_{n,r} \chi^{(\ell')}_{n,r}\right]=o(1)  \qquad \forall\, 1 \le \ell \le \ell' \le 3.
\end{align}

For $\ell=1<2=\ell'$, we have	
\begin{align} \label{eq:ln112}
\Lambda_{n,1}^{(1,2)}
&=
\frac{32}{n^4  \delta^{3}_n(2) \delta_n(3) } 
\sum_{r=1}^{d}\sum_{q_1,\ldots,q_4=1}^{r-1}\sum_{p_1=1}^{q_1-1}\sum_{ \mathbf{i}\in \mathcal{J}^2} 
\Psi_{\mathbf{i}} 
\E\left[\tilde I^{(p_1)}_{i_1,i_2}\tilde  I^{(q_1)}_{i_3,i_4}\tilde  I^{(q_2)}_{i_3,i_4}\tilde  I^{(q_3)}_{i_5,i_6}\tilde  I^{(q_4)}_{i_7,i_8}\right],
\end{align}
where, for $\mathbf i \in \mathcal J^2$,
\[
\Psi_{\mathbf i} = \varphi_2\left(\mathbf{i}_{1:4} \right) \cdot\varphi_2\left(\mathbf{i}_{5:8} \right).
\]		
Note that, by Lemma~\ref{lem:2m}, 
\begin{align} \label{eq:psisum}
\sum_{ \mathbf{i}\in \mathcal{J}^2}  |\Psi_{\mathbf{i}}| = \sum_{h,h' =2}^4 \sum_{ \mathbf{i}\in \mathcal I_h \times \mathcal I_{h'}}  |\Psi_{\mathbf{i}}|  =O(n^4).
\end{align}
In view of the 2-matching condition, see \eqref{eq:eprod} and \eqref{matching} for a related argumentation, the expectation in the sum in \eqref{eq:ln112} can only be non-zero when
$
| \{ p_1,q_1,q_2,q_3,q_4\}  | \le 2.
$
Hence, by \eqref{eq:psisum} and since $\delta^{3}_n(2)\delta_n(3) \propto  d^{\frac{9}{2}}$ by the definition of $\delta_n$ in \eqref{eq:deltan}, we have $\Lambda_{n,1}^{(1,2)}=O(d^{-\frac{3}{2}})=o(1)$. Next, we have 
\begin{align*}
\Lambda_{n,1}^{(1,3)}
&=
\frac{16}{n^4\delta^{2}_n(2) \delta^{2}_n(3)}
\sum_{r=1}^d \sum_{q_1,\ldots,q_4=1}^{r-1}\sum_{p_1=1}^{q_1-1}\sum_{p_2=1}^{q_2-1}\sum_{ \mathbf{i}\in \mathcal{J}^2} \Psi_{\mathbf{i}}\E\left[\tilde I^{(p_1)}_{i_1,i_2}\tilde  I^{(q_1)}_{i_1,i_2}\tilde  I^{(p_2)}_{i_3,i_4} \tilde I^{(q_2)}_{i_3,i_4}\tilde  I^{(q_3)}_{i_5,i_6}\tilde  I^{(q_4)}_{i_7,i_8}\right].
\end{align*}	
By $2$-matching and $6$-wise independence, each summand in the previous sum can only be nonzero when 
$
| \{ p_1,p_2,q_1,q_2,q_3,q_4\}  | \le 3.
$
Hence, \eqref{eq:psisum} and  $\delta^{2}_n(2)\delta^{2}_n(3) \propto d^{5}$  implies $\Lambda_{n,1}^{(1,3)}=O(d^{-1})=o(1)$. Next,
\begin{align*}
\Lambda_{n,1}^{(2,3)}
&=
\frac{32}{n^4\delta_n(2) \delta^{3}_n(3)}
\sum_{r=1}^d  \sum \limits_{\substack{ q_\ell=1 \\ \ell=1,\ldots,4 }}^{r-1}\sum \limits_{\substack{ p_\ell=1 \\ \ell=1,\ldots,3 }}^{q_\ell-1}\sum_{ \mathbf{i}\in \mathcal{J}^2} 
\Psi_{\mathbf{i}}\E\left[\tilde I^{(p_1)}_{i_1,i_2}\tilde  I^{(q_1)}_{i_1,i_2}\tilde  I^{(p_2)}_{i_3,i_4} \tilde I^{(q_2)}_{i_3,i_4}I^{(p_3)}_{i_5,i_6}\tilde  I^{(q_3)}_{i_5,i_6}\tilde  I^{(q_4)}_{i_7,i_8}\right].
\end{align*}
By $2$-matching and $7$-wise independence, each summand in the previous sum can only be nonzero when 
$
| \{ p_1,p_2,q_1,q_2,q_3,q_4\}  | \le 3.
$
Hence, by  \eqref{eq:psisum} and since $\delta_n(2)\delta^{3}_n(3) \propto d^{\frac{11}{2}}$, we obtain that  $\Lambda_{n,1}^{(2,3)}=O(d^{-3/2})=o(1)$. Next,
\begin{align*}
\Lambda_{n,1}^{(1,1)}
&=
\frac{16}{n^4\delta^{4}_n(2)}
\sum_{r=1}^d \sum_{q_1,\ldots,q_4=1}^{r-1}\sum_{\mathbf{i}\in\mathcal{J}^2} 
\Psi_{\mathbf{i}}\E\left[ \tilde  I^{(q_1)}_{i_1,i_2}\tilde  I^{(q_2)}_{i_3,i_4}I^{(q_3)}_{i_5,i_6}\tilde  I^{(q_4)}_{i_7,i_8}\right].
\end{align*}
By $2$-matching  and $4$-wise independence,  each summand in the previous sum can only be nonzero when $|\{ q_1,q_2,q_3,q_4\} |\le 2.$ Hence, by  \eqref{eq:psisum} and since $\delta_{n}^4(2) \propto d^4$, we obtain that $\Lambda_{n,1}^{(1,1)}=O(d^{-1})=o(1)$. Next,
\begin{align*}
\Lambda_{n,1}^{(2,2)}
=
\frac{64}{n^4\delta^{2}_n(2)\delta^{2}_n(3)}
\sum_{r=1}^d \sum_{q_1,\ldots,q_4=1}^{r-1}\sum_{p_1=1}^{q_1-1}\sum_{p_3=1}^{q_3-1}\sum_{\mathbf{i}\in\mathcal{J}^2} \Psi_{\mathbf{i}}\E\left[\tilde I^{(p_1)}_{i_1,i_2}\tilde  I^{(q_1)}_{i_1,i_2}\tilde  I^{(q_2)}_{i_3,i_4}  	\tilde I^{(p_3)}_{i_5,i_6}\tilde  I^{(q_3)}_{i_5,i_6}\tilde  I^{(q_4)}_{i_7,i_8}\right].
\end{align*}
By $2$-matching  and $6$-wise independence, each summand in the previous sum can only be nonzero when $| \{ p_1,p_3,q_1,q_2,q_3,q_4\}  |\le 3.$ Hence, by  \eqref{eq:psisum} and since $\delta^{2}_n(2)\delta^{2}_n(3) \propto d^5$, we obtain that $\Lambda_{n,1}^{(2,2)}=O(d^{-1})=o(1)$. Finally,
\begin{align*}
\Lambda_{n,1}^{(3,3)}
=
\frac{16}{n^4\delta^{4}_n(3) }
\sum_{r=1}^d  \sum_{q_1,\ldots,q_4=1}^{r-1}\sum\limits_{\substack{p_\ell=1 \\\ell =1,\ldots,4 }}^{q_\ell-1}\sum_{\mathbf{i}\in\mathcal{J}^2} \Psi_{\mathbf{i}}
\E\left[\tilde I^{(p_1)}_{i_1,i_2}\tilde  I^{(q_1)}_{i_1,i_2}\tilde  I^{(p_2)}_{i_3,i_4} \tilde I^{(q_2)}_{i_3,i_4}
\tilde I^{(p_3)}_{i_5,i_6}\tilde  I^{(q_3)}_{i_5,i_6}\tilde  I^{(p_4)}_{i_7,i_8} \tilde I^{(q_4)}_{i_7,i_8}\right].
\end{align*}
By $2$-matching and $8$-wise independence and centering, each summand in the previous sum can only be nonzero when $|\{ p_1,p_2,p_3,p_4,q_1,q_2,q_3,q_4\}  | \le 4.$  Hence, by  \eqref{eq:psisum} and since $\delta^{4}_n(3) \propto d^6$, we obtain that $\Lambda_{n,1}^{(3,3)}=O(d^{-1})=o(1)$. As a summary, we have shown \eqref{eq:l1ll} and hence the first assertion in \eqref{eq:lconv-mult}.
	
For proving the second assertion in \eqref{eq:lconv-mult}, we make use of \eqref{eq:xidecomp} again and write	
\[
\Lambda_{n,2} 
=
\sum_{r \ne r'}^d \E \left[\chi_{n,r} \chi_{n,r'} \right] 
=
\sum_{\ell,\ell' =1}^3  \sum_{r \ne r'}^d \E\left[\chi^{(\ell )}_{n,r} \chi^{(\ell')}_{n,r'}\right]
=: 
\sum_{\ell,\ell' =1}^3\Lambda_{n,2}^{(\ell, \ell')}
\]
It is then to sufficient to show that
\begin{align}
\label{eq:l2lla}
&\lim_{n\to\infty} \Lambda_{n,2}^{(1,1)} = \lim_{n\to\infty} \Lambda_{n,2}^{(1,3)} = \lim_{n\to\infty} \Lambda_{n,2}^{(3,3)}=1,  \\
\label{eq:l2llb}
&\lim_{n\to\infty} \Lambda_{n,2}^{(1,2)} = \lim_{n\to\infty} \Lambda_{n,2}^{(2,2)} = \lim_{n\to\infty} \Lambda_{n,2}^{(2,3)}=0.
\end{align}

We start by proving \eqref{eq:l2lla}. First, $\Lambda_{n,2}^{(1,1)} / 2$ and $\Lambda_{n,2}^{(3,3)} / 2$ can be seen to be equal to $\Lambda_{n,2}(2)$ and $\Lambda_{n,2}(3)$ from \eqref{eq:ln2k}, respectively, both of which were shown to converge to $\frac{1}{2}$ in the proof of Lemma \ref{lem_cond_exp}. It remains to treat $\Lambda_{n,2}^{(1,3)}$, which can be written as 
\begin{align}
\label{eq:ln213}
\Lambda_{n,2}^{(1,3)}
&=
\frac{16}{n^4\delta^2_n(2)\delta^2_n(3)}
\sum_{r \ne r'}^d \sum_{q_1,q_2=1}^{r-1}\sum_{q_3,q_4=1}^{r'-1} \sum_{p_1=1}^{q_1-1}\sum_{p_2=1}^{q_2-1}
 A(p_1, q_1, p_2, q_2, q_3, q_4)
\end{align}
where
\[
A(p_1, q_1, p_2, q_2, q_3, q_4)
=
\sum_{\mathbf{i}\in \mathcal{J}^2}\Psi_{\mathbf{i}}\E\left[\tilde I^{(p_1)}_{i_1,i_2}\tilde  I^{(q_1)}_{i_1,i_2}\tilde  I^{(p_2)}_{i_3,i_4} \tilde I^{(q_2)}_{i_3,i_4}
\tilde  I^{(q_3)}_{i_5,i_6} \tilde I^{(q_4)}_{i_7,i_8}\right].
\]
The summands in \eqref{eq:ln213} can only be non-zero if $|\{p_1, q_1, p_2, q_2, q_3, q_4\}| \le 3$ and if each element of the previous set appears at least twice. The sum restricted to those summands for which $|\{p_1, q_1, p_2, q_2, q_3, q_4\}| \le 2$ can be bounded by
\[
\frac{16}{n^4\delta^2_n(2)\delta^2_n(3)}
O(d^{4}) \sum_{\mathbf{i}\in \mathcal{J}^2} |\Psi_{\mathbf{i}}|,
\]
which is of the order $O(d^{-1})=o(1)$ by \eqref{eq:psisum} and $\delta^2_n(2)\delta^2_n(3) \propto d^5$. Hence, it is sufficient to consider the sum in \eqref{eq:ln213} restricted to $|\{p_1, q_1, p_2, q_2, q_3, q_4\}| = 3$ where each element of the previous set appears exactly twice.  We further decompose that sum into two cases: either $p_1=p_2, q_1=q_2, q_3=q_4$, or any other combination of matchings applies (in which case, necessarily, $q_3 \ne q_4$). The resulting sum over those indices for which any other matching applies is negligible: indeed, the summands in \eqref{eq:ln213} must be of the form 
\[
A(p_1, q_1, p_2, q_2, q_3, q_4) = \sum_{\mathbf i \in \mathcal J} \Psi_{\mathbf i} \cdot \varphi_2(\mathbf i_{1:4}) \varphi_2(\mathbf i_{\{1,2,5,6\}}) \varphi_2(\mathbf i_{\{1,2,7,8\}}),
\]
with each summand being of the order
$O(n^{10-2|\mathbf i_{1:4}|- |\mathbf i_{5:8}|  - | \mathbf i_{\{1,2,5,6\}}| - | \mathbf i_{\{1,2,7,8\}} | })$ by Lemma~\ref{lem:2m}. Decomposing the sum over $\mathbf i \in \mathcal J^2$ into $\mathbf i \in \mathcal I_h \times \mathcal I_{h'}$ with $h,h' \in \{2,3,4\}$, it can be seen by a careful case-by-case analysis that 
$A(p_1, q_1, p_2, q_2, q_3, q_4) =o(n^4)$, uniformly.
As a consequence, we have shown that $\Lambda_{n,2}^{(1,3)}=\tilde \Lambda_{n,2}^{(1,3)} + o(1)$, where $\tilde \Lambda_{n,2}^{(1,3)}$ is defined just as in \eqref{eq:ln213}, but with the sum additionally restricted to $p_1=p_2, q_1=q_2, q_3=q_4$. It may be rewritten as
\[
\tilde \Lambda_{n,2}^{(1,3)}
=
\frac{16}{n^4\delta^2_n(2)\delta^2_n(3)}
\sum_{r \ne r'}^d \sum_{q_1=1}^{r-1} \sum_{p_1=1}^{q_1-1}\sum_{\substack{q_3=1 \\ q_3 \ne p_1, q_1}}^{r'-1}
\sum_{\mathbf{i}\in \mathcal{J}^2}
\varphi_2(\mathbf i_{1:4})^3\varphi_2(\mathbf i_{5:8})^2.
\]
Now, a further case by case analysis according to the number of equal indices among $\mathbf i_{1:4}$ and $\mathbf i_{5:8}$ shows that
\[
\sum_{\mathbf{i}\in \mathcal{J}^2}
\varphi_2(\mathbf i_{1:4})^3\varphi_2(\mathbf i_{5:8})^2
=
\sum_{\mathbf{i}\in \mathcal{I}_2^2}
\varphi_2(\mathbf i_{1:4})^3\varphi_2(\mathbf i_{5:8})^2 + o(n^4)
=
\frac{n^4}{4} \cdot \frac{1}{90^5} (1+o(1)),
\]
where the last equation follows from \eqref{cardinal} and Lemma~\ref{lem:2m}.
Moreover,
\[
\delta_n^2(2)\delta_n^2(3)= \frac{4 d^5}{90^5\cdot 12}(1+o(1))
\]
by the definition of $\delta_n$ in \eqref{eq:deltan}
and
\[
\sum_{r \ne r'}^d 
\sum_{q_1=1}^{r-1}  
\sum_{p_1=1}^{q_1-1}  
\sum_{\substack{q_3=1 \\ q_3 \ne p_1, q_1}}^{r'-1}
1
= 
\frac{d^5}{12}(1+o(1)).
\]
As a consequence of the last four  displays, we obtain that
\[
\tilde \Lambda_{n,2}^{(1,3)}
= 1+o(1),
\]
which implies the second assertion in \eqref{eq:l2lla}.

It remains to prove \eqref{eq:l2llb}.  Note that
\begin{align*}
\Lambda_{n,2}^{(1,2)}
&= 
\frac{32}{n^4\delta^3_n(2) \delta_n(3)}
\sum_{r < r'}^d \sum_{q_1,q_2=1}^{r-1}\sum_{q_3,q_4=1}^{r'-1}\sum_{p_3=1}^{q_3-1} 
\sum_{\mathbf{i}\in \mathcal{J}^2} \Psi_{\mathbf{i}}
\E\left[ \tilde  I^{(q_1)}_{i_1,i_2}\tilde  I^{(q_2)}_{i_3,i_4}\tilde  I^{(p_3)}_{i_5,i_6}\tilde  I^{(q_3)}_{i_5,i_6} \tilde  I^{(q_4)}_{i_7,i_8} \right], \\
\Lambda_{n,2}^{(2,2)}
&=
\frac{64}{n^4\delta^2_n(2)\delta^2_n(3)}
\sum_{r < r'}^d \sum_{q_1,q_2=1}^{r-1}\sum_{q_3,q_4=1}^{r'-1}\sum_{p_1=1}^{q_1-1} \sum_{p_3=1}^{q_3-1} 
\sum_{\mathbf{i}\in \mathcal{J}^2}\Psi_{\mathbf{i}}
\E\left[ \tilde  I^{(p_1)}_{i_1,i_2}\tilde  I^{(q_1)}_{i_1,i_2}\tilde  I^{(q_2)}_{i_3,i_4}\tilde  I^{(p_3)}_{i_5,i_6}\tilde  I^{(q_3)}_{i_5,i_6} \tilde  I^{(q_4)}_{i_7,i_8} \right], \\
\Lambda_{n,2}^{(2,3)}
&= 
\frac{32}{n^4\delta_n(2)\delta^3_n(3)}
\sum_{r < r'}^d \sum_{q_1,q_2=1}^{r-1}\sum_{q_3,q_4=1}^{r'-1}\sum_{p_1=1}^{q_1-1} \sum_{p_2=1}^{q_2-1}  \sum_{p_3=1}^{q_3-1} 
\sum_{\mathbf{i}\in \mathcal{J}^2}\Psi_{\mathbf{i}} \\
& \hspace{7.1cm} \times
\E\left[ \tilde  I^{(p_1)}_{i_1,i_2}\tilde  I^{(q_1)}_{i_1,i_2} \tilde  I^{(p_2)}_{i_3,i_4} \tilde  I^{(q_2)}_{i_3,i_4}\tilde  I^{(p_3)}_{i_5,i_6}\tilde  I^{(q_3)}_{i_5,i_6} \tilde  I^{(q_4)}_{i_7,i_8} \right], 
\end{align*}
We may use the same arguments that were used for $\Lambda_{n,1}^{(1,2)}$ and $\Lambda_{n,1}^{(2,3)}$ to show that $\Lambda_{n,2}^{(1,2)}$ and $\Lambda_{n,2}^{(2,3)}$ are of the order $O(d^{-\frac{1}{2}})$ (the only essential difference is the double sum over $r<r'$ instead of a simple sum over $r$, which yields an additional factor $d$). It remains to treat $\Lambda_{n,2}^{(2,2)}$. Note that its summands can only be non-zero if $|\{p_1, q_1, q_2, p_3, q_3, q_4\}| \le 3$ and if each element of the previous set appears at least twice. Hence, it is sufficient to show that
\[
\sum_{\mathbf{i}\in \mathcal{J}^2}\Psi_{\mathbf{i}}\E\left[ \tilde  I^{(p_1)}_{i_1,i_2}\tilde  I^{(q_1)}_{i_1,i_2}\tilde  I^{(q_2)}_{i_3,i_4}\tilde  I^{(p_3)}_{i_5,i_6}\tilde  I^{(q_3)}_{i_5,i_6} \tilde  I^{(q_4)}_{i_7,i_8} \right] = o(n^4),
\]
uniformly in those $p_1, q_1, q_2, p_3, q_3, q_4$. The latter can again be shown by a careful case-by-case analysis. 
\end{proof}

\begin{lemma} \label{lem:lya3}
Assume $9$-wise independence. Then, with $X_{n,r}$ from \eqref{eq:xnr-mult},
\[
\Theta_{n}:=
\E \left| \sum_{r=1}^{d}\E\left( X^4_{n,r} | \mathcal{F}_{n,r-1}  \right)  \right| 
=
\sum_{r=1}^{d}\E\left( X^4_{n,r} \right)  \xrightarrow[n\to +\infty]{} 0 .
\]
\end{lemma}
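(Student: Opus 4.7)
The plan is to reduce this joint-version Lyapunov condition to the two single-coordinate Lyapunov conditions already proven in Lemma~\ref{lyapunovlem}, which is possible because $X_{n,r}$ in~\eqref{eq:xnr-mult} decomposes as a simple sum of the pair- and triple-level contributions.

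More precisely, comparing \eqref{eq:xnr-mult} with \eqref{eq:xnr} for $k=2$ and $k=3$, one sees immediately that, for every $r\ge 2$,
\[
X_{n,r} = X_{n,r}(2) + X_{n,r}(3).
\]
Hence, by the elementary inequality $(a+b)^4 \le 8(a^4+b^4)$,
\[
\Theta_n = \sum_{r=1}^d \E\bigl[X_{n,r}^4\bigr]
\le 8 \sum_{r=1}^d \E\bigl[X_{n,r}(2)^4\bigr] + 8 \sum_{r=1}^d \E\bigl[X_{n,r}(3)^4\bigr].
\]
Under the standing assumption of $9$-wise independence (which trivially implies $5$-wise independence), Lemma~\ref{lyapunovlem} applied with $k=2$ and $k=3$ shows that both sums on the right-hand side vanish as $n\to\infty$, and the conclusion follows.

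The reason no new calculation is required is that the conditioning on $\mathcal F_{n,p_k-1}$ used in Lemma~\ref{lyapunovlem} already exploits the same martingale filtration $\mathcal F_{n,r}$ that governs $X_{n,r}$ here, and the pairwise expansion separates cleanly. In hindsight, the only place where the joint structure of~\eqref{eq:xnr-mult} forces genuine work is in the second-moment conditions handled in Lemma~\ref{lem_exp_sum} and Lemma~\ref{lem_cond_exp_sum}, where cross-terms between the $k=2$ and $k=3$ contributions must be controlled via the $2$-matching arguments. The Lyapunov condition, being an $L^1$-statement rather than an $L^2$-identification of the variance, does not require that level of detail and follows from a straightforward convexity bound.
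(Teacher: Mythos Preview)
Your argument is correct and is in fact more economical than the paper's own proof. The key observation that $X_{n,r}=X_{n,r}(2)+X_{n,r}(3)$ is exactly right (for $r=2$ the triple sum is empty, so both representations agree), and the convexity bound $(a+b)^4\le 8(a^4+b^4)$ together with Lemma~\ref{lyapunovlem} for $k=2$ (requiring $5$-wise independence) and $k=3$ (requiring $9$-wise independence) immediately yields $\Theta_n\to 0$.

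The paper takes a more laborious route: it expands the fourth power of the sum into five pieces $\Theta_n^{(1)},\dots,\Theta_n^{(5)}$ corresponding to $a^4$, $b^4$, $a^3b$, $ab^3$, $a^2b^2$, identifies the first two with the expressions handled in Lemma~\ref{lyapunovlem}, and then bounds each of the three genuine cross-terms separately via the $2$-matching/cardinality argument, obtaining explicit polynomial-in-$d^{-1}$ rates such as $O(d^{-3/2})$. What the paper's approach buys is a quantitative rate for each piece, which could in principle feed into a more refined statement (e.g.\ allowing $k=k_n\to\infty$ or sharper growth conditions on $d$); what your approach buys is brevity and conceptual clarity, since it makes transparent that the joint Lyapunov condition is a direct consequence of the marginal ones and requires no new moment estimates. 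For the statement as formulated, your route is preferable.
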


\begin{proof}
Recalling $X_{n,r}$ from \eqref{eq:xnr-mult} and using that $(a+b)^4 = a^4+b^4 +3a^3b + 3ab^3 + 6a^2b^2$, we may write 
\[
\Theta_{n}  =  \Theta_{n}^{(1)} + \Theta_{n}^{(2)} + 3 \Theta_{n}^{(3)} + 3 \Theta_{n}^{(4)} + 6 \Theta_{n}^{(5)} ,
\]
where
\begin{align*}
\Theta_{n}^{(1)} 
&=
\delta^{-4}_n(2) \sum_{r=1}^{d}\E \left[\left(  \sum_{q =1}^{r-1}  \tilde M_{n,\{ q,r \}} \right)^4 \right] \\
\Theta_{n}^{(2)}  
&=
\delta^{-4}_n(3)\sum_{r=1}^{d}\E \left[ \left(\sum_{q =1}^{r-1} \sum_{ p=1}^{q-1} \tilde M_{n,\{p,q,r\}} \right)^4 \right] \\
\Theta_{n}^{(3)} 
&= 
\delta^{-3}_n(2)\delta^{-1}_n(3) \sum_{r=1}^{d}\E \left[\left(  \sum_{q =1}^{r-1} \tilde M_{n,\{ q,r \}}\right)^3 \cdot \left(\sum_{q =1}^{r-1} \sum_{ p=1}^{q-1} \tilde M_{n,\{p,q,r\}} \right)   \right] \\
\Theta_{n}^{(4)} 
&=
 \delta^{-1}_n(2)\delta^{-3}_n(3)\sum_{r=1}^{d} \E \left[\left(\sum_{q =1}^{r-1} \sum_{ p=1}^{q-1} \tilde M_{n,\{p,q,r\}}\right)^3 \cdot \left(   \sum_{q =1}^{r-1} \tilde M_{n,\{ q,r \}}\right)   \right] \\
 \Theta_{n}^{(5)} 
&=
6\delta^{-2}_n(2) \delta^{-2}_n(3) \sum_{r=1}^{d} \E \left[\left(  \sum_{q =1}^{r-1} \tilde M_{n,\{ q,r \}}\right)^2 \cdot \left(\sum_{q =1}^{r-1} \sum_{ p=1}^{q-1} \tilde M_{n,\{p,q,r\}}\right)^2   \right].
\end{align*}
Lemma~\ref{lyapunovlem} implies that $\lim_{n \to \infty} \Theta_{n}^{(1)}= \lim_{n \to \infty} \Theta_{n}^{(2)}=0$; note that the latter requires 9-wise independence. It remains to consider the other three expressions. For the sake of brevity, we only consider the hardest one, which is $\Theta_{n}^{(4)}$. Recalling $\tilde M_{n,A}$ from \eqref{eq:mnnn}, we have
\begin{align*}
\Theta^{(4)}_{n}
&= 
\frac{16}{n^4\delta_n(2) \delta^{3}_n(3) }
\sum_{r=1}^d 
\sum_{\substack{q_j= 1\\ j=1, \dots, 4}}^{r-1}
\sum_{\substack{p_j= 1\\ j=1, \dots, 3}}^{r-1}
 \sum_{\mathbf{i} \in\mathcal{J}^2} \\
& \hspace{5cm} 
\E \Big[ \tilde I^{(p_1)}_{i_1,i_2}\tilde I^{(q_1)}_{i_1,i_2} \tilde I^{(p_2)}_{i_3,i_4}\tilde I^{(q_2)}_{i_3,i_4} \tilde I^{(p_3)}_{i_5,i_6}\tilde I^{(q_3)}_{i_5,i_6}\tilde  I^{(q_4)}_{i_7,i_8} \tilde I^{(r)}_{i_1,i_2}\tilde I^{(r)}_{i_3,i_4}\tilde I^{(r)}_{i_5,i_6}\tilde I^{(r)}_{i_7,i_8}\Big] \\
&=
\frac{16}{n^4\delta_n(2)\delta^3_n(3) } 
\sum_{r=1}^d 
\sum_{\substack{q_j= 1\\ j=1, \dots, 4}}^{r-1}
\sum_{\substack{p_j= 1\\ j=1, \dots, 3}}^{r-1}
\sum_{\mathbf{i} \in\mathcal{J}^2}\E \left[ \tilde I^{(p_1)}_{i_1,i_2}\tilde I^{(q_1)}_{i_1,i_2} \tilde I^{(p_2)}_{i_3,i_4}\tilde I^{(q_2)}_{i_3,i_4} \tilde I^{(p_3)}_{i_5,i_6}\tilde I^{(q_3)}_{i_5,i_6}\tilde I^{(q_4)}_{i_7,i_8}\right]\cdot \varphi_{4}\left(\mathbf{i} \right).
\end{align*}
The summands in the previous sum can only be non-zero if $\zp[b]{\{ p_1,p_2,p_3,q_1, q_2, q_3, q_4, q_4  \}}\leq 3.$ Moreover, $\sum_{\mathbf i \in \mathcal J} |\varphi_4(\mathbf i)| = O(n^4)$ by Lemma~\ref{lem:4m}. As a consequence, since $\delta_n(2) \delta^{3}_n(3) \propto d^{\frac{11}{2}}$,  
\[
\Theta^{(4)}_{n} = O\left(d^{-\frac{3}{2}} \right) =o(1). \qedhere
\]
\end{proof}

\bibliographystyle{chicago}
\bibliography{biblio}

\end{document}